\newcommand{\vertex}[1]
{
\foreach \i in {1,...,#1}
{\draw[fill=violet] (0,2*\i-2) rectangle (2*#1-1,2*\i-1);
\draw[fill=gray] (0,2*\i-1) rectangle (2*#1-1,2*\i);
}
\draw[step=1.0] (0,0) grid (2*#1-1,2*#1);
\foreach \i in {1,...,#1}
{
\draw[very thick, blue, fill=white] (0,2*\i-2+0.5) circle (0.1);
\draw[very thick, blue,fill=white] (0,2*\i-1+0.5) circle (0.1);
\draw[very thick, blue,fill=white] (2*#1-1,2*\i-2+0.5) circle (0.1);
\draw[very thick, blue,fill=blue] (2*#1-1,2*\i-1+0.5) circle (0.1);
}
\foreach \i in {1,...,#1}
{
\draw[very thick, blue,fill=blue] (\i-0.5,0) circle (0.1);
\draw[very thick, blue, fill=white] (\i-0.5,2*#1) circle (0.1);
}
\foreach \i in {2,...,#1}
{
\draw[very thick,blue, fill=white] (#1+\i-1.5,0) circle (0.1);
\draw[very thick,blue,fill=white] (#1+\i-1.5,2*#1) circle (0.1);
}
}
\newcommand{\vertexb}[1]
{
\foreach \i in {1,...,#1}
{
\draw[fill=white] (0,2*\i-2) rectangle (2*#1-1,2*\i-1);
\draw[fill=pink] (0,2*\i-1) rectangle (2*#1-1,2*\i);
}
\draw[step=1.0] (0,0) grid (2*#1-1,2*#1);
\foreach \i in {1,...,#1}
{
\draw[very thick,blue,fill=white] (0,2*\i-2+0.5) circle (0.1);
\draw[very thick,blue,fill=blue] (0,2*\i-1+0.5) circle (0.1);
\draw[very thick,blue,fill=white] (2*#1-1,2*\i-2+0.5) circle (0.1);
\draw[very thick,blue,fill=white] (2*#1-1,2*\i-1+0.5) circle (0.1);
}
\foreach \i in {1,...,#1}
{
\draw[very thick,blue,fill=white] (#1+\i-1.5,0) circle (0.1);
\draw[very thick,blue,fill=blue] (\i-0.5,2*#1) circle (0.1);
}
\foreach \i in {2,...,#1}
{
\draw[very thick,blue,fill=blue] (\i-1.5,0) circle (0.1);
\draw[very thick,blue,fill=blue] (#1+\i-1.5,2*#1) circle (0.1);
}
}
\newcommand{\checkerboard}[1]
{
\foreach \i in {0,...,#1}
\foreach \j in {0,...,\i}
{
\draw[lightgray, fill={\ifodd\numexpr\i+\j\relax white\else lightgray\fi}] (\i,\j) rectangle (\i+1,\j+1);
\draw[lightgray, fill={\ifodd\numexpr\i+\j\relax lightgray\else white\fi}] (\i,-\j) rectangle (\i+1,-\j-1);
\draw[lightgray, fill={\ifodd\numexpr\i+\j\relax lightgray\else white\fi}] (2*#1-\i+2,\j) rectangle (2*#1-\i+1,\j+1);
\draw[lightgray, fill={\ifodd\numexpr\i+\j\relax white\else lightgray\fi}] (2*#1-\i+2,-\j) rectangle (2*#1-\i+1,-\j-1);
}
}
\newcommand{\vdom}[3][gray]
{
\draw[ultra thick, fill = #1] (#2,#3) rectangle (#2+1,#3+2); 
}
\newcommand{\hdom}[3][gray]
{
\draw[ultra thick, fill = #1] (#2,#3) rectangle (#2+2,#3+1); 
}
\newcommand{\M}{M}
\colorlet{pink}{violet!40!white}
\DeclareMathOperator{\weight}{weight}
\newcommand{\C}{\mathbb{C}}
\newcommand{\I}{\mathbf{I}}
\newcommand{\J}{\mathbf{J}}
\newcommand{\K}{\mathbf{K}}
\renewcommand{\L}{\mathbf{L}}
\theoremstyle{plain}
\newtheorem{thm}{Theorem}[section]
\newtheorem*{thm*}{Theorem}
\newtheorem{prop}[thm]{Proposition}
\newtheorem*{prop*}{Proposition}
\newtheorem{cor}[thm]{Corollary}
\newtheorem{lem}[thm]{Lemma}
\newtheorem*{lem*}{Lemma}
\newtheorem{example}[thm]{Example}
\newtheorem{remark}[thm]{Remark}
\newtheorem{problem}[thm]{Open problem}
\title{Colored vertex models and $k$-tilings of the Aztec diamond}
\author{Sylvie Corteel} 
\affil{Department of Mathematics, UC Berkeley USA\\

CNRS, IRIF, Universit\'e de Paris, France\\
{\small\ttfamily{corteel@berkeley.edu}}}
\author{Andrew Gitlin}
\affil{Department of Mathematics, UC Berkeley USA\\
{\small\ttfamily{andrew\_gitlin@berkeley.edu}}}
\author{David Keating}
\affil{Department of Mathematics, UW Madison USA\\
{\small\ttfamily{dkeating3@wisc.edu}}}
\date{\today}
\begin{document}

\maketitle

\begin{abstract}
We study $k$-tilings ($k$-tuples of domino tilings) of the Aztec diamond of rank $m$.  We assign a weight to each $k$-tiling, depending on the number of dominos of certain types and the number of ``coupled dominos" between the tilings.  Employing the colored vertex models introduced in \cite{GKsuper} to study supersymmetric LLT polynomials, we compute the generating polynomials of the $k$-tilings.  We then prove some combinatorial results about $k$-tilings, including a bijection between $k$-tilings with no coupled dominos and $1$-tilings, and we compute the arctic curves of the tilings for $t=0$ and $t\rightarrow\infty$.  We also present some lozenge $k$-tilings of the hexagon and compute the arctic curves of the tilings for $t=0$.
\end{abstract}

\section{Introduction}

Domino tilings of the Aztec diamond were first studied in 1992 in \cite{aztec0, aztec1}.  The Aztec diamond of rank $m$ is the set of lattice squares inside the diamond-shaped region
\[ 
AD_m=\{(x, y)\in \mathbb{R}^2 : |x| + |y| \leq m + 1\}.\]
The Aztec diamond of rank 3 is drawn on the left of Figure \ref{ex0}.
We tile this region with dominos. For example a tiling of the Aztec diamond of rank 3 is drawn on the right of Figure \ref{ex0}.
and the domino tilings of the Aztec diamond of rank 2 are displayed in Figure \ref{rank2}.
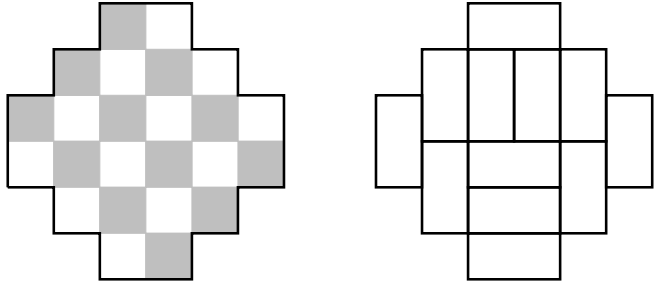
\begin{figure}[ht]
\hspace{3cm}
\resizebox{0.25\textwidth}{!}{
\begin{tikzpicture}[baseline = (current bounding box).center]
\checkerboard{2}
\draw[ultra thick](0,-1)--(0,1)--(1,1)--(1,2)--(2,2)--(2,3)--(4,3)--(4,2)--(5,2)--(5,1)--(6,1)--(6,-1)--(5,-1)--(5,-2)--(4,-2)--(4,-3)--(2,-3)--(2,-2)--(1,-2)--(1,-1)--(0,-1);
\end{tikzpicture}}\hspace{1cm}
\resizebox{0.25\textwidth}{!}{
\begin{tikzpicture}[baseline = (current bounding box).center]
\draw[ultra thick] (0,-1) rectangle (1,1); \draw[ultra thick] (1,-2) rectangle (2,0); \draw[ultra thick] (1,0) rectangle (2,2);
\draw[ultra thick] (2,-3) rectangle (4,-2); \draw[ultra thick] (2,-1) rectangle (4,0);
\draw[ultra thick] (2,2) rectangle (4,3);
\draw[ultra thick] (2,0) rectangle (3,2); \draw[ultra thick] (2,-2) rectangle (4,-1); \draw[ultra thick] (3,0) rectangle (4,2); 
\draw[ultra thick] (4,-2) rectangle (5,0); \draw[ultra thick] (4,0) rectangle (5,2); 
\draw[ultra thick] (5,-1) rectangle (6,1);       
\end{tikzpicture}}
\caption{The Aztec diamond of rank 3 (left) and an example of a domino tiling (right)}
\label{ex0}
\end{figure}
One surprising result is the following:
\begin{thm}
The number of domino tilings of the Aztec diamond of rank $m$ is $2^{m+1\choose 2}$.
\end{thm}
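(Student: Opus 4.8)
The plan is to turn the enumeration into a determinant by passing to non-intersecting lattice paths. First I would use the standard correspondence (going back to \cite{aztec1}; see also the later Schröder-path formulations) between domino tilings of $AD_m$ and families of pairwise non-intersecting lattice paths: two-color the unit cells of $AD_m$ as a checkerboard, sort the dominoes into four types according to the colors and parity of the cells they cover, mark a path segment through each domino of two suitably chosen types, and check that these segments always glue into a family of $m+1$ pairwise non-crossing paths with a fixed set of source points $A_0,\dots,A_m$ on the lower-left boundary and sink points $B_0,\dots,B_m$ on the upper-right boundary. After a shear, the admissible steps of a single such path are $(1,0)$, $(0,1)$ and $(1,1)$ together with a staying-inside constraint, i.e.\ each path is a (large) Schröder path. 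This gives a bijection between tilings of $AD_m$ and such non-intersecting $(m+1)$-tuples.

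Second I would invoke the Lindström--Gessel--Viennot lemma. Since the sources and sinks are placed so that every path joining them is a Schröder path and the number of paths from $A_i$ to $B_j$ depends only on $i+j$, the number of non-intersecting families equals the Hankel determinant
\[
D_m \;=\; \det_{0\le i,j\le m}\bigl(r_{i+j}\bigr),
\]
where $r_n$ is the $n$-th large Schröder number (a mild reindexing of the endpoints may be needed to land exactly on this matrix). The theorem then reduces to the evaluation $D_m = 2^{\binom{m+1}{2}}$.

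Third I would evaluate $D_m$. The large Schröder numbers have an explicit Jacobi continued fraction (equivalently, an explicit orthogonalizing measure), so the classical formula writing a Hankel determinant as a product of continued-fraction coefficients collapses $D_m$ to a power of $2$; equivalently, one proves the recurrence $D_m = 2^{m} D_{m-1}$ with $D_0 = 1$ directly by elementary row and column operations (subtract a suitable multiple of each row from the next, pull out a factor of $2$, using the Schröder recurrence). Either way $\prod_{k=1}^{m} 2^{k} = 2^{\binom{m+1}{2}}$ finishes it, and this step is routine once the matrix has been pinned down.

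The main obstacle is the first step: arranging the path bijection so that the resulting model is \emph{exactly} the large-Schröder model with the \emph{exact} endpoints that produce the clean Hankel matrix $D_m$ above — one must verify that the marked segments never fail to concatenate, that non-crossing of the paths corresponds precisely to the tiling conditions (exact cover, no overlaps), and that no stray factors of $2$ are gained or lost in the dictionary. As a cross-check (and an alternative route that avoids determinants altogether) one can instead run domino shuffling \cite{aztec1}, which yields the recurrence $a_m = 2^{m} a_{m-1}$ with $a_0 = 1$ and hence $a_m = 2^{\binom{m+1}{2}}$ at once; but making the shuffling bijection fully rigorous is the corresponding delicate point there.
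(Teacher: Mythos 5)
Your proposal is correct in outline, but it takes a genuinely different route from this paper. The paper never proves the theorem directly: it is quoted from \cite{aztec0,aztec1}, and the machinery the paper actually develops proves it as a byproduct — rows of (co-)interlacing partitions are encoded as a colored vertex model, the Yang--Baxter equation collapses the partition function to $\prod_{1\le i\le j\le m}(1+x_iy_jt^{\ell})$, and the $k=1$ specialization $x_i=y_j=1$ immediately yields $2^{\binom{m+1}{2}}$ since there are $\binom{m+1}{2}$ pairs $i\le j$. Your route instead goes through the Schr\"oder-path correspondence, Lindstr\"om--Gessel--Viennot, and a Hankel determinant of large Schr\"oder numbers (essentially the Eu--Fu style proof), with domino shuffling as a fallback; this is one of the ``non-intersecting paths'' proofs the paper cites via \cite{aztecschroder}, and it is perfectly legitimate, though it only gives the bare count, whereas the paper's transfer-matrix/YBE computation gives the full multivariate generating polynomial (and its $k$-colored, $t$-deformed extension) with the count as a specialization. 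Two small points to fix in your write-up: the standard bijection — which this paper records in Section 5.1 — produces $m$ (not $m+1$) non-intersecting Schr\"oder paths for rank $m$, so LGV naturally yields the $m\times m$ shifted Hankel determinant $\det_{0\le i,j\le m-1}(r_{i+j+1})=2^{\binom{m+1}{2}}$ rather than your $\det_{0\le i,j\le m}(r_{i+j})$ (both evaluate to the same power of $2$, but you should commit to the matrix your endpoints actually give); and you should state explicitly that the sources and sinks are ordered along the boundary so that only the identity connection permutation admits a non-intersecting family, which is what lets LGV return a single determinant with no signs.
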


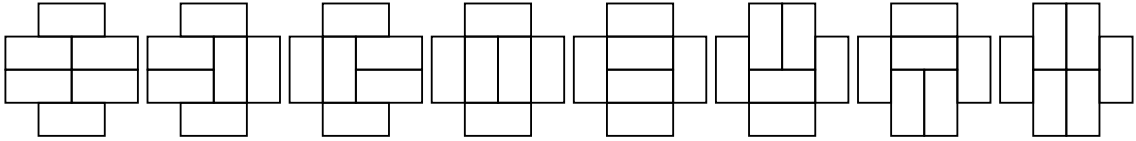
\begin{figure}[ht]\resizebox{\textwidth}{!}{
\begin{tikzpicture}[baseline = (current bounding box).center]
\draw[ultra thick](0,0) rectangle (2,1);
\draw[ultra thick](4,0) rectangle (2,1);
\draw[ultra thick](0,-1) rectangle (2,0);
\draw[ultra thick](4,-1) rectangle (2,0);
\draw[ultra thick](1,-2) rectangle (3,-1);
\draw[ultra thick](1,1) rectangle (3,2);
\end{tikzpicture}\ \ 
\begin{tikzpicture}[baseline = (current bounding box).center]
\draw[ultra thick](0,0) rectangle (2,1);
\draw[ultra thick](3,1) rectangle (2,-1);
\draw[ultra thick](0,-1) rectangle (2,0);
\draw[ultra thick](4,1) rectangle (3,-1);
\draw[ultra thick](1,-2) rectangle (3,-1);
\draw[ultra thick](1,1) rectangle (3,2);
\end{tikzpicture}\ \ 
\begin{tikzpicture}[baseline = (current bounding box).center]
\draw[ultra thick](0,-1) rectangle (1,1);
\draw[ultra thick](1,-1) rectangle (2,1);
\draw[ultra thick](2,1) rectangle (4,0);
\draw[ultra thick](4,-1) rectangle (2,0);
\draw[ultra thick](1,-2) rectangle (3,-1);
\draw[ultra thick](1,1) rectangle (3,2);
\end{tikzpicture}\ \ 
\begin{tikzpicture}[baseline = (current bounding box).center]
\draw[ultra thick](0,-1) rectangle (1,1);
\draw[ultra thick](1,-1) rectangle (2,1);
\draw[ultra thick](3,1) rectangle (2,-1);
\draw[ultra thick](4,1) rectangle (3,-1);
\draw[ultra thick](1,-2) rectangle (3,-1);
\draw[ultra thick](1,1) rectangle (3,2);
\end{tikzpicture}\ \ 
\begin{tikzpicture}[baseline = (current bounding box).center]
\draw[ultra thick](0,-1) rectangle (1,1);
\draw[ultra thick](1,-1) rectangle (3,0);
\draw[ultra thick](3,1) rectangle (1,0);
\draw[ultra thick](4,1) rectangle (3,-1);
\draw[ultra thick](1,-2) rectangle (3,-1);
\draw[ultra thick](1,1) rectangle (3,2);
\end{tikzpicture}\ \ 
\begin{tikzpicture}[baseline = (current bounding box).center]
\draw[ultra thick](0,-1) rectangle (1,1);
\draw[ultra thick](1,-1) rectangle (3,0);
\draw[ultra thick](2,0) rectangle (3,2);
\draw[ultra thick](4,1) rectangle (3,-1);
\draw[ultra thick](1,-2) rectangle (3,-1);
\draw[ultra thick](1,0) rectangle (2,2);
\end{tikzpicture}\ \ 
\begin{tikzpicture}[baseline = (current bounding box).center]
\draw[ultra thick](0,-1) rectangle (1,1);
\draw[ultra thick](3,1) rectangle (1,0);
\draw[ultra thick](4,1) rectangle (3,-1);
\draw[ultra thick](2,-2) rectangle (3,0);
\draw[ultra thick](1,-2) rectangle (2,0);
\draw[ultra thick](1,1) rectangle (3,2);
\end{tikzpicture}\ \ 
\begin{tikzpicture}[baseline = (current bounding box).center]
\draw[ultra thick](0,-1) rectangle (1,1);
\draw[ultra thick](1,-2) rectangle (2,0);
\draw[ultra thick](2,0) rectangle (3,2);
\draw[ultra thick](4,1) rectangle (3,-1);
\draw[ultra thick](2,-2) rectangle (3,0);
\draw[ultra thick](1,0) rectangle (2,2);
\end{tikzpicture}
}
\caption{The 8 domino tilings of the Aztec diamond of rank 2}
\label{rank2}
\end{figure}

\noindent There are numerous proofs of this theorem using many interesting combinatorial techniques, including random generation algorithms \cite{aztec0, aztec1}, non-intersecting paths \cite{aztecschroder}, and sequences of interlacing partitions \cite{steep,RYG}.  Thanks to these techniques, we know a lot about the asymptotic behavior of domino tilings of the Aztec diamond of rank $m$ when $m\rightarrow \infty$.  An important result is the arctic circle theorem of Jockusch, Propp, and Shor \cite{aztec3}, which roughly states that a uniformly random tiling behaves in a brickwork pattern in four regions (called frozen regions or polar regions), one adjacent to each corner of the Aztec diamond, whose union is the complement of the largest circle (called the arctic circle) that can be inscribed in the Aztec diamond.  A beautiful picture for a typical tiling for $m=1000$, as well as a description of the brickwork pattern in the frozen regions, can be found 
in Figure \ref{fig:large}, which was taken from \href{https://sites.uclouvain.be/aztecdiamond/examples/}{this site}.

\begin{figure}[]
    \centering
    \begin{tabular}{c}
    \vspace{-2cm}
    \includegraphics[width=.5\textwidth]{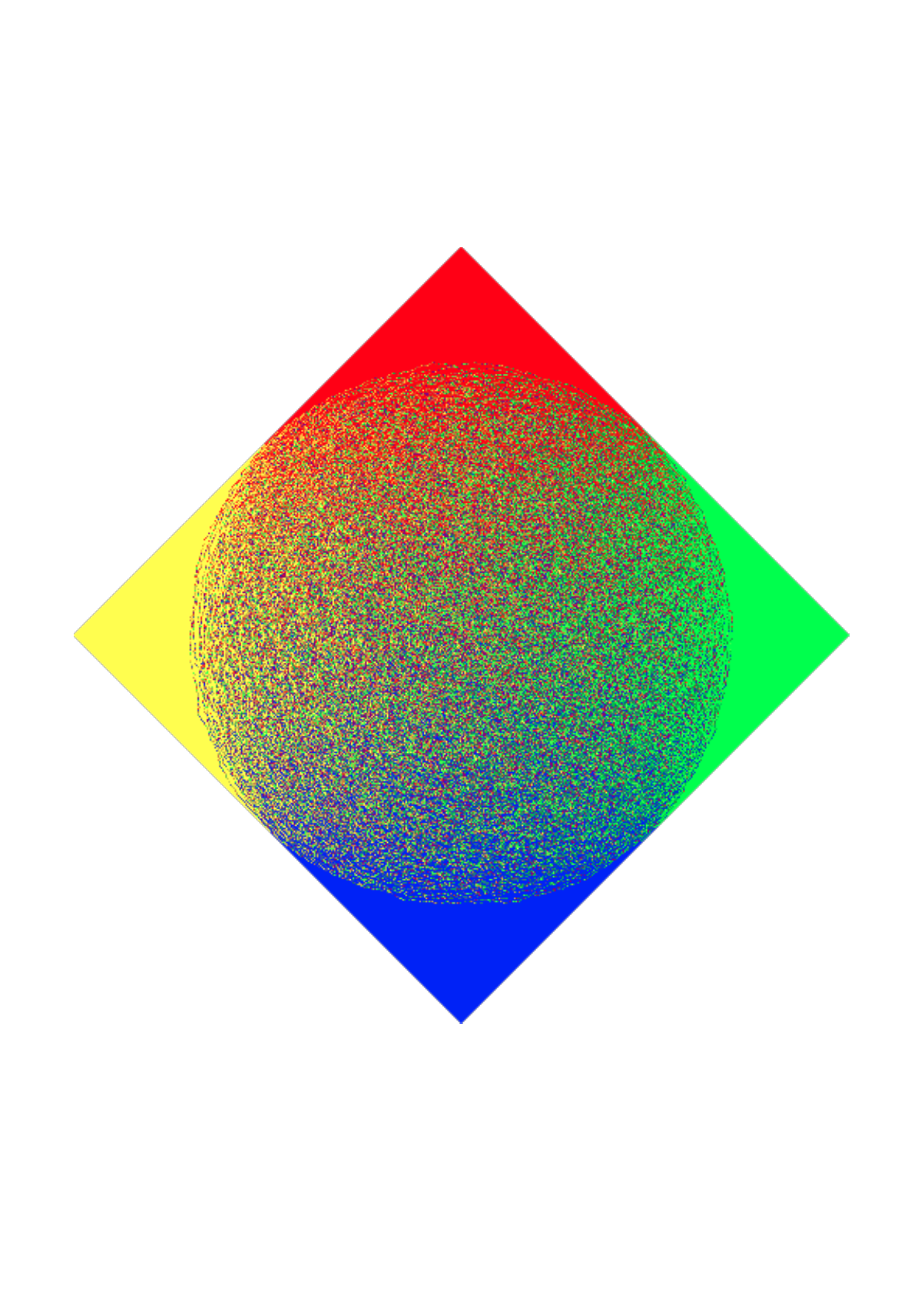} 
    \\
    \begin{tabular}{cccc}
    \begin{tikzpicture}[baseline = (current bounding box).center, scale = 0.3]
        \hdom[red]{0}{0}; \hdom[red]{2}{0};
        \hdom[red]{-1}{1}; \hdom[red]{1}{1}; \hdom[red]{3}{1};
        \hdom[red]{0}{2}; \hdom[red]{2}{2};
        \hdom[red]{-1}{3}; \hdom[red]{1}{3}; \hdom[red]{3}{3};
        \hdom[red]{0}{4}; \hdom[red]{2}{4};
    \end{tikzpicture}
    &
    \begin{tikzpicture}[baseline = (current bounding box).center, scale = 0.3]
        \hdom[blue]{0}{0}; \hdom[blue]{2}{0};
        \hdom[blue]{-1}{1}; \hdom[blue]{1}{1}; \hdom[blue]{3}{1};
        \hdom[blue]{0}{2}; \hdom[blue]{2}{2};
        \hdom[blue]{-1}{3}; \hdom[blue]{1}{3}; \hdom[blue]{3}{3};
        \hdom[blue]{0}{4}; \hdom[blue]{2}{4};
    \end{tikzpicture}
    &
    \begin{tikzpicture}[baseline = (current bounding box).center, scale = 0.3]
        \vdom[green]{0}{0}; \vdom[green]{0}{2};
        \vdom[green]{1}{-1}; \vdom[green]{1}{1}; \vdom[green]{1}{3};
        \vdom[green]{2}{0}; \vdom[green]{2}{2};
        \vdom[green]{3}{-1}; \vdom[green]{3}{1}; \vdom[green]{3}{3};
        \vdom[green]{4}{0}; \vdom[green]{4}{2};
    \end{tikzpicture}
    &
    \begin{tikzpicture}[baseline = (current bounding box).center, scale = 0.3]
        \vdom[yellow]{0}{0}; \vdom[yellow]{0}{2};
        \vdom[yellow]{1}{-1}; \vdom[yellow]{1}{1}; \vdom[yellow]{1}{3};
        \vdom[yellow]{2}{0}; \vdom[yellow]{2}{2};
        \vdom[yellow]{3}{-1}; \vdom[yellow]{3}{1}; \vdom[yellow]{3}{3};
        \vdom[yellow]{4}{0}; \vdom[yellow]{4}{2};
    \end{tikzpicture}
    \end{tabular}
    \end{tabular}
    \caption[]{On top is a typical tiling of the Aztec diamond of rank 1000. Here the four different types of dominos: \resizebox{1cm}{!}{
    \begin{tikzpicture}[baseline = (current bounding box).center]
    \draw[lightgray, fill=lightgray] (0,0) rectangle (1,1);
    \draw[very thick] (0,0) rectangle (2,1);
    \end{tikzpicture}
    }, \resizebox{1cm}{!}{
    \begin{tikzpicture}[baseline = (current bounding box).center]
    \draw[lightgray, fill=lightgray] (0,0) rectangle (1,1);
    \draw[very thick] (-1,0) rectangle (1,1);
    \end{tikzpicture}
    },  \resizebox{0.5cm}{!}{
    \begin{tikzpicture}[baseline = (current bounding box).center]
    \draw[lightgray, fill=lightgray] (0,0) rectangle (1,1);
    \draw[very thick] (0,0) rectangle (1,2);
    \end{tikzpicture}
    }, and 
    \resizebox{0.5cm}{!}{
    \begin{tikzpicture}[baseline = (current bounding box).center]
    \draw[lightgray, fill=lightgray] (0,0) rectangle (1,1);
    \draw[very thick] (0,-1) rectangle (1,1);
    \end{tikzpicture}
    }, are colored red, blue, green, and yellow, respectively. Note that the corners of the Aztec diamond contain large regions in which the tiling exhibits the same brickwork pattern, shown in the bottom image for the North, South, East, and West corner, respectively. These large regions are known as the frozen regions. The center of the tiling does not display a regular pattern and is known as the disordered region. }
    \label{fig:large}
\end{figure}

\begin{thm}\cite{aztec3,aztec2}
Fix $\epsilon > 0$. For each $m$,
consider a uniformly random domino tiling of the Aztec diamond of rank $m$ scaled by a factor $1/m$
in each axis to fit into the limiting diamond
$$
AD_\infty=\{|x| + |y| \le 1\}.
$$
and let $P_m$ be the image of the polar regions of the random tiling
under this scaling transformation. 
Then as $m\rightarrow \infty$
$$
\left \{(x, y)\in AD_\infty: x^2 + y^2 > \frac{1}{2}+\epsilon \right \} \cap \left(\frac{1}{m}AD_m\right)\subset
 P_m \subset \left \{(x, y)\in AD_\infty: x^2 + y^2 > \frac{1}{2}-\epsilon \right \}
$$
holds with probability that tends to 1.
\label{arcticthm}
\end{thm}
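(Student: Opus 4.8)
The plan is to pass from the combinatorial model to a determinantal point process and then carry out a saddle‑point analysis; this is essentially the route of \cite{aztec3} as streamlined via non‑intersecting paths (an alternative, purely probabilistic route due to \cite{romik} is mentioned at the end). \textbf{Step 1 (From tilings to a determinantal process).} Using the correspondence between domino tilings of $AD_m$ and families of non‑intersecting lattice paths \cite{aztecschroder} --- equivalently, the interlacing‑partitions description recalled above \cite{steep,RYG} --- a uniformly random tiling of $AD_m$ is encoded by a random point configuration on $\mathbb{Z}^2$ (the ``particles'' along successive slices of the diamond) which forms a determinantal point process. Its correlation kernel $K_m$ has a double‑contour‑integral representation built from Krawtchouk polynomials. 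In these coordinates, a lattice square near a given point of $AD_m$ is covered by a domino of a prescribed one of the four types precisely according to whether or not the corresponding site is occupied, so the local statistics of the tiling --- in particular the location of the polar regions --- are governed entirely by $K_m$.

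\textbf{Step 2 (Asymptotics and the circle).} Rescale by $1/m$ and write $K_m$ evaluated near a point $u=(x,y)\in AD_\infty$ as a contour integral with an $m$‑dependent phase of the form $m\,\Phi_u(\cdot)$. A steepest‑descent analysis shows that the limiting one‑point density $\rho(u)=\lim_{m\to\infty}K_m$ on the diagonal exists, and that $\rho(u)\in\{0,1\}$ exactly when the two relevant saddle points of $\Phi_u$ are real, while $\rho(u)\in(0,1)$ exactly when they form a complex‑conjugate pair. Computing the discriminant of $\Phi_u'$ identifies the transition locus --- where the two saddles coalesce --- as the circle $x^2+y^2=\tfrac12$, i.e.\ the largest circle inscribed in $AD_\infty$. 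Hence the ``liquid'' region is the open disk, its complement inside $AD_\infty$ is frozen, and the four frozen components adjacent to the corners are exactly the polar regions.

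\textbf{Step 3 (From density to probability).} The same steepest‑descent bounds give, for any $u$ with $x^2+y^2>\tfrac12+\epsilon$, that $K_m$ is within $e^{-cm}$ of an integer on the diagonal and is exponentially small off the diagonal throughout an $O(\delta)$‑neighborhood of $u$; hence the expected number of ``non‑brickwork'' dominoes in a $\delta m\times\delta m$ window around $um$ is exponentially small in $m$. A union bound over an $O(1/\delta^2)$‑size grid of such windows together with Markov's inequality shows that with probability tending to $1$ the tiling is frozen throughout $\{x^2+y^2>\tfrac12+\epsilon\}\cap\tfrac1m AD_m$, which is the first inclusion. For the reverse inclusion, at any point with $x^2+y^2<\tfrac12-\epsilon$ the density $\rho$ is bounded away from $0$ and $1$, so the local configuration cannot be frozen; again a union bound over a grid shows that with probability tending to $1$ no such point lies in a polar region. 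Combining the two inclusions gives Theorem~\ref{arcticthm}.

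\textbf{Main obstacle.} The crux is the uniform steepest‑descent analysis of the Krawtchouk kernel: locating the saddle points of $\Phi_u$, choosing admissible contours through them, and obtaining error estimates that are uniform over the compact range of rescaled positions under consideration --- and then upgrading the resulting control on expectations to an almost‑sure statement via concentration and a union bound. An alternative that avoids the kernel asymptotics entirely is the argument of \cite{romik}, which couples the uniform tiling to the domino‑shuffling Markov chain and analyzes its hydrodynamic limit; there the obstacle shifts to the combinatorics of the shuffling map. Either way, once the liquid region is shown to be the open disk bounded by $x^2+y^2=\tfrac12$, the statement follows.
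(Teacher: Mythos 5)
This theorem is not proved in the paper at all: it is the classical arctic circle theorem, imported verbatim with the citations \cite{aztec3,romik}, and the authors use it later only as a black box (e.g.\ in Section \ref{t-0-section} to transport the circle through their $t=0$ bijection). So there is no internal proof to compare yours against; what you have written is a sketch of how the literature proves it, and it should be judged on that basis.

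As a sketch, your three-step outline (tilings $\to$ determinantal process with a Krawtchouk-type kernel $\to$ saddle-point analysis locating the coalescence locus $x^2+y^2=\tfrac12$ $\to$ union bounds upgrading one-point density to the two inclusions) is a legitimate route, but two caveats. First, the attribution is off: the original proof in \cite{aztec3} is the domino-shuffling argument (the route you describe for \cite{romik}), while the kernel/steepest-descent proof is due to later work (Johansson; cf.\ also Cohn--Elkies--Propp), so your Step 1--2 is not ``essentially the route of \cite{aztec3}.'' Second, and more substantively, the proposal defers exactly the parts that constitute the proof: the uniform steepest-descent estimates are only named as an obstacle, and Step 3 glosses over the passage from pointwise density statements to statements about the \emph{polar regions}. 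Knowing $\rho(u)\in(0,1)$ at a point does not by itself exclude that point from $P_m$ with high probability, and knowing the tiling is locally brickwork near $u$ does not by itself place $u$ in $P_m$; both inclusions need a connectivity/monotonicity argument (the polar region is the maximal brickwork component attached to a corner, and membership propagates toward the corner) before a union bound over an $O(1/\delta^2)$ grid can control all points simultaneously. As written, then, this is a correct roadmap to a known external theorem rather than a proof, which is consistent with how the paper itself treats the statement.
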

\noindent In the rest of this paper, we say that the arctic curve  of the Aztec diamond is the circle
\[
x^2+y^2=\frac{1}{2}.
\]

Cohn, Elkies, and Propp \cite{aztec2} later proved some results about the behavior of the tiling inside the arctic circle, specifically regarding the probability of observing a given domino in a given position and regarding the ``height function" of the tiling. Many more papers were written on domino tiling of the Aztec diamond, for example \cite{romik, johansson, johansson1}.

The main goal of this paper is to study superpositions of domino tilings of the Aztec diamond, by using recent ideas coming from the study of colored vertex models.  It has recently been shown \cite{color1,LLT,REU} that the LLT polynomials can be realized as a certain class of partition functions constructed from an integrable vertex model, and more recently the same was shown for the supersymmetric LLT polynomials (also called super ribbon functions) \cite{CFsuper,GKsuper}.  We will use the vertex models introduced in \cite{GKsuper}, which can also be realized as degenerations of a vertex model introduced in \cite{color1}.

For $k\ge 1$, we define a $k$-tiling of the Aztec diamond of rank $m$ to be the superposition of $k$ tilings, colored from 1 to $k$.  These tilings are not independent; we define an interaction between the tilings of colors $i < j$ to be a pair of dominos, one of color $i$ and one of color $j$, of a certain form. We call call such pairs ``coupled dominos". By relating these tilings to the colored vertex models introduced in \cite{GKsuper}, we prove that 
\begin{thm}\label{thm:partitionfunctionktiling}
The generating polynomial of the $k$-tilings of the Aztec diamond of rank $m$ is
\begin{equation}
\prod_{\ell=0}^{k-1}\prod_{1\le i\le j\le m}(1+t^{\ell}x_iy_j).    
\end{equation}
\end{thm}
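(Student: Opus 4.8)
The plan is to realize the generating polynomial of $k$-tilings as a partition function of the colored vertex model from \cite{GKsuper}, and then evaluate that partition function by commuting row-to-row transfer matrices using the Yang--Baxter equation. First I would recall the standard dictionary between domino tilings of the Aztec diamond of rank $m$ and sequences of interlacing partitions (the ``steep tableaux'' / shuffling picture of \cite{steep,RYG}): a domino tiling corresponds to a path in the branching graph whose vertices are partitions fitting in a staircase, with horizontal and vertical dominos recording the two types of interlacing steps. Stacking $k$ such tilings colored $1,\dots,k$ then corresponds to a $k$-tuple of such interlacing sequences, i.e. exactly the combinatorial data indexed by the boundary conditions of the colored lattice model of \cite{GKsuper}. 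The weights must be matched: each domino of a prescribed type in the color-$\ell$ tiling should contribute the appropriate $x_i$ or $y_j$, and each interaction between colors $i<j$ should contribute a power of $t$, so that the total monomial weight of a $k$-tiling equals the Boltzmann weight of the corresponding lattice configuration.

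Next I would invoke the main identification from \cite{GKsuper} that the partition function of this vertex model with fully ``packed''/generic boundary data is a supersymmetric LLT polynomial (super ribbon function), or more directly, that a suitably specialized version of it factors. Concretely, the relevant partition function here should be the one computing $G_{\lambda/\mu}$ for a single-row or staircase shape, and the key algebraic input is a \emph{Cauchy-type identity}: summing the product of a row-model partition function in the $x$-variables against one in the $y$-variables over all intermediate states yields the product $\prod_{\ell=0}^{k-1}\prod_{i\le j}(1+t^\ell x_i y_j)$. I would derive this by repeatedly applying the Yang--Baxter / $RLL$ relation established in \cite{GKsuper} to drag a column of $R$-vertices through the lattice, which swaps the roles of an $x$-row and a $y$-row and, after the full sweep, disentangles the partition function into a product of single-vertex evaluations, each of which is a factor $(1+t^\ell x_i y_j)$.

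The main obstacle I expect is bookkeeping the boundary conditions and the $t$-powers. The Aztec diamond of rank $m$ has a triangular/staircase geometry, so the relevant lattice is not a plain rectangle: the partitions are constrained to lie in a staircase, which forces specific frozen boundary edges on the vertex model, and I must check that these are precisely the boundary conditions under which the Cauchy identity of \cite{GKsuper} produces $\prod_{1\le i\le j\le m}$ (the $i\le j$, as opposed to all $i,j$, comes from this staircase constraint). Likewise, the colored model's $R$-matrix introduces a factor of $t$ each time two differently-colored paths cross, and I need to verify that the total number of such crossings in a configuration equals the number of interactions in the corresponding $k$-tiling, so that the exponent $\ell$ ranges correctly as $0,\dots,k-1$. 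Once the combinatorial dictionary and these boundary/weight matchings are pinned down, the factorization itself is a formal consequence of the Yang--Baxter equation, so I would present the proof as: (1) tiling $\leftrightarrow$ lattice configuration bijection with weight preservation; (2) the lattice partition function equals the claimed product, via the $RLL$ relation and train-argument of \cite{GKsuper}.
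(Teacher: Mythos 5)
Your plan is essentially the paper's own proof: the paper encodes a $k$-tiling as a sequence of (co-)interlacing $k$-tuples of partitions via Maya diagrams on slices, identifies these with configurations of an alternating purple-gray (or white-pink) lattice of $L'$/$\M$ vertices with the matching of $x_i$, $y_j$, and interaction $t$-powers checked row by row, and then evaluates the partition function by repeatedly inserting an $R'$-cross and commuting rows with the Yang--Baxter equation, each swap contributing a factor $\prod_{\ell=0}^{k-1}(1+t^{\ell}x_iy_j)^{\pm 1}$ against a trivially frozen sorted lattice. The only refinements to note are that the bulk lattice contains no $R$-vertices (the interaction $t$'s come from the box weights themselves, not from path crossings at $R$-vertices), and that the tiling weight equals the configuration weight only up to the global factor $(y^{\rho_m})^k t^{\binom{m}{2}\binom{k}{2}}$, which cancels against the frozen-lattice evaluation.
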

\noindent Here $t$ follows the number of coupled dominos, and $x_i$ and $y_j$ follow the numbers of dominos of certain types as defined in Section \ref{two-models-section}. In fact, we introduce two different models, each with a different choice of coupled dominos, we call these the ``white-pink" and ``purple-gray" models. Both models have the same generating polynomial given in Thm. \ref{thm:partitionfunctionktiling}.

If $k=1$, we recover known results for the Aztec diamond \cite{steep}. If $t=0$, we construct a bijection between $k$-tilings with no coupled dominos and $1$-tilings.  This bijection allows us to compute the arctic curves for $t=0$ and $t\rightarrow\infty$. Note that setting $t=0$ disallows the presence of coupled dominos, greatly restricting the possible configurations in the 2-tilings. In fact, the brickwork pattern in the West corner frozen region of the usual Aztec diamond is entirely made up of coupled dominos in the purple-gray model, and cannot exist when $t=0$. We will see that it is replaced, in part, by a new ``herringbone" pattern of dominos. See Figure \ref{fig:2tilingt0} for an example.

\begin{figure}
    \centering
    \begin{tabular}{c}
    \resizebox{0.9\textwidth}{!}{
    \begin{tabular}{cc}
    \includegraphics[width=0.5\textwidth]{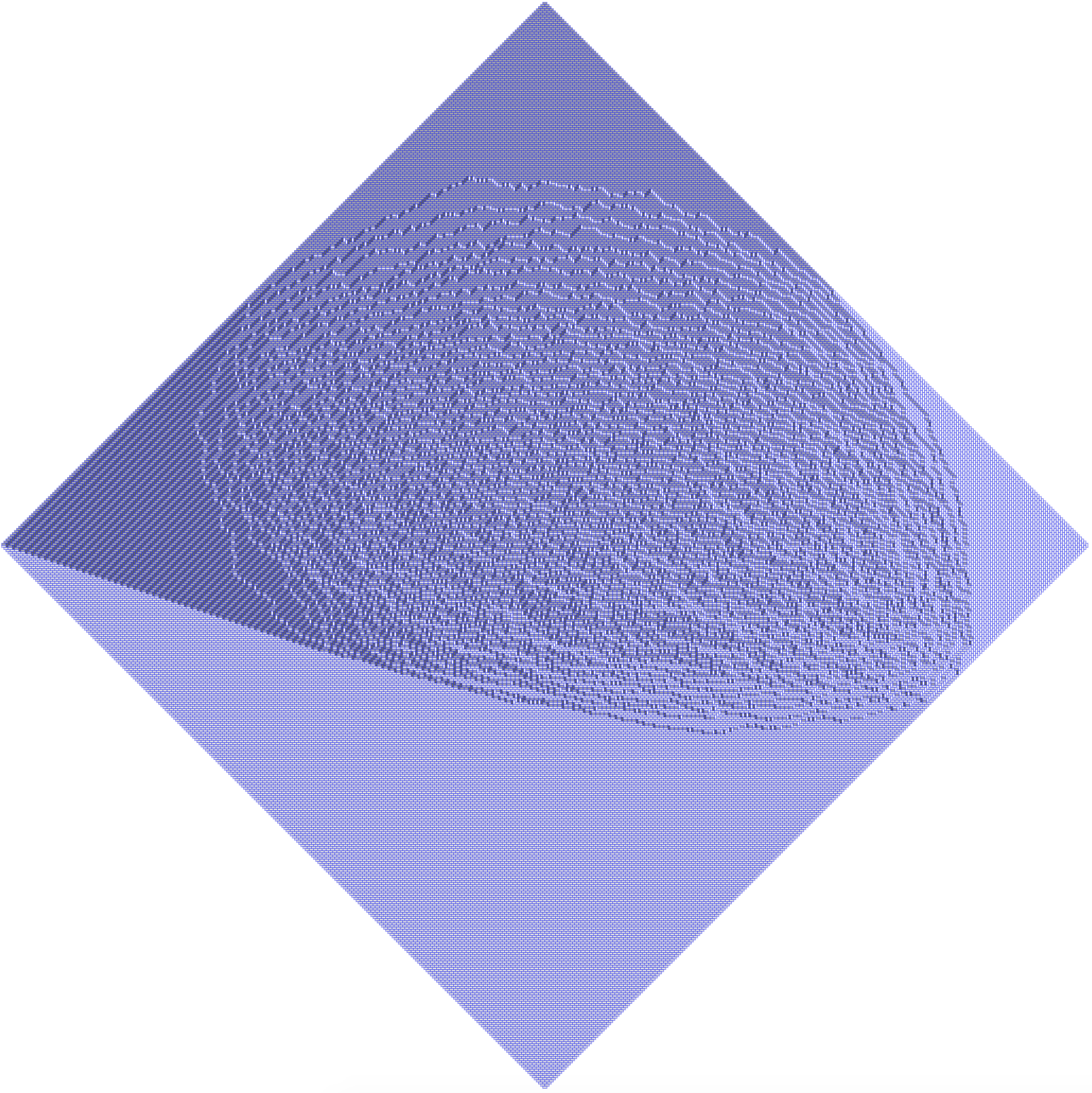} & \includegraphics[width=0.5\textwidth]{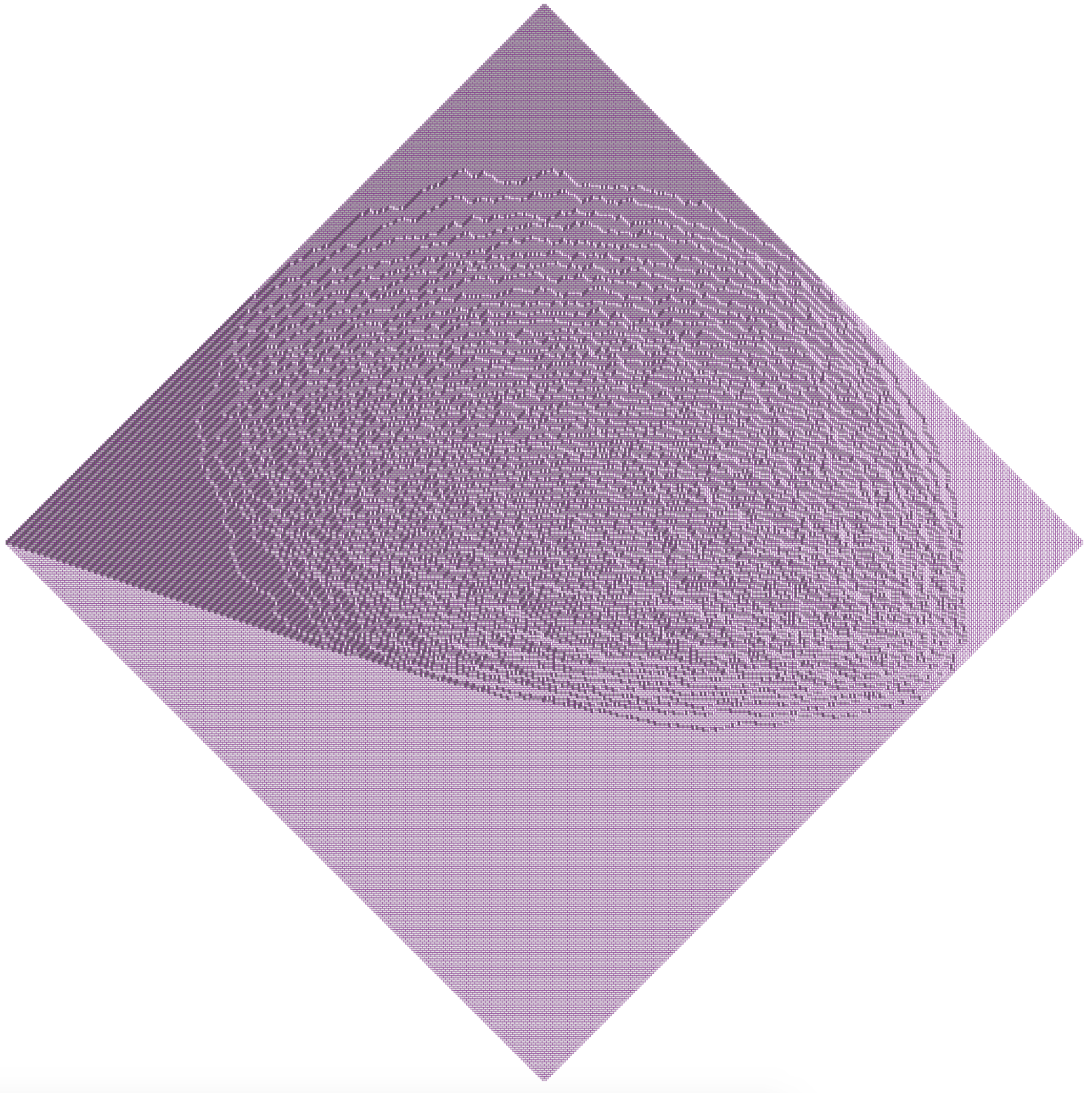} \\ \\
    \begin{tikzpicture}[baseline = (current bounding box).center, scale = 0.3]
        \draw[very thick, blue, fill=gray, fill opacity = 0.7] (0,0) rectangle (2,1); \draw[very thick, blue, fill=gray, fill opacity = 1.0] (2,-1) rectangle (3,1);
        \draw[very thick, blue, fill=gray, fill opacity = 0.7] (1,1) rectangle (3,2); \draw[very thick, blue, fill=gray, fill opacity = 1.0] (3,0) rectangle (4,2);
        \draw[very thick, blue, fill=gray, fill opacity = 0.7] (2,2) rectangle (4,3); \draw[very thick, blue, fill=gray, fill opacity = 1.0] (4,1) rectangle (5,3);
        \draw[very thick, blue, fill=gray, fill opacity = 0.7] (3,3) rectangle (5,4); \draw[very thick, blue, fill=gray, fill opacity = 1.0] (5,2) rectangle (6,4);

        \draw[very thick, blue, fill=gray, fill opacity = 0.7] (3,-1) rectangle (5,0); \draw[very thick, blue, fill=gray, fill opacity = 1.0] (5,-2) rectangle (6,0);
        \draw[very thick, blue, fill=gray, fill opacity = 0.7] (4,0) rectangle (6,1); \draw[very thick, blue, fill=gray, fill opacity = 1.0] (6,-1) rectangle (7,1);
        \draw[very thick, blue, fill=gray, fill opacity = 0.7] (5,1) rectangle (7,2); \draw[very thick, blue, fill=gray, fill opacity = 1.0] (7,0) rectangle (8,2); 
        \draw[very thick, blue, fill=gray, fill opacity = 0.7] (6,2) rectangle (8,3); \draw[very thick, blue, fill=gray, fill opacity = 1.0] (8,1) rectangle (9,3);
        \end{tikzpicture}
        &
         \begin{tikzpicture}[baseline = (current bounding box).center, scale = 0.3]
          \draw[very thick, red, fill=gray, fill opacity = 1.0] (0,-1) rectangle (1,1); \draw[very thick, red, fill=gray, fill opacity = 0.7] (1,-1) rectangle (3,0);
          \draw[very thick, red, fill=gray, fill opacity = 1.0] (1,0) rectangle (2,2); \draw[very thick, red, fill=gray, fill opacity = 0.7] (2,0) rectangle (4,1);
          \draw[very thick, red, fill=gray, fill opacity = 1.0] (2,1) rectangle (3,3); \draw[very thick, red, fill=gray, fill opacity = 0.7] (3,1) rectangle (5,2);
          \draw[very thick, red, fill=gray, fill opacity = 1.0] (3,2) rectangle (4,4); \draw[very thick, red, fill=gray, fill opacity = 0.7] (4,2) rectangle (6,3);

          \draw[very thick, red, fill=gray, fill opacity = 1.0] (3,-2) rectangle (4,0); \draw[very thick, red, fill=gray, fill opacity = 0.7] (4,-2) rectangle (6,-1);
          \draw[very thick, red, fill=gray, fill opacity = 1.0] (4,-1) rectangle (5,1); \draw[very thick, red, fill=gray, fill opacity = 0.7] (5,-1) rectangle (7,0);
          \draw[very thick, red, fill=gray, fill opacity = 1.0] (5,0) rectangle (6,2); \draw[very thick, red, fill=gray, fill opacity = 0.7] (6,0) rectangle (8,1);
          \draw[very thick, red, fill=gray, fill opacity = 1.0] (6,1) rectangle (7,3); \draw[very thick, red, fill=gray, fill opacity = 0.7] (7,1) rectangle (9,2);
         \end{tikzpicture}
    \end{tabular}
    }
    \end{tabular}
    \caption[]{Above is a simulation of a 2-tiling of the Aztec diamond of rank 256 for $t=0$. Here we draw one tiling in blue and the other in red to match the notation of Section \ref{kcolors}. We use the shading of the dominos to distinguish the four types of domino:
    $$
    \begin{tabular}{cccc}
    \resizebox{1cm}{!}{
    \begin{tikzpicture}[baseline = (current bounding box).center]
    \draw[lightgray, fill=lightgray] (0,0) rectangle (1,1);
    \draw[very thick] (-1,0) rectangle (1,1);
    \end{tikzpicture}
    }
    & 
    \resizebox{0.5cm}{!}{
    \begin{tikzpicture}[baseline = (current bounding box).center]
    \draw[lightgray, fill=lightgray] (0,0) rectangle (1,1);
    \draw[very thick] (0,-1) rectangle (1,1);
    \end{tikzpicture}
    }
    & 
    \resizebox{1cm}{!}{
    \begin{tikzpicture}[baseline = (current bounding box).center]
    \draw[lightgray, fill=lightgray] (0,0) rectangle (1,1);
    \draw[very thick] (0,0) rectangle (2,1);
    \end{tikzpicture}
    }
    & 
    \resizebox{0.5cm}{!}{
    \begin{tikzpicture}[baseline = (current bounding box).center]
    \draw[lightgray, fill=lightgray] (0,0) rectangle (1,1);
    \draw[very thick] (0,0) rectangle (1,2);
    \end{tikzpicture}
    } \\ \\
    \resizebox{1cm}{!}{
    \begin{tikzpicture}[baseline = (current bounding box).center]
    \draw[very thick, fill=gray, fill opacity = 0.4] (-1,0) rectangle (1,1);
    \end{tikzpicture}
    }
    & 
    \resizebox{0.5cm}{!}{
    \begin{tikzpicture}[baseline = (current bounding box).center]
    \draw[very thick, fill=gray, fill opacity = 1.0] (0,-1) rectangle (1,1);
    \end{tikzpicture}
    }
    & 
    \resizebox{1cm}{!}{
    \begin{tikzpicture}[baseline = (current bounding box).center]
    \draw[very thick, fill=gray, fill opacity = 0.7] (0,0) rectangle (2,1);
    \end{tikzpicture}
    }
    & 
    \resizebox{0.5cm}{!}{
    \begin{tikzpicture}[baseline = (current bounding box).center]
    \draw[very thick, fill=gray, fill opacity = 0.1] (0,0) rectangle (1,2);
    \end{tikzpicture}
    }
    \end{tabular}
    $$
    Note that in the western corner of both tilings we no longer see a brickwork pattern as shown in Figure \ref{fig:large}. Instead, the brickwork pattern from the southern corner extends further north and west, and new herringbone patterns appear. The herringbone pattern for each color is shown below the corresponding tiling. Note that the pattern for red is shifted by one diagonal from the pattern for blue.
    }
    \label{fig:2tilingt0}
\end{figure}

For $k$-tilings and $t=0$, we have
\begin{thm}
For the purple-gray model, the arctic curve for the $k$-tilings of the Aztec diamond when $t=0$ is given by
\[
\begin{cases}
x^2+y^2=\frac{1}{2},& x\in[-1/2,1/2],\; y>1/2 \\
(x+(k-1)y)^2+(k y)^2=\frac{1}{2},& x\in[-1/(2k),1 - 1/(2k)],\; y<-1/(2k) \\
\left(\frac{2x+(k-1)(x+y-1)}{2}\right)^2+\left(\frac{2y+(k-1)(x+y-1)}{2}\right)^2=\frac{1}{2},& y\in[-1/(2k),1/2],\; x>-\frac{k-1}{k+1}y+\frac{k}{k+1} \\
\left(\frac{2x+(k-1)(x+y-1)}{2k}\right)^2+\left(\frac{2y+(k-1)(3y-x-1)}{2k}\right)^2=\frac{1}{2},& y\in[-1/(2k),1/2],\; x<-\frac{k-1}{k+1}y-\frac{1}{k+1}
\end{cases}
\]
for each color.
\label{karctic}
\end{thm}
Figure \ref{fig:ArcticCurveDomino} shows a simulation of a 2-tiling when $t=0$, with the arctic curve superimposed.

The paper is organized as follows.  
\begin{itemize}
\item We introduce domino tilings of the Aztec diamond in Section \ref{aztec-section} and some combinatorial notions related to partitions in Section \ref{partitions-section}.  In Sections \ref{two-models-section} and \ref{interactions-subsection}, we define two different models (purple-gray and white-pink) for relating a $k$-tiling to a sequence of $k$-tuples of partitions and for assigning a weight to a $k$-tiling.  
\item In Section \ref{k-color-vertex-models-section}, we discuss the $k$-color vertex models that were introduced in \cite{LLT,GKsuper} to study (super) LLT polynomials.  In Section \ref{1-color-vertex-models-section}, we relate the rows of vertices to (co-)interlacing $k$-tuples of partitions.  
\item In Section \ref{kcolors}, we relate the vertex models to the purple-gray and white-pink models, and we use the vertex models to compute the generating polynomials of the $k$-tilings.  
\item We relate domino tilings to collections of non-intersecting paths in Section \ref{schroder-section}, which allows us to compute the arctic curve of the tilings for $t=0$ in Section \ref{t-0-section}.  
\item In Section \ref{t-infinity-section}, we relate the $t=0$ and $t\rightarrow\infty$ cases, which allows us to compute the arctic curve of the tilings for $t \rightarrow \infty$.  
\item We propose some open problems in Section \ref{open-section}.  We present some lozenge $k$-tilings of the hexagon and compute their arctic curves for $t=0$ in Appendix \ref{ap:a}.
\end{itemize}

\noindent{\bf Acknowledgements.} This material is based upon work supported by the National Science Foundation under Grant No. 1440140, while SC was in residence at the Mathematical Sciences Research Institute in Berkeley, California, during the Fall 2021. All authors are partially funded by NSF grant DMS-2054482. SC is partially funded by ANR grant ANR COMBIN\'e ANR-19-CE48-0011.

\section{Domino tilings of the Aztec diamond}
\label{aztec}

\subsection{The Aztec diamond} \label{aztec-section}

A lattice square is a $1 \times 1$ square $[a,a+1] \times [b,b+1]$ in $\mathbb{R}^2$, for some $a,b \in \mathbb{Z}$.  A horizontal domino 
$ \resizebox{!}{0.125cm}{{\begin{tikzpicture}[baseline = (current bounding box).center]
\draw (0,0) grid (2,1);
\end{tikzpicture}}} $
is a $2 \times 1$ rectangle consisting of two lattice squares.  A vertical domino
$ \resizebox{!}{0.25cm}{\begin{tikzpicture}[baseline = (current bounding box).center]
\draw (0,0) grid (1,2);
\end{tikzpicture} }$
is a $1 \times 2$ rectangle consisting of two lattice squares.  Given a region $R \subseteq \mathbb{R}^2$ consisting of lattice squares, a domino tiling of $R$ is a partitioning of $R$ into horizontal and vertical dominos.  A $k$-tiling $\bm{T} = (T_1,\ldots,T_k)$ is a $k$-tuple of domino tilings; we say the dominos in the $i$-th tiling $T_i$ are colored $i$.

The \textbf{Aztec diamond} of rank $m$ is the region in $\mathbb{R}^2$ which consists of all lattice squares lying completely inside the diamond-shaped region 
\[ \{(x, y) : |x| + |y| \leq m + 1\}. \]
We will be interested in domino tilings of the Aztec diamond.  As we can draw $\mathbb{R}^2$ as a checkerboard, where the lattice square $[a,a+1] \times [b,b+1]$ is shaded white if $a+b+m$ is odd and gray if $a+b+m$ is even, we have four types of dominos with which to tile the Aztec diamond of rank $m$:
\begin{center}
\begin{tabular}{cccc}
\resizebox{1.5cm}{!}{
\begin{tikzpicture}[baseline = (current bounding box).center]
\draw[lightgray, fill=lightgray] (0,0) rectangle (1,1);
\draw[very thick] (-1,0) rectangle (1,1);
\end{tikzpicture}
}
& 
\resizebox{0.75cm}{!}{
\begin{tikzpicture}[baseline = (current bounding box).center]
\draw[lightgray, fill=lightgray] (0,0) rectangle (1,1);
\draw[very thick] (0,-1) rectangle (1,1);
\end{tikzpicture}
}
& 
\resizebox{1.5cm}{!}{
\begin{tikzpicture}[baseline = (current bounding box).center]
\draw[lightgray, fill=lightgray] (0,0) rectangle (1,1);
\draw[very thick] (0,0) rectangle (2,1);
\end{tikzpicture}
}
& 
\resizebox{0.75cm}{!}{
\begin{tikzpicture}[baseline = (current bounding box).center]
\draw[lightgray, fill=lightgray] (0,0) rectangle (1,1);
\draw[very thick] (0,0) rectangle (1,2);
\end{tikzpicture}
}
\\
type I & type II & type III & type IV
\end{tabular}
\end{center}
For example, here is one possible domino tiling of the Aztec diamond of rank $m=3$.
\begin{center}
\resizebox{0.2\textwidth}{!}{
\begin{tikzpicture}[baseline = (current bounding box).center]
\checkerboard{2}
\draw[ultra thick] (0,-1) rectangle (1,1); \draw[ultra thick] (1,-2) rectangle (2,0); \draw[ultra thick] (1,0) rectangle (2,2);
\draw[ultra thick] (2,-3) rectangle (4,-2); \draw[ultra thick] (2,-1) rectangle (4,0);
\draw[ultra thick] (2,2) rectangle (4,3);
\draw[ultra thick] (2,0) rectangle (3,2); \draw[ultra thick] (2,-2) rectangle (4,-1); \draw[ultra thick] (3,0) rectangle (4,2); 
\draw[ultra thick] (4,-2) rectangle (5,0); \draw[ultra thick] (4,0) rectangle (5,2); 
\draw[ultra thick] (5,-1) rectangle (6,1);  
\end{tikzpicture}
}
\end{center}

\subsection{Partitions} \label{partitions-section}

A \textbf{partition} $\lambda$ is a non-increasing sequence of non-negative integers. We will usually write
$\lambda=(\lambda_1,\lambda_2,\ldots )$ such that $\lambda_1\ge \lambda_2\ge \ldots \ge 0$.
As it turns out, specifying a domino tiling of the Aztec diamond of rank $m$ is equivalent to specifying $2m+1$ partitions that satisfy certain conditions.  We will actually show this equivalence with two different constructions, but first, we must make some further definitions.  

The \textbf{Young diagram} of a partition $\lambda$ is the set of cells $(i,j)$ such that $0\le i\le \lambda_j$. Here we draw the diagram in French notation.
The \textbf{content} of a cell $u = (i,j)$ in the $i$-th row and $j$-th column of the Young diagram of a partition $\lambda$ is $c(u) = i-j$.  Notice that the \textbf{$c$ content line} $y = x-c$ goes through the center of each cell with content $c$.  The \textbf{Maya diagram} of a partition $\lambda = (\lambda_1,\lambda_2,\ldots)$ is a doubly infinite sequence  
\[ \ldots,a_{-\frac52},a_{-\frac32},a_{-\frac12},a_{\frac12},a_{\frac32},a_{\frac52},\ldots \]
of symbols in the alphabet $\{ \circ = 0, \bullet = 1 \}$, starting with infinitely many $\bullet$'s and ending with infinitely many $\circ$'s, where 
\[ a_{i+\frac{1}{2}} = \left \{ \begin{array}{ll} \bullet & \text{if $\lambda_j - j = i$ for some $j$} \\ \circ & \text{otherwise} \end{array} \right.. \]
The Maya diagram of $\lambda$ can be found by following the NE border of the Young diagram of $\lambda$ from NW to SE, where each $E$ step corresponds to a $\circ$ and each $S$ step corresponds to a $\bullet$.  The steps corresponding to $a_{c-\frac12}$ and $a_{c+\frac12}$ lie on the left and right side of the $c$ content line, respectively.  It is easy to see that the 0 content line is the unique content line such that the number of $\circ$'s to its left equals the number of $\bullet$'s to its right.

\begin{example}
The Maya diagram of (4,3,2,2,1) is $\ldots \bullet \bullet \enspace \circ\bullet\circ\bullet\bullet\circ\bullet\circ\bullet \enspace \circ \circ \ldots$.
\[ \resizebox{3cm}{!}{
\begin{tikzpicture}[baseline=(current bounding box.center)]
\draw (0,0) grid (2,4); \draw (0,4) grid (1,5); \draw (2,0) grid (3,2); \draw (3,0) grid (4,1);
\node[scale=2] at (0.5,5) {$\circ$}; \node[scale=2] at (1,4.5) {$\bullet$}; \node[scale=2] at (1.5,4) {$\circ$}; \node[scale=2] at (2,3.5) {$\bullet$}; \node[scale=2] at (2,2.5) {$\bullet$}; \node[scale=2] at (2.5,2) {$\circ$}; \node[scale=2] at (3,1.5) {$\bullet$}; \node[scale=2] at (3.5,1) {$\circ$}; \node[scale=2] at (4,0.5) {$\bullet$}; 
\draw (4,0) grid (7,0); \node[scale=2] at (4.5,0) {$\circ$}; \node[scale=2] at (5.5,0) {$\circ$}; 
\draw (0,5) grid (0,8); \node[scale=2] at (0,5.5) {$\bullet$}; \node[scale=2] at (0,6.5) {$\bullet$}; 
\draw[ultra thick, red] (0,0)--(4,4);
\end{tikzpicture} } \]
In red, we have indicated the 0 content line; there are two $\circ$'s to its left and two $\bullet$'s to its right.
\end{example}

We say two partitions $\lambda$ and $\mu$ \textbf{interlace}, and we write $\lambda \succeq \mu$, if 
\[ \lambda_1 \geq \mu_1 \geq \lambda_2 \geq \mu_2 \geq \ldots. \]
The conjugate of a partition $\lambda$ denoted by $\lambda'$ is the partition whose diagram is
the set of lattice squares $(i,j)$ such that $0\le j\le \lambda_i$.
We say two partitions $\lambda$ and $\mu$ \textbf{co-interlace}, and we write $\lambda \succeq' \mu$, if and only if $\lambda' \succeq \mu'$. 

A $k$-tuple of partitions $\bm{\lambda}$ is simply
a sequence of $k$ partitions 
$(\lambda^{(1)},\lambda^{(2)}, \ldots , \lambda^{(k)})$.  
We define the conjugate of $\bm{\lambda} = (\lambda^{(1)},\lambda^{(2)}, \ldots , \lambda^{(k)})$ to be $\bm{\lambda}' =({\lambda^{(k)}}',{\lambda^{(k-1)}}', \ldots , {\lambda^{(1)}}')$. We have to reverse the order of the colors 
as we are keeping with the notation in \cite{GKsuper} where it is used to prove certain identities between LLT polynomials (see \cite[Thm. 4.3]{GKsuper}).
We extend the definition of interlacing to $k$-tuples of partitions and we say that
$\bm{\lambda}\succeq \bm{\mu}$ if and only if $\lambda^{(i)}\succeq \mu^{(i)}$ for all $i\in[k]$, and similarly for co-interlacing.

We define the empty partition $\emptyset$ to be the partition with 0 parts, and we define $\bm{0}$ to be the $k$-tuple of empty partitions.  The length $\ell(\lambda)$ of a partition $\lambda$ is the number of non-zero parts.

\subsection{Sequences of partitions and tilings of the Aztec diamond} \label{two-models-section}

In this subsection we present a bijection betweem domino tilings and sequences of partitions that was first defined in \cite{steep} and generalized in \cite{RYG}.

\indent We will actually use two different constructions to go from a sequence of interlacing partitions to a tiling of the Aztec diamond.  In addition, for each construction, we will define the weight of a tiling as a polynomial in the variables $x_1,\ldots,x_m,y_1,\ldots,y_m$.

\begin{figure}[ht]
\centering\resizebox{12cm}{!}{
\begin{tikzpicture}
\checkerboard{2}
\draw[dashed] (2,-4)--(7,1); \node[below, left] at (2,-4) {$\lambda^0$};
\draw[dashed] (1.5,-3.5)--(6.5,1.5); \node[below, left] at (1.5,-3.5) {$\lambda^1$};
\draw[dashed] (1,-3)--(6,2); \node[below, left] at (1,-3) {$\lambda^2$};
\draw[dashed] (0.5,-2.5)--(5.5,2.5); \node[below, left] at (0.5,-2.5) {$\ldots$};
\draw[dashed] (0,-2)--(5,3); \node[below, left] at (0,-2) {$\lambda^{2m-2}$};
\draw[dashed] (-0.5,-1.5)--(4.5,3.5); \node[below, left] at (-0.5,-1.5) {$\lambda^{2m-1}$};
\draw[dashed] (-1,-1)--(4,4); \node[below, left] at (-1,-1) {$\lambda^{2m}$};
\draw[red,thick] (-0.5,0)--(6.5,0); \node[left] at (-0.5,0) {$0$};
\end{tikzpicture}
\begin{tikzpicture}
\checkerboard{2}
\draw[dashed] (2,-4)--(7,1); \node[below, left] at (2,-4) {$\lambda^0$};
\draw[dashed] (1.5,-3.5)--(6.5,1.5); \node[below, left] at (1.5,-3.5) {$\lambda^1$};
\draw[dashed] (1,-3)--(6,2); \node[below, left] at (1,-3) {$\lambda^2$};
\draw[dashed] (0.5,-2.5)--(5.5,2.5); \node[below, left] at (0.5,-2.5) {$\ldots$};
\draw[dashed] (0,-2)--(5,3); \node[below, left] at (0,-2) {$\lambda^{2m-2}$};
\draw[dashed] (-0.5,-1.5)--(4.5,3.5); \node[below, left] at (-0.5,-1.5) {$\lambda^{2m-1}$};
\draw[dashed] (-1,-1)--(4,4); \node[below, left] at (-1,-1) {$\lambda^{2m}$};
\draw[red,thick] (3,-3.5)--(3,3.5); \node[below] at (3,-3.5) {$0$};
\end{tikzpicture}}
\caption{Two ways of slicing the Aztec diamond. Here the superscript of each $\lambda$ indicates the number of the diagonal slice that it lies on.}
\label{fig:slicing}
\end{figure}

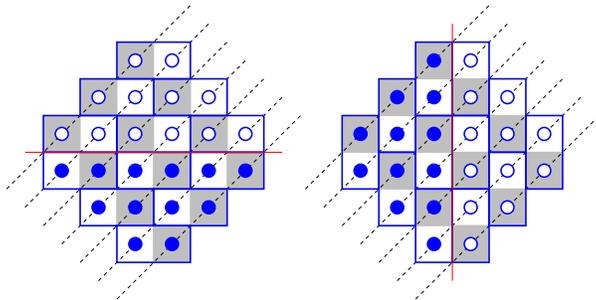
\begin{figure}[ht]
\centering\resizebox{8cm}{!}{
\begin{tikzpicture}
\checkerboard{2}
\draw[dashed] (2,-4)--(7,1);
\draw[dashed] (1.5,-3.5)--(6.5,1.5);
\draw[dashed] (1,-3)--(6,2); 
\draw[dashed] (0.5,-2.5)--(5.5,2.5);
\draw[dashed] (0,-2)--(5,3);
\draw[dashed] (-0.5,-1.5)--(4.5,3.5); 
\draw[dashed] (-1,-1)--(4,4);
\draw[very thick, blue,fill=white] (0.5,0.5) circle (5pt); 
\draw[very thick, blue,fill=blue] (0.5,-0.5) circle (5pt); 
\draw[very thick, blue,fill=white] (1.5,0.5) circle (5pt); \draw[very thick, blue,fill=white] (2.5,0.5) circle (5pt); \draw[very thick, blue,fill=white] (3.5,0.5) circle (5pt); \draw[very thick, blue,fill=white] (4.5,0.5) circle (5pt); \draw[very thick, blue,fill=white] (5.5,0.5) circle (5pt);
\draw[very thick, blue,fill=white] (1.5,1.5) circle (5pt); \draw[very thick, blue,fill=white] (2.5,1.5) circle (5pt); \draw[very thick, blue,fill=white] (3.5,1.5) circle (5pt); \draw[very thick, blue,fill=white] (4.5,1.5) circle (5pt);
\draw[very thick, blue,fill=white] (2.5,2.5) circle (5pt); \draw[very thick, blue,fill=white] (3.5,2.5) circle (5pt);
\draw[very thick, blue,fill=blue] (1.5,-0.5) circle (5pt); \draw[very thick, blue,fill=blue] (2.5,-0.5) circle (5pt); \draw[very thick, blue,fill=blue] (3.5,-0.5) circle (5pt); \draw[very thick, blue,fill=blue] (4.5,-0.5) circle (5pt); \draw[very thick, blue,fill=blue] (5.5,-0.5) circle (5pt);
\draw[very thick, blue,fill=blue] (1.5,-1.5) circle (5pt); \draw[very thick, blue,fill=blue] (2.5,-1.5) circle (5pt); \draw[very thick, blue,fill=blue] (3.5,-1.5) circle (5pt); \draw[very thick, blue,fill=blue] (4.5,-1.5) circle (5pt);
\draw[very thick, blue,fill=blue] (2.5,-2.5) circle (5pt); \draw[very thick, blue,fill=blue] (3.5,-2.5) circle (5pt);
\draw[very thick, blue] (0,-1) rectangle (2,0); \draw[very thick, blue] (2,-1) rectangle (4,0); \draw[very thick, blue] (4,-1) rectangle (6,0);
\draw[very thick, blue] (1,-2) rectangle (3,-1); \draw[very thick, blue] (3,-2) rectangle (5,-1);
\draw[very thick, blue] (2,-3) rectangle (4,-2);
\draw[very thick, blue] (0,0) rectangle (2,1); \draw[very thick, blue] (2,0) rectangle (4,1); \draw[very thick, blue] (4,0) rectangle (6,1); 
\draw[very thick, blue] (1,1) rectangle (3,2); \draw[very thick, blue] (3,1) rectangle (5,2); 
\draw[very thick, blue] (2,2) rectangle (4,3);
\draw[red,thick] (-0.5,0)--(6.5,0); 
\end{tikzpicture}
\begin{tikzpicture}
\checkerboard{2}
\draw[dashed] (2,-4)--(7,1);
\draw[dashed] (1.5,-3.5)--(6.5,1.5);
\draw[dashed] (1,-3)--(6,2); 
\draw[dashed] (0.5,-2.5)--(5.5,2.5);
\draw[dashed] (0,-2)--(5,3);
\draw[dashed] (-0.5,-1.5)--(4.5,3.5); 
\draw[dashed] (-1,-1)--(4,4);
\draw[very thick, blue,fill=blue] (0.5,0.5) circle (5pt); 
\draw[very thick, blue,fill=blue] (0.5,-0.5) circle (5pt); 
\draw[very thick, blue,fill=blue] (1.5,0.5) circle (5pt); \draw[very thick, blue,fill=blue] (2.5,0.5) circle (5pt); \draw[very thick, blue,fill=white] (3.5,0.5) circle (5pt); \draw[very thick, blue,fill=white] (4.5,0.5) circle (5pt); \draw[very thick, blue,fill=white] (5.5,0.5) circle (5pt);
\draw[very thick, blue,fill=blue] (1.5,1.5) circle (5pt); \draw[very thick, blue,fill=blue] (2.5,1.5) circle (5pt); \draw[very thick, blue,fill=white] (3.5,1.5) circle (5pt); \draw[very thick, blue,fill=white] (4.5,1.5) circle (5pt);
\draw[very thick, blue,fill=blue] (2.5,2.5) circle (5pt); \draw[very thick, blue,fill=white] (3.5,2.5) circle (5pt);
\draw[very thick, blue,fill=blue] (1.5,-0.5) circle (5pt); \draw[very thick, blue,fill=blue] (2.5,-0.5) circle (5pt); \draw[very thick, blue,fill=white] (3.5,-0.5) circle (5pt); \draw[very thick, blue,fill=white] (4.5,-0.5) circle (5pt); \draw[very thick, blue,fill=white] (5.5,-0.5) circle (5pt);
\draw[very thick, blue,fill=blue] (1.5,-1.5) circle (5pt); \draw[very thick, blue,fill=blue] (2.5,-1.5) circle (5pt); \draw[very thick, blue,fill=white] (3.5,-1.5) circle (5pt); \draw[very thick, blue,fill=white] (4.5,-1.5) circle (5pt);
\draw[very thick, blue,fill=blue] (2.5,-2.5) circle (5pt); \draw[very thick, blue,fill=white] (3.5,-2.5) circle (5pt);
\draw[very thick, blue] (0,-1) rectangle (1,1); \draw[very thick, blue] (1,0) rectangle (2,2); \draw[very thick, blue] (2,1) rectangle (3,3);
\draw[very thick, blue] (3,1) rectangle (4,3); \draw[very thick, blue] (4,0) rectangle (5,2);
\draw[very thick, blue] (5,-1) rectangle (6,1);
\draw[very thick, blue] (1,-2) rectangle (2,0); \draw[very thick, blue] (2,-3) rectangle (3,-1); \draw[very thick, blue] (3,-3) rectangle (4,-1); 
\draw[very thick, blue] (4,-2) rectangle (5,0); \draw[very thick, blue] (2,-1) rectangle (3,1); 
\draw[very thick, blue] (3,-1) rectangle (4,1);
\draw[red,thick] (3,-3.5)--(3,3.5); 
\end{tikzpicture}
}
\caption{The empty tilings for the purple-gray model (left) and the white-pink model (right)}
\label{fig:empty}
\end{figure}

The first construction, which we will call the \textbf{purple-gray model}, is as follows.  Specifying a domino tiling of the Aztec diamond of rank $m$ is equivalent to specifying $2m+1$ Maya diagrams on the slices going from SW to NE (drawn as dashed lines), as in the left of Figure \ref{fig:slicing}, where the 0 content line for the diagrams is drawn in red.  We impose the condition that 
\[
\emptyset = \lambda^0 \preceq ' \lambda^1 \succeq \ldots \preceq' \lambda^{2m-1} \succeq \lambda^{2m}=\emptyset
\]
and $\lambda^{i}_1 + \ell(\lambda^{i}) \leq m$ for all $i$.  (The Maya diagrams of these partitions are truncated to fit inside the Aztec diamond, with the 0 content line positioned as specified in the left of Figure \ref{fig:slicing}.  To recover the untruncated Maya diagram from the truncated one, we can pre-pend infinitely many $\bullet$'s and post-pend infinitely many $\circ$'s.)  From these Maya diagrams, we can draw dominos according to the following rules.
\[
\centering\resizebox{1cm}{!}{
\begin{tikzpicture}
\draw[lightgray] (0,0) rectangle (1,1);
\draw[lightgray,fill=lightgray] (1,0) rectangle (2,1);
\draw[very thick, blue,fill=blue] (0.5,0.5) circle (5pt);
\draw[very thick, blue,fill=blue] (1.5,0.5) circle (5pt);
\draw[very thick, blue] (0,0) rectangle (2,1);
\end{tikzpicture}},\;\;\;
\centering
\resizebox{.5cm}{!}{
\begin{tikzpicture}
\draw[lightgray] (0,0) rectangle (1,1);
\draw[lightgray,fill=lightgray] (0,1) rectangle (1,2);
\draw[very thick, blue] (0,0) rectangle (1,2);
\draw[very thick, blue,fill=white] (0.5,0.5) circle (5pt);
\draw[very thick, blue,fill=white] (0.5,1.5) circle (5pt);
\end{tikzpicture}},\;\;\;
\centering\resizebox{1cm}{!}{ \begin{tikzpicture}
\draw[lightgray,fill=lightgray] (0,0) rectangle (1,1);
\draw[lightgray] (1,0) rectangle (2,1);
\draw[very thick, blue] (0,0) rectangle (2,1);
\draw[very thick, blue,fill=white] (0.5,0.5) circle (5pt);
\draw[very thick, blue,fill=white] (1.5,0.5) circle (5pt);
\end{tikzpicture}},\;\;\;
\centering\resizebox{.5cm}{!}{
\begin{tikzpicture}
\draw[lightgray, fill=lightgray] (0,0) rectangle (1,1);
\draw[lightgray] (0,1) rectangle (1,2);
\draw[very thick, blue,fill=blue] (0.5,0.5) circle (5pt);
\draw[very thick, blue,fill=blue] (0.5,1.5) circle (5pt);
\draw[very thick, blue] (0,0) rectangle (1,2);
\end{tikzpicture}}
\]

\noindent (There is a unique way to do this.)  For example, if each $\lambda^\ell = \emptyset$, then the corresponding Maya diagrams and domino tiling are given on the left of Figure \ref{fig:empty}.  We define the weight of a domino tiling of the Aztec diamond to be the product of the weights of the dominos, where we assign weights to the dominos according to the following rules.
\begin{itemize}
\item A domino of the form 
\resizebox{!}{0.3cm}{
\begin{tikzpicture}[baseline = (current bounding box).center]
\draw[lightgray, fill=lightgray] (0,0) rectangle (1,1);
\draw[lightgray] (0,1) rectangle (1,2);
\draw[very thick,blue,fill=blue] (0.5,0.5) circle (5pt);
\draw[very thick,blue,fill=blue] (0.5,1.5) circle (5pt);
\draw[very thick, blue] (0,0) rectangle (1,2);
\end{tikzpicture} }
whose top square is on slice $2i-1$ gets a weight of $x_i$.
\item A domino of the form
\resizebox{!}{0.3cm}{
\begin{tikzpicture}[baseline = (current bounding box).center]
\draw[lightgray] (0,0) rectangle (1,1);
\draw[lightgray,fill=lightgray] (0,1) rectangle (1,2);
\draw[very thick,blue,fill=white] (0.5,0.5) circle (5pt);
\draw[very thick,blue,fill=white] (0.5,1.5) circle (5pt);
\draw[very thick, blue] (0,0) rectangle (1,2);
\end{tikzpicture} }
whose bottom square is on slice $2i-1$ gets a weight of $y_i$.
\item All other dominos get a weight of 1.
\end{itemize}

The second construction, which we will call the \textbf{white-pink model}, is as follows. Specifying a domino tiling of the Aztec diamond of rank $m$ is equivalent to specifying $2m+1$ Maya diagrams on the slices going from SW to NE (drawn as dashed lines), as in the right of Figure \ref{fig:slicing}, where the 0 content line for the diagrams is drawn in red.  We impose the condition that 
\[
\emptyset = \lambda^0 \preceq  \lambda^1 \succeq' \ldots \preceq \lambda^{2m-1} \succeq' \lambda^{2m}=\emptyset
\]
and $\lambda^{i}_1 + \ell(\lambda^{i}) \leq m$ for all $i$. Note that the two models are the same up  to swapping interlacing and cointerlacing. (Similarly as in the purple-gray model, we truncate the Maya diagrams to fit inside the Aztec diamond.)  From these Maya diagrams, we can draw dominos according to the following rules.
\[
\centering\resizebox{!}{.5cm}{
\begin{tikzpicture}
\draw[lightgray] (0,0) rectangle (1,1);
\draw[lightgray,fill=lightgray] (1,0) rectangle (2,1);
\draw[very thick, blue, fill=white] (0.5,0.5) circle (5pt);
\draw[very thick, blue, fill=white] (1.5,0.5) circle (5pt);
\draw[very thick, blue] (0,0) rectangle (2,1);
\end{tikzpicture}},\;\;\;
\centering\resizebox{!}{1cm}{
\begin{tikzpicture}
\draw[lightgray] (0,0) rectangle (1,1);
\draw[lightgray,fill=lightgray] (0,1) rectangle (1,2);
\draw[very thick, blue] (0,0) rectangle (1,2);
\draw[very thick, blue,fill=blue] (0.5,0.5) circle (5pt);
\draw[very thick, blue,fill=blue] (0.5,1.5) circle (5pt);
\end{tikzpicture}},\;\;\;
\centering\resizebox{!}{.5cm}{
\begin{tikzpicture}
\draw[lightgray,fill=lightgray] (0,0) rectangle (1,1);
\draw[lightgray] (1,0) rectangle (2,1);
\draw[very thick, blue] (0,0) rectangle (2,1);
\draw[very thick, blue,fill=blue] (0.5,0.5) circle (5pt);
\draw[very thick, blue,fill=blue] (1.5,0.5) circle (5pt);
\end{tikzpicture}},\;\;\;
\centering\resizebox{!}{1cm}{
\begin{tikzpicture}
\draw[lightgray, fill=lightgray] (0,0) rectangle (1,1);
\draw[lightgray] (0,1) rectangle (1,2);
\draw[very thick, blue, fill=white] (0.5,0.5) circle (5pt);
\draw[very thick, blue, fill=white] (0.5,1.5) circle (5pt);
\draw[very thick, blue] (0,0) rectangle (1,2);
\end{tikzpicture}}
\]

\noindent (There is a unique way to do this.)  For example, if each $\lambda^\ell = \emptyset$, then the corresponding Maya diagrams and domino tiling are given on the right of Figure \ref{fig:empty}.  We define the weight of a domino tiling of the Aztec diamond to be the product of the weights of the dominos, where we assign weights to the dominos according to the following rules.
\begin{itemize}
\item A domino of the form 
\resizebox{!}{0.4cm}{
\begin{tikzpicture}
\draw[lightgray] (0,0) rectangle (1,1);
\draw[lightgray,fill=lightgray] (1,0) rectangle (2,1);
\draw[very thick, blue, fill=white] (0.5,0.5) circle (5pt);
\draw[very thick, blue, fill=white] (1.5,0.5) circle (5pt);
\draw[very thick, blue] (0,0) rectangle (2,1);
\end{tikzpicture} }
whose left square is on slice $2i-1$ gets a weight of $x_i$.
\item A domino of the form
\resizebox{!}{0.4cm}{
\begin{tikzpicture}
\draw[lightgray,fill=lightgray] (0,0) rectangle (1,1);
\draw[lightgray] (1,0) rectangle (2,1);
\draw[very thick, blue] (0,0) rectangle (2,1);
\draw[very thick, blue,fill=blue] (0.5,0.5) circle (5pt);
\draw[very thick, blue,fill=blue] (1.5,0.5) circle (5pt);

\end{tikzpicture} }
whose right square is on slice $2i-1$ gets a weight of $y_i$.
\item All other dominos get a weight of 1.
\end{itemize}

\begin{figure}[tb]
\centering\resizebox{3cm}{!}{
\begin{tikzpicture}[baseline = (current bounding box).center]
\checkerboard{2}
\draw[ultra thick] (0,-1) rectangle (1,1); \draw[ultra thick] (1,-2) rectangle (2,0); \draw[ultra thick] (1,0) rectangle (2,2);
\draw[ultra thick] (2,-3) rectangle (4,-2); \draw[ultra thick] (2,-1) rectangle (4,0);
\draw[ultra thick] (2,2) rectangle (4,3);
\draw[ultra thick] (2,0) rectangle (3,2); \draw[ultra thick] (2,-2) rectangle (4,-1); \draw[ultra thick] (3,0) rectangle (4,2); 
\draw[ultra thick] (4,-2) rectangle (5,0); \draw[ultra thick] (4,0) rectangle (5,2); 
\draw[ultra thick] (5,-1) rectangle (6,1);       
\draw[ultra thick, blue, fill=white] (0.5,0.5) circle (5pt);
\draw[ultra thick, blue, fill=white] (0.5,-.5) circle (5pt);
\draw[ultra thick, blue, fill=white] (1.5,-1.5) circle (5pt);
\draw[ultra thick, blue, fill=white] (1.5,-.5) circle (5pt);
\draw[ultra thick, blue, fill=white] (1.5,1.5) circle (5pt);
\draw[ultra thick, blue, fill=white] (1.5,.5) circle (5pt);
\draw[ultra thick, blue, fill=white] (3.5,1.5) circle (5pt);
\draw[ultra thick, blue, fill=white] (3.5,.5) circle (5pt);
\draw[ultra thick, blue, fill=white] (3.5,2.5) circle (5pt);\draw[ultra thick, blue, fill=white] (2.5,2.5) circle (5pt);
\draw[ultra thick, blue, fill=white] (3.5,-1.5) circle (5pt);
\draw[ultra thick, blue, fill=white] (2.5,-1.5) circle (5pt);
\draw[ultra thick, blue, fill=blue] (3.5,-2.5) circle (5pt);
\draw[ultra thick, blue, fill=blue] (2.5,-2.5) circle (5pt);
\draw[ultra thick, blue, fill=blue] (3.5,-.5) circle (5pt);
\draw[ultra thick, blue, fill=blue] (2.5,-.5) circle (5pt);
\draw[ultra thick, blue, fill=blue] (4.5,-.5) circle (5pt);
\draw[ultra thick, blue, fill=blue] (4.5,-1.5) circle (5pt);
\draw[ultra thick, blue, fill=blue] (4.5,.5) circle (5pt);
\draw[ultra thick, blue, fill=blue] (4.5,1.5) circle (5pt);
\draw[ultra thick, blue, fill=blue] (2.5,.5) circle (5pt);
\draw[ultra thick, blue, fill=blue] (2.5,1.5) circle (5pt);
\draw[ultra thick, blue, fill=blue] (5.5,-.5) circle (5pt);
\draw[ultra thick, blue, fill=blue] (5.5,.5) circle (5pt);
\end{tikzpicture} }
\resizebox{3cm}{!}{
\begin{tikzpicture}[baseline = (current bounding box).center]
\checkerboard{2}
\draw[ultra thick] (0,-1) rectangle (1,1); \draw[ultra thick] (1,-2) rectangle (2,0); \draw[ultra thick] (1,0) rectangle (2,2);
\draw[ultra thick] (2,-3) rectangle (4,-2); \draw[ultra thick] (2,-1) rectangle (4,0);
\draw[ultra thick] (2,2) rectangle (4,3);
\draw[ultra thick] (2,0) rectangle (3,2); \draw[ultra thick] (2,-2) rectangle (4,-1); \draw[ultra thick] (3,0) rectangle (4,2); 
\draw[ultra thick] (4,-2) rectangle (5,0); \draw[ultra thick] (4,0) rectangle (5,2); 
\draw[ultra thick] (5,-1) rectangle (6,1);       
\draw[ultra thick, blue, fill=blue] (0.5,0.5) circle (5pt);
\draw[ultra thick, blue, fill=blue] (0.5,-.5) circle (5pt);
\draw[ultra thick, blue, fill=blue] (1.5,-1.5) circle (5pt);
\draw[ultra thick, blue, fill=blue] (1.5,-.5) circle (5pt);
\draw[ultra thick, blue, fill=blue] (1.5,1.5) circle (5pt);
\draw[ultra thick, blue, fill=blue] (1.5,.5) circle (5pt);
\draw[ultra thick, blue, fill=blue] (3.5,1.5) circle (5pt);
\draw[ultra thick, blue, fill=blue] (3.5,.5) circle (5pt);
\draw[ultra thick, blue, fill=blue] (3.5,2.5) circle (5pt);\draw[ultra thick, blue, fill=blue] (2.5,2.5) circle (5pt);
\draw[ultra thick, blue, fill=blue] (3.5,-1.5) circle (5pt);
\draw[ultra thick, blue, fill=blue] (2.5,-1.5) circle (5pt);
\draw[ultra thick, blue, fill=white] (3.5,-2.5) circle (5pt);
\draw[ultra thick, blue, fill=white] (2.5,-2.5) circle (5pt);
\draw[ultra thick, blue, fill=white] (3.5,-.5) circle (5pt);
\draw[ultra thick, blue, fill=white] (2.5,-.5) circle (5pt);
\draw[ultra thick, blue, fill=white] (4.5,-.5) circle (5pt);
\draw[ultra thick, blue, fill=white] (4.5,-1.5) circle (5pt);
\draw[ultra thick, blue, fill=white] (4.5,.5) circle (5pt);
\draw[ultra thick, blue, fill=white] (4.5,1.5) circle (5pt);
\draw[ultra thick, blue, fill=white] (2.5,.5) circle (5pt);
\draw[ultra thick, blue, fill=white] (2.5,1.5) circle (5pt);
\draw[ultra thick, blue, fill=white] (5.5,-.5) circle (5pt);
\draw[ultra thick, blue, fill=white] (5.5,.5) circle (5pt);
\end{tikzpicture} }
\caption{A domino tiling and the corresponding Maya diagrams in the purple-gray (left) and white-pink (right) models}
\label{fig:DominoTiling}
\end{figure}
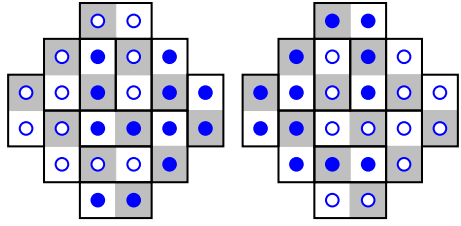

For example, consider the domino tiling in Figure \ref{fig:DominoTiling}.  In the purple-gray model, this tiling gives the sequence of partitions
\[ \emptyset \preceq ' (1,1) \succeq (1,1)\preceq '  (2,1)\succeq (1)\preceq '  (2)\succeq \emptyset \]
and has weight $x_1^2x_2x_3y_2y_3^2$.  In the white-pink model, this tiling gives the sequence of partitions
\[ \emptyset \preceq (1) \succeq' \emptyset \preceq  (1) \succeq' (1) \preceq  (1) \succeq' \emptyset \]
and has weight $x_1x_2y_1y_3$.

Both models were studied previously in \cite{steep,RYG}:
\begin{prop}\cite{steep}
The generating polynomial of both models is the same and is
\[ \prod_{1 \leq i \leq j \leq m}(1+x_iy_j). \]
\end{prop}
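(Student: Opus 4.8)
The plan is to compute each of the two generating polynomials by the transfer-matrix (free-fermion) method of \cite{steep}, and then to observe that the two computations collapse to the same product. First I would make the correspondence of Section~\ref{two-models-section} quantitative: a domino tiling of $AD_m$ is the same datum as an \emph{admissible} sequence $\emptyset=\lambda^{0},\lambda^{1},\dots,\lambda^{2m}=\emptyset$ of partitions all lying in the staircase $\lambda^{i}_1+\ell(\lambda^{i})\le m$, whose consecutive members satisfy the alternating interlacing/co-interlacing relations of the given model ($\preceq',\succeq,\preceq',\dots$ for purple-gray, $\preceq,\succeq',\preceq,\dots$ for white-pink). Grouping the domino weights of Section~\ref{two-models-section} around the odd slices, the weight of a tiling becomes a product $\prod_{i=1}^{m}x_i^{a_i}y_i^{b_i}$ of local statistics of the sequence, so each generating polynomial is a (finite) sum of such monomials over admissible sequences.

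Next I would realize this sum as a matrix element in fermionic Fock space, via the standard boson-fermion machinery. Let $\{\ket{\lambda}\}$ be the orthonormal basis indexed by partitions, with vacuum $\ket{\emptyset}$, and use the half-vertex operators $\Gamma_{\pm}(z),\Gamma'_{\pm}(z)$ whose nonzero matrix elements encode interlacing (for $\Gamma$) and co-interlacing (for $\Gamma'$) transitions with the corresponding power of $z$, and which satisfy $\Gamma_{+}(z)\ket{\emptyset}=\Gamma'_{+}(z)\ket{\emptyset}=\ket{\emptyset}$, $\bra{\emptyset}\Gamma_{-}(z)=\bra{\emptyset}\Gamma'_{-}(z)=\bra{\emptyset}$, together with
\[
\Gamma_{+}(x)\Gamma'_{-}(y)=(1+xy)\,\Gamma'_{-}(y)\Gamma_{+}(x),\qquad
\Gamma'_{+}(x)\Gamma_{-}(y)=(1+xy)\,\Gamma_{-}(y)\Gamma'_{+}(x),
\]
\[
\Gamma_{+}(x)\Gamma_{-}(y)=\frac{\Gamma_{-}(y)\Gamma_{+}(x)}{1-xy},\qquad
\Gamma'_{+}(x)\Gamma'_{-}(y)=\frac{\Gamma'_{-}(y)\Gamma'_{+}(x)}{1-xy}.
\]
By the dictionary of Section~\ref{two-models-section}, the weighted sum over admissible sequences in the purple-gray model is a matrix element $\bra{\emptyset}\,W\,\ket{\emptyset}$ in which the word $W$ alternates a $\Gamma'$-type operator carrying some $x_i$ with a $\Gamma$-type operator carrying some $y_j$ (the white-pink model yields the same expression with $\Gamma\leftrightarrow\Gamma'$, which is why the two models will agree). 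Normal ordering --- moving all the $\Gamma$-type operators past all the $\Gamma'$-type operators --- uses only the mixed $(1+xy)$ relations; each surviving crossing contributes a factor $1+x_iy_j$ with $i\le j$, and once the operators collapse against the two vacua one is left with $\prod_{1\le i\le j\le m}(1+x_iy_j)$.

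The step I expect to be the genuine obstacle is not any single identity but controlling the finite-rank constraint $\lambda^{i}_1+\ell(\lambda^{i})\le m$. It is \emph{not} implied by the boundary conditions and the alternating pattern alone --- for instance $\emptyset\preceq'(1,1)\succeq(1)\preceq'(1)\succeq\emptyset$ is an admissible alternation for $m=2$, yet $(1,1)$ violates the bound --- so the Fock space computation must be run with all operators restricted to the finite-dimensional subspace spanned by the partitions whose Maya window fits inside the corresponding slice of $AD_m$; one must then check that the commutation relations and the normal-ordering step survive that restriction, and, crucially, that the truncation is exactly what turns the would-be geometric factors $(1-x_iy_j)^{-1}$ of the unrestricted model into their degree-one parts $1+x_iy_j$ and eliminates the pairs with $i>j$. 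This bookkeeping is precisely what is carried out in \cite{steep}, from which the proposition follows. Alternatively, and with no extra work, the statement is the $k=1$ specialization --- automatically independent of $t$, since a single tiling has no interactions --- of the generating polynomial of $k$-tilings computed from the colored vertex models in Section~\ref{kcolors}, and that computation also yields the equality of the two models for free.
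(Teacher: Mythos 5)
Your overall route --- encode a tiling as the alternating (co-)interlacing sequence of Section~\ref{two-models-section} and evaluate the weighted sum as the Fock-space matrix element $\bra{\emptyset}\Gamma_+(y_m)\Gamma'_-(x_m)\cdots\Gamma_+(y_1)\Gamma'_-(x_1)\ket{\emptyset}$ --- is exactly the method of \cite{steep}, which is all the paper invokes for this proposition (it is also recovered later as the $k=1$ case of the colored-vertex-model computation of Section~\ref{kcolors}, the fallback you mention). The trouble is with what you single out as ``the genuine obstacle.'' No restriction to a finite-dimensional subspace is needed, and the mechanism you describe for producing the product would fail if carried out: in the Aztec word every raising operator is of primed type and every lowering operator is of unprimed type, so normal ordering uses \emph{only} the mixed relation $\Gamma_+(x)\Gamma'_-(y)=(1+xy)\,\Gamma'_-(y)\Gamma_+(x)$. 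The geometric factors $(1-x_iy_j)^{-1}$ that you expect a truncation to convert into $1+x_iy_j$ never arise at all, and the restriction to $i\le j$ is not produced by any truncation --- it is simply the set of crossings: $\Gamma_+(y_j)$ stands to the left of $\Gamma'_-(x_i)$ precisely when $i\le j$, so pushing it to the right until it annihilates on $\ket{\emptyset}$ yields $\prod_{i\le j}(1+x_iy_j)$, after which $\bra{\emptyset}\Gamma'_-(x_m)\cdots\Gamma'_-(x_1)\ket{\emptyset}=1$. The same computation with the roles of $\Gamma$ and $\Gamma'$ exchanged handles the white-pink model, giving the equality of the two generating polynomials.

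Your counterexample against automatic boundedness is also a red herring: $\emptyset\preceq'(1,1)\succeq(1)\preceq'(1)\succeq\emptyset$ is one of the eight legal sequences for $m=2$ and corresponds to an honest tiling, of weight $x_1^2y_1y_2$, which indeed occurs exactly once in $(1+x_1y_1)(1+x_1y_2)(1+x_2y_2)$. The inequality $\lambda^i_1+\ell(\lambda^i)\le m$ printed in Section~\ref{two-models-section} cannot be read literally --- the paper's own example sequence for $m=3$ contains $(2,1)$ --- the actual constraints are slice-dependent and are automatically implied by $\lambda^0=\lambda^{2m}=\emptyset$ together with the alternating pattern (each $\preceq'$ step increases $\lambda_1$ by at most one and each $\succeq$ step only shrinks it, with the symmetric statement for $\ell$ read from the right end), so the sum over unrestricted interlacing sequences is finite and is already the tiling generating polynomial. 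In short: drop the restricted-subspace program; the unrestricted commutation computation, as in \cite{steep}, finishes the proof, and your alternative via the $k=1$ specialization of Section~\ref{kcolors} is likewise legitimate.
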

We will generalize these two models to $k$-tilings, and recover this result in the case $k=1$.

\subsection{Extending the models to $k$-tilings}
\label{interactions-subsection}

\indent The two models in the previous section can be extended to $k$-tilings.  For the purple-gray model, specifying a $k$-tiling of the Aztec diamond of rank $m$ is equivalent to specifying a sequence of $2m+1$ $k$-tuples of partitions satisfying
\[
\bm{0} = \bm{\lambda}^0 \preceq ' \bm{\lambda}^1 \succeq \ldots \preceq' \bm{\lambda}^{2m-1} \succeq \bm{\lambda}^{2m}=\bm{0}.
\]
For the white-pink model, specifying a $k$-tiling of the Aztec diamond of rank $m$ is equivalent to specifying a sequence of $2m+1$ $k$-tuples of partitions satisfying
\[
\bm{0} = \bm{\lambda}^0 \preceq  \bm{\lambda}^1 \succeq ' \ldots \preceq \bm{\lambda}^{2m-1} \succeq ' \bm{\lambda}^{2m}= \bm{0}.
\]
For both models, letting the $k$-tiling be $\bm{T} = (T_1,\ldots,T_k)$ and letting the $j$-th $k$-tuple of partitions be $\bm{\lambda}^j=(\lambda^{(j,1)},\ldots ,\lambda^{(j,k)})$ for all $j$, the $i$-th tiling $T_i$ corresponds to the sequence of partitions $(\lambda^{(0,i)},\ldots ,\lambda^{(2m,i)})$ for all $i$.

We define the weight of a $k$-tiling $\bm{T}$ as a polynomial in the variables $x_1,\ldots,x_m,y_1,\ldots,y_m,t$ by the equation
\[ 
w(\bm{T})=w(T_1,\ldots,T_k) = t^{\# {\rm coupled dominos}} \prod_{i=1}^k w(T_i). 
\]
In other words, the weight of a $k$-tiling is the product of the weights of the individual tilings, times an additional factor of $t$ for every \textbf{interaction} between two of the tilings.  In the purple-gray model, an interaction is a pair of \textbf{coupled dominos} of the form
\begin{equation*}
\resizebox{1cm}{!}{
\begin{tikzpicture}[baseline = (current bounding box).center]
\draw[lightgray] (0,1) rectangle (1,2);
\draw[lightgray,fill=lightgray] (0,0) rectangle (1,1);
\draw[lightgray] (-1,0) rectangle (0,1);
\draw[line width=1mm, blue] (0,0) rectangle (1,2);
\draw[line width=1mm, red] (-1.01,-0.01) rectangle (0.99,0.99);
\end{tikzpicture}}\ ,
\;\;\;
\resizebox{.5cm}{!}{
 \begin{tikzpicture}[baseline = (current bounding box).center]
\draw[lightgray,fill=lightgray] (0,1) rectangle (1,2);
\draw[lightgray] (0,0) rectangle (1,1);
\draw[line width=1mm, blue] (0,0) rectangle (1,2);
\draw[line width=1mm, red] (-0.1,-0.1) rectangle (0.9,1.9);
\end{tikzpicture}}\ ,
\;\;\;
\resizebox{1cm}{!}{
 \begin{tikzpicture}[baseline = (current bounding box).center]
\draw[lightgray,fill=lightgray] (0,1) rectangle (1,2);
\draw[lightgray] (0,0) rectangle (1,1);
\draw[line width=1mm, blue] (-1.01,0.99) rectangle (0.99,1.99);
\draw[line width=1mm, red] (0,0) rectangle (1,2);
\end{tikzpicture}}\ , {\rm or}
\;\;\;
\resizebox{.5cm}{!}{
 \begin{tikzpicture}[baseline = (current bounding box).center]
\draw[lightgray,fill=lightgray] (0,1) rectangle (1,2);
\draw[lightgray] (0,0) rectangle (1,1);
\draw[line width=1mm, blue] (-0.01,0.99) rectangle (.99,2.99);
\draw[line width=1mm, red] (0,0) rectangle (1,2);
\end{tikzpicture}}.
\end{equation*}
In the white-pink model, an interaction is a pair of coupled dominos of the form
\begin{equation*}
\resizebox{1cm}{!}{
\begin{tikzpicture}[baseline = (current bounding box).center]
\draw[lightgray,fill=lightgray] (1,-1) rectangle (2,0);
\draw[lightgray,fill=lightgray] (0,0) rectangle (1,1);
\draw[line width=1mm, blue] (0,-1) rectangle (2,0);
\draw[line width=1mm, red] (-0.01,-1.01) rectangle (0.99,0.99);
\end{tikzpicture}}\ ,
\;\;\;
\resizebox{1cm}{!}{ \begin{tikzpicture}[baseline = (current bounding box).center]
\draw[lightgray,fill=lightgray] (1,0) rectangle (2,1);
\draw[line width=1mm, blue] (0,0) rectangle (2,1);
\draw[line width=1mm, red] (-0.1,-0.1) rectangle (1.9,0.9);
\end{tikzpicture}}\ ,
\;\;\;
\resizebox{1cm}{!}{
 \begin{tikzpicture}[baseline = (current bounding box).center]
\draw[lightgray,fill=lightgray] (0,0) rectangle (1,1);
\draw[lightgray,fill=lightgray] (1,1) rectangle (2,2);
\draw[line width=1mm, blue] (0.99,-0.01) rectangle (1.99,1.99);
\draw[line width=1mm, red] (0,0) rectangle (2,1);
\end{tikzpicture}}\ , {\rm or}
\;\;\;
\resizebox{1.5cm}{!}{
 \begin{tikzpicture}[baseline = (current bounding box).center]
\draw[lightgray] (1,0) rectangle (2,1);
\draw[lightgray,fill=lightgray] (0,0) rectangle (1,1);
\draw[lightgray,fill=lightgray] (2,0) rectangle (3,1);
\draw[line width=1mm, blue] (0.99,-0.01) rectangle (2.99,0.99);
\draw[line width=1mm, red] (0,0) rectangle (2,1);
\end{tikzpicture}}.
\end{equation*}
Here blue is a smaller color than red.  

\begin{figure}[ht]
\begin{center}
\resizebox{\textwidth}{!}{
\begin{tabular}{ccc}
\resizebox{0.2\textwidth}{!}{
\begin{tikzpicture}[baseline = (current bounding box).center]
\checkerboard{2}
\draw[very thick, blue] (0,-1) rectangle (1,1); \draw[very thick, blue] (1,-2) rectangle (2,0); \draw[very thick, blue] (1,0) rectangle (2,2);
\draw[very thick, blue] (2,-3) rectangle (4,-2); \draw[very thick, blue] (2,-1) rectangle (4,0);
\draw[very thick, blue] (2,2) rectangle (4,3);
\draw[very thick, blue] (2,0) rectangle (3,2); \draw[very thick, blue] (2,-2) rectangle (4,-1); \draw[very thick, blue] (3,0) rectangle (4,2); 
\draw[very thick, blue] (4,-2) rectangle (5,0); \draw[very thick, blue] (4,0) rectangle (5,2); 
\draw[very thick, blue] (5,-1) rectangle (6,1);
\end{tikzpicture}
}
&
\resizebox{0.2\textwidth}{!}{
\begin{tikzpicture}[baseline = (current bounding box).center]
\checkerboard{2}
\draw[very thick, red] (0,-1) rectangle (1,1); \draw[very thick, red] (1,-2) rectangle (2,0); 
\draw[very thick, red] (1,0) rectangle (3,1);
\draw[very thick, red] (2,-3) rectangle (3,-1); 
\draw[very thick, red] (2,-1) rectangle (4,0);
\draw[very thick, red] (1,1) rectangle (3,2);
\draw[very thick, red] (3,-3) rectangle (4,-1);
\draw[very thick, red] (3,0) rectangle (5,1); 
\draw[very thick, red] (3,1) rectangle (5,2); 
\draw[very thick, red] (4,-2) rectangle (5,0); 
\draw[very thick, red] (2,2) rectangle (4,3); 
\draw[very thick, red] (5,-1) rectangle (6,1);
\end{tikzpicture}
}
&
\resizebox{0.2\textwidth}{!}{
\begin{tikzpicture}[baseline = (current bounding box).center]
\checkerboard{2}
\draw[very thick, green] (0,-1) rectangle (2,0); 
\draw[very thick, green] (0,0) rectangle (2,1);
\draw[very thick, green] (1,-2) rectangle (3,-1);
\draw[very thick, green] (2,-3) rectangle (4,-2);
\draw[very thick, green] (3,-2) rectangle (5,-1);
\draw[very thick, green] (1,1) rectangle (3,2);
\draw[very thick, green] (3,1) rectangle (5,2);
\draw[very thick, green] (2,2) rectangle (4,3);
\draw[very thick, green] (2,-1) rectangle (3,1);
\draw[very thick, green] (3,-1) rectangle (4,1);
\draw[very thick, green] (4,-1) rectangle (5,1);
\draw[very thick, green] (5,-1) rectangle (6,1);
\end{tikzpicture}
}
\\
$T_1$ & $T_2$ & $T_3$
\end{tabular}
}
\end{center}
\caption{An example of a 3-tiling of the rank 3 Aztec diamond with 11 coupled dominos in the purple-gray model} 
\label{ex:3til}
\end{figure}

In Figure \ref{ex:3til}, we have an example of a 3-tiling of the rank 3 Aztec diamond.  In the purple-gray model, the first tiling has weight $x_1^2 x_2y_2^2x_3y_3^2$, the second has weight $x_1^3 y_1  y_2  y_3$, the third has weight $x_1 y_1 x_2 y_2$, and there are 11 pairs of coupled dominos - 4 between blue and red,  3 between blue and green, and 4 between red and green. 

The reason for these choices of interactions is two-fold: first, as we will see in Sections \ref{vertexsection} and \ref{kcolors}, these interactions can be obtain from certain vertex models whose integrability is essential in our analysis. Secondly, these tilings are related to (super) LLT polynomials \cite{GKsuper,KN23} and are an example of an LLT process, generalizing the well-studied Schur process.

\section{Colored vertex models} \label{vertexsection}

\subsection{Defining the vertex models} 
\label{k-color-vertex-models-section}

Next we discuss several families of vertex models, which generalize the vertices introduced in Section \ref{aztec}.  These vertex models were originally defined and studied in \cite{LLT,GKsuper}, although they can be realized as degenerations of vertex models introduced in \cite{color1} (see \cite[Lemma 3.1]{GKsuper}).  

We begin with some notation.  For a vector $\I = (I_1,\ldots,I_k) \in \mathbb{R}^k$, we define \[ |\I| = \sum_{m=1}^k I_i. \]
For vectors $\I = (I_1,\ldots,I_k), \J = (J_1,\ldots,J_k) \in \mathbb{R}^k$, we define
\[ \varphi(\I,\J) = \sum_{1 \leq i < j \leq k} I_iJ_j. \] 
For variables $x$ and $t$ and an integer $n \geq 0$, we define the $t$-Pochhammer symbol
\[ (x;t)_n = \prod_{m=0}^{n-1} (1-xt^m). \]

We first define our vertices algebraically.  The $L$, $L'$, $\M$, and $\M'$ matrices are families of functions $(\{0,1\}^k)^4 \rightarrow \C[x,t]$, one for each integer $k \geq 0$, whereas the  $R'$ matrix is a family of functions $(\{0,1\}^k)^4 \rightarrow \C(x,y,t)$, one for each integer $k \geq 0$.  In other words, each vertex associates a \textbf{weight} (either a polynomial in $x,t$ or a rational function in $x,y,t$) to every 4-tuple of vectors in $\{0,1\}^k$ for each integer $k \geq 0$.
\begin{equation}\label{tab:algdef}
\resizebox{.9\textwidth}{!}{
\bgroup
\def\arraystretch{2.5}
\begin{tabular}{||c|c||}
\hline
Type of vertex & Algebraic definition \\ \hline \hline 
$L$ & $\displaystyle L_x^{(k)}(\I,\J,\K,\L) = \textbf{1}_{\I+\J=\K+\L} \prod_{i=1}^k \textbf{1}_{I_i+J_i \neq 2} \cdot x^{|\L|} t^{\varphi(\L,\I+\J)}$ \\[5pt] \hline
$L'$ & $\displaystyle L_x^{'(k)}(\I,\J,\K,\L) = \textbf{1}_{\I+\J=\K+\L} \prod_{i=1}^k \textbf{1}_{K_i \geq J_i} \cdot x^{|\L|} t^{\varphi(\L,\K-\J)}$ \\[5pt] \hline
$\M$ & $\displaystyle \M_x^{(k)}(\I,\J,\K,\L) = x^kt^{\binom{k}{2}}L^{(k)}_{1/(xt^{k-1})}(\I,\J,\K,\L)$  \\[5pt] \hline
$\M'$ & $\displaystyle \M_x^{'(k)}(\I,\J,\K,\L) = x^k L_{1/x}^{'(k)}(\I,\J,\K,\L)$  \\[5pt] \hline
$R'$ &  $\displaystyle R_{y/x}^{'(k)}(\I,\J,\K,\L) = \textbf{1}_{\I+\J=\K+\L} \prod_{i=1}^k \textbf{1}_{I_i + J_i \neq 2} \cdot (x/y)^{|\L|} (-x/y;t)_{|\K|+|\L|}^{-1} t^{\varphi(\L,\K+\L)}$ \\[5pt] \hline
\end{tabular}
\egroup
}
\end{equation}

\indent However, it is often useful to think of a vertex graphically.  We can draw a vertex as a face with four incident edges, each labelled by an element of $\{0,1\}^k$.  A face takes one of two forms, 
\[ \begin{tikzpicture}[baseline=(current bounding box.center)]\draw[thin] (0,0) rectangle (1,1); \node[left] at (0,0.5) {$\J$}; \node[right] at (1,0.5) {$\L$};\node[above] at (0.5,1) {$\K$};\node[below] at (0.5,0) {$\I$}; \end{tikzpicture} \text{(a box) or } \begin{tikzpicture}[baseline=(current bounding box.center)] \draw[thin] (0,1)--(1,0);\draw[thin] (0,0)--(1,1); \node at (-0.1,-0.1) {$\I$}; \node at (-0.1,1.1) {$\J$}; \node at (1.1,1.1) {$\K$}; \node at (1.1,-0.1) {$\L$};\end{tikzpicture} \text{(a cross).} \]
The edge labels describe colored paths moving through the face SW-to-NE (in a box) or left-to-right (in a cross).  If an edge has the label $\I = (I_1,\ldots,I_k) \in \{0,1\}^k$, then for each $i \in [k]$, a path of color $i$ is incident at the edge if and only if $I_i = 1$.  For example, with $k=2$ (letting blue be color 1 and red be color 2), the path configuration associated to the edge labels
\[ \I = (0,1), \J = (1,0), \K = (0,1), \L = (1,0) \]
is 
\[
\begin{tikzpicture}[baseline=(current bounding box.center)] \draw[thin] (0,0) rectangle (1,1); \draw[blue,thick] (0,0.5)--(1,0.5); \draw[red, thick] (0.5,0)--(0.5,1); \end{tikzpicture} 
\text{ (for a box) or }
\begin{tikzpicture}[baseline=(current bounding box.center)] \draw[blue, ultra thick] (0,1)--(1,0); \draw[red, ultra thick] (0,0)--(1,1); \end{tikzpicture}
\text{ (for a cross).}
\]
The factor of $\textbf{1}_{\I+\J=\K+\L}$ that appears in the algebraic definitions of all five vertices imposes a \textbf{path conservation} restriction: in order for a vertex to have a non-zero weight, the paths entering the vertex and the paths exiting the vertex must be the same.  To define the vertex weights graphically, we start by defining the weights in the case $k=1$.
\begin{equation}\label{tab:1colorgraphical}
\resizebox{0.9\textwidth}{!}{
\begin{tabular}{||c|c||}
\hline
Type of vertex & One-color definition \\ \hline\hline
$L$ & \resizebox{0.8\textwidth}{!}{
\begin{tabular}{cccccc}
   \begin{tikzpicture}[baseline=(current bounding box.center)]\draw[thin] (0,0) rectangle (1,1); \node[left] at (0,0.5) {j}; \node[right] at (1,0.5) {l};\node[above] at (0.5,1) {k};\node[below] at (0.5,0) {i};\node at (0.5,0.5) {$x$}; \end{tikzpicture}: & \begin{tikzpicture}[baseline=(current bounding box.center)]\draw[thin] (0,0) rectangle (1,1);\end{tikzpicture} & \begin{tikzpicture}[baseline=(current bounding box.center)]\draw[thin] (0,0) rectangle (1,1);\draw[blue, thick] (0.5,0)--(0.5,0.5)--(1,0.5);\end{tikzpicture} & \begin{tikzpicture}[baseline=(current bounding box.center)] \draw[thin] (0,0) rectangle (1,1); \draw[blue, thick] (0,0.5)--(1,0.5); \end{tikzpicture} & \begin{tikzpicture}[baseline=(current bounding box.center)] \draw[thin] (0,0) rectangle (1,1); \draw[blue, thick] (0.5,0)--(0.5,1); \end{tikzpicture} & \begin{tikzpicture}[baseline=(current bounding box.center)] \draw[thin] (0,0) rectangle (1,1); \draw[blue, thick] (0,0.5)--(0.5,0.5)--(0.5,1); \end{tikzpicture} \\
$L_x^{(1)}(i,j,k,l)$: &  $1$ & $x$ & $x$ & $1$ & $1$ \\
\end{tabular}
} \\[40pt] \hline
$L'$ & \resizebox{0.8\textwidth}{!}{
\begin{tabular}{cccccc}
   \begin{tikzpicture}[baseline=(current bounding box.center)]\draw[thin,fill=violet] (0,0) rectangle (1,1); \node[left] at (0,0.5) {j}; \node[right] at (1,0.5) {l};\node[above] at (0.5,1) {k};\node[below] at (0.5,0) {i};\node at (0.5,0.5) {$x$};\end{tikzpicture}: & \begin{tikzpicture}[baseline=(current bounding box.center)] \draw[thin,fill=violet] (0,0) rectangle (1,1); \draw[blue, thick] (0.5,0)--(0.5,1); \end{tikzpicture} & \begin{tikzpicture}[baseline=(current bounding box.center)] \draw[thin,fill=violet] (0,0) rectangle (1,1); \draw[blue, thick] (0,0.5)--(0.5,0.5)--(0.5,1); \end{tikzpicture} & \begin{tikzpicture}[baseline=(current bounding box.center)] \draw[thin,fill=violet] (0,0) rectangle (1,1); \draw[blue, thick] (0,0.5)--(1,0.5); \draw[blue, thick] (0.5,0)--(0.5,1); \end{tikzpicture} & \begin{tikzpicture}[baseline=(current bounding box.center)]\draw[thin,fill=violet] (0,0) rectangle (1,1); \end{tikzpicture} & \begin{tikzpicture}[baseline=(current bounding box.center)]\draw[thin,fill=violet] (0,0) rectangle (1,1);\draw[blue, thick] (0.5,0)--(0.5,0.5)--(1,0.5);\end{tikzpicture} \\
$L_x^{'(1)}(i,j,k,l)$: &  $1$ & $1$ & $x$ & $1$ & $x$ \\
\end{tabular}
} \\[40pt] \hline
$\M$ & \resizebox{0.8\textwidth}{!}{
\begin{tabular}{cccccc}
   \begin{tikzpicture}[baseline=(current bounding box.center)]\draw[thin,fill=gray] (0,0) rectangle (1,1); \node[left] at (0,0.5) {j}; \node[right] at (1,0.5) {l};\node[above] at (0.5,1) {k};\node[below] at (0.5,0) {i};\node at (0.5,0.5) {$ x$}; \end{tikzpicture}: & \begin{tikzpicture}[baseline=(current bounding box.center)]\draw[thin,fill=gray] (0,0) rectangle (1,1);\end{tikzpicture} & \begin{tikzpicture}[baseline=(current bounding box.center)]\draw[thin,fill=gray] (0,0) rectangle (1,1);\draw[blue, thick] (0.5,0)--(0.5,0.5)--(1,0.5);\end{tikzpicture} & \begin{tikzpicture}[baseline=(current bounding box.center)] \draw[thin,fill=gray] (0,0) rectangle (1,1); \draw[blue, thick] (0,0.5)--(1,0.5); \end{tikzpicture} & \begin{tikzpicture}[baseline=(current bounding box.center)] \draw[thin,fill=gray] (0,0) rectangle (1,1); \draw[blue, thick] (0.5,0)--(0.5,1); \end{tikzpicture} & \begin{tikzpicture}[baseline=(current bounding box.center)] \draw[thin,fill=gray] (0,0) rectangle (1,1); \draw[blue, thick] (0,0.5)--(0.5,0.5)--(0.5,1); \end{tikzpicture} \\
$\M_x^{(1)}(i,j,k,l)$: &  $x$ & $1$ & $1$ & $x$ & $x$ \\
\end{tabular}
} \\[40pt] \hline
$\M'$ & \resizebox{0.8\textwidth}{!}{
\begin{tabular}{cccccc}
   \begin{tikzpicture}[baseline=(current bounding box.center)]\draw[thin,fill=pink] (0,0) rectangle (1,1); \node[left] at (0,0.5) {j}; \node[right] at (1,0.5) {l};\node[above] at (0.5,1) {k};\node[below] at (0.5,0) {i};\node at (0.5,0.5) {$ x$};\end{tikzpicture}: & \begin{tikzpicture}[baseline=(current bounding box.center)] \draw[thin,fill=pink] (0,0) rectangle (1,1); \draw[blue, thick] (0.5,0)--(0.5,1); \end{tikzpicture} & \begin{tikzpicture}[baseline=(current bounding box.center)] \draw[thin,fill=pink] (0,0) rectangle (1,1); \draw[blue, thick] (0,0.5)--(0.5,0.5)--(0.5,1); \end{tikzpicture} & \begin{tikzpicture}[baseline=(current bounding box.center)] \draw[thin,fill=pink] (0,0) rectangle (1,1); \draw[blue, thick] (0,0.5)--(1,0.5); \draw[blue, thick] (0.5,0)--(0.5,1); \end{tikzpicture} & \begin{tikzpicture}[baseline=(current bounding box.center)]\draw[thin,fill=pink] (0,0) rectangle (1,1); \end{tikzpicture} & \begin{tikzpicture}[baseline=(current bounding box.center)]\draw[thin,fill=pink] (0,0) rectangle (1,1);\draw[blue, thick] (0.5,0)--(0.5,0.5)--(1,0.5);\end{tikzpicture} \\
$\M_x^{'(1)}(i,j,k,l)$: &  $x$ & $x$ & $1$ & $x$ & $1$ \\
\end{tabular}
} \\[40pt] \hline
$R'$ & \resizebox{0.8\textwidth}{!}{
\begin{tabular}{cccccc}
\begin{tikzpicture}[baseline=(current bounding box.center)] \fill[yellow] (0,0) rectangle (1,1); \draw[thin] (0,1)--(1,0);\draw[thin] (0,0)--(1,1); \node at (-0.1,-0.1) {$i$}; \node at (-0.5,-0.1) {$y$}; \node at (-0.1,1.1) {$j$}; \node at (-0.5,1.1) {$x$}; \node at (1.1,1.1) {$k$}; \node at (1.1,-0.1) {$l$};\end{tikzpicture}: & \begin{tikzpicture}[baseline=(current bounding box.center)] \fill[yellow] (0,0) rectangle (1,1); \draw[blue, ultra thick] (0,1)--(1,0);\draw[thin] (0,0)--(1,1); \end{tikzpicture} & \begin{tikzpicture}[baseline=(current bounding box.center)] \fill[yellow] (0,0) rectangle (1,1); \draw[blue, ultra thick] (0,1)--(0.5,0.5)--(1,1);\draw[thin] (0,0)--(0.5,0.5)--(1,0); \end{tikzpicture} & \begin{tikzpicture}[baseline=(current bounding box.center)] \fill[yellow] (0,0) rectangle (1,1); \draw[thin] (0,1)--(0.5,0.5)--(1,1);\draw[blue, ultra thick] (0,0)--(0.5,0.5)--(1,0); \end{tikzpicture} & \begin{tikzpicture}[baseline=(current bounding box.center)] \fill[yellow] (0,0) rectangle (1,1); \draw[thin] (0,1)--(1,0);\draw[blue, ultra thick] (0,0)--(1,1); \end{tikzpicture} & \begin{tikzpicture}[baseline=(current bounding box.center)] \fill[yellow] (0,0) rectangle (1,1); \draw[thin] (0,1)--(1,0);\draw[thin] (0,0)--(1,1); \end{tikzpicture} \\
$R_{y/x}^{'(1)}(i,j,k,l)$:&  $\frac{1}{1+y/x}$ & $\frac{y/x}{1+y/x}$ & $\frac{1}{1+y/x}$ & $\frac{y/x}{1+y/x}$ & $1$ 
\end{tabular}
} \\[40pt] \hline
\end{tabular}
}
\end{equation}

The $k$-color weights are then defined in terms of the one-color weights.  
\begin{equation}\label{tab:kcolorgraphical}
\bgroup
\def\arraystretch{1.9}
\begin{tabular}{||c|c||}
\hline
Type of vertex & $k$-color definition \\ \hline \hline 
$L$ & \begin{tabular}{c} $\displaystyle L_x^{(k)}(\I,\J,\K,\L) =  \prod_{i=1}^k L_{xt^{\delta_i}}^{(1)}(I_i,J_i,K_i,L_i)$ \\ where $\delta_i = \text{\# colors greater than $i$ that are present}$ \end{tabular} \\ \hline
$L'$ & \begin{tabular}{c} $L_x^{'(k)}(\I,\J,\K,\L) = \prod_{i=1}^k L_{xt^{\delta'_i}}^{'(1)}(I_i,J_i,K_i,L_i)$ 
 \\ where $\delta'_i = \text{\# colors greater than $i$ of the form}$ \resizebox{0.04\textwidth}{!}{\begin{tikzpicture}[baseline=(current bounding box.center)] \draw[thin,fill=violet] (0,0) rectangle (1,1); \draw[blue, thick] (0.5,0)--(0.5,1); \end{tikzpicture}} \end{tabular} \\[18pt] \hline
$\M$ & \begin{tabular}{c} $\displaystyle \M_x^{(k)}(\I,\J,\K,\L) =  \prod_{i=1}^k t^{\beta_i} M_{xt^{\alpha_i-\beta_i}}^{(1)}(I_i,J_i,K_i,L_i)$ \\ where $\begin{array}{ll} \alpha_i = \text{\# colors greater than $i$ that don't exit right}  \\ \beta_i = \text{\# colors greater than $i$ that exit top} \end{array}$ \end{tabular} \\ \hline
$\M'$ & \begin{tabular}{c} $\M_x^{'(k)}(\I,\J,\K,\L) = \prod_{i=1}^k t^{\gamma_i} \M_{xt^{-\gamma_i}}^{'(1)}(I_i,J_i,K_i,L_i)$ \\ where $\gamma_i = \text{\# colors greater than $i$ of the form}$ \resizebox{0.04\textwidth}{!}{\begin{tikzpicture}[baseline=(current bounding box.center)] \draw[thin,fill=pink] (0,0) rectangle (1,1); \draw[blue, thick] (0.5,0)--(0.5,1); \end{tikzpicture}} \end{tabular} \\[18pt] \hline
$R'$ &  \begin{tabular}{c} $\displaystyle R_{y/x}^{'(k)}(\I,\J,\K,\L) = \prod_{i=1}^k R_{y/(xt^{\epsilon'_i})}^{'(1)}(I_i,J_i,K_i,L_i)$ \\ where $\epsilon'_i = \text{\# colors greater than $i$ that are present}$ \end{tabular}
\\ \hline
\end{tabular}
\egroup
\end{equation}

\noindent For the white $L$ and gray $M$ weights, the equivalence of the algebraic and graphical definitions is shown in Appendix \ref{ap:b}.  The equivalence for the other vertices can be shown similarly.

\indent A \textbf{lattice} is a rectangular grid of vertices, with the variables and the labels on the outer edges specified, but with the labels on the internal edges unspecified.  A \textbf{lattice configuration} is a lattice with the labels on the internal edges specified, such that the weight of each vertex is non-zero.  The weight of a lattice configuration is the product of the weights of the vertices.  Given a lattice $L$, the associated \textbf{partition function} is
\[ \sum_{C \in LC(L)} \weight(C) \]
where $LC(L)$ is the set of valid lattice configurations on $L$.  Often, when it is clear from context, we will abuse notation and let the drawing of the lattice be equal to the partition function of the vertex model on the lattice.

Our vertices satisfy several Yang-Baxter equations.  We state two of them here; both follow from the Yang-Baxter equation for purple and white vertices given in \cite[Proposition 3.3]{GKsuper} after an appropriate change of variables.


\begin{prop} \label{prop:YBEpurplegray}

The $L'$, $\M$, and $R'$ matrices satisfy the Yang-Baxter equation
\[
\sum_{\text{interior paths}}
w\left(
\resizebox{1.8cm}{!}{
  \begin{tikzpicture}[baseline=(current bounding box.center)] 
  \def\vS{1.8}
  \draw[thin,fill=violet] (0,1) rectangle (1,2);
  \draw[thin,fill=gray] (0,0) rectangle (1,1);
  \fill[yellow] (-1,0.5) rectangle (0,1.5);
 \draw[] (-1,0.5) -- (0,1.5); \draw[] (-1,1.5) -- (0,0.5); 
 \draw[step=1.0,black,thin] (0,0) grid (1,2); 
 \node[scale=\vS] at (0.5,0.5) {$x$}; \node[scale=\vS] at (0.5,1.5) {$y$};
 \node[left,scale=\vS] at (-1,1.5) {$\J_1$}; \node[left,scale=\vS] at (-1,0.5) {$\I_1$}; \node[below,scale=\vS] at (0.5,0) {$\K_1$};
 \node[right,scale=\vS] at (1,1.5) {$\I_3$}; \node[right,scale=\vS] at (1,0.5) {$\J_3$}; \node[above,scale=\vS] at (0.5,2) {$\K_3$};
 \end{tikzpicture} 
 }
 \right)
=
\sum_{\text{interior paths}}
w\left(
\resizebox{1.8cm}{!}{
 \begin{tikzpicture}[baseline=(current bounding box.center)] 
 \def\vS{1.8}
 \draw[thin,fill=gray] (0,1) rectangle (1,2);
 \draw[thin,fill=violet] (0,0) rectangle (1,1); \fill[yellow] (1,0.5) rectangle (2,1.5);
 \draw[] (2,0.5) -- (1,1.5); \draw[] (2,1.5) -- (1,0.5); 
 \draw[step=1.0,black,thin] (0,0) grid (1,2); 
 \node[scale=\vS] at (0.5,0.5) {$y$}; \node[scale=\vS] at (0.5,1.5) {$x$};
 \node[left,scale=\vS] at (0,1.5) {$\J_1$}; \node[left,scale=\vS] at (0,0.5) {$\I_1$}; \node[below,scale=\vS] at (0.5,0) {$\K_1$};
 \node[right,scale=\vS] at (2,0.5) {$\J_3$}; \node[right,scale=\vS] at (2,1.5) {$\I_3$}; \node[above,scale=\vS] at (0.5,2) {$\K_3$};
 \end{tikzpicture}
 }
 \right)
\]
for any choice of boundary condition $\I_1,\J_1,\K_1,\I_3,\J_3,\K_3$, where the yellow cross vertex has parameter $xy t^{k-1}$.

\end{prop}

\begin{prop} \label{prop:YBEpinkwhite}

The $L$, $\M'$, and $R'$ matrices satisfy the Yang-Baxter equation
\[
\sum_{\text{interior paths}}
w\left(
\resizebox{1.8cm}{!}{
  \begin{tikzpicture}[baseline=(current bounding box.center)] 
  \def\vS{1.8}
 \draw[thin,fill=pink] (0,1) rectangle (1,2); \fill[yellow] (-1,0.5) rectangle (0,1.5);
 \draw[] (-1,0.5) -- (0,1.5); \draw[] (-1,1.5) -- (0,0.5); 
 \draw[step=1.0,black,thin] (0,0) grid (1,2); 
 \node[scale=\vS] at (0.5,0.5) {$x$}; \node[scale=\vS] at (0.5,1.5) {$y$};
 \node[left,scale=\vS] at (-1,1.5) {$\J_1$}; \node[left,scale=\vS] at (-1,0.5) {$\I_1$}; \node[below,scale=\vS] at (0.5,0) {$\K_1$};
 \node[right,scale=\vS] at (1,1.5) {$\I_3$}; \node[right,scale=\vS] at (1,0.5) {$\J_3$}; \node[above,scale=\vS] at (0.5,2) {$\K_3$};
 \end{tikzpicture} 
 }
 \right)
=
\sum_{\text{interior paths}}
w\left(
\resizebox{1.8cm}{!}{
 \begin{tikzpicture}[baseline=(current bounding box.center)] 
 \def\vS{1.8}
 \draw[thin,fill=pink] (0,0) rectangle (1,1); \fill[yellow] (1,0.5) rectangle (2,1.5);
 \draw[] (2,0.5) -- (1,1.5); \draw[] (2,1.5) -- (1,0.5); 
 \draw[step=1.0,black,thin] (0,0) grid (1,2); 
 \node[scale=\vS] at (0.5,0.5) {$ y$}; \node[scale=\vS] at (0.5,1.5) {$x$};
 \node[left,scale=\vS] at (0,1.5) {$\J_1$}; \node[left,scale=\vS] at (0,0.5) {$\I_1$}; \node[below,scale=\vS] at (0.5,0) {$\K_1$};
 \node[right,scale=\vS] at (2,0.5) {$\J_3$}; \node[right,scale=\vS] at (2,1.5) {$\I_3$}; \node[above,scale=\vS] at (0.5,2) {$\K_3$};
 \end{tikzpicture}
 }
 \right)
\]
for any choice of boundary condition $\I_1,\J_1,\K_1,\I_3,\J_3,\K_3$, where the yellow cross vertex has parameter $\frac{1}{yx}$.

\end{prop}

\begin{proof}

Fix $\I_1,\J_1,\K_1,\I_3,\J_3,\K_3$.  Written algebraically, \cite[Proposition 3.3]{GKsuper} states
\[ \begin{aligned}
&\sum_{\I_2,\J_2,\K_2} R'_{y/x}(\I_1,\J_1;\I_2,\J_2) L_x(\K_1,\J_2;\K_2,\J_3) L'_y(\K_2,\I_2;\K_3,\I_3) \\ 
&= \sum_{\I_2,\J_2,\K_2} L'_y(\K_1,\I_1;\K_2,\I_2) L_x(\K_2,\J_1;\K_3,\J_2) R'_{y/x}(\I_2,\J_2;\I_3,\J_3). 
\end{aligned} \]
If we replace $x$ with $\frac{1}{xt^{k-1}}$ and multiply both sides by $x^k t^{k \choose 2}$, we get
\[ \begin{aligned}
&\sum_{\I_2,\J_2,\K_2} R'_{xyt^{k-1}}(\I_1,\J_1;\I_2,\J_2) \cdot x^k t^{k \choose 2} L_{1/(xt^{k-1})}(\K_1,\J_2;\K_2,\J_3) \cdot L'_y(\K_2,\I_2;\K_3,\I_3) \\ 
&= \sum_{\I_2,\J_2,\K_2} L'_y(\K_1,\I_1;\K_2,\I_2) \cdot x^k t^{k \choose 2} L_{1/(xt^{k-1})}(\K_2,\J_1;\K_3,\J_2) \cdot R'_{xyt^{k-1}}(\I_2,\J_2;\I_3,\J_3)
\end{aligned} \]
which can be rewritten using Table \ref{tab:algdef} as
\[ \begin{aligned}
&\sum_{\I_2,\J_2,\K_2} R'_{xyt^{k-1}}(\I_1,\J_1;\I_2,\J_2) M_x(\K_1,\J_2;\K_2,\J_3) L'_y(\K_2,\I_2;\K_3,\I_3) \\ 
&= \sum_{\I_2,\J_2,\K_2} L'_y(\K_1,\I_1;\K_2,\I_2) M_x(\K_2,\J_1;\K_3,\J_2) R'_{xyt^{k-1}}(\I_2,\J_2;\I_3,\J_3)
\end{aligned} \]
which is Proposition \ref{prop:YBEpurplegray} written algebraically. If instead we replace $y$ with $\frac{1}{y}$ and multiply both sides by $y^k$, we get 
\[ \begin{aligned}
&\sum_{\I_2,\J_2,\K_2} R'_{1/(yx)}(\I_1,\J_1;\I_2,\J_2) \cdot L_x(\K_1,\J_2;\K_2,\J_3) \cdot y^k L'_{1/y}(\K_2,\I_2;\K_3,\I_3) \\ 
&= \sum_{\I_2,\J_2,\K_2} y^k L'_{1/y}(\K_1,\I_1;\K_2,\I_2) \cdot L_x(\K_2,\J_1;\K_3,\J_2) \cdot R'_{1/(yx)}(\I_2,\J_2;\I_3,\J_3)
\end{aligned} \]
which can be rewritten using Table \ref{tab:algdef} as 
\[ \begin{aligned}
&\sum_{\I_2,\J_2,\K_2} R'_{1/(yx)}(\I_1,\J_1;\I_2,\J_2) L_x(\K_1,\J_2;\K_2,\J_3) M'_y(\K_2,\I_2;\K_3,\I_3) \\ 
&= \sum_{\I_2,\J_2,\K_2} M'_{y}(\K_1,\I_1;\K_2,\I_2) L_x(\K_2,\J_1;\K_3,\J_2) R'_{1/(yx)}(\I_2,\J_2;\I_3,\J_3)
\end{aligned} \]
which is Proposition \ref{prop:YBEpinkwhite} written algebraically.
    
\end{proof}

\subsection{The vertex models and (co-)interlacing partitions}
\label{1-color-vertex-models-section}

To better understand the colored vertex models defined in the previous subsection, we will begin by looking at the case $k=1$.  In fact, in the case $k=1$, there is a very natural interpretation of rows of vertices in terms of (co-)interlacing partitions.  Given two partitions $\mu$ and $\lambda$ such that $\lambda_1+\ell(\lambda)\le n$, we can draw rows of $n$ vertices with border conditions as follows.
\[
\resizebox{\textwidth}{!}{
\begin{tikzpicture}[baseline=(current bounding box.center)]
\def\vS{1.8}
\draw[step=1.0,black,thin] (0,0) grid (6,1); 
\node[below, scale=\vS] at (3,0) {$\mu$};
\node[above, scale=\vS] at (3,1) {$\lambda$};
\node at (3,0) {x}; 
\node at (3,1) {x}; 
\draw[fill=white] (0,0.5) circle (0.1);
\draw[fill=white] (6,0.5) circle (0.1);
\end{tikzpicture}\ \ \ 
\begin{tikzpicture}[baseline=(current bounding box.center)]
\def\vS{1.8}
\draw[fill=gray] (0,0) rectangle (6,1);
\draw[step=1.0,black,thin, fill=gray] (0,0) grid (6,1); 
\node[above, scale=\vS] at (3,1) {$\mu$};
\node[below, scale=\vS] at (3,0) {$\lambda$};
\node at (2,1) {x}; 
\node at (3,0) {x}; 
\draw[fill=white] (0,0.5) circle (0.1);
\draw[fill=blue] (6,0.5) circle (0.1);
\end{tikzpicture}\ \ \  
\begin{tikzpicture}[baseline=(current bounding box.center)]
\def\vS{1.8}
\draw[fill=violet] (0,0) rectangle (6,1);
\draw[step=1.0,black,thin] (0,0) grid (6,1); 
\node[below, scale=\vS] at (3,0) {$\mu'$};
\node[above, scale=\vS] at (3,1) {$\lambda'$};
\node at (3,1) {x}; 
\node at (3,0) {x}; 
\draw[fill=white] (0,0.5) circle (0.1);
\draw[fill=white] (6,0.5) circle (0.1);
\end{tikzpicture}\ \ \
\begin{tikzpicture}[baseline=(current bounding box.center)]
\def\vS{1.8}
\draw[fill=pink] (0,0) rectangle (6,1);
\draw[step=1.0,black,thin] (0,0) grid (6,1); 
\node[above, scale=\vS] at (3,1) {$\mu'$};
\node[below, scale=\vS] at (3,0) {$\lambda'$};
\node at (4,1) {x}; 
\node at (3,0) {x}; 
\draw[fill=blue] (0,0.5) circle (0.1);
\draw[fill=white] (6,0.5) circle (0.1);
\end{tikzpicture}
}
\]
Here a partition on the boundary of the array means that the border condition is given by the corresponding Maya diagram, and we mark the position of the 0 content line with an x.  We give an example with $\lambda=(3,3,2)$ and $\mu=(2,2,0)$ in Figure \ref{vertex}.  It is easy to see that each row has no valid configurations unless $\mu \preceq\lambda$, in which case it has one valid configuration with weight $x^{|\lambda|-|\mu|}$.  Thus the weight of each row is
\[ \left \{ \begin{array}{ll} x^{|\lambda|-|\mu|} & \text{if $\mu \preceq\lambda$} \\ 0 & \text{otherwise} \end{array} \right. . \]

\begin{figure}[ht]
\resizebox{\textwidth}{!}{
\begin{tikzpicture}[baseline=(current bounding box.center)]
\def\vS{1.8}
\draw[step=1.0,black,thin] (0,0) grid (6,1); 
\node[below, scale=\vS] at (3,0) {$\mu$};
\node[above, scale=\vS] at (3,1) {$\lambda$};

\node at (3,0) {x}; 
\node at (3,1) {x}; 
\draw[fill=white] (0,0.5) circle (0.1);
\draw[fill=white] (6,0.5) circle (0.1);

\draw[fill=blue] (0.5,0.0) circle (0.1);
\draw[fill=white] (1.5,0.0) circle (0.1);
\draw[fill=white] (2.5,0.0) circle (0.1);
\draw[fill=blue] (3.5,0.0) circle (0.1);
\draw[fill=blue] (4.5,0.0) circle (0.1);
\draw[fill=white] (5.5,0.0) circle (0.1);

\draw[fill=white] (0.5,1.0) circle (0.1);
\draw[fill=white] (1.5,1.0) circle (0.1);
\draw[fill=blue] (2.5,1.0) circle (0.1);
\draw[fill=blue] (3.5,1.0) circle (0.1);
\draw[fill=white] (4.5,1.0) circle (0.1);
\draw[fill=blue] (5.5,1.0) circle (0.1);
\draw[ultra thick,blue](0.5,0)--(0.5,0.5)--(2.5,0.5)--(2.5,1);
\draw[ultra thick,blue](3.5,0)--(3.5,1);
\draw[ultra thick,blue](4.5,0)--(4.5,0.5)--(5.5,0.5)--(5.5,1);
\end{tikzpicture}\ \ \
\begin{tikzpicture}[baseline=(current bounding box.center)]
\def\vS{1.8}
\draw[fill=gray] (0,0) rectangle (6,1);
\draw[step=1.0,black,thin, fill=gray] (0,0) grid (6,1); 

\node[above, scale=\vS] at (3,1) {$\mu$};
\node[below, scale=\vS] at (3,0) {$\lambda$};

\draw[fill=white] (0.5,1.0) circle (0.1);
\draw[fill=white] (1.5,1.0) circle (0.1);
\draw[fill=blue] (2.5,1.0) circle (0.1);
\draw[fill=blue] (3.5,1.0) circle (0.1);
\draw[fill=white] (4.5,1.0) circle (0.1);
\draw[fill=white] (5.5,1.0) circle (0.1);

\draw[fill=white] (0.5,0.0) circle (0.1);
\draw[fill=white] (1.5,0.0) circle (0.1);
\draw[fill=blue] (2.5,0.0) circle (0.1);
\draw[fill=blue] (3.5,0.0) circle (0.1);
\draw[fill=white] (4.5,0.0) circle (0.1);
\draw[fill=blue] (5.5,0.0) circle (0.1);
\node at (2,1) {x}; 
\node at (3,0) {x}; 
\draw[fill=white] (0,0.5) circle (0.1);
\draw[fill=blue] (6,0.5) circle (0.1);

\draw[ultra thick,blue](2.5,0)--(2.5,1);
\draw[ultra thick,blue](3.5,0)--(3.5,1);
\draw[ultra thick,blue](5.5,0)--(5.5,0.5)--(6,.5);

\end{tikzpicture}\ \ \
\begin{tikzpicture}[baseline=(current bounding box.center)]
\def\vS{1.8}
\draw[fill=violet] (0,0) rectangle (6,1);

\draw[step=1.0,black,thin] (0,0) grid (6,1); 
\node[below, scale=\vS] at (3,0) {$\mu'$};
\node[above, scale=\vS] at (3,1) {$\lambda'$};
\node at (3,1) {x}; 
\node at (3,0) {x}; 
\draw[fill=white] (0,0.5) circle (0.1);
\draw[fill=white] (6,0.5) circle (0.1);

\draw[fill=blue] (0.5,0.0) circle (0.1);
\draw[fill=white] (1.5,0.0) circle (0.1);
\draw[fill=blue] (4.5,0.0) circle (0.1);
\draw[fill=blue] (3.5,0.0) circle (0.1);
\draw[fill=white] (5.5,0.0) circle (0.1);
\draw[fill=white] (2.5,0.0) circle (0.1);
\draw[fill=white] (0.5,1.0) circle (0.1);
\draw[fill=blue] (1.5,1.0) circle (0.1);
\draw[fill=white] (2.5,1.0) circle (0.1);
\draw[fill=white] (3.5,1.0) circle (0.1);
\draw[fill=blue] (4.5,1.0) circle (0.1);
\draw[fill=blue] (5.5,1.0) circle (0.1);
\draw[ultra thick, blue](0.5,0)--(0.5,0.5)--(1.5,0.5)--(1.5,1);
\draw[ultra thick, blue](3.5,0)--(3.5,0.5)--(5.5,0.5)--(5.5,1);
\draw[ultra thick, blue](4.5,0)--(4.5,1);

\end{tikzpicture}\ \ \ 
\begin{tikzpicture}[baseline=(current bounding box.center)]
\def\vS{1.8}
\draw[fill=pink] (0,0) rectangle (6,1);
\draw[step=1.0,black,thin] (0,0) grid (6,1); 
\node[above, scale=\vS] at (3,1) {$\mu'$};
\node[below, scale=\vS] at (3,0) {$\lambda'$};
\node at (4,1) {x}; 
\node at (3,0) {x}; 
\draw[fill=blue] (0,0.5) circle (0.1);
\draw[fill=white] (6,0.5) circle (0.1);
\draw[fill=blue] (0.5,1.0) circle (0.1);
\draw[fill=blue] (1.5,1.0) circle (0.1);
\draw[fill=blue] (4.5,1.0) circle (0.1);
\draw[fill=blue] (5.5,1.0) circle (0.1);
\draw[fill=white] (3.5,1.0) circle (0.1);
\draw[fill=white] (2.5,1.0) circle (0.1);

\draw[fill=white] (0.5,0.0) circle (0.1);
\draw[fill=blue] (1.5,0.0) circle (0.1);
\draw[fill=white] (2.5,0.0) circle (0.1);
\draw[fill=white] (3.5,0.0) circle (0.1);
\draw[fill=blue] (4.5,0.0) circle (0.1);
\draw[fill=blue] (5.5,0.0) circle (0.1);
\draw[ultra thick, blue](0,0.5)--(0.5,0.5)--(0.5,1);
\draw[ultra thick, blue](1.5,0)--(1.5,1);
\draw[ultra thick, blue](4.5,0)--(4.5,1);
\draw[ultra thick, blue](5.5,0)--(5.5,1);

\end{tikzpicture}}
\caption{Rows of vertices for $\lambda=(3,3,2)$ and $\mu=(2,2,0)$}
\label{vertex}
\end{figure}
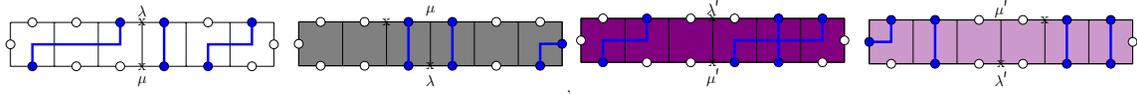

It is easy to generalize this interpretation of rows of vertices to general values of $k$.  Given two $k$-tuples of partitions $\bm{\mu}$ and $\bm{\lambda}$ such that $\lambda^{(i)}_1 + \ell(\lambda^{(i)}) \leq n$ for all $i \in [k]$, we can draw rows of $n$ vertices with border conditions as follows.
\[
\resizebox{\textwidth}{!}{
\begin{tikzpicture}[baseline=(current bounding box.center)]
\def\vS{1.8}
\draw[step=1.0,black,thin] (0,0) grid (6,1); 
\node[below, scale=\vS] at (3,0) {$\bm{\mu}$};
\node[above, scale=\vS] at (3,1) {$\bm{\lambda}$};
\node at (3,0) {x}; 
\node at (3,1) {x}; 
\draw[fill=white] (0,0.5) circle (0.1);
\draw[fill=white] (6,0.5) circle (0.1);
\end{tikzpicture}\ \ \ 
\begin{tikzpicture}[baseline=(current bounding box.center)]
\def\vS{1.8}
\draw[fill=gray] (0,0) rectangle (6,1);
\draw[step=1.0,black,thin, fill=gray] (0,0) grid (6,1); 
\node[above, scale=\vS] at (3,1) {$\bm{\mu}$};
\node[below, scale=\vS] at (3,0) {$\bm{\lambda}$};
\node at (2,1) {x}; 
\node at (3,0) {x}; 
\draw[fill=white] (0,0.5) circle (0.1);
\draw[fill=blue] (6,0.5) circle (0.1);
\end{tikzpicture}\ \ \  
\begin{tikzpicture}[baseline=(current bounding box.center)]
\def\vS{1.8}
\draw[fill=violet] (0,0) rectangle (6,1);
\draw[step=1.0,black,thin] (0,0) grid (6,1); 
\node[below, scale=\vS] at (3,0) {$\bm{\mu}'$};
\node[above, scale=\vS] at (3,1) {$\bm{\lambda}'$};
\node at (3,1) {x}; 
\node at (3,0) {x}; 
\draw[fill=white] (0,0.5) circle (0.1);
\draw[fill=white] (6,0.5) circle (0.1);
\end{tikzpicture}\ \ \
\begin{tikzpicture}[baseline=(current bounding box.center)]
\def\vS{1.8}
\draw[fill=pink] (0,0) rectangle (6,1);
\draw[step=1.0,black,thin] (0,0) grid (6,1); 
\node[above, scale=\vS] at (3,1) {$\bm{\mu}'$};
\node[below, scale=\vS] at (3,0) {$\bm{\lambda}'$};
\node at (4,1) {x}; 
\node at (3,0) {x}; 
\draw[fill=blue] (0,0.5) circle (0.1);
\draw[fill=white] (6,0.5) circle (0.1);
\end{tikzpicture}
}
\]
Here a $k$-tuple of partitions on the boundary of the array means that, for all $i \in [k]$, the $i$-th component of the border condition is given by the Maya diagram corresponding to the $i$-th partition.  It is easy to see that each row has no valid configurations unless $\bm{\mu} \preceq \bm{\lambda}$, in which case it has one valid configuration with weight $x^{|\bm{\lambda}| - |\bm{\mu}|}$ when $t=1$.  Thus the weight of each row is 
\[ \left \{ \begin{array}{ll} x^{|\bm{\lambda}|-|\bm{\mu}|} & \text{if $\bm{\mu} \preceq \bm{\lambda}$} \\ 0 & \text{otherwise} \end{array} \right.  \]
when $t=1$.

\section{$k$-tilings and vertex models}
\label{kcolors}

\subsection{The purple-gray partition function}

Consider the following lattice and its associated partition function.

\[
\resizebox{0.4\textwidth}{!}{
\begin{tikzpicture}[baseline = (current bounding box).center]
\def\vS{1.8}
\draw[fill=gray] (0,0) rectangle (5,3);
\draw[step=1.0] (0,0) grid (5,3);
\draw[fill=violet] (0,3) rectangle (5,6);
\draw[step=1.0] (0,3) grid (5,6);
\draw[fill=black] (5,0.5) circle (0.1);
\draw[fill=black] (5,1.5) circle (0.1);
\draw[fill=black] (5,2.5) circle (0.1);
\draw[fill=white] (5,3.5) circle (0.1);
\draw[fill=white] (5,4.5) circle (0.1);
\draw[fill=white] (5,5.5) circle (0.1);
\draw[fill=white] (0,0.5) circle (0.1);
\draw[fill=white] (0,1.5) circle (0.1);
\draw[fill=white] (0,2.5) circle (0.1);
\draw[fill=white] (0,3.5) circle (0.1);
\draw[fill=white] (0,4.5) circle (0.1);
\draw[fill=white] (0,5.5) circle (0.1);
\draw[fill=black] (0.5,0) circle (0.1);
\draw[fill=black] (1.5,0) circle (0.1);
\draw[fill=black] (2.5,0) circle (0.1);
\draw[fill=white] (0.5,6) circle (0.1);
\draw[fill=white] (1.5,6) circle (0.1);
\draw[fill=white] (2.5,6) circle (0.1);
\draw[fill=white] (3.5,6) circle (0.1);
\draw[fill=white] (4.5,6) circle (0.1);
\draw[fill=white] (3.5,0) circle (0.1);
\draw[fill=white] (4.5,0) circle (0.1);
\node[left,scale=\vS] at (0,0.5) {$y_1$};
\node[left,scale=\vS] at (0,1.5) {$\vdots$};
\node[left,scale=\vS] at (0,2.5) {$y_m$};
\node[left,scale=\vS] at (0,3.5) {$x_1$};
\node[left,scale=\vS] at (0,4.5) {$\vdots$};
\node[left,scale=\vS] at (0,5.5) {$x_m$};
\node[below,scale=\vS] at (1.5,0) {$\leftarrow \;\; m \;\; \rightarrow$};
\end{tikzpicture}
=
\begin{tikzpicture}[baseline = (current bounding box).center]
\def\vS{1.8}
\draw[fill=gray] (0,0) rectangle (5,3);
\draw[step=1.0] (0,0) grid (5,3);
\draw[fill=violet] (0,3) rectangle (5,6);
\draw[step=1.0] (0,3) grid (5,6);
\draw[fill=black] (5,0.5) circle (0.1);
\draw[fill=black] (5,1.5) circle (0.1);
\draw[fill=black] (5,2.5) circle (0.1);
\draw[fill=white] (5,3.5) circle (0.1);
\draw[fill=white] (5,4.5) circle (0.1);
\draw[fill=white] (5,5.5) circle (0.1);
\draw[fill=white] (0,0.5) circle (0.1);
\draw[fill=white] (0,1.5) circle (0.1);
\draw[fill=white] (0,2.5) circle (0.1);
\draw[fill=white] (0,3.5) circle (0.1);
\draw[fill=white] (0,4.5) circle (0.1);
\draw[fill=white] (0,5.5) circle (0.1);
\draw[fill=black] (0.5,0) circle (0.1);
\draw[fill=black] (1.5,0) circle (0.1);
\draw[fill=black] (2.5,0) circle (0.1);
\draw[fill=white] (0.5,6) circle (0.1);
\draw[fill=white] (1.5,6) circle (0.1);
\draw[fill=white] (2.5,6) circle (0.1);
\draw[fill=white] (3.5,6) circle (0.1);
\draw[fill=white] (4.5,6) circle (0.1);
\draw[fill=white] (3.5,0) circle (0.1);
\draw[fill=white] (4.5,0) circle (0.1);
\node[left,scale=\vS] at (0,0.5) {$y_1$};
\node[left,scale=\vS] at (0,1.5) {$\vdots$};
\node[left,scale=\vS] at (0,2.5) {$y_m$};
\node[left,scale=\vS] at (0,3.5) {$x_1$};
\node[left,scale=\vS] at (0,4.5) {$ \vdots$};
\node[left,scale=\vS] at (0,5.5) {$ x_m$};
\draw[ultra thick] (0.5,0)--(0.5,2.5)--(5,2.5);
\draw[ultra thick] (1.5,0)--(1.5,1.5)--(5,1.5);
\draw[ultra thick] (2.5,0)--(2.5,0.5)--(5,0.5);
\end{tikzpicture}
}
= (y^{\rho_m})^k t^{\binom{m}{2}\binom{k}{2}}
\]
Here $y^{\rho_m}=y_1^{m-1}y_2^{m-2}\ldots y_m^{m-m}$, a white dot indicates the absence of all colors, and a black dot indicates the presence of all colors.

By inserting a yellow cross on the left we can use the Yang Baxter equation (Proposition \ref{prop:YBEpurplegray}) to get
\[
\resizebox{0.25\textwidth}{!}{
$
\begin{tikzpicture}[baseline = (current bounding box).center]
\def\vS{1.8}
\draw[fill=gray] (0,0) rectangle (5,3);
\draw[step=1.0] (0,0) grid (5,3);
\draw[fill=violet] (0,3) rectangle (5,6);
\draw[step=1.0] (0,3) grid (5,6);
\draw[fill=black] (5,0.5) circle (0.1);
\draw[fill=black] (5,1.5) circle (0.1);
\draw[fill=black] (5,2.5) circle (0.1);
\draw[fill=white] (5,3.5) circle (0.1);
\draw[fill=white] (5,4.5) circle (0.1);
\draw[fill=white] (5,5.5) circle (0.1);
\draw[fill=white] (0,0.5) circle (0.1);
\draw[fill=white] (0,1.5) circle (0.1);
\draw[fill=white] (0,4.5) circle (0.1);
\draw[fill=white] (0,5.5) circle (0.1);
\draw[fill=black] (0.5,0) circle (0.1);
\draw[fill=black] (1.5,0) circle (0.1);
\draw[fill=black] (2.5,0) circle (0.1);
\draw[fill=white] (0.5,6) circle (0.1);
\draw[fill=white] (1.5,6) circle (0.1);
\draw[fill=white] (2.5,6) circle (0.1);
\draw[fill=white] (3.5,6) circle (0.1);
\draw[fill=white] (4.5,6) circle (0.1);
\draw[fill=white] (3.5,0) circle (0.1);
\draw[fill=white] (4.5,0) circle (0.1);

\node[left,scale=\vS] at (0,0.5) {$y_1$};
\node[left,scale=\vS] at (0,1.5) {$\vdots$};
\node[left,scale=\vS] at (0,4.5) {$\vdots$};
\node[left,scale=\vS] at (0,5.5) {$x_m$};
\node[below,scale=\vS] at (1.5,0) {$\leftarrow \;\; m \;\; \rightarrow$};
\draw[yellow,fill=yellow] (-1,2.5)--(-1,3.5)--(0,3.5)--(0,2.5)--cycle;
\draw (-1,2.5)--(0,3.5); \draw (-1,3.5)--(0,2.5);
\draw[fill=white] (-1,2.5) circle (0.1);
\draw[fill=white] (-1,3.5) circle (0.1);
\node[left,scale=\vS] at (-1,3.5) {$y_m$};
\node[left,scale=\vS] at (-1,2.5) {$x_1$};
\end{tikzpicture}
$
}
= 
\resizebox{0.25\textwidth}{!}{
$
\begin{tikzpicture}[baseline = (current bounding box).center]
\def\vS{1.8}
\draw[fill=gray] (0,0) rectangle (5,2);
\draw[fill=violet] (0,2) rectangle (5,3);
\draw[step=1.0] (0,0) grid (5,3);
\draw[fill=violet] (0,4) rectangle (5,6);
\draw[fill=gray] (0,3) rectangle (5,4);
\draw[step=1.0] (0,3) grid (5,6);
\draw[fill=black] (5,0.5) circle (0.1);
\draw[fill=black] (5,1.5) circle (0.1);
\draw[fill=white] (5,4.5) circle (0.1);
\draw[fill=white] (5,5.5) circle (0.1);
\draw[fill=white] (0,0.5) circle (0.1);
\draw[fill=white] (0,1.5) circle (0.1);
\draw[fill=white] (0,2.5) circle (0.1);
\draw[fill=white] (0,3.5) circle (0.1);
\draw[fill=white] (0,4.5) circle (0.1);
\draw[fill=white] (0,5.5) circle (0.1);
\draw[fill=black] (0.5,0) circle (0.1);
\draw[fill=black] (1.5,0) circle (0.1);
\draw[fill=black] (2.5,0) circle (0.1);
\draw[fill=white] (0.5,6) circle (0.1);
\draw[fill=white] (1.5,6) circle (0.1);
\draw[fill=white] (2.5,6) circle (0.1);
\draw[fill=white] (3.5,6) circle (0.1);
\draw[fill=white] (4.5,6) circle (0.1);
\draw[fill=white] (3.5,0) circle (0.1);
\draw[fill=white] (4.5,0) circle (0.1);
\node[left,scale=\vS] at (0,0.5) {$y_1$};
\node[left,scale=\vS] at (0,1.5) {$\vdots$};
\node[left,scale=\vS] at (0,3.5) {$y_m$};
\node[left,scale=\vS] at (0,2.5) {$x_1$};
\node[left,scale=\vS] at (0,4.5) {$\vdots$};
\node[left,scale=\vS] at (0,5.5) {$x_m$};
\node[below,scale=\vS] at (1.5,0) {$\leftarrow \;\; m \;\; \rightarrow$};
\draw[yellow,fill=yellow] (5,2.5)--(5,3.5)--(6,3.5)--(6,2.5)--cycle;
\draw (5,2.5)--(6,3.5); \draw (5,3.5)--(6,2.5);
\draw[fill=black] (6,2.5) circle (0.1);
\draw[fill=white] (6,3.5) circle (0.1);
\end{tikzpicture}
$
}
\]
from which we see that
\[
\resizebox{0.2\textwidth}{!}{
$
\begin{tikzpicture}[baseline = (current bounding box).center]
\def\vS{1.8}
\draw[fill=gray] (0,0) rectangle (5,3);
\draw[step=1.0] (0,0) grid (5,3);
\draw[fill=violet] (0,3) rectangle (5,6);
\draw[step=1.0] (0,3) grid (5,6);
\draw[fill=black] (5,0.5) circle (0.1);
\draw[fill=black] (5,1.5) circle (0.1);
\draw[fill=black] (5,2.5) circle (0.1);
\draw[fill=white] (5,3.5) circle (0.1);
\draw[fill=white] (5,4.5) circle (0.1);
\draw[fill=white] (5,5.5) circle (0.1);
\draw[fill=white] (0,0.5) circle (0.1);
\draw[fill=white] (0,1.5) circle (0.1);
\draw[fill=white] (0,2.5) circle (0.1);
\draw[fill=white] (0,3.5) circle (0.1);
\draw[fill=white] (0,4.5) circle (0.1);
\draw[fill=white] (0,5.5) circle (0.1);
\draw[fill=black] (0.5,0) circle (0.1);
\draw[fill=black] (1.5,0) circle (0.1);
\draw[fill=black] (2.5,0) circle (0.1);
\draw[fill=white] (0.5,6) circle (0.1);
\draw[fill=white] (1.5,6) circle (0.1);
\draw[fill=white] (2.5,6) circle (0.1);
\draw[fill=white] (3.5,6) circle (0.1);
\draw[fill=white] (4.5,6) circle (0.1);
\draw[fill=white] (3.5,0) circle (0.1);
\draw[fill=white] (4.5,0) circle (0.1);

\node[left,scale=\vS] at (0,0.5) {$y_1$};
\node[left,scale=\vS] at (0,1.5) {$\vdots$};
\node[left,scale=\vS] at (0,2.5) {$y_m$};
\node[left,scale=\vS] at (0,3.5) {$x_1$};
\node[left,scale=\vS] at (0,4.5) {$\vdots$};
\node[left,scale=\vS] at (0,5.5) {$x_m$};
\node[below,scale=\vS] at (1.5,0) {$\leftarrow \;\; m \;\; \rightarrow$};
\end{tikzpicture}
$
} 
=
\resizebox{0.2\textwidth}{!}{
$
\begin{tikzpicture}[baseline = (current bounding box).center]
\def\vS{1.8}
\draw[fill=gray] (0,0) rectangle (5,2);
\draw[fill=violet] (0,2) rectangle (5,3);
\draw[step=1.0] (0,0) grid (5,3);
\draw[fill=violet] (0,4) rectangle (5,6);
\draw[fill=gray] (0,3) rectangle (5,4);
\draw[step=1.0] (0,3) grid (5,6);
\draw[fill=black] (5,0.5) circle (0.1);
\draw[fill=black] (5,1.5) circle (0.1);
\draw[fill=white] (5,2.5) circle (0.1);
\draw[fill=black] (5,3.5) circle (0.1);
\draw[fill=white] (5,4.5) circle (0.1);
\draw[fill=white] (5,5.5) circle (0.1);
\draw[fill=white] (0,0.5) circle (0.1);
\draw[fill=white] (0,1.5) circle (0.1);
\draw[fill=white] (0,2.5) circle (0.1);
\draw[fill=white] (0,3.5) circle (0.1);
\draw[fill=white] (0,4.5) circle (0.1);
\draw[fill=white] (0,5.5) circle (0.1);
\draw[fill=black] (0.5,0) circle (0.1);
\draw[fill=black] (1.5,0) circle (0.1);
\draw[fill=black] (2.5,0) circle (0.1);
\draw[fill=white] (0.5,6) circle (0.1);
\draw[fill=white] (1.5,6) circle (0.1);
\draw[fill=white] (2.5,6) circle (0.1);
\draw[fill=white] (3.5,6) circle (0.1);
\draw[fill=white] (4.5,6) circle (0.1);
\draw[fill=white] (3.5,0) circle (0.1);
\draw[fill=white] (4.5,0) circle (0.1);
\node[left,scale=\vS] at (0,0.5) {$y_1$};
\node[left,scale=\vS] at (0,1.5) {$\vdots$};
\node[left,scale=\vS] at (0,3.5) {$y_m$};
\node[left,scale=\vS] at (0,2.5) {$x_1$};
\node[left,scale=\vS] at (0,4.5) {$\vdots$};
\node[left,scale=\vS] at (0,5.5) {$x_m$};
\node[below,scale=\vS] at (1.5,0) {$\leftarrow \;\; m \;\; \rightarrow$};
\end{tikzpicture}
$
}
\times \prod_{l=0}^{k-1}\left(1+x_1y_mt^l\right)^{-1}.
\]
We can repeat this to get
\[
\resizebox{0.2\textwidth}{!}{
$
\begin{tikzpicture}[baseline = (current bounding box).center]
\def\vS{1.8}
\draw[fill=gray] (0,0) rectangle (5,3);
\draw[step=1.0] (0,0) grid (5,3);
\draw[fill=violet] (0,3) rectangle (5,6);
\draw[step=1.0] (0,3) grid (5,6);
\draw[fill=black] (5,0.5) circle (0.1);
\draw[fill=black] (5,1.5) circle (0.1);
\draw[fill=black] (5,2.5) circle (0.1);
\draw[fill=white] (5,3.5) circle (0.1);
\draw[fill=white] (5,4.5) circle (0.1);
\draw[fill=white] (5,5.5) circle (0.1);
\draw[fill=white] (0,0.5) circle (0.1);
\draw[fill=white] (0,1.5) circle (0.1);
\draw[fill=white] (0,2.5) circle (0.1);
\draw[fill=white] (0,3.5) circle (0.1);
\draw[fill=white] (0,4.5) circle (0.1);
\draw[fill=white] (0,5.5) circle (0.1);
\draw[fill=black] (0.5,0) circle (0.1);
\draw[fill=black] (1.5,0) circle (0.1);
\draw[fill=black] (2.5,0) circle (0.1);
\draw[fill=white] (0.5,6) circle (0.1);
\draw[fill=white] (1.5,6) circle (0.1);
\draw[fill=white] (2.5,6) circle (0.1);
\draw[fill=white] (3.5,6) circle (0.1);
\draw[fill=white] (4.5,6) circle (0.1);
\draw[fill=white] (3.5,0) circle (0.1);
\draw[fill=white] (4.5,0) circle (0.1);
\node[left,scale=\vS] at (0,0.5) {$y_1$};
\node[left,scale=\vS] at (0,1.5) {$\vdots$};
\node[left,scale=\vS] at (0,2.5) {$y_m$};
\node[left,scale=\vS] at (0,3.5) {$x_1$};
\node[left,scale=\vS] at (0,4.5) {$\vdots$};
\node[left,scale=\vS] at (0,5.5) {$x_m$};
\node[below,scale=\vS] at (1.5,0) {$\leftarrow \;\; m \;\; \rightarrow$};
\end{tikzpicture}
$
} 
=
\resizebox{0.2\textwidth}{!}{
$
\begin{tikzpicture}[baseline = (current bounding box).center]
\def\vS{1.8}
\draw[fill=violet] (0,0) rectangle (5,1);
\draw[fill=gray] (0,1) rectangle (5,2);
\draw[fill=violet] (0,2) rectangle (5,3);
\draw[fill=gray] (0,3) rectangle (5,4);
\draw[fill=violet] (0,4) rectangle (5,5);
\draw[fill=gray] (0,5) rectangle (5,6);
\draw[step=1.0] (0,0) grid (5,3);
\draw[step=1.0] (0,3) grid (5,6);
\draw[fill=white] (5,0.5) circle (0.1);
\draw[fill=black] (5,1.5) circle (0.1);
\draw[fill=white] (5,2.5) circle (0.1);
\draw[fill=black] (5,3.5) circle (0.1);
\draw[fill=white] (5,4.5) circle (0.1);
\draw[fill=black] (5,5.5) circle (0.1);
\draw[fill=white] (0,0.5) circle (0.1);
\draw[fill=white] (0,1.5) circle (0.1);
\draw[fill=white] (0,2.5) circle (0.1);
\draw[fill=white] (0,3.5) circle (0.1);
\draw[fill=white] (0,4.5) circle (0.1);
\draw[fill=white] (0,5.5) circle (0.1);
\draw[fill=black] (0.5,0) circle (0.1);
\draw[fill=black] (1.5,0) circle (0.1);
\draw[fill=black] (2.5,0) circle (0.1);
\draw[fill=white] (0.5,6) circle (0.1);
\draw[fill=white] (1.5,6) circle (0.1);
\draw[fill=white] (2.5,6) circle (0.1);
\draw[fill=white] (3.5,6) circle (0.1);
\draw[fill=white] (4.5,6) circle (0.1);
\draw[fill=white] (3.5,0) circle (0.1);
\draw[fill=white] (4.5,0) circle (0.1);

\node[left,scale=\vS] at (0,0.5) {$x_1$};
\node[left,scale=\vS] at (0,1.5) {$y_1$};
\node[left,scale=\vS] at (0,3.5) {$\vdots$};
\node[left,scale=\vS] at (0,2.5) {$\vdots$};
\node[left,scale=\vS] at (0,4.5) {$x_m$};
\node[left,scale=\vS] at (0,5.5) {$y_m$};
\node[below,scale=\vS] at (1.5,0) {$\leftarrow \;\; m \;\; \rightarrow$};
\end{tikzpicture}
$
}
\times \prod_{l=0}^{k-1}\prod_{i\le j}\left(1+x_iy_jt^l\right)^{-1}
\]
from which we see that
\[
\resizebox{0.2\textwidth}{!}{
$
\begin{tikzpicture}[baseline = (current bounding box).center]
\def\vS{1.8}
\draw[fill=violet] (0,0) rectangle (5,1);
\draw[fill=gray] (0,1) rectangle (5,2);
\draw[fill=violet] (0,2) rectangle (5,3);
\draw[fill=gray] (0,3) rectangle (5,4);
\draw[fill=violet] (0,4) rectangle (5,5);
\draw[fill=gray] (0,5) rectangle (5,6);
\draw[step=1.0] (0,0) grid (5,3);
\draw[step=1.0] (0,3) grid (5,6);
\draw[fill=white] (5,0.5) circle (0.1);
\draw[fill=black] (5,1.5) circle (0.1);
\draw[fill=white] (5,2.5) circle (0.1);
\draw[fill=black] (5,3.5) circle (0.1);
\draw[fill=white] (5,4.5) circle (0.1);
\draw[fill=black] (5,5.5) circle (0.1);
\draw[fill=white] (0,0.5) circle (0.1);
\draw[fill=white] (0,1.5) circle (0.1);
\draw[fill=white] (0,2.5) circle (0.1);
\draw[fill=white] (0,3.5) circle (0.1);
\draw[fill=white] (0,4.5) circle (0.1);
\draw[fill=white] (0,5.5) circle (0.1);
\draw[fill=black] (0.5,0) circle (0.1);
\draw[fill=black] (1.5,0) circle (0.1);
\draw[fill=black] (2.5,0) circle (0.1);
\draw[fill=white] (0.5,6) circle (0.1);
\draw[fill=white] (1.5,6) circle (0.1);
\draw[fill=white] (2.5,6) circle (0.1);
\draw[fill=white] (3.5,6) circle (0.1);
\draw[fill=white] (4.5,6) circle (0.1);
\draw[fill=white] (3.5,0) circle (0.1);
\draw[fill=white] (4.5,0) circle (0.1);

\node[left,scale=\vS] at (0,0.5) {$x_1$};
\node[left,scale=\vS] at (0,1.5) {$y_1$};
\node[left,scale=\vS] at (0,3.5) {$\vdots$};
\node[left,scale=\vS] at (0,2.5) {$\vdots$};
\node[left,scale=\vS] at (0,4.5) {$x_m$};
\node[left,scale=\vS] at (0,5.5) {$y_m$};
\node[below,scale=\vS] at (1.5,0) {$\leftarrow \;\; m \;\; \rightarrow$};
\end{tikzpicture}
$
} 
=  (y^{\rho_m})^k t^{\binom{m}{2}\binom{k}{2}} \prod_{l=0}^{k-1}\prod_{i\le j}\left(1+x_iy_jt^l\right).
\]

Given a configuration of the lattice
\[
\resizebox{0.2\textwidth}{!}{
$
\begin{tikzpicture}[baseline = (current bounding box).center]
\def\vS{1.8}
\draw[fill=violet] (0,0) rectangle (5,1);
\draw[fill=gray] (0,1) rectangle (5,2);
\draw[fill=violet] (0,2) rectangle (5,3);
\draw[fill=gray] (0,3) rectangle (5,4);
\draw[fill=violet] (0,4) rectangle (5,5);
\draw[fill=gray] (0,5) rectangle (5,6);
\draw[step=1.0] (0,0) grid (5,3);
\draw[step=1.0] (0,3) grid (5,6);
\draw[fill=white] (5,0.5) circle (0.1);
\draw[fill=black] (5,1.5) circle (0.1);
\draw[fill=white] (5,2.5) circle (0.1);
\draw[fill=black] (5,3.5) circle (0.1);
\draw[fill=white] (5,4.5) circle (0.1);
\draw[fill=black] (5,5.5) circle (0.1);
\draw[fill=white] (0,0.5) circle (0.1);
\draw[fill=white] (0,1.5) circle (0.1);
\draw[fill=white] (0,2.5) circle (0.1);
\draw[fill=white] (0,3.5) circle (0.1);
\draw[fill=white] (0,4.5) circle (0.1);
\draw[fill=white] (0,5.5) circle (0.1);
\draw[fill=black] (0.5,0) circle (0.1);
\draw[fill=black] (1.5,0) circle (0.1);
\draw[fill=black] (2.5,0) circle (0.1);
\draw[fill=white] (0.5,6) circle (0.1);
\draw[fill=white] (1.5,6) circle (0.1);
\draw[fill=white] (2.5,6) circle (0.1);
\draw[fill=white] (3.5,6) circle (0.1);
\draw[fill=white] (4.5,6) circle (0.1);
\draw[fill=white] (3.5,0) circle (0.1);
\draw[fill=white] (4.5,0) circle (0.1);

\node[left,scale=\vS] at (0,0.5) {$x_1$};
\node[left,scale=\vS] at (0,1.5) {$y_1$};
\node[left,scale=\vS] at (0,3.5) {$\vdots$};
\node[left,scale=\vS] at (0,2.5) {$\vdots$};
\node[left,scale=\vS] at (0,4.5) {$x_m$};
\node[left,scale=\vS] at (0,5.5) {$y_m$};
\node[below,scale=\vS] at (1.5,0) {$\leftarrow \;\; m \;\; \rightarrow$};
\node at (3,0) {x}; \node at (3,1) {x}; \node at (2,2) {x}; \node at (2,3) {x}; \node at (1,4) {x}; \node at (1,5) {x}; \node at (0,6) {x};
\end{tikzpicture}
$
} 
\]
and looking at the labels on the horizontal edges row by row from bottom to top, we get a sequence of $2m+1$ $k$-tuples of Maya diagrams, where we mark the 0 content line with x's on the lattice.  The corresponding $2m+1$ $k$-tuples of partitions satisfy
\[ \bm{0} = \bm{\lambda}^0 \preceq '  \bm{\lambda}^1 \succeq \ldots \preceq'  \bm{\lambda}^{2m-1} \succeq  \bm{\lambda}^{2m} = \bm{0}. \]

\subsection{Relating lattice configurations and $k$-tilings in the purple-gray model}

Given a sequence of $k$-tuples of partitions 
\[ \bm{0} = \bm{\lambda}^0 \preceq '  \bm{\lambda}^1 \succeq \ldots \preceq'  \bm{\lambda}^{2m-1} \succeq  \bm{\lambda}^{2m} = \bm{0}, \]
we get both a configuration of the lattice
\[ 
\resizebox{0.2\textwidth}{!}{
$
\begin{tikzpicture}[baseline = (current bounding box).center]
\def\vS{1.8}
\draw[fill=violet] (0,0) rectangle (5,1);
\draw[fill=gray] (0,1) rectangle (5,2);
\draw[fill=violet] (0,2) rectangle (5,3);
\draw[fill=gray] (0,3) rectangle (5,4);
\draw[fill=violet] (0,4) rectangle (5,5);
\draw[fill=gray] (0,5) rectangle (5,6);
\draw[step=1.0] (0,0) grid (5,3);
\draw[step=1.0] (0,3) grid (5,6);
\draw[fill=white] (5,0.5) circle (0.1);
\draw[fill=black] (5,1.5) circle (0.1);
\draw[fill=white] (5,2.5) circle (0.1);
\draw[fill=black] (5,3.5) circle (0.1);
\draw[fill=white] (5,4.5) circle (0.1);
\draw[fill=black] (5,5.5) circle (0.1);
\draw[fill=white] (0,0.5) circle (0.1);
\draw[fill=white] (0,1.5) circle (0.1);
\draw[fill=white] (0,2.5) circle (0.1);
\draw[fill=white] (0,3.5) circle (0.1);
\draw[fill=white] (0,4.5) circle (0.1);
\draw[fill=white] (0,5.5) circle (0.1);
\draw[fill=black] (0.5,0) circle (0.1);
\draw[fill=black] (1.5,0) circle (0.1);
\draw[fill=black] (2.5,0) circle (0.1);
\node[left,scale=\vS] at (0,0.5) {$x_1$};
\node[left,scale=\vS] at (0,1.5) {$y_1$};
\node[left,scale=\vS] at (0,3.5) {$\vdots$};
\node[left,scale=\vS] at (0,2.5) {$\vdots$};
\node[left,scale=\vS] at (0,4.5) {$x_m$};
\node[left,scale=\vS] at (0,5.5) {$y_m$};
\node[below,scale=\vS] at (1.5,0) {$\leftarrow \;\; m \;\; \rightarrow$};
\node at (3,0) {x}; \node at (3,1) {x}; \node at (2,2) {x}; \node at (2,3) {x}; \node at (1,4) {x}; \node at (1,5) {x}; \node at (0,6) {x};
\draw[fill=black] (0.5,0) circle (0.1);
\draw[fill=black] (1.5,0) circle (0.1);
\draw[fill=black] (2.5,0) circle (0.1);
\draw[fill=white] (3.5,0) circle (0.1);
\draw[fill=white] (4.5,0) circle (0.1);
\draw[fill=white] (0.5,6) circle (0.1);
\draw[fill=white] (1.5,6) circle (0.1);
\draw[fill=white] (2.5,6) circle (0.1);
\draw[fill=white] (3.5,6) circle (0.1);
\draw[fill=white] (4.5,6) circle (0.1);
\end{tikzpicture}
$
} 
\]
and a $k$-tiling of the Aztec diamond of rank $m$.  For example, in Figure \ref{exvertex}, the tiling on the left corresponds to the configuration on the right, and in Figure \ref{grpu}, we give the 8 configurations corresponding to the tilings of the rank 2 Aztec diamond, which were listed in Figure \ref{rank2}.  It turns out that the weight of the lattice configuration and the weight of the $k$-tiling are related.

\begin{figure}[ht]
\begin{center}
\resizebox{0.5\textwidth}{!}{
\begin{tikzpicture}[baseline = (current bounding box).center]
\checkerboard{2}
\draw[ultra thick] (0,-1) rectangle (1,1); \draw[ultra thick] (1,-2) rectangle (2,0); \draw[ultra thick] (1,0) rectangle (2,2);
\draw[ultra thick] (2,-3) rectangle (4,-2); \draw[ultra thick] (2,-1) rectangle (4,0);
\draw[ultra thick] (2,2) rectangle (4,3);
\draw[ultra thick] (2,0) rectangle (3,2); \draw[ultra thick] (2,-2) rectangle (4,-1); \draw[ultra thick] (3,0) rectangle (4,2); 
\draw[ultra thick] (4,-2) rectangle (5,0); \draw[ultra thick] (4,0) rectangle (5,2); 
\draw[ultra thick] (5,-1) rectangle (6,1);       
\draw[ultra thick, blue, fill=white] (0.5,0.5) circle (5pt);
\draw[ultra thick, blue, fill=white] (0.5,-.5) circle (5pt);
\draw[ultra thick, blue, fill=white] (1.5,-1.5) circle (5pt);
\draw[ultra thick, blue, fill=white] (1.5,-.5) circle (5pt);
\draw[ultra thick, blue, fill=white] (1.5,1.5) circle (5pt);
\draw[ultra thick, blue, fill=white] (1.5,.5) circle (5pt);
\draw[ultra thick, blue, fill=white] (3.5,1.5) circle (5pt);
\draw[ultra thick, blue, fill=white] (3.5,.5) circle (5pt);
\draw[ultra thick, blue, fill=white] (3.5,2.5) circle (5pt);\draw[ultra thick, blue, fill=white] (2.5,2.5) circle (5pt);
\draw[ultra thick, blue, fill=white] (3.5,-1.5) circle (5pt);
\draw[ultra thick, blue, fill=white] (2.5,-1.5) circle (5pt);
\draw[ultra thick, blue, fill=blue] (3.5,-2.5) circle (5pt);
\draw[ultra thick, blue, fill=blue] (2.5,-2.5) circle (5pt);
\draw[ultra thick, blue, fill=blue] (3.5,-.5) circle (5pt);
\draw[ultra thick, blue, fill=blue] (2.5,-.5) circle (5pt);
\draw[ultra thick, blue, fill=blue] (4.5,-.5) circle (5pt);
\draw[ultra thick, blue, fill=blue] (4.5,-1.5) circle (5pt);
\draw[ultra thick, blue, fill=blue] (4.5,.5) circle (5pt);
\draw[ultra thick, blue, fill=blue] (4.5,1.5) circle (5pt);

\draw[ultra thick, blue, fill=blue] (2.5,.5) circle (5pt);
\draw[ultra thick, blue, fill=blue] (2.5,1.5) circle (5pt);
\draw[ultra thick, blue, fill=blue] (5.5,-.5) circle (5pt);
\draw[ultra thick, blue, fill=blue] (5.5,.5) circle (5pt);

\end{tikzpicture}\hspace{1cm}
\begin{tikzpicture}[baseline = (current bounding box).center]
\vertex{3}
\draw[ultra thick, blue](0.5,0)--(0.5,1.5)--(1.5,1.5)--(1.5,4.5)--(2.5,4.5)--(2.5,5.5)--(5,5.5);
\draw[ultra thick, blue](1.5,0)--(1.5,.5)--(3.5,.5)--(3.5,1.5)--(5,1.5);
\draw[ultra thick, blue](2.5,0)--(2.5,2.5)--(3.5,2.5)--(3.5,3.5)--(5,3.5);

\end{tikzpicture}}
\end{center}
\caption{Domino tilings and
vertex models}
\label{exvertex}
\end{figure}
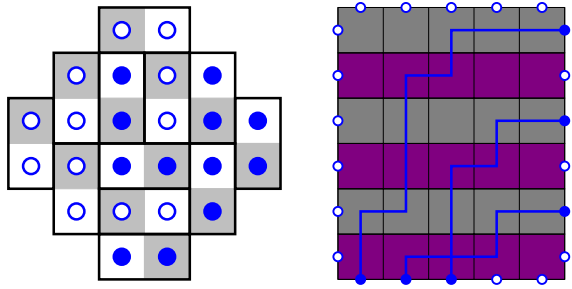

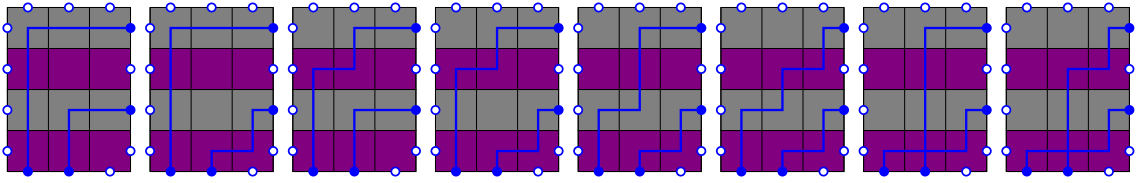
\begin{figure}[ht]
\resizebox{\textwidth}{!}{
\begin{tikzpicture}\vertex{2}
\draw[ultra thick, blue] (0.5,0)--(0.5,3.5)--(3,3.5);
\draw[ultra thick, blue] (1.5,0)--(1.5,1.5)--(3,1.5);
\end{tikzpicture}\ \ 
\begin{tikzpicture}
\vertex{2}
\draw[ultra thick, blue] (0.5,0)--(0.5,3.5)--(3,3.5);
\draw[ultra thick, blue] (1.5,0)--(1.5,0.5)--(2.5,0.5)--(2.5,1.5)--(3,1.5);
\end{tikzpicture}\ \ 
\begin{tikzpicture}\vertex{2}
\draw[ultra thick, blue] (0.5,0)--(0.5,2.5)--(1.5,2.5)--(1.5,3.5)--(3,3.5);
\draw[ultra thick, blue] (1.5,0)--(1.5,1.5)--(3,1.5);
\end{tikzpicture}\ \ 
\begin{tikzpicture}\vertex{2}
\draw[ultra thick, blue] (0.5,0)--(0.5,2.5)--(1.5,2.5)--(1.5,3.5)--(3,3.5);
\draw[ultra thick, blue]  (1.5,0)--(1.5,0.5)--(2.5,0.5)--(2.5,1.5)--(3,1.5);
\end{tikzpicture}\ \ 
\begin{tikzpicture}\vertex{2}
\draw[ultra thick, blue] (0.5,0)--(0.5,1.5)--(1.5,1.5)--(1.5,3.5)--(3,3.5);
\draw[ultra thick, blue]  (1.5,0)--(1.5,0.5)--(2.5,0.5)--(2.5,1.5)--(3,1.5);
\end{tikzpicture}\ \ 

\begin{tikzpicture}\vertex{2}
\draw[ultra thick, blue] (0.5,0)--(0.5,1.5)--(1.5,1.5)--(1.5,2.5)--(2.5,2.5)--(2.5,3.5)--(3,3.5);
\draw[ultra thick, blue]  (1.5,0)--(1.5,0.5)--(2.5,0.5)--(2.5,1.5)--(3,1.5);
\end{tikzpicture}\ \ 

\begin{tikzpicture}\vertex{2}
\draw[ultra thick, blue] (0.5,0)--(0.5,0.5)--(1.5,0.5)--(1.5,3.5)--(3,3.5);
\draw[ultra thick, blue]  (1.5,0)--(1.5,0.5)--(2.5,0.5)--(2.5,1.5)--(3,1.5);
\end{tikzpicture}\ \ 

\begin{tikzpicture}\vertex{2}
\draw[ultra thick, blue] (0.5,0)--(0.5,0.5)--(1.5,0.5)--(1.5,2.5)--(2.5,2.5)--(2.5,3.5)--(3,3.5);
\draw[ultra thick, blue]  (1.5,0)--(1.5,0.5)--(2.5,0.5)--(2.5,1.5)--(3,1.5);
\end{tikzpicture}}

\caption{Purple-gray lattice configurations for the Aztec diamond of rank 2}
\label{grpu}
\end{figure}

For the purple faces, one gets a $t$ whenever you have a face of the form
\[
\begin{tikzpicture}[baseline = (current bounding box).center]
\draw[fill=violet] (0,0) rectangle (1,1);
\draw[very thick, blue] (0.5,0.6)--(1,0.6);
\draw[very thick, red] (0.6,0)--(0.6,1); 
\end{tikzpicture}
\]
where blue is a smaller color than red. It is easy to see that this equals the number of domino configurations of the form
\[ \resizebox{1.0cm}{!}{
\begin{tikzpicture}[baseline = (current bounding box).center]
\draw[lightgray] (0,1) rectangle (1,2);
\draw[lightgray,fill=lightgray] (0,0) rectangle (1,1);
\draw[lightgray] (-1,0) rectangle (0,1);
\draw[line width=1mm, blue] (0,0) rectangle (1,2);
\draw[line width=1mm, red] (-1.01,-0.01) rectangle (0.99,0.99);
\end{tikzpicture}. } \]
which is one of the configurations that give a $t$.  One gets an $x_i$ whenever a path exits right in the $i$-th purple row. It is easy to see that this equals the number of dominos of the form
\[ \resizebox{.5cm}{!}{
\begin{tikzpicture}[baseline = (current bounding box).center]
\draw[lightgray, fill=lightgray] (0,0) rectangle (1,1);
\draw[lightgray] (0,1) rectangle (1,2);
\draw[fill=black] (0.5,0.5) circle (5pt);
\draw[fill=black] (0.5,1.5) circle (5pt);
\draw[very thick, blue] (0,0) rectangle (1,2);
\end{tikzpicture}}
\]
whose top square is on slice $2i-1$, which equals the $x_i$ power we give the dominos.

We are left to consider the gray faces.  Let's look at the $(m-p+1)$-th gray row
\[\resizebox{4cm}{!}{
\begin{tikzpicture}[baseline = (current bounding box)4center]
\draw[fill=gray] (0,0) rectangle (5,1);
\draw[step=1.0] (0,0) grid (5,1);
\draw[fill=white] (0,0.5) circle (0.1);
\draw[fill=black] (5,0.5) circle (0.1);
\node[below] at (1.5,0) {$\longleftarrow \;\; p \;\; \longrightarrow$};
\node at (3,0) {x}; \node at (2,1) {x};
\end{tikzpicture}}
\]
for some $p \in [m]$, where the row has total length $M$.  Let $y = y_{m-p+1}$.
For a single color $a \in [k]$, we can write the $y$-weight for the row as
\[ y^{\left(
\# \resizebox{0.75cm}{!}{\begin{tikzpicture}[baseline=(current bounding box.center)]\draw[thin,fill=gray] (0,0) rectangle (1,1); \end{tikzpicture}}
+ \# \resizebox{0.75cm}{!}{\begin{tikzpicture}[baseline=(current bounding box.center)] \draw[thin,fill=gray] (0,0) rectangle (1,1); \draw[blue, thick] (0,0.5)--(0.5,0.5)--(0.5,1); \end{tikzpicture}}
+ \# \resizebox{0.75cm}{!}{\begin{tikzpicture}[baseline=(current bounding box.center)] \draw[thin,fill=gray] (0,0) rectangle (1,1); \draw[blue, thick] (0.5,0)--(0.5,1); \end{tikzpicture}} \right)}.
\]

\begin{lem}
\[
\# \resizebox{0.75cm}{!}{\begin{tikzpicture}[baseline=(current bounding box.center)] \draw[thin,fill=gray] (0,0) rectangle (1,1); \draw[blue, thick] (0,0.5)--(0.5,0.5)--(0.5,1); \end{tikzpicture}}
+ \# \resizebox{0.75cm}{!}{\begin{tikzpicture}[baseline=(current bounding box.center)] \draw[thin,fill=gray] (0,0) rectangle (1,1); \draw[blue, thick] (0.5,0)--(0.5,1); \end{tikzpicture}}
= p-1
\]
\end{lem}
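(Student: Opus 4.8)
The plan is to reduce the left-hand side to a single-colour boundary count and then to propagate that count through the lattice, one row at a time, using path conservation.

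First I would interpret the left-hand side. Among the five admissible local configurations of a gray vertex, exactly two put a colour-$a$ path on the top edge of the vertex: the vertical step and the left-to-top turn; the empty vertex, the bottom-to-right turn, and the horizontal step do not. Hence, writing $a$ for the colour of the blue paths, the quantity $\#(\text{left-to-top})+\#(\text{vertical})$ in this gray row equals the number of columns whose top edge carries a colour-$a$ path, i.e.\ the number of $\bullet$'s in the colour-$a$ Maya diagram sitting on the top boundary of the row. Because the $k$-colour weights are products of one-colour weights and the path-conservation factor $\mathbf{1}_{\I+\J=\K+\L}$ is componentwise, it is enough to argue for one fixed colour $a$, which I do from now on.

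Next I would set up the bookkeeping. For a horizontal edge $e$ of the lattice let $N(e)$ be the number of $\bullet$'s of colour $a$ on $e$ (within the $M$ columns of the row). Inside any single row at most one colour-$a$ path runs through each vertex, and every such path enters from below or from the left and exits through the top or the right; counting entries against exits gives
\[
N(\text{bottom edge of the row})+\delta_{\mathrm{left}}=N(\text{top edge of the row})+\delta_{\mathrm{right}},
\]
where $\delta_{\mathrm{left}},\delta_{\mathrm{right}}\in\{0,1\}$ record whether colour $a$ occupies the left (resp.\ right) boundary half-edge of that row. In the purple-gray lattice every left boundary half-edge is empty, every violet right boundary half-edge is empty, and every gray right boundary half-edge carries all colours; therefore a violet row leaves $N$ unchanged while a gray row decreases $N$ by exactly $1$.

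Finally I would initialise and count. The bottom boundary of the lattice is $\bm{\lambda}^0=\bm{0}$ with exactly the leftmost $m$ columns occupied by all colours, so $N=m$ there. The $(m-p+1)$-th gray row is the $(2m-2p+2)$-th row, and strictly below its top edge lie precisely $m-p+1$ gray rows (rows $2,4,\dots,2m-2p+2$) together with violet rows which do not change $N$; hence its top edge has $N=m-(m-p+1)=p-1$, which by the first step is $\#(\text{left-to-top})+\#(\text{vertical})$. (Equivalently: the marked $0$-content line on that edge sits $p-1$ columns from the left, and the balance property defining the $0$-content line forces exactly $p-1$ $\bullet$'s.) The \textbf{main obstacle} is this conservation step — one must read off correctly, from the lattice produced by the Yang–Baxter moves, which boundary half-edges carry all colours and which carry none (gray right edges versus violet right edges versus all left edges), and one must check that inside a gray row a colour-$a$ path can only go straight up, turn once, or pass horizontally, never merging or splitting. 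Both facts are immediate from the explicit one-colour weight tables, but they are exactly what legitimizes the entry/exit identity; as a sanity check, the same bookkeeping gives $N=0$ on the top boundary $\bm{\lambda}^{2m}=\bm{0}$, since all $m$ gray rows lie below it.
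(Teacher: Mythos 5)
Your argument is correct and takes essentially the same route as the paper: both proofs rest on single-colour path conservation, identifying the left-hand side with the number of colour-$a$ paths exiting the gray row through the top, with no paths entering from the left and exactly one leaving through the right. The only difference is that you justify the count of $p$ particles on the row's bottom edge (equivalently $p-1$ on its top edge) by propagating conservation from the lattice's bottom boundary through the gray rows below, a detail the paper simply reads off from the setup.
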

\begin{proof}
The left-hand side counts the number of paths that exit the row through the top.  There are $p$ paths entering the row through the bottom and 0 paths entering the row from the left, and there is 1 path exiting the row through the right.  By path conservation, this means that there are $p-1$ paths exiting the row through the top.
\end{proof}
\noindent Note that the number of empty vertices equals the number of cells that get removed from the partition, i.e.
\[ \# \resizebox{0.75cm}{!}{\begin{tikzpicture}[baseline=(current bounding box.center)]\draw[thin,fill=gray] (0,0) rectangle (1,1); \end{tikzpicture}} = |\lambda^{(a)}_{2(m-p+1)}| - |\lambda^{(a)}_{2(m-p+1)-1}|. \]
It is easy to see that this equals the number the number of dominos of the form
\[ \resizebox{0.5cm}{!}{ \begin{tikzpicture}[baseline = (current bounding box).center]
\draw[lightgray] (0,0) rectangle (1,1);
\draw[lightgray,fill=lightgray] (0,1) rectangle (1,2);
\draw[very thick, blue] (0,0) rectangle (1,2);
\end{tikzpicture}} \]
whose bottom square is on slice $2(m-p+1)-1$, which equals the $y$ power we give the dominos.  Thus if we pull out a factor of $y^{p-1}$ from the gray row, then the $y$-weights agree.  For the $t$-weight of the row, we can write the $t$ power as
\[ \begin{aligned}
&\sum_{1 \leq a < b \leq k} \# \{ \text{faces} | \text{$a$ right, $b$ top} \} + \# \{ \text{faces} | \text{$a$ not right, $b$ not right} \} \\
&= \sum_{1 \leq a < b \leq k} \# \{ \text{faces} | \text{$a$ right, $b$ top} \} + \# \{ \text{faces} | \text{$a$ not right, $b$ top} \} + \# \{ \text{faces} | \text{$a$ not right, $b$ absent} \} \\
&= \sum_{1 \leq a < b \leq k} \# \{ \text{faces} | \text{$b$ top} \} + \# \{ \text{faces} | \text{$a$ not right, $b$ absent} \} \\
&= \sum_{1 \leq a < b \leq k} (p-1) + \# \{ \text{faces} | \text{$a$ not right, $b$ absent} \} \\
&= \binom{k}{2}(p-1) + \sum_{1 \leq a < b \leq k} \# \{ \text{faces} | \text{$a$ not right, $b$ absent} \}
\end{aligned} \]
where we have used the fact that $\# \{ \text{faces} : \text{$b$ exits top} \} = p-1$ by the previous lemma.  It is easy to see that $\# \{ \text{faces} : \text{$a$ not right, $b$ absent} \}$ equals the number of domino configurations of the form
\[ 
\resizebox{0.5cm}{!}{
\begin{tikzpicture}[baseline = (current bounding box).center]
\draw[lightgray,fill=lightgray] (0,1) rectangle (1,2);
\draw[lightgray] (0,0) rectangle (1,1);
\draw[line width=1mm, blue] (0,0) rectangle (1,2);
\draw[line width=1mm, red] (-0.05,-0.05) rectangle (0.95,1.95);
\end{tikzpicture}}\ ,
\;\;\;
\resizebox{1cm}{!}{
 \begin{tikzpicture}[baseline = (current bounding box).center]
\draw[lightgray,fill=lightgray] (0,1) rectangle (1,2);
\draw[lightgray] (0,0) rectangle (1,1);
\draw[very thick, blue] (-1.01,0.99) rectangle (0.99,1.99);
\draw[very thick, red] (0,0) rectangle (1,2);
\end{tikzpicture}}\ , {\rm or}
\;\;\;
\resizebox{0.5cm}{!}{
 \begin{tikzpicture}[baseline = (current bounding box).center]
\draw[lightgray,fill=lightgray] (0,1) rectangle (1,2);
\draw[lightgray] (0,0) rectangle (1,1);
\draw[very thick, blue] (-0.01,0.99) rectangle (.99,2.99);
\draw[very thick, red] (0,0) rectangle (1,2);
\end{tikzpicture} }
\]
where blue is color $a$ and red is color $b$, which are three of the configurations that give a $t$.  Thus if we pull out a factor of $ t^{\binom{k}{2}(p-1)}$ from the gray row, then the $t$-weights agree.

Putting it all together, we arrive at the following results.

\begin{lem}
There is a weight-preserving bijection between configurations of the purple-gray lattice
\[
\resizebox{0.2\textwidth}{!}{
$
\begin{tikzpicture}[baseline = (current bounding box).center]
\def\vS{1.8}
\draw[fill=violet] (0,0) rectangle (5,1);
\draw[fill=gray] (0,1) rectangle (5,2);
\draw[fill=violet] (0,2) rectangle (5,3);
\draw[fill=gray] (0,3) rectangle (5,4);
\draw[fill=violet] (0,4) rectangle (5,5);
\draw[fill=gray] (0,5) rectangle (5,6);
\draw[step=1.0] (0,0) grid (5,3);
\draw[step=1.0] (0,3) grid (5,6);
\draw[fill=white] (5,0.5) circle (0.1);
\draw[fill=black] (5,1.5) circle (0.1);
\draw[fill=white] (5,2.5) circle (0.1);
\draw[fill=black] (5,3.5) circle (0.1);
\draw[fill=white] (5,4.5) circle (0.1);
\draw[fill=black] (5,5.5) circle (0.1);
\draw[fill=white] (0,0.5) circle (0.1);
\draw[fill=white] (0,1.5) circle (0.1);
\draw[fill=white] (0,2.5) circle (0.1);
\draw[fill=white] (0,3.5) circle (0.1);
\draw[fill=white] (0,4.5) circle (0.1);
\draw[fill=white] (0,5.5) circle (0.1);
\draw[fill=black] (0.5,0) circle (0.1);
\draw[fill=black] (1.5,0) circle (0.1);
\draw[fill=black] (2.5,0) circle (0.1);
\draw[fill=white] (3.5,0) circle (0.1);
\draw[fill=white] (4.5,0) circle (0.1);
\draw[fill=white] (0.5,6) circle (0.1);
\draw[fill=white] (1.5,6) circle (0.1);
\draw[fill=white] (2.5,6) circle (0.1);
\draw[fill=white] (3.5,6) circle (0.1);
\draw[fill=white] (4.5,6) circle (0.1);
\node[left,scale=\vS] at (0,0.5) {$x_1$};
\node[left,scale=\vS] at (0,1.5) {$y_1$};
\node[left,scale=\vS] at (0,3.5) {$\vdots$};
\node[left,scale=\vS] at (0,2.5) {$\vdots$};
\node[left,scale=\vS] at (0,4.5) {$x_m$};
\node[left,scale=\vS] at (0,5.5) {$y_m$};
\node[below,scale=\vS] at (1.5,0) {$\leftarrow \;\; m \;\; \rightarrow$};
\node at (3,0) {x}; \node at (3,1) {x}; \node at (2,2) {x}; \node at (2,3) {x}; \node at (1,4) {x}; \node at (1,5) {x}; \node at (0,6) {x};
\end{tikzpicture}
$
} 
\]
and $k$-tilings of the Aztec diamond of rank $m$. By weight-preserving, we mean that the weight of the configuration is $(y^{\rho_m})^k t^{\binom{m}{2}\binom{k}{2}}$ times the weight of the $k$-tiling.
\end{lem}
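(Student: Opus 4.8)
The plan is to obtain the bijection by composing two correspondences that have already been set up, and then to prove the weight identity by matching the weight of a lattice configuration to that of the corresponding $k$-tiling one lattice row at a time, reusing the purple-face analysis, the gray-face analysis, and the preceding lemma.

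\textbf{The bijection.} First I would recall from the previous subsection that reading the labels on the horizontal edges of a configuration $C$ of the purple-gray lattice row by row from bottom to top yields a sequence of $k$-tuples of Maya diagrams, hence a sequence of $k$-tuples of partitions
\[ \bm 0 = \bm\lambda^0 \preceq' \bm\lambda^1 \succeq \ldots \preceq' \bm\lambda^{2m-1} \succeq \bm\lambda^{2m} = \bm 0; \]
and, conversely, that each such sequence comes from a unique configuration, because applying the one-color analysis of Section \ref{1-color-vertex-models-section} to each color shows that every purple or gray row has exactly one valid filling once its horizontal boundary labels are prescribed. On the other side, the definition of the purple-gray model in Section \ref{interactions-subsection} puts the same sequences in bijection with $k$-tilings of the Aztec diamond of rank $m$. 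Composing gives the bijection in the statement; I would record the example of Figures \ref{exvertex} and \ref{grpu} to make it concrete.

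\textbf{The weight identity.} Both $\weight(C)$ and $(y^{\rho_m})^k t^{\binom{m}{2}\binom{k}{2}} w(\bm T)$ are products over the $2m$ rows of the lattice, so it suffices to compare them row by row. For the $i$-th purple row, the discussion above shows its weight is $x_i$ raised to the number of rightward path-steps in the row and $t$ raised to the number of faces in which a path exits right while a path of a larger color is present; by the Maya-diagram-to-domino dictionary these counts are exactly the number of dominos on slice $2i-1$ carrying weight $x_i$ and the number of interactions of the one purple-gray type supported there, so the purple rows contribute no universal prefactor and match the $x_i$-part of $\prod_i w(T_i)$ together with part of $t^{\#\mathrm{interactions}}$. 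For the $(m-p+1)$-th gray row, the preceding lemma gives that exactly $p-1$ faces have a path leaving the top, so each color contributes a universal factor $y_{m-p+1}^{p-1}$, i.e.\ $y_{m-p+1}^{k(p-1)}$ in total; the remaining $y_{m-p+1}$-power counts empty faces, equal to the number of cells removed from the relevant partitions and hence to the number of dominos on slice $2(m-p+1)-1$ carrying weight $y_{m-p+1}$. By the telescoping computation above, the $t$-power of that gray row equals $\binom{k}{2}(p-1) + \sum_{a<b}\#\{\text{faces where }a\text{ does not exit right and }b\text{ is absent}\}$, and the second summand counts the remaining three purple-gray interaction types on that slice; so after extracting $t^{\binom{k}{2}(p-1)}$ the gray contribution matches as well.

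Finally I would multiply the row contributions. Reindexing $j = m-p+1$, the universal prefactors extracted from the gray rows multiply to $\prod_{j=1}^m y_j^{k(m-j)} t^{\binom{k}{2}(m-j)} = (y^{\rho_m})^k t^{\binom{k}{2}\binom{m}{2}}$, while the remaining row contributions multiply to $t^{\#\mathrm{interactions}}\prod_{i=1}^k w(T_i) = w(\bm T)$, which is exactly the claimed relation. The hard part is not any single estimate but the bookkeeping: checking that the four purple-gray interaction types are distributed among the faces precisely as asserted (one in purple rows, three in gray rows) with no double counting, and verifying the chain of equalities for the gray-row $t$-power; this is where one must use carefully that path conservation together with the fixed white/black boundaries forces the stated face configurations, so that the ``it is easy to see'' identifications above are genuinely bijective.
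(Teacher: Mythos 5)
Your proposal is correct and follows essentially the same route as the paper: the bijection via the sequence of $k$-tuples of Maya diagrams, the purple-row matching of $x_i$-powers and one interaction type, the gray-row lemma giving $p-1$ top exits per color, the identity $t$-power $=\binom{k}{2}(p-1)+\sum_{a<b}\#\{\text{$a$ not right, $b$ absent}\}$, and the extraction of the prefactor $(y^{\rho_m})^k t^{\binom{k}{2}\binom{m}{2}}$. One small precision to fix when writing it up: in a purple row the factor of $t$ arises when a smaller color exits right and a larger color passes \emph{vertically} through the face (per the definition of $\delta_i'$), not merely when a larger color is present.
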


\begin{thm}
The partition function of 
\[
\resizebox{0.2\textwidth}{!}{
$
\begin{tikzpicture}[baseline = (current bounding box).center]
\def\vS{1.8}
\draw[fill=violet] (0,0) rectangle (5,1);
\draw[fill=gray] (0,1) rectangle (5,2);
\draw[fill=violet] (0,2) rectangle (5,3);
\draw[fill=gray] (0,3) rectangle (5,4);
\draw[fill=violet] (0,4) rectangle (5,5);
\draw[fill=gray] (0,5) rectangle (5,6);
\draw[step=1.0] (0,0) grid (5,3);
\draw[step=1.0] (0,3) grid (5,6);
\draw[fill=white] (5,0.5) circle (0.1);
\draw[fill=black] (5,1.5) circle (0.1);
\draw[fill=white] (5,2.5) circle (0.1);
\draw[fill=black] (5,3.5) circle (0.1);
\draw[fill=white] (5,4.5) circle (0.1);
\draw[fill=black] (5,5.5) circle (0.1);
\draw[fill=white] (0,0.5) circle (0.1);
\draw[fill=white] (0,1.5) circle (0.1);
\draw[fill=white] (0,2.5) circle (0.1);
\draw[fill=white] (0,3.5) circle (0.1);
\draw[fill=white] (0,4.5) circle (0.1);
\draw[fill=white] (0,5.5) circle (0.1);
\draw[fill=black] (0.5,0) circle (0.1);
\draw[fill=black] (1.5,0) circle (0.1);
\draw[fill=black] (2.5,0) circle (0.1);
\draw[fill=white] (3.5,0) circle (0.1);
\draw[fill=white] (4.5,0) circle (0.1);
\draw[fill=white] (0.5,6) circle (0.1);
\draw[fill=white] (1.5,6) circle (0.1);
\draw[fill=white] (2.5,6) circle (0.1);
\draw[fill=white] (3.5,6) circle (0.1);
\draw[fill=white] (4.5,6) circle (0.1);
\node[left,scale=\vS] at (0,0.5) {$x_1$};
\node[left,scale=\vS] at (0,1.5) {$y_1$};
\node[left,scale=\vS] at (0,3.5) {$\vdots$};
\node[left,scale=\vS] at (0,2.5) {$\vdots$};
\node[left,scale=\vS] at (0,4.5) {$x_m$};
\node[left,scale=\vS] at (0,5.5) {$y_m$};
\node[below,scale=\vS] at (1.5,0) {$\leftarrow \;\; m \;\; \rightarrow$};
\node at (3,0) {x}; \node at (3,1) {x}; \node at (2,2) {x}; \node at (2,3) {x}; \node at (1,4) {x}; \node at (1,5) {x}; \node at (0,6) {x};
\end{tikzpicture}
$
} 
\]
with $k$ colors is equal to $(y^{\rho_m})^k t^{\binom{m}{2}\binom{k}{2}}$ times the partition function of the $k$-tiling of the Aztec diamond of rank $m$ in the purple-gray model.  We have
\[
Z_{AD,purple-gray}^{(k)}(X_m;Y_m;t) = \prod_{l=0}^{k-1}\prod_{i\le j}\left(1+x_iy_jt^l\right).
\]
\end{thm}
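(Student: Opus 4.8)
The plan is to evaluate the partition function of the displayed alternating purple-gray lattice in two ways and match the answers. On one hand, the preceding lemma already puts configurations of this lattice in weight-preserving bijection with $k$-tilings of the Aztec diamond of rank $m$, the weight of a configuration being $(y^{\rho_m})^k t^{\binom{m}{2}\binom{k}{2}}$ times the weight of the corresponding tiling; summing over all configurations, the partition function of the alternating lattice equals $(y^{\rho_m})^k t^{\binom{m}{2}\binom{k}{2}}\,Z_{AD,purple-gray}^{(k)}(X_m;Y_m;t)$. On the other hand, I will compute this same partition function directly via the Yang--Baxter equation, and dividing by the common prefactor $(y^{\rho_m})^k t^{\binom{m}{2}\binom{k}{2}}$ will give the formula.

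For the direct computation I start from the ``frozen'' lattice in which all $m$ gray rows sit below all $m$ purple rows, carrying the same frozen external data (all colors present on the bottom and right edges of the gray block, no colors on every other external edge). Path conservation forces a unique internal configuration --- the $p$ colored paths entering the $p$-th column from the left run straight up through the gray block and then turn right --- so its partition function is a single monomial, which I would check equals $(y^{\rho_m})^k t^{\binom{m}{2}\binom{k}{2}}$ by reading off the $\M^{(k)}$ weights row by row. Next I reorder the rows into alternating position using Proposition \ref{prop:YBEpurplegray}: one inserts an $R'$ cross at the left boundary, where it costs nothing because both external left edges of the whole lattice are empty, railroads it rightward through one gray row and the purple row just below it (swapping their order), and pops it off at the right boundary. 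The pop-off at the right produces a scalar which, after multiplying the one-color $R'$ weights over the $k$ colors with $\epsilon'_c=\#\{\text{colors}>c\ \text{present}\}$, comes out to $\prod_{l=0}^{k-1}(1+x_iy_jt^l)^{-1}$ for the swap of the $x_i$ row past the $\bar y_j$ row. Cascading these swaps --- sliding $x_1$ below $\bar y_m,\dots,\bar y_1$, then $x_2$ below $\bar y_m,\dots,\bar y_2$, and so on --- carries the frozen lattice to the alternating lattice while multiplying by $\prod_{l=0}^{k-1}\prod_{1\le i\le j\le m}(1+x_iy_jt^l)^{-1}$. Hence the alternating lattice has partition function $(y^{\rho_m})^k t^{\binom{m}{2}\binom{k}{2}}\prod_{l=0}^{k-1}\prod_{i\le j}(1+x_iy_jt^l)$.

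Comparing the two evaluations and cancelling $(y^{\rho_m})^k t^{\binom{m}{2}\binom{k}{2}}$ gives $Z_{AD,purple-gray}^{(k)}(X_m;Y_m;t)=\prod_{l=0}^{k-1}\prod_{1\le i\le j\le m}(1+x_iy_jt^l)$, as claimed.

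The main obstacle is the railroading step: one must verify that the spectral parameters on the inserted $R'$ cross satisfy the hypotheses of Proposition \ref{prop:YBEpurplegray} at every swap, that the cross genuinely enters for free on the left throughout the whole cascade (the external left edges must remain empty after each move), and --- most delicately --- that popping the cross off on the right with the frozen boundary data yields exactly $\prod_{l=0}^{k-1}(1+x_iy_jt^l)^{-1}$ rather than some other rational function. This last point is a careful but elementary evaluation of the $k$-color $R'$ weight on the boundary configuration forced by the frozen data, using the one-color $R'$ table together with $R_{y/x}^{'(k)}=\prod_c R_{y/(xt^{\epsilon'_c})}^{'(1)}$.
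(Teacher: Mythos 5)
Your proposal is correct and follows essentially the same route as the paper: it also evaluates the alternating purple-gray lattice once via the weight-preserving bijection of the preceding lemma and once by starting from the frozen block lattice (whose unique configuration has weight $(y^{\rho_m})^k t^{\binom{m}{2}\binom{k}{2}}$) and railroading an $R'$ cross through via Proposition \ref{prop:YBEpurplegray}, with each swap of an $x_i$ row past a $\bar y_j$ row contributing $\prod_{l=0}^{k-1}(1+x_iy_jt^l)^{-1}$. The remaining verifications you flag (free insertion on the empty left boundary and the evaluation of the popped-off $k$-color $R'$ weight) are exactly the computations the paper carries out.
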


\noindent In Figure \ref{ex:3til-bijection}, we exhibit an example of the bijection for $k,m=3$.
We have the following 3-tuples of partitions.
\begin{eqnarray*}
\bm{\lambda}^0&=&(\emptyset,\emptyset,\emptyset)\\
\bm{\lambda}^1&=&((1,1),(1,1,1),(1))\\
\bm{\lambda}^2&=&((1,1),(1,1),\emptyset)\\
\bm{\lambda}^3&=&((2,1),(1,1),(1))\\
\bm{\lambda}^4&=&((1),(1),\emptyset)\\
\bm{\lambda}^5&=&((2),(1),\emptyset)\\
\bm{\lambda}^6&=&(\emptyset,\emptyset,\emptyset)\\
\end{eqnarray*}

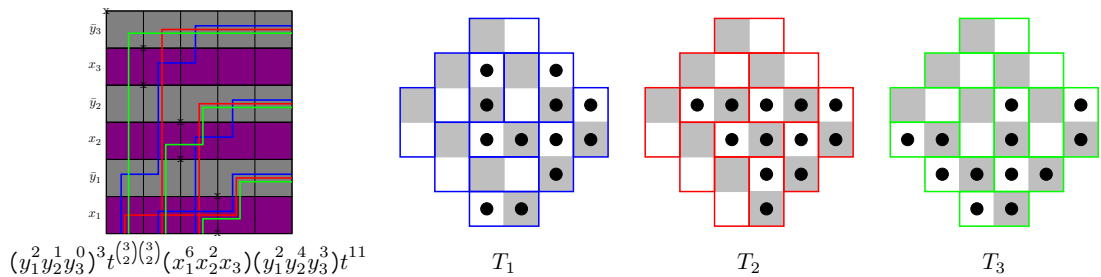
\begin{figure}[ht]
\begin{center}
\resizebox{\textwidth}{!}{
\begin{tabular}{cccc}
\resizebox{0.22\textwidth}{!}{
\begin{tikzpicture}[baseline = (current bounding box).center]
\def\vS{1.8}
\draw[fill=violet] (0,0) rectangle (5,1);
\draw[fill=gray] (0,1) rectangle (5,2);
\draw[fill=violet] (0,2) rectangle (5,3);
\draw[fill=gray] (0,3) rectangle (5,4);
\draw[fill=violet] (0,4) rectangle (5,5);
\draw[fill=gray] (0,5) rectangle (5,6);
\draw[step=1.0] (0,0) grid (5,3);
\draw[step=1.0] (0,3) grid (5,6);
\node[left,scale=\vS] at (0,0.5) {$x_1$};
\node[left,scale=\vS] at (0,1.5) {$y_1$};
\node[left,scale=\vS] at (0,3.5) {$y_2$};
\node[left,scale=\vS] at (0,2.5) {$x_2$};
\node[left,scale=\vS] at (0,4.5) {$x_3$};
\node[left,scale=\vS] at (0,5.5) {$y_3$};
\node at (3,0) {x}; \node at (3,1) {x}; \node at (2,2) {x}; \node at (2,3) {x}; \node at (1,4) {x}; \node at (1,5) {x}; \node at (0,6) {x};
\draw[very thick, blue] (0.4,0)--(0.4,1.6)--(1.4,1.6)--(1.4,4.6)--(2.4,4.6)--(2.4,5.6)--(5,5.6);
\draw[very thick, red] (0.5,0)--(0.5,0.5)--(1.5,0.5)--(1.5,5.5)--(5,5.5);
\draw[very thick, green] (0.6,0)--(0.6,5.4)--(5,5.4);
\draw[very thick, blue] (1.4,0)--(1.4,0.6)--(2.4,0.6)--(2.4,2.6)--(3.4,2.6)--(3.4,3.6)--(5,3.6);
\draw[very thick, red] (1.5,0)--(1.5,0.5)--(2.5,0.5)--(2.5,3.5)--(5,3.5);
\draw[very thick, green] (1.6,0)--(1.6,2.4)--(2.6,2.4)--(2.6,3.4)--(5,3.4);
\draw[very thick, blue] (2.4,0)--(2.4,0.6)--(3.4,0.6)--(3.4,1.6)--(5,1.6);
\draw[very thick, red] (2.5,0)--(2.5,0.5)--(3.5,0.5)--(3.5,1.5)--(5,1.5);
\draw[very thick, green] (2.6,0)--(2.6,0.4)--(3.6,0.4)--(3.6,1.4)--(5,1.4);
\end{tikzpicture}
} 
&
\resizebox{0.22\textwidth}{!}{
\begin{tikzpicture}[baseline = (current bounding box).center]
\checkerboard{2}
\draw[fill=black] (2.5,1.5) circle (5pt); \draw[fill=black] (2.5,0.5) circle (5pt); \draw[fill=black] (4.5,1.5) circle (5pt); \draw[fill=black] (3.5,-0.5) circle (5pt); \draw[fill=black] (4.5,-0.5) circle (5pt); \draw[fill=black] (5.5,-0.5) circle (5pt);
\draw[fill=black] (2.5,-0.5) circle (5pt); \draw[fill=black] (4.5,0.5) circle (5pt); \draw[fill=black] (2.5,-2.5) circle (5pt); \draw[fill=black] (4.5,-1.5) circle (5pt);
\draw[fill=black] (5.5,0.5) circle (5pt); \draw[fill=black] (3.5,-2.5) circle (5pt);
\draw[very thick, blue] (0,-1) rectangle (1,1); \draw[very thick, blue] (1,-2) rectangle (2,0); \draw[very thick, blue] (1,0) rectangle (2,2);
\draw[very thick, blue] (2,-3) rectangle (4,-2); \draw[very thick, blue] (2,-1) rectangle (4,0);
\draw[very thick, blue] (2,2) rectangle (4,3);
\draw[very thick, blue] (2,0) rectangle (3,2); \draw[very thick, blue] (2,-2) rectangle (4,-1); \draw[very thick, blue] (3,0) rectangle (4,2); 
\draw[very thick, blue] (4,-2) rectangle (5,0); \draw[very thick, blue] (4,0) rectangle (5,2); 
\draw[very thick, blue] (5,-1) rectangle (6,1);
\end{tikzpicture}
}
&
\resizebox{0.22\textwidth}{!}{
\begin{tikzpicture}[baseline = (current bounding box).center]
\checkerboard{2}
\draw[fill=black] (1.5,0.5) circle (5pt); \draw[fill=black] (2.5,0.5) circle (5pt); \draw[fill=black] (3.5,0.5) circle (5pt); \draw[fill=black] (3.5,-0.5) circle (5pt); \draw[fill=black] (4.5,-0.5) circle (5pt); \draw[fill=black] (5.5,-0.5) circle (5pt);
\draw[fill=black] (2.5,-0.5) circle (5pt); \draw[fill=black] (4.5,0.5) circle (5pt); \draw[fill=black] (3.5,-1.5) circle (5pt); \draw[fill=black] (4.5,-1.5) circle (5pt);
\draw[fill=black] (5.5,0.5) circle (5pt); \draw[fill=black] (3.5,-2.5) circle (5pt);
\draw[very thick, red] (0,-1) rectangle (1,1); \draw[very thick, red] (1,-2) rectangle (2,0); 
\draw[very thick, red] (1,0) rectangle (3,1);
\draw[very thick, red] (2,-3) rectangle (3,-1); 
\draw[very thick, red] (2,-1) rectangle (4,0);
\draw[very thick, red] (1,1) rectangle (3,2);
\draw[very thick, red] (3,-3) rectangle (4,-1);
\draw[very thick, red] (3,0) rectangle (5,1); 
\draw[very thick, red] (3,1) rectangle (5,2); 
\draw[very thick, red] (4,-2) rectangle (5,0); 
\draw[very thick, red] (2,2) rectangle (4,3); 
\draw[very thick, red] (5,-1) rectangle (6,1);
\end{tikzpicture}
}
&
\resizebox{0.22\textwidth}{!}{
\begin{tikzpicture}[baseline = (current bounding box).center]
\checkerboard{2}
\draw[fill=black] (0.5,-0.5) circle (5pt); \draw[fill=black] (1.5,-0.5) circle (5pt); \draw[fill=black] (3.5,0.5) circle (5pt); \draw[fill=black] (2.5,-1.5) circle (5pt); \draw[fill=black] (3.5,-1.5) circle (5pt); \draw[fill=black] (5.5,-0.5) circle (5pt);
\draw[fill=black] (1.5,-1.5) circle (5pt); \draw[fill=black] (3.5,-0.5) circle (5pt); \draw[fill=black] (2.5,-2.5) circle (5pt); \draw[fill=black] (4.5,-1.5) circle (5pt);
\draw[fill=black] (5.5,0.5) circle (5pt); \draw[fill=black] (3.5,-2.5) circle (5pt);
\draw[very thick, green] (0,-1) rectangle (2,0); 
\draw[very thick, green] (0,0) rectangle (2,1);
\draw[very thick, green] (1,-2) rectangle (3,-1);
\draw[very thick, green] (2,-3) rectangle (4,-2);
\draw[very thick, green] (3,-2) rectangle (5,-1);
\draw[very thick, green] (1,1) rectangle (3,2);
\draw[very thick, green] (3,1) rectangle (5,2);
\draw[very thick, green] (2,2) rectangle (4,3);
\draw[very thick, green] (2,-1) rectangle (3,1);
\draw[very thick, green] (3,-1) rectangle (4,1);
\draw[very thick, green] (4,-1) rectangle (5,1);
\draw[very thick, green] (5,-1) rectangle (6,1);
\end{tikzpicture}
}
\\
$(y_1^2y_2^1y_3^0)^3t^{\binom{3}{2}\binom{3}{2}}(x_1^6x_2^2x_3)(y_1^2 y_2^4y_3^3) t^{11}  $ & $T_1$ & $T_2$ & $T_3$
\end{tabular}
}
\end{center}
\caption{An example of a 3-tiling of the rank 3 Aztec diamond and the corresponding purple-gray lattice configuration} 
\label{ex:3til-bijection}
\end{figure}

\subsection{The white-pink partition function}

We can apply similar techniques to analyze the white-pink model.  Consider the following lattice and its associated partition function.
\[
\resizebox{0.2\textwidth}{!}{
$
\begin{tikzpicture}[baseline = (current bounding box).center]
\def\vS{1.8}
\draw[fill=pink] (0,0) rectangle (5,3);
\draw[step=1.0] (0,0) grid (5,3);
\draw[fill=white] (0,3) rectangle (5,6);
\draw[step=1.0] (0,3) grid (5,6);
\draw[fill=black] (0,0.5) circle (0.1);
\draw[fill=black] (0,1.5) circle (0.1);
\draw[fill=black] (0,2.5) circle (0.1);
\draw[fill=white] (5,3.5) circle (0.1);
\draw[fill=white] (5,4.5) circle (0.1);
\draw[fill=white] (5,5.5) circle (0.1);
\draw[fill=white] (5,0.5) circle (0.1);
\draw[fill=white] (5,1.5) circle (0.1);
\draw[fill=white] (5,2.5) circle (0.1);
\draw[fill=white] (0,3.5) circle (0.1);
\draw[fill=white] (0,4.5) circle (0.1);
\draw[fill=white] (0,5.5) circle (0.1);
\draw[fill=black] (0.5,0) circle (0.1);
\draw[fill=black] (1.5,0) circle (0.1);
\draw[fill=white] (2.5,0) circle (0.1);
\draw[fill=black] (0.5,6) circle (0.1);
\draw[fill=black] (1.5,6) circle (0.1);
\draw[fill=black] (2.5,6) circle (0.1);
\draw[fill=black] (3.5,6) circle (0.1);
\draw[fill=black] (4.5,6) circle (0.1);
\draw[fill=white] (3.5,0) circle (0.1);
\draw[fill=white] (4.5,0) circle (0.1);
\node[left,scale=\vS] at (0,0.5) {$y_1$};
\node[left,scale=\vS] at (0,1.5) {$\vdots$};
\node[left,scale=\vS] at (0,2.5) {$y_m$};
\node[left,scale=\vS] at (0,3.5) {$x_1$};
\node[left,scale=\vS] at (0,4.5) {$\vdots$};
\node[left,scale=\vS] at (0,5.5) {$x_m$};
\node[below,scale=\vS] at (3.5,0) {$\leftarrow \;\; m \;\;\rightarrow$};
\end{tikzpicture}
$
}
= (y_1^m y_2^{m-1}\ldots y_m ^1)^k
\]
\noindent Using the Yand Baxter equation (Proposition \ref{prop:YBEpinkwhite}), we get that
\[
\resizebox{0.2\textwidth}{!}{
$
\begin{tikzpicture}[baseline = (current bounding box).center]
\def\vS{1.8}
\draw[fill=violet!40!white] (0,1) rectangle (5,2);
\draw[fill=violet!40!white] (0,3) rectangle (5,4);
\draw[fill=violet!40!white] (0,5) rectangle (5,6);
\draw[step=1.0] (0,0) grid (5,3);
\draw[] (0,0) rectangle (5,1);
\draw[] (0,2) rectangle (5,3);
\draw[] (0,4) rectangle (5,4);
\draw[step=1.0] (0,3) grid (5,6);
\draw[fill=white] (5,5.5) circle (0.1);
\draw[fill=white] (5,4.5) circle (0.1);
\draw[fill=white] (5,3.5) circle (0.1);
\draw[fill=white] (5,2.5) circle (0.1);
\draw[fill=white] (5,1.5) circle (0.1);
\draw[fill=white] (5,0.5) circle (0.1);
\draw[fill=black] (0,5.5) circle (0.1);
\draw[fill=white] (0,4.5) circle (0.1);
\draw[fill=black] (0,3.5) circle (0.1);
\draw[fill=white] (0,2.5) circle (0.1);
\draw[fill=black] (0,1.5) circle (0.1);
\draw[fill=white] (0,0.5) circle (0.1);
\draw[fill=black] (0.5,0) circle (0.1);
\draw[fill=black] (1.5,0) circle (0.1);
\node[left,scale=\vS] at (0,0.5) {$x_1$};
\node[left,scale=\vS] at (0,1.5) {$y_1$};
\node[left,scale=\vS] at (0,2.5) {$\vdots$};
\node[left,scale=\vS] at (0,3.5) {$\vdots$};
\node[left,scale=\vS] at (0,4.5) {$x_m$};
\node[left,scale=\vS] at (0,5.5) {$y_m$};
\node[below,scale=\vS] at (3.5,0) {$\leftarrow \;\; m \;\;\rightarrow$};
\draw[fill=black] (0.5,0) circle (0.1);
\draw[fill=black] (1.5,0) circle (0.1);
\draw[fill=white] (2.5,0) circle (0.1);
\draw[fill=black] (0.5,6) circle (0.1);
\draw[fill=black] (1.5,6) circle (0.1);
\draw[fill=black] (2.5,6) circle (0.1);
\draw[fill=black] (3.5,6) circle (0.1);
\draw[fill=black] (4.5,6) circle (0.1);
\draw[fill=white] (3.5,0) circle (0.1);
\draw[fill=white] (4.5,0) circle (0.1);
\end{tikzpicture}
$
} 
= (y_1\ldots y_m)^k(y^{\rho_m})^k \prod_{l=0}^{k-1}\prod_{i\le j}(1+x_iy_jt^l)
\]

Given a configuration of the lattice
\[
\resizebox{0.2\textwidth}{!}{
$
\begin{tikzpicture}[baseline = (current bounding box).center]
\def\vS{1.8}
\draw[fill=violet!40!white] (0,1) rectangle (5,2);
\draw[fill=violet!40!white] (0,3) rectangle (5,4);
\draw[fill=violet!40!white] (0,5) rectangle (5,6);
\draw[step=1.0] (0,0) grid (5,3);
\draw[] (0,0) rectangle (5,1);
\draw[] (0,2) rectangle (5,3);
\draw[] (0,4) rectangle (5,4);
\draw[step=1.0] (0,3) grid (5,6);
\draw[fill=white] (5,5.5) circle (0.1);
\draw[fill=white] (5,4.5) circle (0.1);
\draw[fill=white] (5,3.5) circle (0.1);
\draw[fill=white] (5,2.5) circle (0.1);
\draw[fill=white] (5,1.5) circle (0.1);
\draw[fill=white] (5,0.5) circle (0.1);
\draw[fill=black] (0,5.5) circle (0.1);
\draw[fill=white] (0,4.5) circle (0.1);
\draw[fill=black] (0,3.5) circle (0.1);
\draw[fill=white] (0,2.5) circle (0.1);
\draw[fill=black] (0,1.5) circle (0.1);
\draw[fill=white] (0,0.5) circle (0.1);
\draw[fill=black] (0.5,0) circle (0.1);
\draw[fill=black] (1.5,0) circle (0.1);
\node[left,scale=\vS] at (0,0.5) {$x_1$};
\node[left,scale=\vS] at (0,1.5) {$y_1$};
\node[left,scale=\vS] at (0,2.5) {$\vdots$};
\node[left,scale=\vS] at (0,3.5) {$\vdots$};
\node[left,scale=\vS] at (0,4.5) {$x_m$};
\node[left,scale=\vS] at (0,5.5) {$y_m$};
\node[below,scale=\vS] at (3.5,0) {$\leftarrow \;\; m \;\;\rightarrow$};
\draw[fill=black] (0.5,0) circle (0.1);
\draw[fill=black] (1.5,0) circle (0.1);
\draw[fill=white] (2.5,0) circle (0.1);
\draw[fill=black] (0.5,6) circle (0.1);
\draw[fill=black] (1.5,6) circle (0.1);
\draw[fill=black] (2.5,6) circle (0.1);
\draw[fill=black] (3.5,6) circle (0.1);
\draw[fill=black] (4.5,6) circle (0.1);
\draw[fill=white] (3.5,0) circle (0.1);
\draw[fill=white] (4.5,0) circle (0.1);
\node at (2,0) {x}; \node at (2,1) {x}; \node at (3,2) {x}; \node at (3,3) {x}; \node at (4,4) {x}; \node at (4,5) {x}; \node at (5,6) {x};
\end{tikzpicture}
$
} 
\]
and looking at the labels on the horizontal edges row by row from bottom to top, we get a sequence of $2m+1$ $k$-tuples of Maya diagrams, where we mark the 0 content line with x's on the lattice.  The corresponding $2m+1$ $k$-tuples of partitions satisfy
\[
\bm{0} = \bm{\lambda}^0 \preceq  \bm{\lambda}^1 \succeq ' \ldots \preceq \bm{\lambda}^{2m-1} \succeq ' \bm{\lambda}^{2m}= \bm{0}.
\]

\subsection{Relating lattice configurations and $k$-tilings in the white-pink model}

Give a sequence of $k$-tuples of partitions
\[
\bm{0} = \bm{\lambda}^0 \preceq  \bm{\lambda}^1 \succeq ' \ldots \preceq \bm{\lambda}^{2m-1} \succeq ' \bm{\lambda}^{2m}= \bm{0},
\]
we get both a configuration of the lattice
\[
\resizebox{0.2\textwidth}{!}{
$
\begin{tikzpicture}[baseline = (current bounding box).center]
\def\vS{1.8}
\draw[fill=violet!40!white] (0,1) rectangle (5,2);
\draw[fill=violet!40!white] (0,3) rectangle (5,4);
\draw[fill=violet!40!white] (0,5) rectangle (5,6);
\draw[step=1.0] (0,0) grid (5,3);
\draw[] (0,0) rectangle (5,1);
\draw[] (0,2) rectangle (5,3);
\draw[] (0,4) rectangle (5,4);
\draw[step=1.0] (0,3) grid (5,6);
\draw[fill=white] (5,5.5) circle (0.1);
\draw[fill=white] (5,4.5) circle (0.1);
\draw[fill=white] (5,3.5) circle (0.1);
\draw[fill=white] (5,2.5) circle (0.1);
\draw[fill=white] (5,1.5) circle (0.1);
\draw[fill=white] (5,0.5) circle (0.1);
\draw[fill=black] (0,5.5) circle (0.1);
\draw[fill=white] (0,4.5) circle (0.1);
\draw[fill=black] (0,3.5) circle (0.1);
\draw[fill=white] (0,2.5) circle (0.1);
\draw[fill=black] (0,1.5) circle (0.1);
\draw[fill=white] (0,0.5) circle (0.1);
\draw[fill=black] (0.5,0) circle (0.1);
\draw[fill=black] (1.5,0) circle (0.1);
\node[left,scale=\vS] at (0,0.5) {$x_1$};
\node[left,scale=\vS] at (0,1.5) {$y_1$};
\node[left,scale=\vS] at (0,2.5) {$\vdots$};
\node[left,scale=\vS] at (0,3.5) {$\vdots$};
\node[left,scale=\vS] at (0,4.5) {$x_m$};
\node[left,scale=\vS] at (0,5.5) {$y_m$};
\node[below,scale=\vS] at (3.5,0) {$\leftarrow \;\; m \;\;\rightarrow$};
\draw[fill=black] (0.5,0) circle (0.1);
\draw[fill=black] (1.5,0) circle (0.1);
\draw[fill=white] (2.5,0) circle (0.1);
\draw[fill=black] (0.5,6) circle (0.1);
\draw[fill=black] (1.5,6) circle (0.1);
\draw[fill=black] (2.5,6) circle (0.1);
\draw[fill=black] (3.5,6) circle (0.1);
\draw[fill=black] (4.5,6) circle (0.1);
\draw[fill=white] (3.5,0) circle (0.1);
\draw[fill=white] (4.5,0) circle (0.1);
\node at (2,0) {x}; \node at (2,1) {x}; \node at (3,2) {x}; \node at (3,3) {x}; \node at (4,4) {x}; \node at (4,5) {x}; \node at (5,6) {x};
\end{tikzpicture}
$
} 
\]
and a $k$-tiling of the Aztec diamond of rank $m$.  For example, in Figures \ref{whpi}, we give the 8 configurations corresponding to the 1-tilings of the rank 2 Aztec diamond, which were listed in Figure \ref{rank2}.  It turns out that the weight of the lattice configuration and the weight of the $k$-tiling are related.

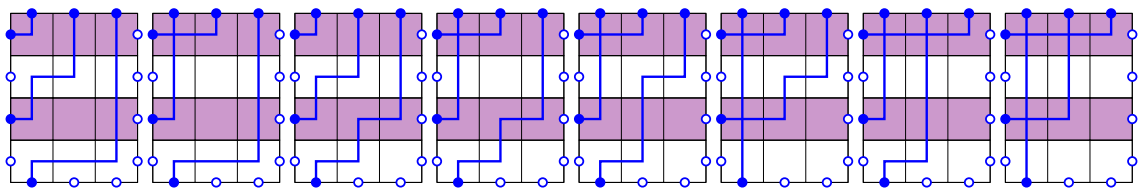
\begin{figure}[ht]
\resizebox{\textwidth}{!}{
\begin{tikzpicture}\vertexb{2}
\draw[ultra thick, blue] (0.5,0)--(0.5,0.5)--(2.5,0.5)--(2.5,4);;
\draw[ultra thick, blue] (0,1.5)--(0.5,1.5)--(0.5,2.5)--(1.5,2.5)--(1.5,4);
\draw[ultra thick, blue] (0,3.5)--(0.5,3.5)--(0.5,4);
\end{tikzpicture}
\begin{tikzpicture}\vertexb{2}
\draw[ultra thick, blue] (0.5,0)--(0.5,0.5)--(2.5,0.5)--(2.5,4);;
\draw[ultra thick, blue] (0,1.5)--(0.5,1.5)--(0.5,4);
\draw[ultra thick, blue] (0,3.5)--(1.5,3.5)--(1.5,4);
\end{tikzpicture}
\begin{tikzpicture}\vertexb{2}
\draw[ultra thick, blue] (0.5,0)--(0.5,0.5)--(1.5,0.5)--(1.5,1.5)--(2.5,1.5)--(2.5,4);;
\draw[ultra thick, blue] (0,1.5)--(0.5,1.5)--(0.5,2.5)--(1.5,2.5)--(1.5,4);
\draw[ultra thick, blue] (0,3.5)--(0.5,3.5)--(0.5,4);
\end{tikzpicture}
\begin{tikzpicture}\vertexb{2}
\draw[ultra thick, blue] (0.5,0)--(0.5,0.5)--(1.5,0.5)--(1.5,1.5)--(2.5,1.5)--(2.5,4);;
\draw[ultra thick, blue] (0,1.5)--(0.5,1.5)--(0.5,4);
\draw[ultra thick, blue] (0,3.5)--(1.5,3.5)--(1.5,4);
\end{tikzpicture}
\begin{tikzpicture}\vertexb{2}
\draw[ultra thick, blue] (0.5,0)--(0.5,0.5)--(1.5,0.5)--(1.5,2.5)--(2.5,2.5)--(2.5,4);;
\draw[ultra thick, blue] (0,1.5)--(0.5,1.5)--(0.5,4);
\draw[ultra thick, blue] (0,3.5)--(1.5,3.5)--(1.5,4);
\end{tikzpicture}
\begin{tikzpicture}\vertexb{2}
\draw[ultra thick, blue] (0.5,0)--(0.5,4);
\draw[ultra thick, blue] (0,1.5)--(1.5,1.5)--(1.5,2.5)--(2.5,2.5)--(2.5,4);
\draw[ultra thick, blue] (0,3.5)--(1.5,3.5)--(1.5,4);
\end{tikzpicture}
\begin{tikzpicture}\vertexb{2}
\draw[ultra thick, blue] (0.5,0)--(0.5,0.5)--(1.5,0.5)--(1.5,4);
\draw[ultra thick, blue] (0,1.5)--(0.5,1.5)--(0.5,4);
\draw[ultra thick, blue] (0,3.5)--(2.5,3.5)--(2.5,4);
\end{tikzpicture}
\begin{tikzpicture}\vertexb{2}
\draw[ultra thick, blue] (0.5,0)--(0.5,4);
\draw[ultra thick, blue] (0,1.5)--(1.5,1.5)--(1.5,4);
\draw[ultra thick, blue] (0,3.5)--(2.5,3.5)--(2.5,4);
\end{tikzpicture}}
\caption{White-pink lattice configurations for the Aztec diamond of rank 2}
\label{whpi}
\end{figure}

\begin{lem}
In the $l$-th pink row, for each color,
\[ \# 
\begin{tikzpicture}[baseline = (current bounding box).center]
\draw[fill=violet!40!white] (0,0) rectangle (1,1);
\draw[very thick, blue] (0,0.5)--(0.5,0.5)--(0.5,1);
\end{tikzpicture}
+ \# 
\begin{tikzpicture}[baseline = (current bounding box).center]
\draw[fill=violet!40!white] (0,0) rectangle (1,1);
\end{tikzpicture}
= m-l+1. \]
\end{lem}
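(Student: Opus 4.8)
The plan is to mimic the proof of the preceding gray-row lemma, with the gray $\M$-vertices replaced by the pink $\M'$-vertices. Fix a color $a\in[k]$; the $l$-th pink row has width $m$, and I will use path conservation inside that row together with the boundary data of the white-pink lattice. Two facts about the flow of color $a$ through the row are read off from the boundary conditions: exactly one path of color $a$ enters the row from the left (the left edge of every pink row carries all $k$ colors) and no path of color $a$ exits the row to the right (the right edge of the lattice is empty of every color). Hence, by path conservation in the row, the number of color-$a$ paths on its top edge is one more than the number on its bottom edge.

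Next I would count the color-$a$ paths on the top edge of the $l$-th pink row by comparison with the top of the lattice. As one moves upward, the number of color-$a$ paths on a horizontal edge is unchanged across each white ($L$) row (nothing enters from the left, nothing exits to the right) and increases by exactly one across each pink ($\M'$) row (one color-$a$ path enters from the left, none exits to the right). The top boundary of the lattice realizes $\bm{\lambda}^{2m}=\bm{0}$ with its $0$ content line at the right end of the window, i.e. as $m$ paths of every color, and $m-l$ pink rows lie above the $l$-th pink row; therefore the top edge of the $l$-th pink row carries $m-(m-l)=l$ color-$a$ paths, and consequently its bottom edge carries $l-1$ color-$a$ paths.

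To conclude, I would note that among the admissible single-color $\M'$-configurations the only two with no path entering from below are the left-to-top turn and the empty vertex---exactly the two configurations on the left-hand side of the identity---while the vertical, the crossing, and the bottom-to-right turn each have a path entering from below. Since the bottom edge of the row is the disjoint union of the $m$ bottom edges of its vertices, each carrying at most one path of color $a$, the number of vertices with a path entering from below equals the number of color-$a$ paths on the bottom edge of the row, namely $l-1$. Thus the number of left-to-top vertices plus the number of empty vertices equals $m-(l-1)=m-l+1$, as claimed. The only genuinely model-specific ingredient is the propagation count of the second paragraph; the point that needs care is keeping the orientation conventions consistent, in particular that the $l$-th pink row (counted from the bottom) has exactly $m-l$ pink rows above it.
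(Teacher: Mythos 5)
Your argument is correct and is essentially the paper's own proof: both identify the left-to-top turn and the empty face as exactly the admissible pink vertices with no path entering from below, and then compute their number by path conservation using the boundary data of the row (the paper simply asserts the bottom-edge particle count, while you derive it by propagating down from the fully occupied top boundary). The only difference is bookkeeping --- you take the lattice window to have width $m$ (empty bottom boundary, top carrying $m$ paths per color), whereas the paper's lattice is drawn with extra occupied columns padded on the left of the $0$ content line --- but the width cancels in the difference, so your count $m-l+1$ is unaffected, just as the paper's proof quotes yet other absolute numbers ($m+2$ vertices, $l+1$ particles entering from below) with the same difference.
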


\begin{proof}
In the $l$-th pink row, there are $l+1$ particles entering from the bottom and $m+2$ vertices.  This means there are $(m+2)-(l+1)$ vertices in which a particle does not enter from the bottom.
\end{proof}

If we pull out a factor of $y_l^{m-l+1}$ for the $l$-th pink row for each $l \in [m]$, then we get an overall factor of $(y_1\ldots y_m)^k(y^{\rho_m})^k$ on the left-hand side, which cancels with the same factor on the right-hand side.  After removing this factor, the $l$-th pink row now contributes a $y_l$ whenever there is a vertical path, which corresponds to a domino of the form
\[ \resizebox{!}{0.25cm}{
\begin{tikzpicture}[baseline = (current bounding box).center]
\draw[lightgray,fill=lightgray] (0,0) rectangle (1,1);
\draw[lightgray] (1,0) rectangle (2,1);
\draw[very thick, blue] (0,0) rectangle (2,1);
\draw[very thick, blue,fill=blue] (0.5,0.5) circle (5pt);
\draw[very thick, blue,fill=blue] (1.5,0.5) circle (5pt);
\end{tikzpicture} } \]
whose right square is on slice $2l-1$.  We get a $t$ whenever a smaller color exits right and a larger color is vertical in a pink row, and whenever a smaller color exits right and a larger color is present in a white row.  This corresponds to a pair of dominos of the form
\begin{equation*}
\resizebox{1cm}{!}{
\begin{tikzpicture}[baseline = (current bounding box).center]
\draw[lightgray,fill=lightgray] (1,-1) rectangle (2,0);
\draw[lightgray,fill=lightgray] (0,0) rectangle (1,1);
\draw[line width=1mm, blue] (0,-1) rectangle (2,0);
\draw[line width=1mm, red] (-0.01,-1.01) rectangle (0.99,0.99);
\end{tikzpicture}}\ ,
\;\;\;
\resizebox{1cm}{!}{ \begin{tikzpicture}[baseline = (current bounding box).center]
\draw[lightgray,fill=lightgray] (1,0) rectangle (2,1);
\draw[line width=1mm, blue] (0,0) rectangle (2,1);
\draw[line width=1mm, red] (-0.05,-0.05) rectangle (1.95,0.95);
\end{tikzpicture}}\ ,
\;\;\;
\resizebox{1cm}{!}{
 \begin{tikzpicture}[baseline = (current bounding box).center]
\draw[lightgray,fill=lightgray] (0,0) rectangle (1,1);
\draw[lightgray,fill=lightgray] (1,1) rectangle (2,2);
\draw[line width=1mm, red] (0,0) rectangle (2,1);
\draw[line width=1mm, blue] (0.99,-0.01) rectangle (1.99,1.99);
\end{tikzpicture}}\ , {\rm or}
\;\;\;
\resizebox{1.5cm}{!}{
 \begin{tikzpicture}[baseline = (current bounding box).center]
\draw[lightgray] (1,0) rectangle (2,1);
\draw[lightgray,fill=lightgray] (0,0) rectangle (1,1);
\draw[lightgray,fill=lightgray] (2,0) rectangle (3,1);
\draw[line width=1mm, blue] (0.99,-0.01) rectangle (2.99,0.99);
\draw[line width=1mm, red] (0,0) rectangle (2,1);
\end{tikzpicture}}
\end{equation*}
where blue is a smaller color than red.  

Putting it all together, we arrive at the following results.

\begin{lem}
There is a weight-preserving bijection between configurations of the white-pink lattice
\[
\resizebox{0.2\textwidth}{!}{
$
\begin{tikzpicture}[baseline = (current bounding box).center]
\def\vS{1.8}
\draw[fill=violet!40!white] (0,1) rectangle (5,2);
\draw[fill=violet!40!white] (0,3) rectangle (5,4);
\draw[fill=violet!40!white] (0,5) rectangle (5,6);
\draw[step=1.0] (0,0) grid (5,3);
\draw[] (0,0) rectangle (5,1);
\draw[] (0,2) rectangle (5,3);
\draw[] (0,4) rectangle (5,4);
\draw[step=1.0] (0,3) grid (5,6);
\draw[fill=white] (5,5.5) circle (0.1);
\draw[fill=white] (5,4.5) circle (0.1);
\draw[fill=white] (5,3.5) circle (0.1);
\draw[fill=white] (5,2.5) circle (0.1);
\draw[fill=white] (5,1.5) circle (0.1);
\draw[fill=white] (5,0.5) circle (0.1);
\draw[fill=black] (0,5.5) circle (0.1);
\draw[fill=white] (0,4.5) circle (0.1);
\draw[fill=black] (0,3.5) circle (0.1);
\draw[fill=white] (0,2.5) circle (0.1);
\draw[fill=black] (0,1.5) circle (0.1);
\draw[fill=white] (0,0.5) circle (0.1);
\draw[fill=black] (0.5,0) circle (0.1);
\draw[fill=black] (1.5,0) circle (0.1);
\node[left,scale=\vS] at (0,0.5) {$x_1$};
\node[left,scale=\vS] at (0,1.5) {$y_1$};
\node[left,scale=\vS] at (0,2.5) {$\vdots$};
\node[left,scale=\vS] at (0,3.5) {$\vdots$};
\node[left,scale=\vS] at (0,4.5) {$x_m$};
\node[left,scale=\vS] at (0,5.5) {$y_m$};
\node[below,scale=\vS] at (3.5,0) {$\leftarrow \;\; m \;\;\rightarrow$};
\draw[fill=black] (0.5,0) circle (0.1);
\draw[fill=black] (1.5,0) circle (0.1);
\draw[fill=white] (2.5,0) circle (0.1);
\draw[fill=black] (0.5,6) circle (0.1);
\draw[fill=black] (1.5,6) circle (0.1);
\draw[fill=black] (2.5,6) circle (0.1);
\draw[fill=black] (3.5,6) circle (0.1);
\draw[fill=black] (4.5,6) circle (0.1);
\draw[fill=white] (3.5,0) circle (0.1);
\draw[fill=white] (4.5,0) circle (0.1);
\node at (2,0) {x}; \node at (2,1) {x}; \node at (3,2) {x}; \node at (3,3) {x}; \node at (4,4) {x}; \node at (4,5) {x}; \node at (5,6) {x};
\end{tikzpicture}
$
} 
\]
and $k$-tilings of the Aztec diamond of rank $m$. By weight-preserving, we mean that the weight of the configuration is $(y^{\rho_m})^k t^{\binom{m}{2}\binom{k}{2}}$ times the weight of the $k$-tiling.
\end{lem}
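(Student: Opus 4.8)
The plan is to stitch together two correspondences that are already in place, and then verify weight preservation row by row, exactly as was done for the purple--gray lattice. First I would observe that reading the labels on the horizontal edges of a configuration of the displayed white--pink lattice, from bottom to top, yields a sequence of $2m+1$ $k$-tuples of truncated Maya diagrams. By the $k$-color version of the row dictionary of Section~\ref{1-color-vertex-models-section}, a white ($x_i$) row admits a (then unique) filling exactly when its lower $k$-tuple of partitions interlaces ($\preceq$) its upper one, and a pink ($\bar y_i$) row admits a (then unique) filling exactly when its lower $k$-tuple co-interlaces ($\succeq'$) its upper one. Hence configurations of the lattice are in bijection with sequences $\bm{0}=\bm{\lambda}^0\preceq\bm{\lambda}^1\succeq'\cdots\preceq\bm{\lambda}^{2m-1}\succeq'\bm{\lambda}^{2m}=\bm{0}$, and by the white--pink construction of Section~\ref{interactions-subsection} these are in bijection with $k$-tilings $\bm{T}$ of the Aztec diamond of rank $m$; composing gives the asserted bijection, the relevant fit constraints $\lambda^{(i)}_1+\ell(\lambda^{(i)})$ being exactly those built into the white--pink model.

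Next I would expand the weight of a configuration as the product of its $2m$ row weights and match it against $w(\bm{T})=t^{\#\mathrm{interactions}}\prod_{i=1}^k w(T_i)$, handling the white and pink rows separately. For a white ($x_i$) row I would use the $k$-color $L$-weights: the power of $x_i$ it contributes equals the number of horizontal $x_i$-dominos of the white--pink model on slice $2i-1$, and the extra power of $t$ coming from the $\delta_i$-shifts (equivalently the $t^{\varphi(\ldots)}$ factor) in the $k$-color $L$ weight counts exactly those interactions in which a smaller color exits to the right of that row while a larger color is present in it. For a pink ($\bar y_i$) row I would first apply the lemma just proved to extract the fixed monomial $y_i^{m-i+1}$ from that row for every color; after this extraction the leftover power of $y_i$ is the number of vertical $y_i$-dominos on slice $2i-1$, and the leftover power of $t$ (from the $\gamma_i$-shifts in the $k$-color $\M'$ weight) records the remaining interactions, namely those where a smaller color exits right and a larger color is vertical in a pink row. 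Collecting all the extracted monomials over rows and colors produces the normalization factor recorded in the statement, and what survives is precisely $w(\bm{T})$.

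The hard part will be the $t$-exponent bookkeeping: I must show that each of the four white--pink interaction types is generated exactly once across the $2m$ rows, with no omission, no double count, and no spurious factor, and that the split of interactions between the ``white-row'' and ``pink-row'' mechanisms above is both exhaustive and disjoint. This is the analogue of the case analysis carried out for the purple--gray model, but run with the $L$ and $\M'$ tables in place of $L'$ and $\M$, so I would have to redo it while tracking the shift exponents $\delta_i$ and $\gamma_i$ entering the $k$-color weights. Since those exponents merely count how many larger colors occupy a prescribed local state at a vertex, this is a finite, mechanical verification introducing no new ideas beyond those already used, and once it is organized cleanly the rest of the argument is immediate.
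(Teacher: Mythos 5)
Your proposal is correct and follows the paper's own argument essentially verbatim: the same composite bijection (lattice configuration $\leftrightarrow$ sequence $\bm{0}=\bm{\lambda}^0\preceq\bm{\lambda}^1\succeq'\cdots\succeq'\bm{\lambda}^{2m}=\bm{0}$ $\leftrightarrow$ $k$-tiling in the white--pink model), the same extraction of $y_l^{m-l+1}$ per color from the $l$-th pink row via the counting lemma, and the same identification of the residual $y$-, $x$-, and $t$-powers with the white--pink domino weights and the two interaction mechanisms (smaller color exits right with a larger color vertical in a pink row, resp.\ present in a white row), with the remaining case-check left at the same level of detail as in the paper. Two small points to fix when writing it up: the $y_i$-weighted dominos of the white--pink model are \emph{horizontal} dominos (it is the vertical lattice paths in a pink row that correspond to them), and the total extracted monomial is $(y_1\cdots y_m)^k(y^{\rho_m})^k$, which is the factor used in the paper's proof and subsequent theorem rather than the $(y^{\rho_m})^k t^{\binom{m}{2}\binom{k}{2}}$ printed in the lemma statement.
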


\begin{thm}
The partition function of 
\[
\resizebox{0.2\textwidth}{!}{
$
\begin{tikzpicture}[baseline = (current bounding box).center]
\def\vS{1.8}
\draw[fill=violet!40!white] (0,1) rectangle (5,2);
\draw[fill=violet!40!white] (0,3) rectangle (5,4);
\draw[fill=violet!40!white] (0,5) rectangle (5,6);
\draw[step=1.0] (0,0) grid (5,3);
\draw[] (0,0) rectangle (5,1);
\draw[] (0,2) rectangle (5,3);
\draw[] (0,4) rectangle (5,4);
\draw[step=1.0] (0,3) grid (5,6);
\draw[fill=white] (5,5.5) circle (0.1);
\draw[fill=white] (5,4.5) circle (0.1);
\draw[fill=white] (5,3.5) circle (0.1);
\draw[fill=white] (5,2.5) circle (0.1);
\draw[fill=white] (5,1.5) circle (0.1);
\draw[fill=white] (5,0.5) circle (0.1);
\draw[fill=black] (0,5.5) circle (0.1);
\draw[fill=white] (0,4.5) circle (0.1);
\draw[fill=black] (0,3.5) circle (0.1);
\draw[fill=white] (0,2.5) circle (0.1);
\draw[fill=black] (0,1.5) circle (0.1);
\draw[fill=white] (0,0.5) circle (0.1);
\draw[fill=black] (0.5,0) circle (0.1);
\draw[fill=black] (1.5,0) circle (0.1);
\node[left,scale=\vS] at (0,0.5) {$x_1$};
\node[left,scale=\vS] at (0,1.5) {$y_1$};
\node[left,scale=\vS] at (0,2.5) {$\vdots$};
\node[left,scale=\vS] at (0,3.5) {$\vdots$};
\node[left,scale=\vS] at (0,4.5) {$x_m$};
\node[left,scale=\vS] at (0,5.5) {$y_m$};
\node[below,scale=\vS] at (3.5,0) {$\leftarrow \;\; m \;\;\rightarrow$};
\draw[fill=black] (0.5,0) circle (0.1);
\draw[fill=black] (1.5,0) circle (0.1);
\draw[fill=white] (2.5,0) circle (0.1);
\draw[fill=black] (0.5,6) circle (0.1);
\draw[fill=black] (1.5,6) circle (0.1);
\draw[fill=black] (2.5,6) circle (0.1);
\draw[fill=black] (3.5,6) circle (0.1);
\draw[fill=black] (4.5,6) circle (0.1);
\draw[fill=white] (3.5,0) circle (0.1);
\draw[fill=white] (4.5,0) circle (0.1);

\node at (2,0) {x}; \node at (2,1) {x}; \node at (3,2) {x}; \node at (3,3) {x}; \node at (4,4) {x}; \node at (4,5) {x}; \node at (5,6) {x};
\end{tikzpicture}
$
}
\]
with $k$ colors is equal to $(y_1\ldots y_m)^k(y^{\rho_m})^k$ times the partition function of the $k$-tiling of the Aztec diamond in the white-pink model.  We have
\[
Z_{AD,white-pink}^{(k)}(X_m;Y_m;t) = \prod_{l=0}^{k-1}\prod_{i\le j}\left(1+x_iy_jt^l\right).
\]
\end{thm}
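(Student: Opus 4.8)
The strategy is to reproduce, in the white--pink setting, the two-step argument just used for the purple--gray lattice: a Yang--Baxter ``train'' computation that evaluates the partition function of the interleaved white--pink lattice in closed form, together with the weight-preserving bijection stated above that identifies that partition function --- up to an explicit monomial --- with the generating polynomial of $k$-tilings.

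First I would pin down the partition function of the interleaved white--pink lattice, the one whose rows read $x_1,\bar y_1,\ldots,x_m,\bar y_m$ from bottom to top. Starting from the ``straight'' lattice, whose unique configuration has weight $(y_1^m y_2^{m-1}\cdots y_m)^k$, I would insert an $R'$ cross on the left boundary between a white ($L$) row and a pink ($\M'$) row and drag it to the right boundary using Proposition~\ref{prop:YBEpinkwhite}; absorbing it against the all-colors/no-colors corner contributes the scalar $\prod_{l=0}^{k-1}(1+x_iy_jt^l)$ --- the $t^l$ powers coming precisely from the colored refinement $R'_{\,\cdot\,/(xt^{\epsilon'_i})}$ --- while its only other effect is to transpose the two rows. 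Performing this once for every pair $i\le j$ and collecting factors, and noting that $(y_1^m\cdots y_m)^k=(y_1\cdots y_m)^k(y^{\rho_m})^k$, shows that the partition function of the interleaved lattice equals
\[
(y_1\cdots y_m)^k (y^{\rho_m})^k \prod_{l=0}^{k-1}\prod_{i\le j}\left(1+x_iy_jt^l\right),
\]
which is the identity recorded in the excerpt.

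Second, I would apply the weight-preserving bijection Lemma proved just above: configurations of the interleaved white--pink lattice correspond to $k$-tilings of the rank-$m$ Aztec diamond, the weight of a configuration being $(y_1\cdots y_m)^k(y^{\rho_m})^k$ times the white--pink weight of the associated tiling. Summing over all configurations shows the partition function of the interleaved lattice also equals
\[
(y_1\cdots y_m)^k(y^{\rho_m})^k\, Z_{AD,white-pink}^{(k)}(X_m;Y_m;t),
\]
which is the first assertion of the theorem. Comparing this with the identity from the first step and cancelling the common nonzero monomial $(y_1\cdots y_m)^k(y^{\rho_m})^k$ gives $Z_{AD,white-pink}^{(k)} = \prod_{l=0}^{k-1}\prod_{i\le j}(1+x_iy_jt^l)$, the second assertion.

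The bulk of the work --- and the step I expect to be the real obstacle --- is the row-by-row verification of the weight-preserving bijection, i.e.\ checking that the local $L$- and $\M'$-weights together with the $t$-factors produced by interleaving reproduce exactly the $x_i$-, $y_j$-, and $t$-statistics of the white--pink domino weight. Concretely, in each white ($L$) row one matches a rightward path with a type I/III domino carrying an $x_i$ and shows, via the $\varphi(\L,\K+\L)$ exponent, that the vertex $t$-power counts interactions of the ``larger color present in a white row'' type; in each pink ($\M'$) row one first uses the preceding Lemma (the count $\#\{\ldots\}+\#\{\ldots\}=m-l+1$) to extract the normalizing factor $y_l^{m-l+1}$, after which a vertical path matches a $y_l$-domino and the residual $t$-power (from $\gamma_i$) counts interactions of the ``larger color vertical in a pink row'' type. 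Keeping the truncation of the Maya diagrams and the placement of the $0$-content line consistent between the two blocks, and getting the $\bar y_i$ versus $\tilde y_i$ parameter identifications right inside the train argument, are the delicate bookkeeping points; everything past that is a routine transcription of the purple--gray computation.
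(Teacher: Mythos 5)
Your proposal is correct and follows essentially the same route as the paper: evaluate the interleaved white--pink lattice by dragging $R'$ crosses through with Proposition~\ref{prop:YBEpinkwhite}, starting from the blocked lattice of weight $(y_1^m\cdots y_m^1)^k=(y_1\cdots y_m)^k(y^{\rho_m})^k$, and then identify that partition function with $(y_1\cdots y_m)^k(y^{\rho_m})^k$ times the $k$-tiling generating polynomial via the weight-preserving correspondence of the preceding lemma (extracting $y_l^{m-l+1}$ from the $l$-th pink row and matching the residual $x$-, $y$-, and $t$-statistics with the domino weights). Since this is the paper's own two-step argument, no further comment is needed.
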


\subsection{Combining the two models}

Since the partition functions of the two models are equal, we will write 
\[ Z_{AD}^{(k)}(X_m;Y_m;t) = \prod_{l=0}^{k-1}\prod_{i\le j}\left(1+x_iy_jt^l\right). \]
Moreover, we get a surprising combinatorial statement.
\begin{prop}
Fix integers $k,m,l,r_1,\ldots,r_m,s_1,\ldots,s_m \geq 0$.  There exists a bijection between:
\begin{itemize}
\item the $k$-tilings of the Aztec diamond of rank $m$ with $\ell$ pairs of dominos of the form

\begin{equation*}
\resizebox{1cm}{!}{
\begin{tikzpicture}[baseline = (current bounding box).center]
\draw[lightgray] (0,1) rectangle (1,2);
\draw[lightgray,fill=lightgray] (0,0) rectangle (1,1);
\draw[lightgray] (-1,0) rectangle (0,1);
\draw[line width=1mm, blue] (0,0) rectangle (1,2);
\draw[line width=1mm, red] (-1.01,-0.01) rectangle (0.99,0.99);
\end{tikzpicture}}\ ,
\;\;\;
\resizebox{.5cm}{!}{
 \begin{tikzpicture}[baseline = (current bounding box).center]
\draw[lightgray,fill=lightgray] (0,1) rectangle (1,2);
\draw[lightgray] (0,0) rectangle (1,1);
\draw[line width=1mm, blue] (0,0) rectangle (1,2);
\draw[line width=1mm, red] (-0.05,-0.05) rectangle (0.95,1.95);
\end{tikzpicture}}\ ,
\;\;\;
\resizebox{1cm}{!}{
 \begin{tikzpicture}[baseline = (current bounding box).center]
\draw[lightgray,fill=lightgray] (0,1) rectangle (1,2);
\draw[lightgray] (0,0) rectangle (1,1);
\draw[line width=1mm, blue] (-1.01,0.99) rectangle (0.99,1.99);
\draw[line width=1mm, red] (0,0) rectangle (1,2);
\end{tikzpicture}}\ , {\rm or}
\;\;\; 
\resizebox{.5cm}{!}{
 \begin{tikzpicture}[baseline = (current bounding box).center]
\draw[lightgray,fill=lightgray] (0,1) rectangle (1,2);
\draw[lightgray] (0,0) rectangle (1,1);
\draw[line width=1mm, blue] (-0.01,0.99) rectangle (.99,2.99);
\draw[line width=1mm, red] (0,0) rectangle (1,2);
\end{tikzpicture}}
\end{equation*}
where where blue is a smaller color than red, $r_i$ dominos of the form
\resizebox{!}{0.3cm}{
\begin{tikzpicture}[baseline = (current bounding box).center]
\draw[lightgray, fill=lightgray] (0,0) rectangle (1,1);
\draw[lightgray] (0,1) rectangle (1,2);
\draw[very thick, blue] (0,0) rectangle (1,2);
\end{tikzpicture} }
whose top square is on slice $2i-1$ for each $i \in [m]$, and $s_i$ dominos of the form
\resizebox{!}{0.3cm}{
\begin{tikzpicture}[baseline = (current bounding box).center]
\draw[lightgray] (0,0) rectangle (1,1);
\draw[lightgray,fill=lightgray] (0,1) rectangle (1,2);
\draw[very thick, blue] (0,0) rectangle (1,2);
\end{tikzpicture} }
whose bottom square is on slice $2i-1$
for each $i \in [m]$; and
\item the $k$-tilings of the Aztec diamond of rank $m$ with $\ell$ pairs of dominos of the form
\begin{equation*}
\resizebox{1cm}{!}{
\begin{tikzpicture}[baseline = (current bounding box).center]
\draw[lightgray,fill=lightgray] (1,-1) rectangle (2,0);
\draw[lightgray,fill=lightgray] (0,0) rectangle (1,1);
\draw[line width=1mm, blue] (0,-1) rectangle (2,0);
\draw[line width=1mm, red] (-0.01,-1.01) rectangle (0.99,0.99);
\end{tikzpicture}}\ ,
\;\;\;
\resizebox{1cm}{!}{ \begin{tikzpicture}[baseline = (current bounding box).center]
\draw[lightgray,fill=lightgray] (1,0) rectangle (2,1);
\draw[line width=1mm, blue] (0,0) rectangle (2,1);
\draw[line width=1mm, red] (-0.05,-0.05) rectangle (1.95,0.95);
\end{tikzpicture}}\ ,
\;\;\;
\resizebox{1cm}{!}{
 \begin{tikzpicture}[baseline = (current bounding box).center]
\draw[lightgray,fill=lightgray] (0,0) rectangle (1,1);
\draw[lightgray,fill=lightgray] (1,1) rectangle (2,2);
\draw[line width=1mm, blue] (0.99,-0.01) rectangle (1.99,1.99);
\draw[line width=1mm, red] (0,0) rectangle (2,1);
\end{tikzpicture}}\ , {\rm or}
\;\;\;
\resizebox{1.5cm}{!}{
 \begin{tikzpicture}[baseline = (current bounding box).center]
\draw[lightgray] (1,0) rectangle (2,1);
\draw[lightgray,fill=lightgray] (0,0) rectangle (1,1);
\draw[lightgray,fill=lightgray] (2,0) rectangle (3,1);
\draw[line width=1mm, blue] (0.99,-0.01) rectangle (2.99,0.99);
\draw[line width=1mm, red] (0,0) rectangle (2,1);
\end{tikzpicture}}
\end{equation*}
where where blue is a smaller color than red, $r_i$ dominos of the form
\resizebox{!}{0.4cm}{
\begin{tikzpicture}
\draw[lightgray] (0,0) rectangle (1,1);
\draw[lightgray,fill=lightgray] (1,0) rectangle (2,1);
\draw[very thick, blue] (0,0) rectangle (2,1);
\end{tikzpicture} }
whose left square is on slice $2i-1$ for each $i \in [m]$, and $s_i$ dominos of the form
\resizebox{!}{0.4cm}{
\begin{tikzpicture}
\draw[lightgray,fill=lightgray] (0,0) rectangle (1,1);
\draw[lightgray] (1,0) rectangle (2,1);
\draw[very thick, blue] (0,0) rectangle (2,1);
\end{tikzpicture} }
whose right square is on slice $2i-1$ for each $i \in [m]$.
\end{itemize}
\label{prop:open}
\end{prop}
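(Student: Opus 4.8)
The plan is to obtain the bijection as an immediate consequence of the two theorems computing $Z_{AD,purple-gray}^{(k)}$ and $Z_{AD,white-pink}^{(k)}$. First I would observe that, after unwinding the definitions of the two weightings, the weight of a purple-gray $k$-tiling $\bm T$ is exactly $t^{\ell}\prod_{i=1}^m x_i^{r_i}\prod_{i=1}^m y_i^{s_i}$, where $\ell$ is the number of interactions of $\bm T$, $r_i$ is the number of dominos of the first $x_i$-type (vertical, top square on slice $2i-1$) summed over all $k$ colors, and $s_i$ is the number of dominos of the $y_i$-type (vertical, bottom square on slice $2i-1$) summed over all $k$ colors; these are precisely the statistics appearing in the first bullet of the proposition. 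The analogous statement holds in the white-pink model with the horizontal versions of these dominos, matching the second bullet. Hence the two theorems say exactly that the generating function of purple-gray $k$-tilings by $(\ell,r_1,\dots,r_m,s_1,\dots,s_m)$ and the generating function of white-pink $k$-tilings by $(\ell,r_1,\dots,r_m,s_1,\dots,s_m)$ are both equal to the same polynomial $\prod_{l=0}^{k-1}\prod_{1\le i\le j\le m}(1+x_iy_jt^l)$.

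Next I would extract coefficients. Reading off the coefficient of $t^{\ell}\prod_i x_i^{r_i}\prod_i y_i^{s_i}$ on each side shows that the number of purple-gray $k$-tilings with exactly $\ell$ interactions, $r_i$ dominos of the prescribed $x_i$-type for each $i$, and $s_i$ dominos of the prescribed $y_i$-type for each $i$, equals the number of white-pink $k$-tilings with the corresponding statistics. Since both are finite sets of equal cardinality, a bijection exists, which is what the proposition asserts.

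If one instead wants an explicit bijection rather than a counting argument, the plan would be to pass through partitions: recall that a purple-gray $k$-tiling corresponds to a chain $\bm 0 = \bm\lambda^0\preceq'\bm\lambda^1\succeq\cdots\succeq\bm\lambda^{2m}=\bm 0$ (with $\lambda^{(j,i)}_1+\ell(\lambda^{(j,i)})\le m$) and a white-pink $k$-tiling to a chain $\bm 0 = \bm\mu^0\preceq\bm\mu^1\succeq'\cdots\succeq'\bm\mu^{2m}=\bm 0$, and I would set $\bm\mu^j := (\bm\lambda^j)'$ for all $j$. Conjugation of $k$-tuples interchanges $\preceq'$ with $\preceq$ and $\succeq$ with $\succeq'$, fixes $\bm 0$, and preserves $\lambda_1+\ell(\lambda)$, so this is a well-defined bijection on chains; moreover $|\lambda|=|\lambda'|$ forces the step sizes $|\bm\lambda^{2i-1}|-|\bm\lambda^{2i-2}|$ and $|\bm\lambda^{2i-1}|-|\bm\lambda^{2i}|$ to be preserved, so the $r_i$ and $s_i$ statistics match. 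The one genuinely delicate point — and the step I expect to be the main obstacle — is checking that the number of interactions is preserved: this requires re-deriving, as in the row-by-row analysis used to relate lattice configurations to $k$-tilings, a characterization of each type of interaction as a local condition on a pair of adjacent Maya diagrams, and then verifying that conjugation together with the reversal of the color order inside $\bm\lambda\mapsto\bm\lambda'$ carries the four purple-gray interaction patterns bijectively to the four white-pink ones. The coefficient-comparison argument above sidesteps this bookkeeping and already establishes the proposition.
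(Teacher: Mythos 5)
Your main argument—identifying the coefficient of $t^{\ell}\prod_i x_i^{r_i}\prod_i y_i^{s_i}$ in the purple-gray and white-pink generating polynomials, which both equal $\prod_{l=0}^{k-1}\prod_{i\le j}(1+x_iy_jt^l)$, and concluding that the two finite sets are equinumerous—is exactly how the paper obtains this proposition, so your proof is correct and takes essentially the same route. Your closing remark is also consistent with the paper: an explicit combinatorial bijection (such as your conjugation-of-chains sketch, whose interaction-preservation step you rightly flag as unverified) is precisely what the authors leave as an open problem.
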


\noindent We leave it as an open problem to find a combinatorial proof of the proposition. 

\begin{remark}\label{rmk:purplegray}
    In the remainder of this article we will focus on the purple-gray model of the tilings. Note that corresponding results (for example, Theorem \ref{thm:2tilingAC} and Lemma \ref{lem:0toinf}) for the white-pink model require separate proof without knowing more about the bijection above. However, we believe the techniques used for the purple-gray model can be easily translated to the white-pink model. In order to keep the present article at a reasonable length and not repeat similar calculations, we do not pursue that here. However, in Remark \ref{rmk:whitepinkpaths}, we sketch how the techniques of Section \ref{paths} may be translated to the white-pink model.
\end{remark}

\section{The case $t=0$}
\label{paths}
In what follows we only consider the purple-gray model of $k$-tiling, as noted in Remark \ref{rmk:purplegray}.

\subsection{Schr\"oder paths} \label{schroder-section}

We can assign paths to the dominos according to the following rules.
\[
\resizebox{.5cm}{!}{
\begin{tikzpicture}[baseline = (current bounding box).center]
\draw[lightgray, fill=lightgray] (0,0) rectangle (1,1);
\draw[lightgray] (0,1) rectangle (1,2);
\draw[very thick] (0,0) rectangle (1,2);
\draw[ultra thick, red] (0,1.5)--(1,0.5);
\end{tikzpicture}},\;\;\;
\resizebox{1cm}{!}{
\begin{tikzpicture}[baseline = (current bounding box).center]
\draw[lightgray] (0,0) rectangle (1,1);
\draw[lightgray,fill=lightgray] (1,0) rectangle (2,1);
\draw[very thick] (0,0) rectangle (2,1);
\draw[ultra thick, red] (0,0.5)--(2,0.5);
\end{tikzpicture}},\;\;\;
\resizebox{.5cm}{!}{
\begin{tikzpicture}[baseline = (current bounding box).center]
\draw[lightgray] (0,0) rectangle (1,1);
\draw[lightgray,fill=lightgray] (0,1) rectangle (1,2);
\draw[very thick] (0,0) rectangle (1,2);
\draw[ultra thick, red, red] (0,0.5)--(1,1.5);
\end{tikzpicture}},\;\;\;
\resizebox{1cm}{!}{
\begin{tikzpicture}[baseline = (current bounding box).center]
\draw[lightgray,fill=lightgray] (0,0) rectangle (1,1);
\draw[lightgray] (1,0) rectangle (2,1);
\draw[very thick] (0,0) rectangle (2,1);
\end{tikzpicture}}
\]

Then instead of domino tilings, we can consider non-intersecting Schr\"oder paths \cite{ardila,aztecschroder,johansson2}. Schr\"oder paths are lattice paths using North-East (1,1), South-East (1,-1) and East (2,0) steps starting at (0,0) ending at $(n,0)$ and they do not cross the $y=0$ axis.
For example, the tiling from Figure \ref{ex} 
gives the set of paths in Figure \ref{ex2}.
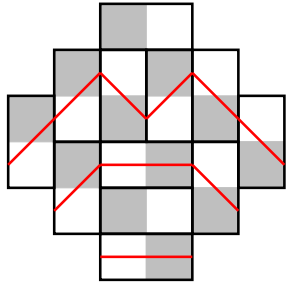
\begin{figure}[ht]
\centering
\resizebox{0.25\textwidth}{!}{
\begin{tikzpicture}[baseline = (current bounding box).center]
\checkerboard{2}
\draw[ultra thick] (0,-1) rectangle (1,1); \draw[ultra thick] (1,-2) rectangle (2,0); \draw[ultra thick] (1,0) rectangle (2,2);
\draw[ultra thick] (2,-3) rectangle (4,-2); \draw[ultra thick] (2,-1) rectangle (4,0);
\draw[ultra thick] (2,2) rectangle (4,3);
\draw[ultra thick] (2,0) rectangle (3,2); \draw[ultra thick] (2,-2) rectangle (4,-1); \draw[ultra thick] (3,0) rectangle (4,2); 
\draw[ultra thick] (4,-2) rectangle (5,0); \draw[ultra thick] (4,0) rectangle (5,2); 
\draw[ultra thick] (5,-1) rectangle (6,1);       
\draw[ultra thick, red] (0,-0.5)--(1,0.5); 
\draw[ultra thick, red] (1,-1.5)--(2,-0.5); 
\draw[ultra thick, red] (1,0.5)--(2,1.5);
\draw[ultra thick, red] (2,-2.5)--(4,-2.5); 
\draw[ultra thick, red] (2,-0.5)--(4,-0.5);
\draw[ultra thick, red] (2,1.5)--(3,0.5); 
\draw[ultra thick, red] (3,0.5)--(4,1.5); 
\draw[ultra thick, red] (4,-0.5)--(5,-1.5); 
\draw[ultra thick, red] (4,1.5)--(5,0.5); 
\draw[ultra thick, red] (5,0.5)--(6,-0.5);       
\end{tikzpicture}}
\label{ex2}
\caption{Non-intersecting paths and domino tiling}
\end{figure}

\noindent This gives a well-known bijection (see for example \cite{aztecschroder}) between:
\begin{itemize}
    \item domino tilings of the Aztec diamond of rank $m$
    \item $m$-tuples of non-intersecting paths using North-East $(1,1)$, South-East $(1,-1)$ and East  $(2,0)$ steps such that the $i$-th path starts at $(-m-1+i,-i+\frac{1}{2})$ and ends at $(m+1-i,-i+\frac{1}{2})$.
\end{itemize}
An example for $m=2$ is given in Figure \ref{expath}.

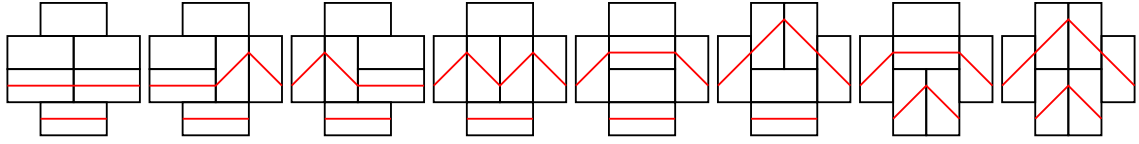
\begin{figure}[ht]
\resizebox{\textwidth}{!}{
\begin{tikzpicture}[baseline = (current bounding box).center]
\draw[ultra thick](0,0) rectangle (2,1);
\draw[ultra thick](4,0) rectangle (2,1);
\draw[ultra thick](0,-1) rectangle (2,0);
\draw[ultra thick](4,-1) rectangle (2,0);
\draw[ultra thick](1,-2) rectangle (3,-1);
\draw[ultra thick](1,1) rectangle (3,2);
\draw[ultra thick, red](0,-.5)--(4,-.5);
\draw[ultra thick, red](1,-1.5)--(3,-1.5);

\end{tikzpicture}\ \ 
\begin{tikzpicture}[baseline = (current bounding box).center]
\draw[ultra thick](0,0) rectangle (2,1);
\draw[ultra thick](3,1) rectangle (2,-1);
\draw[ultra thick](0,-1) rectangle (2,0);
\draw[ultra thick](4,1) rectangle (3,-1);
\draw[ultra thick](1,-2) rectangle (3,-1);
\draw[ultra thick](1,1) rectangle (3,2);
\draw[ultra thick, red](0,-.5)--(2,-.5)--(3,.5)--(4,-.5);
\draw[ultra thick, red](1,-1.5)--(3,-1.5);

\end{tikzpicture}\ \ 
\begin{tikzpicture}[baseline = (current bounding box).center]
\draw[ultra thick](0,-1) rectangle (1,1);
\draw[ultra thick](1,-1) rectangle (2,1);
\draw[ultra thick](2,1) rectangle (4,0);
\draw[ultra thick](4,-1) rectangle (2,0);
\draw[ultra thick](1,-2) rectangle (3,-1);
\draw[ultra thick](1,1) rectangle (3,2);
\draw[ultra thick, red](0,-.5)--(1,.5)--(2,-.5)--(4,-.5);
\draw[ultra thick, red](1,-1.5)--(3,-1.5);

\end{tikzpicture}\ \ 
\begin{tikzpicture}[baseline = (current bounding box).center]
\draw[ultra thick](0,-1) rectangle (1,1);
\draw[ultra thick](1,-1) rectangle (2,1);
\draw[ultra thick](3,1) rectangle (2,-1);
\draw[ultra thick](4,1) rectangle (3,-1);
\draw[ultra thick](1,-2) rectangle (3,-1);
\draw[ultra thick](1,1) rectangle (3,2);
\draw[ultra thick, red](0,-.5)--(1,.5)--(2,-.5)--(3,.5)--(4,-.5);
\draw[ultra thick, red](1,-1.5)--(3,-1.5);

\end{tikzpicture}\ \ 
\begin{tikzpicture}[baseline = (current bounding box).center]
\draw[ultra thick](0,-1) rectangle (1,1);
\draw[ultra thick](1,-1) rectangle (3,0);
\draw[ultra thick](3,1) rectangle (1,0);
\draw[ultra thick](4,1) rectangle (3,-1);
\draw[ultra thick](1,-2) rectangle (3,-1);
\draw[ultra thick](1,1) rectangle (3,2);
\draw[ultra thick, red](0,-.5)--(1,0.5)--(3,0.5)--(4,-.5);
\draw[ultra thick, red](1,-1.5)--(3,-1.5);

\end{tikzpicture}\ \ 
\begin{tikzpicture}[baseline = (current bounding box).center]
\draw[ultra thick](0,-1) rectangle (1,1);
\draw[ultra thick](1,-1) rectangle (3,0);
\draw[ultra thick](2,0) rectangle (3,2);
\draw[ultra thick](4,1) rectangle (3,-1);
\draw[ultra thick](1,-2) rectangle (3,-1);
\draw[ultra thick](1,0) rectangle (2,2);
\draw[ultra thick, red](0,-.5)--(2,1.5)--(4,-.5);
\draw[ultra thick, red](1,-1.5)--(3,-1.5);

\end{tikzpicture}\ \ 
\begin{tikzpicture}[baseline = (current bounding box).center]
\draw[ultra thick](0,-1) rectangle (1,1);
\draw[ultra thick](3,1) rectangle (1,0);
\draw[ultra thick](4,1) rectangle (3,-1);
\draw[ultra thick](2,-2) rectangle (3,0);
\draw[ultra thick](1,-2) rectangle (2,0);
\draw[ultra thick](1,1) rectangle (3,2);
\draw[ultra thick, red](0,-.5)--(1,0.5)--(3,0.5)--(4,-.5);
\draw[ultra thick, red](1,-1.5)--(2,-.5)--(3,-1.5);
\end{tikzpicture}\ \ 
\begin{tikzpicture}[baseline = (current bounding box).center]
\draw[ultra thick](0,-1) rectangle (1,1);
\draw[ultra thick](1,-2) rectangle (2,0);
\draw[ultra thick](2,0) rectangle (3,2);
\draw[ultra thick](4,1) rectangle (3,-1);
\draw[ultra thick](2,-2) rectangle (3,0);
\draw[ultra thick](1,0) rectangle (2,2);
\draw[ultra thick, red](0,-.5)--(2,1.5)--(4,-.5);
\draw[ultra thick, red](1,-1.5)--(2,-.5)--(3,-1.5);
\end{tikzpicture}
}
\label{ex}
\caption{Non-intersecting paths for the Aztec diamond of rank 2}
\label{expath}
\end{figure}

The weight of a domino tiling can be expressed in terms of Schr\"oder paths.  In the purple-gray model, the power of $x_i$ is the number of down-right steps starting on slice $2i-1$, the power of $y_i$ is the number of up-right steps starting on slice $2i-1$.  In the white-pink model, the power of $x_i$ is the number of horizontal steps starting on slice $2i-1$, the power of $y_i$ is the number of dominos with no paths whose right square is on slice $2i-1$.  This follows easily from the definition of the weight of a domino tiling in the purple-gray and white-pink models in Section \ref{two-models-section}.

In the purple-gray model, the power of $t$ is the number of configurations of the form
\[ 
 \resizebox{!}{.8cm}{
\begin{tikzpicture}[baseline = (current bounding box).center]
\draw[lightgray] (0,1) rectangle (1,2);
\draw[lightgray,fill=lightgray] (0,0) rectangle (1,1);
\draw[lightgray] (-1,0) rectangle (0,1);
\draw[very thick, blue] (0,1.5)--(1,0.5);
\draw[very thick, red] (-1,0.5)--(1,0.5);
\end{tikzpicture},
\;\;\;
 \begin{tikzpicture}[baseline = (current bounding box).center]
\draw[lightgray,fill=lightgray] (0,1) rectangle (1,2);
\draw[lightgray] (0,0) rectangle (1,1);
\draw[very thick, blue] (0,0.5)--(1,1.5);
\draw[very thick, red] (0,0.55)--(1,1.55);
\end{tikzpicture},
\;\;\;
 \begin{tikzpicture}[baseline = (current bounding box).center]
\draw[lightgray,fill=lightgray] (0,1) rectangle (1,2);
\draw[lightgray] (0,0) rectangle (1,1);
\draw[lightgray] (-1,1) rectangle (0,2);
\draw[very thick, blue] (-1,1.5)--(1,1.5);
\draw[very thick, red] (0,0.5)--(1,1.5);
\end{tikzpicture},
\;\;\;
 \begin{tikzpicture}[baseline = (current bounding box).center]
\draw[lightgray,fill=lightgray] (0,1) rectangle (1,2);
\draw[lightgray] (0,0) rectangle (1,1);
\draw[lightgray] (0,2) rectangle (1,3);
\draw[very thick, blue] (0,2.5)--(1,1.5);
\draw[very thick, red] (0,0.5)--(1,1.5);
\end{tikzpicture}
}
\] 
where blue is a smaller color than red.  In other words, we get a factor of $t$ when a blue path meets a red path from above, or when a blue and a red path take an up-right step together.  This follows easily from the definition of the weight of a $k$-tiling in the purple-gray model in Section \ref{interactions-subsection}.

\subsection{The case $t=0$} \label{t-0-section}

When $t=0$, we have
\[ Z_{AD}^{(k)}(X_m;Y_m;0) = \prod_{l=0}^{k-1}\prod_{i\le j}\left(1+x_iy_j0^l\right) = \prod_{i\le j}\left(1+x_iy_j\right) = Z_{AD}^{(1)}(X_m;Y_m). \]
In this section, we will prove $Z_{AD}^{(k)}(X_m;Y_m;0) = Z_{AD}^{(1)}(X_m;Y_m)$ combinatorially, by constructing a weight-preserving bijection between $k$-tilings of the Aztec diamond with $t=0$ and domino tilings of the Aztec diamond.  This bijection can be expressed quite nicely in terms of Schr\"oder paths.  Label the starting and ending points of the paths as follows.
\[
\resizebox{0.3\textwidth}{!}{
\begin{tikzpicture}[baseline = (current bounding box).center]
\checkerboard{3}
\draw[fill=black] (0,-0.5) circle (5pt); \node[scale=2,left] at (0,-0.5) {$1$};
\draw[fill=black] (1,-1.5) circle (5pt); \node[scale=2,left] at (1,-1.5) {$2$};
\draw[fill=black] (2,-2.5) circle (5pt); \node[scale=2,left] at (2,-2.5) {$\ddots$};
\draw[fill=black] (3,-3.5) circle (5pt); \node[scale=2,left] at (3,-3.5) {$m$};
\draw[fill=black] (8,-0.5) circle (5pt); \node[scale=2,right] at (8,-0.5) {$1$};
\draw[fill=black] (7,-1.5) circle (5pt); \node[scale=2,right] at (7,-1.5) {$2$};
\draw[fill=black] (6,-2.5) circle (5pt); \node[scale=2,right] at (6,-2.5) {$\iddots$};
\draw[fill=black] (5,-3.5) circle (5pt); \node[scale=2,right] at (5,-3.5) {$m$};
\node[scale=1.5] at (0,-2) {start};
\node[scale=1.5] at (8,-2) {end};
\end{tikzpicture}
}
\]
Note that starting point $i$ and ending point $i$ can be connected via $m-i+1$ horizontal steps. 

Before constructing the bijection, we need a better understanding of the behavior of the Schr\"oder paths when $t=0$.  We begin with the case $k=2$.  We let blue be color 1 and red be color 2.
\begin{prop} \label{prop:2-color-t-0-frozen}
When $t=0$, for any $2$-tiling of the rank-$m$ Aztec diamond with non-zero weight: 
\begin{enumerate}
    \item  If $i<\frac{m+1}{2}$, then the $i$-th blue path is forced to have its first $i$ steps be horizontal while the $i$-th red path is forced to have its first $i-1$ steps be horizontal.
    \item If $i=\lfloor \frac{m}{2}+1 \rfloor$, then the $i$-th blue path is forced to have all its steps be horizontal while the $i$-th red path is forced to have its first $i-1$ steps horizontal.
    \item If $i\ge \lceil \frac{m}{2}+1 \rceil$, then the $i$-th blue path is forced to have all its steps be horizontal and the $i$-th red path is forced to have all its steps horizontal.
\end{enumerate}
In other words, the $i$-th blue path starts with $\min(i,m-i+1)$ horizontal steps, and the $i$-th red path starts with $\min(i-1,m-i+1)$ horizontal steps.
\end{prop}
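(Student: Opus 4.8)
The proof will be combinatorial, phrased in terms of the non-intersecting Schr\"oder paths of Section~\ref{schroder-section}. The first step is to observe that at $t=0$ a $2$-tiling has nonzero weight exactly when it has no interactions, and by the $t$-power dictionary for the purple--gray model (Section~\ref{schroder-section}) this means the $m$ blue paths (color~$1$) and the $m$ red paths (color~$2$) together contain none of the four displayed local patterns. I will freely use three further features of the path model: inside a single color the $m$ paths are vertex-disjoint and nested, the $i$-th path of a color running from $(-m-1+i,-i+\frac{1}{2})$ to $(m+1-i,-i+\frac{1}{2})$; every path lies weakly above its baseline $y=-i+\frac{1}{2}$ and inside the Aztec diamond, so a path sitting on its baseline can move only by an East or a North-East step; and the completely horizontal $i$-th path is a string of exactly $m-i+1$ East steps. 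With these in hand the claim is precisely the uniform statement that the $i$-th blue path begins with at least $\min(i,m-i+1)$ East steps and the $i$-th red path begins with at least $\min(i-1,m-i+1)$ East steps, from which the three cases in the statement follow by unpacking the minima.

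I plan to prove this by induction on $i$, treating the paths from the bottom-left corner outward and at each stage peeling off the forced East-prefix of the $i$-th red path and then of the $i$-th blue path. For the red path, comparison with the inductively pinned $(i-1)$-st red path via vertex-disjointness shows that a North-East step in any of the first $\min(i-2,m-i+2)$ positions would land the red path on a lattice point already occupied by the $(i-1)$-st red path; this yields a prefix of length $\min(i-2,m-i+2)$. The remaining East step is forced across colors: a North-East step of the $i$-th red path in the next position would end at the very lattice point where the pinned East-run of the $(i-1)$-st blue path ends, and that is exactly the third displayed interaction pattern (a blue horizontal domino abutting a red up-right domino), which is excluded. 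For the blue path, the intra-color comparison with the $(i-1)$-st blue path gives a prefix of length $\min(i-1,m-i+2)$, one short of the claim in the range $i\le\frac{m+1}{2}$; to get the extra East step I compare the $i$-th blue path against the now-pinned East-run of the $i$-th red path and against the $(i-1)$-st blue path. A premature North-East step of the $i$-th blue path either coincides, out of a common point, with a North-East step of the $i$-th red path (the second displayed pattern), or else forces the $(i-1)$-st blue path off its baseline, and chasing that displacement one further step produces the third or the first displayed pattern. The base case $i=1$ is argued directly at the corner, where the first blue and first red paths emanate from the same point, a South-East step is impossible, and two simultaneous North-East steps are forbidden; and the transition near $i\approx\frac{m}{2}+1$, where $\min(i,m-i+1)$ becomes $m-i+1$ and the path is forced to be entirely horizontal, follows from the same bounds together with the length count $m-i+1$.

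The step I expect to be the real obstacle is making the chasing argument in the blue case fully rigorous: the induction pins only \emph{prefixes}, and in a given tiling a path may run horizontally longer than forced, so one must control how a single misplaced North-East step propagates --- alternately invoking vertex-disjointness within a color and the ban on the four patterns across colors --- and check that no reconfiguration dodges all four forbidden pictures. The cleanest organization is probably to strengthen the inductive hypothesis so that it describes the entire configuration of both colors in the bottom-left region (the maximally packed staircase of dominoes), turning each inductive step into a bounded local verification; alternatively one can run a minimal-counterexample argument on the pair consisting of the index $i$ and the position of the first offending North-East step, and perform a short local surgery on the paths to exhibit one of the four patterns or a same-color crossing. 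This is the same local analysis that underlies the bijection with $1$-tilings built later in this subsection, and the one place where care is needed is the precise translation between ``North-East or South-East step into or out of a point'' and the displayed dominoes, where a parity slip would break the matching.
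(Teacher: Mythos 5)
Your overall architecture (induction on $i$; same-color vertex-disjointness pins the prefixes; cross-color forbidden patterns supply the extra forced step) matches the paper's proof, and several pieces are sound: the intra-color prefix argument, and the last forced east step of the $i$-th red path via the ``blue horizontal step and red up-right step arriving at a common point'' pattern, are correct and amount to the paper's observations 1 and 2. But the crucial step for the \emph{blue} path (and already the base case $i=1$) has a genuine gap. If the $i$-th blue path takes a premature NE step while the $i$-th red path goes east, your claim that ``chasing that displacement one further step produces the third or the first displayed pattern'' is false: the only thing forced locally is that the $(i-1)$-st blue path must also step NE (to avoid a same-color vertex collision), and the resulting configuration --- $i$-th and $(i-1)$-st blue paths stepping NE, red paths continuing east --- contains none of the four interaction patterns, since a smaller color sitting strictly \emph{above} a larger one, or a red path arriving at a point from which a blue path departs NE, carries no factor of $t$. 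The same hole is in your base case: ruling out a simultaneous double NE and SE steps does not exclude (blue NE, red E) at the corner. Your fallback suggestions (strengthening the induction to describe the whole frozen corner, or a minimal-counterexample surgery) are left as sketches and do not supply the mechanism that closes this case.

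The missing idea is global, not local, and it is exactly the paper's third observation: the $i$-th blue and $i$-th red paths terminate at the \emph{same point}, so if at a common point the blue steps NE while the red does not, the blue is strictly above the red yet both must reach the common endpoint; at the first subsequent meeting the blue necessarily meets the red from above (or the two take an up-right step together), which is one of the forbidden patterns. With this shared-endpoint argument the extra east step of the $i$-th blue path, and the base case, follow in one line, with no cascade, no strengthened inductive hypothesis, and no counterexample surgery needed; the rest of your induction then goes through essentially as in the paper.
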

\begin{proof} We begin with three important observations.
\begin{enumerate}
    \item The $i$-th and $j$-th paths of the same color may not intersect for $i \neq j$.  This implies (by a simple induction argument from the $m$-th path to the 1st path) that the $i$-th path of each color may not go below the horizontal line connecting starting point $i$ and ending point $i$.
    \item The $i$-th blue path and the $j$-th red path may not intersect for $i < j$ when $t=0$.  The $i$-th blue path starts above the $j$-th red path, so if they did intersect, then at the first point $P$ of intersection, the blue path would meet the red path from above, which gives a $t$.
    \item Suppose a blue path and a red path meet at two points $A,Z$ with $A$ left of $Z$.  Let's consider the behavior of the two paths at $A$.  If both paths go up-right, then we get a $t$.  If the blue path goes up-right and the red path goes horizontal or down-right, then the blue path is above the red path, but the two paths both reach $Z$ later, so eventually the two paths will meet, hence we will get a $t$.  Therefore, when $t=0$, the blue path must not go up-right at $A$.
\end{enumerate}

The 1st blue path and the 1st red path start at the same point and end at the same point.  Therefore, by observation 3, the 1st blue path must start with a horizontal step.

Now assume the proposition holds for the first $i-1$ paths.  We will show the proposition holds for the $i$-th paths.  Suppose $i < m-i+1$.  We know the $i-1$-th blue path begins with $i-1$ horizontal steps.  Thus the $i$-th blue path must begin with $i-1$ horizontal steps by observation 1, and moreover the $i$-th red path must begin with $i-1$ horizontal steps by observation 2.  Since the $i$-th blue path and the $i$-th red path begin by taking $i-1$ horizontal steps together, the $i$-th blue path must take another horizontal step.  (It can't go up-right by observation 3, and it can't go down-right by observation 1.)  Thus the $i$-th blue path begins with $i$ horizontal steps.  If we suppose instead that $i \geq m-i+1$, then the same argument works, except the paths are forced to take all their steps horizontally.
\end{proof}

\begin{cor} \label{cor:2-color-t-0}
When $t=0$, for any $2$-tiling of the rank $m$ Aztec diamond with non-zero weight, the $i$-th blue path is weakly below the $i$-th red path and strictly above the $(i+1)$-th red path.
\end{cor}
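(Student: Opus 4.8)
Here is how I would prove the corollary, working entirely in the purple--gray model and using the Schr\"oder-path dictionary from Section~\ref{schroder-section}. Both halves of the statement — the $i$-th blue path weakly below the $i$-th red path, and strictly above the $(i+1)$-th red path — will follow from one local lemma describing the only way a blue and a red path can be made to touch when $t=0$: \emph{if a blue path $p$ and a red path $q$ pass through a common point $P$ that is not their common starting point, and $p$ is strictly higher than $q$ immediately to the left of $P$, then the tiling has a factor of $t$, hence weight $0$.} The proof of this local lemma is just an inspection of the step of each path arriving at $P=(x_P,y_P)$, against the four explicit $t$-generating configurations listed in Section~\ref{schroder-section} (blue arriving down-right onto a horizontal red step; blue and red arriving together on an up-right step; blue arriving horizontally while red arrives up-right; blue arriving down-right while red arrives up-right).

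For the local lemma I would first note that every Schr\"oder path is $1$-Lipschitz in $x$, so for small $\delta>0$ we have $q(x_P-\delta)\le y_P+\delta$ and $p(x_P-\delta)\ge y_P-\delta$. If $q$ arrived at $P$ by a down-right step then $q(x_P-\delta)=y_P+\delta\ge p(x_P-\delta)$, contradicting that $p$ is strictly higher just left of $P$; so $q$ arrives horizontally or by an up-right step. A one-line check on $p$ finishes it: if $q$ arrives horizontally, $p$ cannot arrive horizontally or up-right (either gives $p(x_P-\delta)\le y_P=q(x_P-\delta)$), so $p$ arrives down-right and $P$ is the first listed configuration; if $q$ arrives up-right, $p$ cannot arrive up-right, so $p$ arrives horizontally or down-right, giving the third or fourth listed configuration. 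In all cases $P$ contributes a factor $t=0$, so the weight is $0$, contradicting the hypothesis.

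Granting the local lemma, the corollary is short. \textbf{Weakly below.} Suppose $\mathrm{blue}_i(x_0)>\mathrm{red}_i(x_0)$ for some $x_0$. Since $\mathrm{blue}_i$ and $\mathrm{red}_i$ have the same two endpoints, set $Z=\inf\{x>x_0:\mathrm{blue}_i(x)\le\mathrm{red}_i(x)\}$; then $Z>x_0$, $\mathrm{blue}_i>\mathrm{red}_i$ on $(x_0,Z)$, $\mathrm{blue}_i(Z)=\mathrm{red}_i(Z)$ by continuity, and $Z$ is not the common starting point (which lies strictly left of $x_0$), so the local lemma applies at $P=Z$ and gives a contradiction. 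Hence $\mathrm{blue}_i\le\mathrm{red}_i$ everywhere. \textbf{Strictly above.} For $i<m$ (the case $i=m$ is vacuous), recall from the first observation in the proof of Proposition~\ref{prop:2-color-t-0-frozen} that $\mathrm{blue}_i$ never dips below $y=-i+\tfrac12$, while $\mathrm{red}_{i+1}$ runs between the endpoints at $y=-i-\tfrac12$; since the $x$-range $[-m+i,m-i]$ of $\mathrm{red}_{i+1}$ lies inside that of $\mathrm{blue}_i$, at both ends of $\mathrm{red}_{i+1}$'s range $\mathrm{blue}_i$ is strictly above $\mathrm{red}_{i+1}$, and by the second observation in that proof the two paths never meet; continuity then forces $\mathrm{blue}_i$ strictly above $\mathrm{red}_{i+1}$ on all of $[-m+i,m-i]$. (The local lemma also reproves that second observation directly, applied to the leftmost intersection point, should one prefer a self-contained argument.)

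The main obstacle is getting the case analysis inside the local lemma exactly right: ruling out a down-right incoming step for the red path via the Lipschitz bound, then matching each of the two surviving red possibilities against the two surviving blue possibilities to one of the four $t$-generating pictures, while observing that the ``both horizontal'' combination cannot occur (it would make blue equal red just left of $P$). A secondary, purely bookkeeping point is that two Schr\"oder paths can cross at a half-integer $x$-coordinate, so every ``immediately to the left'' assertion should be phrased with an $\epsilon$ or $\delta$ rather than by stepping one lattice unit.
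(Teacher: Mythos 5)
Your proof is correct and follows essentially the same route as the paper's: both halves come down to the fact that a blue path touching a red path from above (or sharing an up-right step with it) forces a factor of $t$, combined with Observations 1 and 2 from the proof of Proposition \ref{prop:2-color-t-0-frozen} and the fact that the $i$-th blue and $i$-th red paths share their endpoints. Your local lemma simply makes the paper's phrase ``the blue path meets the red path from above, giving a $t$'' explicit through a case analysis of incoming steps; incidentally, the mid-step crossings you caution about cannot actually occur, since the fixed checkerboard parity forces vertical dominos of different colors occupying the same cells to carry diagonal steps in the same direction, so all meetings of two paths happen at integer $x$.
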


\begin{proof}
The fact that the $i$-th blue path is strictly above the $(i+1)$-th red path follows from observation 2 (and the fact that the $i$-th blue path starts above the $(i+1)$-th red path).  If the $i$-th blue path were ever strictly above the $i$-th red path, then since these paths end at the same point, there must be a point where the $i$-th blue path meets the $i$-th red path from above, giving a $t$.
\end{proof}

\noindent As an example of Proposition \ref{prop:2-color-t-0-frozen}, when $m=4$, the frozen paths are as follows.
\[
\resizebox{0.3\textwidth}{!}{
\begin{tikzpicture}[baseline = (current bounding box).center]
\checkerboard{3}
\draw[fill=black] (0,-0.5) circle (5pt); \node[scale=2,left] at (0,-0.5) {$1$};
\draw[fill=black] (1,-1.5) circle (5pt); \node[scale=2,left] at (1,-1.5) {$2$};
\draw[fill=black] (2,-2.5) circle (5pt); \node[scale=2,left] at (2,-2.5) {$3$};
\draw[fill=black] (3,-3.5) circle (5pt); \node[scale=2,left] at (3,-3.5) {$4$};
\draw[fill=black] (8,-0.5) circle (5pt); \node[scale=2,right] at (8,-0.5) {$1$};
\draw[fill=black] (7,-1.5) circle (5pt); \node[scale=2,right] at (7,-1.5) {$2$};
\draw[fill=black] (6,-2.5) circle (5pt); \node[scale=2,right] at (6,-2.5) {$3$};
\draw[fill=black] (5,-3.5) circle (5pt); \node[scale=2,right] at (5,-3.5) {$4$};
\draw[very thick, blue] (0,-0.5)--(2,-0.5);
\draw[very thick, blue] (1,-1.52)--(5,-1.52);
\draw[very thick, red] (1,-1.48)--(3,-1.48);
\draw[very thick, blue] (2,-2.52)--(6,-2.52);
\draw[very thick, red] (2,-2.48)--(6,-2.48);
\draw[very thick, blue] (3,-3.52)--(5,-3.52);
\draw[very thick, red] (3,-3.48)--(5,-3.48);
\end{tikzpicture}
}
\]

\noindent We are now ready to construct the bijection in the case $k=2$.

\begin{prop}
There is a weight-preserving bijection between 2-tilings of the rank $m$ Aztec diamond at $t=0$ and domino tilings of the rank $m$ Aztec diamond, given by shifting the $i$-th blue path down $i$ steps and left $i$ steps, and shifting the $i$-th red path down $i-1$ steps and left $i-1$ steps.
\end{prop}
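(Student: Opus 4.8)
\textit{Proof proposal.}

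The plan is to carry everything over to the Schr\"oder-path picture of Section \ref{schroder-section}. A $2$-tiling is the same data as an $m$-tuple of blue paths together with an $m$-tuple of red paths, the $i$-th path of each colour running from $(-m-1+i,-i+\tfrac12)$ to $(m+1-i,-i+\tfrac12)$; at $t=0$ with non-zero weight these obey the rigidity of Proposition \ref{prop:2-color-t-0-frozen} and Corollary \ref{cor:2-color-t-0}, namely the $i$-th blue path $B_i$ starts with $\min(i,m-i+1)$ horizontal steps, the $i$-th red path $R_i$ with $\min(i-1,m-i+1)$ horizontal steps, and throughout one has $R_1\geq B_1>R_2\geq B_2>\cdots$. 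Translating $B_i$ by $(-i,-i)$ and $R_i$ by $(-(i-1),-(i-1))$ moves the $2m$ paths onto the distinct heights $-\tfrac12,-\tfrac32,\dots,-2m+\tfrac12$, interleaved as $R_1,B_1,R_2,B_2,\dots$, and a one-line coordinate computation shows that the leading horizontal runs of $B_i$ and $R_i$ then protrude to the left of the rank-$m$ Aztec diamond; deleting those forced horizontal steps --- equivalently, restricting to the diamond --- turns $R_i$ into a path with the endpoints of the $(2i-1)$-st Schr\"oder path of a rank-$m$ tiling, turns $B_i$ into the $(2i)$-th such path, and collapses every path of index $>m$ to nothing, so that exactly $m$ non-empty paths survive.

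Next I would verify the image is a genuine domino tiling, i.e.\ that the trimmed paths are pairwise non-intersecting and are legal Schr\"oder paths inside the diamond. The translation sends ``$R_i$ weakly above $B_i$'' (equal heights) to ``$R_i$ exactly one level above $B_i$'' and leaves ``$B_i$ strictly above $R_{i+1}$'' intact, so consecutive paths in the interleaved list lie at consecutive heights. Inspecting which pairs of steps of two paths can coincide after the translation shows that vertex-disjointness of the whole list is equivalent to the $t=0$, non-zero-weight condition as read off from the Schr\"oder-path description of interactions in Section \ref{schroder-section} (no blue path meeting a red path from above, and no blue and red path taking an up-right step together); the ``meeting from above'' part is already ruled out by the ordering, and the general ``simultaneous up-right step'' case reduces to the adjacent pair $B_i,R_i$, which under the shift is exactly a shared vertex. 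Deleting leading horizontal steps does not destroy disjointness (each trimmed path visits a subset of the vertices of the untrimmed one), and by observation (1) in the proof of Proposition \ref{prop:2-color-t-0-frozen} the trimmed paths stay above their base lines, hence are valid.

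Weight-preservation is then immediate. Both translations are by vectors of the form $(-c,-c)$, which run along the slicing direction, so every up-right or down-right step keeps the slice on which it starts, while deletion only removes horizontal steps, which carry weight $1$. Thus in the new tiling the exponent of $x_i$ (resp.\ $y_i$) is the number of down-right (resp.\ up-right) steps starting on slice $2i-1$ among all $2m$ paths, which is the sum of the corresponding counts for $T_1$ and $T_2$; hence $w(\text{new tiling})=w(T_1)w(T_2)=w(\bm{T})$ at $t=0$.

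Finally, for bijectivity I would write down the inverse explicitly: given a rank-$m$ tiling with Schr\"oder paths $p_1,\dots,p_m$, take (up to translation) the $i$-th red path to be $p_{2i-1}$ with $i-1$ horizontal steps prepended and the $i$-th blue path to be $p_{2i}$ with $i$ horizontal steps prepended, using the fully horizontal path whenever the index exceeds $m$, and then translate back by $(i-1,i-1)$ and $(i,i)$ respectively. One checks that the red paths (resp.\ the blue paths) form a non-intersecting $m$-tuple, that the frozen structure and the ordering $R_i\geq B_i>R_{i+1}$ hold, and --- the real content --- that the reconstructed $2$-tiling has no interactions, which is the converse of the vertex-disjointness computation above. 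I expect precisely this equivalence, ``$2$-tiling with zero interactions at $t=0$'' $\Longleftrightarrow$ ``$2m$ vertex-disjoint, correctly frozen Schr\"oder paths'', to be the main obstacle; once it is established the two maps are visibly mutually inverse and the rest is coordinate bookkeeping.
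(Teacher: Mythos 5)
Your proposal is correct and follows essentially the same route as the paper's own proof: pass to the Schr\"oder-path picture, invoke Proposition \ref{prop:2-color-t-0-frozen} and Corollary \ref{cor:2-color-t-0}, shift the paths, discard the forced horizontal prefixes, and conclude weight-preservation because horizontal steps carry weight $1$. You merely spell out details the paper treats as immediate, namely why the weak ordering becomes strict after the diagonal shift (the only possible post-shift touching comes from a shared up-right step, which is exactly an excluded $t$-configuration), why the slice of each non-horizontal step is unchanged, and the explicit inverse map.
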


\begin{proof}

By Proposition \ref{prop:2-color-t-0-frozen}, it is easy to see that after shifting the paths, the frozen part of each path is shifted completely outside the Aztec diamond and the non-frozen part of each path remains inside the Aztec diamond.  Since the $i$-th blue path is weakly below the $i$-th red path and strictly above the $(i+1)$-th red path before the shift by Corollary \ref{cor:2-color-t-0}, after the shift it is strictly below the $i$-th red path and strictly above the $(i+1)$-th red path.  That is, now the paths are non-intersecting.  Non-intersecting Schr\"oder paths are in bijection with domino tilings of the Aztec diamond.  Since a horizontal step has a weight of 1, it follows that the bijection is weight-preserving.
\end{proof}

An example of the bijection for two colors is given in Figure \ref{ex:dom}. The bijection can also be defined directly on the tilings.
See Figure \ref{ex:dom1}.

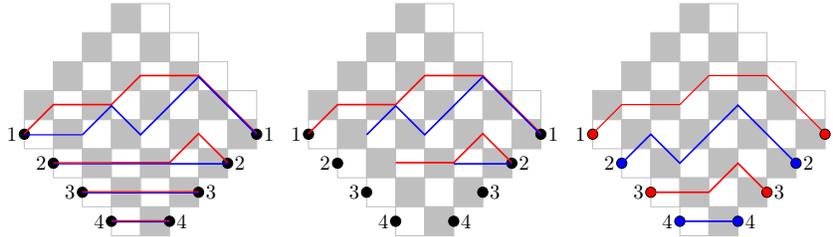
\begin{figure}[ht]
\begin{center}
\resizebox{0.75\textwidth}{!}{
\begin{tikzpicture}[baseline = (current bounding box).center]
\checkerboard{3}
\draw[fill=black] (0,-0.5) circle (5pt); \node[scale=2,left] at (0,-0.5) {$1$};
\draw[fill=black] (1,-1.5) circle (5pt); \node[scale=2,left] at (1,-1.5) {$2$};
\draw[fill=black] (2,-2.5) circle (5pt); \node[scale=2,left] at (2,-2.5) {$3$};
\draw[fill=black] (3,-3.5) circle (5pt); \node[scale=2,left] at (3,-3.5) {$4$};
\draw[fill=black] (8,-0.5) circle (5pt); \node[scale=2,right] at (8,-0.5) {$1$};
\draw[fill=black] (7,-1.5) circle (5pt); \node[scale=2,right] at (7,-1.5) {$2$};
\draw[fill=black] (6,-2.5) circle (5pt); \node[scale=2,right] at (6,-2.5) {$3$};
\draw[fill=black] (5,-3.5) circle (5pt); \node[scale=2,right] at (5,-3.5) {$4$};
\draw[ultra thick, red] (0,-0.48)--(1,0.52)--(3,0.52)--(4,1.52)--(6,1.52)--(8,-.48);
\draw[ultra thick, red] (3,-1.48)--(5,-1.48)--(6,-0.48)--(7,-1.48);
\draw[ultra thick, blue] (2,-0.52)--(3,0.48)--(4,-0.52)--(6,1.48)--(8,-.52);
\draw[ultra thick, blue] (5,-1.52)--(7,-1.52);
\draw[very thick, blue] (0,-0.52)--(2,-0.52);
\draw[very thick, blue] (1,-1.52)--(5,-1.52);
\draw[very thick, red] (1,-1.48)--(3,-1.48);
\draw[very thick, blue] (2,-2.52)--(6,-2.52);
\draw[very thick, red] (2,-2.48)--(6,-2.48);
\draw[very thick, blue] (3,-3.52)--(5,-3.52);
\draw[very thick, red] (3,-3.48)--(5,-3.48);
\end{tikzpicture}
\begin{tikzpicture}[baseline = (current bounding box).center]
\checkerboard{3}
\draw[fill=black] (0,-0.5) circle (5pt); \node[scale=2,left] at (0,-0.5) {$1$};
\draw[fill=black] (1,-1.5) circle (5pt); \node[scale=2,left] at (1,-1.5) {$2$};
\draw[fill=black] (2,-2.5) circle (5pt); \node[scale=2,left] at (2,-2.5) {$3$};
\draw[fill=black] (3,-3.5) circle (5pt); \node[scale=2,left] at (3,-3.5) {$4$};
\draw[fill=black] (8,-0.5) circle (5pt); \node[scale=2,right] at (8,-0.5) {$1$};
\draw[fill=black] (7,-1.5) circle (5pt); \node[scale=2,right] at (7,-1.5) {$2$};
\draw[fill=black] (6,-2.5) circle (5pt); \node[scale=2,right] at (6,-2.5) {$3$};
\draw[fill=black] (5,-3.5) circle (5pt); \node[scale=2,right] at (5,-3.5) {$4$};
\draw[ultra thick, red] (0,-0.48)--(1,0.52)--(3,0.52)--(4,1.52)--(6,1.52)--(8,-.48);
\draw[ultra thick, red] (3,-1.48)--(5,-1.48)--(6,-0.48)--(7,-1.48);
\draw[ultra thick, blue] (2,-0.52)--(3,0.48)--(4,-0.52)--(6,1.48)--(8,-.52);
\draw[ultra thick, blue] (5,-1.52)--(7,-1.52);
\end{tikzpicture}
\begin{tikzpicture}[baseline = (current bounding box).center]
\checkerboard{3}
\draw[fill=red] (0,-0.5) circle (5pt); \node[scale=2,left] at (0,-0.5) {$1$};
\draw[very thick, red] (0,-0.48)--(1,0.52)--(3,0.52)--(4,1.52)--(6,1.52)--(8,-.48);
\draw[fill=blue] (1,-1.5) circle (5pt); \node[scale=2,left] at (1,-1.5) {$2$};
\draw[fill=red] (2,-2.5) circle (5pt); \node[scale=2,left] at (2,-2.5) {$3$};
\draw[fill=blue] (3,-3.5) circle (5pt); \node[scale=2,left] at (3,-3.5) {$4$};
\draw[fill=red] (8,-0.5) circle (5pt); \node[scale=2,right] at (8,-0.5) {$1$};
\draw[fill=blue] (7,-1.5) circle (5pt); \node[scale=2,right] at (7,-1.5) {$2$};
\draw[fill=red] (6,-2.5) circle (5pt); \node[scale=2,right] at (6,-2.5) {$3$};
\draw[fill=blue] (5,-3.5) circle (5pt); \node[scale=2,right] at (5,-3.5) {$4$};
\draw[ultra thick, blue](1,-1.5)--(2,-0.5)--(3,-1.5)--(5,0.5)--(7,-1.5);
\draw[ultra thick, blue] (3,-3.5)--(5,-3.5);
\draw[ultra thick, red] (2,-2.5)--(4,-2.5)--(5,-1.5)--(6,-2.5);
\end{tikzpicture}}
\end{center}
\caption{Example of the bijection at $t=0$ for $k=2$. On the left, the 2-tuple of paths, in the middle the paths without the frozen steps and on the right the 1-tuple of paths}
\label{ex:dom}
\end{figure}

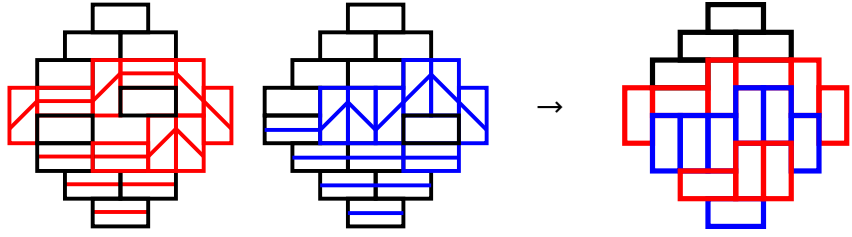
\begin{figure}[ht]
\begin{center}
\resizebox{0.75\textwidth}{!}{
\begin{tikzpicture}[baseline = (current bounding box).center, scale=0.3]

\draw[very thick, red] (0,-1) rectangle (1,1);
\draw[very thick] (1,1) rectangle (3,2);
\draw[very thick] (2,2) rectangle (4,3);
\draw[very thick] (3,3) rectangle (5,4);
\draw[very thick] (4,2) rectangle (6,3);

\draw[very thick, red] (1,0) rectangle (3,1);
\draw[very thick, red] (3,0) rectangle (4,2);
\draw[very thick, red] (4,1) rectangle (6,2);
\draw[very thick, red] (7,0) rectangle (6,2);
\draw[very thick, red] (7,1) rectangle (8,-1);

\draw[very thick, red] (0,-0.48)--(1,0.52)--(3,0.52)--(4,1.52)--(6,1.52)--(8,-.48);
\draw[very thick, red] (3,-1.48)--(5,-1.48)--(6,-0.48)--(7,-1.48);
\draw[very thick, red] (1,-1.48)--(3,-1.48);
\draw[very thick, red] (2,-2.48)--(6,-2.48);
\draw[very thick, red] (3,-3.48)--(5,-3.48);
\draw[very thick] (1,-1) rectangle (3,0);
\draw[very thick] (1,-2) rectangle (3,-1);
\draw[very thick] (2,-3) rectangle (4,-2);
\draw[very thick] (6,-3) rectangle (4,-2);

\draw[very thick, red] (3,-2) rectangle (5,-1);
\draw[very thick, red] (5,0) rectangle (6,-2);
\draw[very thick, red] (6,0) rectangle (7,-2);

\draw[very thick] (3,-4) rectangle (5,-3);

\draw[very thick] (4,0) rectangle (6,1);

\end{tikzpicture}\hspace{.2cm}
\begin{tikzpicture}[baseline = (current bounding box).center, scale=0.3]

\draw[very thick] (0,0) rectangle (2,1);
\draw[very thick] (1,1) rectangle (3,2);
\draw[very thick] (5,1) rectangle (3,2);

\draw[very thick] (2,2) rectangle (4,3);
\draw[very thick] (3,3) rectangle (5,4);
\draw[very thick] (4,2) rectangle (6,3);

\draw[very thick] (0,-1) rectangle (2,0);
\draw[very thick] (1,-2) rectangle (3,-1);
\draw[very thick] (2,-3) rectangle (4,-2);
\draw[very thick] (5,-2) rectangle (3,-1);
\draw[very thick] (6,-3) rectangle (4,-2);

\draw[very thick] (3,-4) rectangle (5,-3);
\draw[very thick, blue] (5,-1.52)--(7,-1.52);
\draw[very thick, blue] (0,-0.52)--(2,-0.52);
\draw[very thick, blue] (1,-1.52)--(5,-1.52);
\draw[very thick, blue] (2,-0.52)--(3,0.48)--(4,-0.52)--(6,1.48)--(8,-.52);
\draw[very thick, blue] (3,-3.52)--(5,-3.52);
\draw[very thick, blue] (2,-2.52)--(6,-2.52);

\draw[very thick,  blue](2,-1) rectangle (3,1);
\draw[very thick,  blue](3,-1) rectangle (4,1);
\draw[very thick,  blue](4,-1) rectangle (5,1);
\draw[very thick,  blue](5,0) rectangle (6,2);
\draw[very thick,  blue](6,0) rectangle (7,2);
\draw[very thick,  blue](7,-1) rectangle (8,1);
\draw[very thick,  blue](5,-2) rectangle (7,-1);
\draw[very thick] (5,-1) rectangle (7,0);

\end{tikzpicture}\hspace{.5cm}${\huge \rightarrow}$\hspace{.5cm}
\begin{tikzpicture}[baseline = (current bounding box).center, scale=0.3]
\draw[ultra thick, red] (0,-1) rectangle (1,1);
\draw[ultra thick] (1,1) rectangle (3,2);
\draw[ultra thick] (2,2) rectangle (4,3);
\draw[ultra thick] (3,3) rectangle (5,4);
\draw[ultra thick] (4,2) rectangle (6,3);

\draw[ultra thick, red] (1,0) rectangle (3,1);
\draw[ultra thick, red] (3,0) rectangle (4,2);
\draw[ultra thick, red] (4,1) rectangle (6,2);
\draw[ultra thick, red] (7,0) rectangle (6,2);
\draw[ultra thick, red] (7,1) rectangle (8,-1);

\draw[ultra thick,  blue](1,-2) rectangle (2,0);
\draw[ultra thick,  blue](2,-2) rectangle (3,0);
\draw[ultra thick,  blue](3,-2) rectangle (4,0);
\draw[ultra thick,  blue](4,-1) rectangle (5,1);
\draw[ultra thick,  blue](5,-1) rectangle (6,1);
\draw[ultra thick,  blue](6,-2) rectangle (7,0);

\draw[ultra thick,  blue](3,-4) rectangle (5,-3);

\draw[ultra thick, red] (2,-2) rectangle (4,-3);
\draw[ultra thick, red] (4,-1) rectangle (5,-3);
\draw[ultra thick, red] (5,-1) rectangle (6,-3);

\end{tikzpicture}}
\end{center}
\caption{Example of the bijection at $t=0$ for $k=2$. On the left, the 2-tiling with the ``frozen" dominos in black. On the right, the corresponding 1-tiling. }
\label{ex:dom1}
\end{figure}

\begin{figure}[ht]
    \centering
    \resizebox{0.9\textwidth}{!}{
    \begin{tabular}{cc}
    \includegraphics[width=0.5\textwidth]{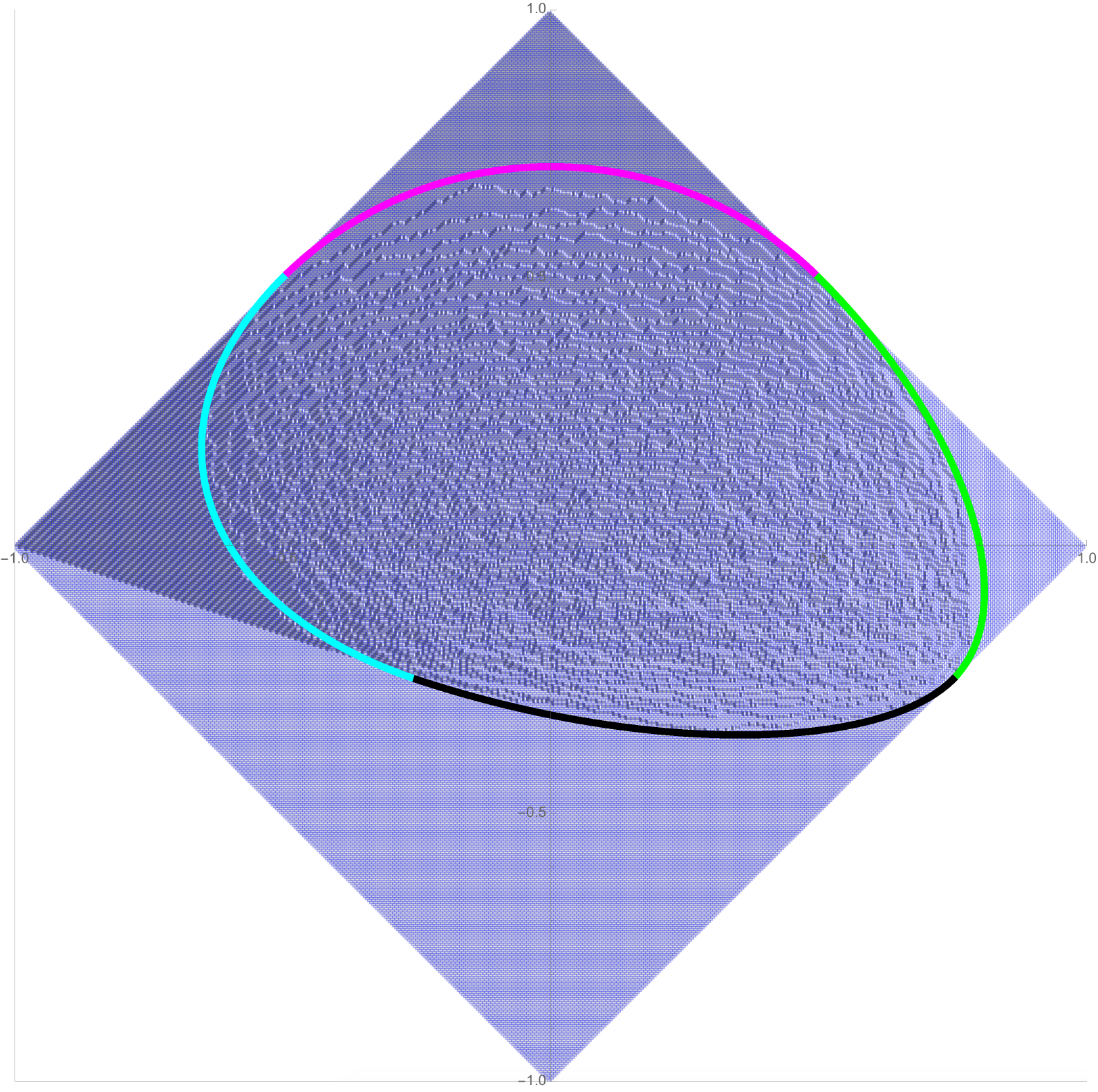} & \includegraphics[width=0.5\textwidth]{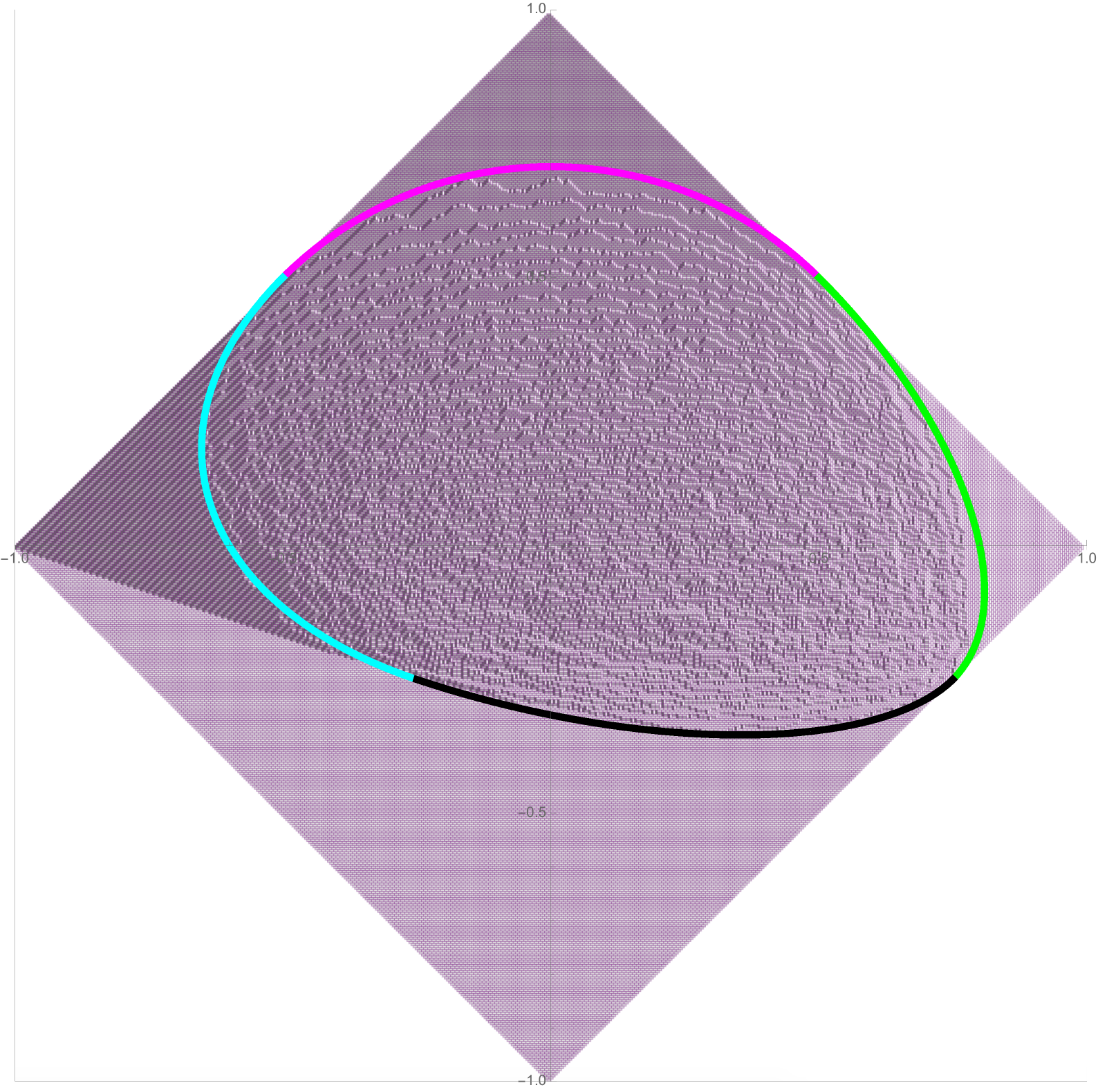}
    \end{tabular}
    }
    \caption{Simulation and computed arctic curve for a 2-tiling of the Aztec diamond of rank 256 for $t=0$. The blue tiling is the smaller color while the red tiling is the larger color.}
    \label{fig:ArcticCurveDomino}
\end{figure}

With this we can compute the arctic curve when $t=0$ (see Figure \ref{fig:ArcticCurveDomino}). We use Theorem \ref{arcticthm}.
For each $m$,
consider a uniformly random $k$-tiling of the Aztec diamond of rank $m$ and we scale each tiling by a factor $1/m$
in each axis to fit into the limiting diamond
$$
AD_\infty=\{|x| + |y| \le 1\}.
$$
We fix $k=2$.
\begin{thm}\label{thm:2tilingAC}
The arctic curve for 2-tilings of the Aztec diamond when $t=0$ is given by
\[
\begin{cases}
x^2+y^2=\frac{1}{2},& x\in[-1/2,1/2],\; y>1/2 \\
(x+y)^2+(2y)^2=\frac{1}{2},& x\in[-1/4,3/4],\; y<-1/4 \\
\left(\frac{3x+y-1}{2}\right)^2+\left(\frac{3y+x-1}{2}\right)^2=\frac{1}{2},& y\in[-1/4,1/2],\; x>-\frac{1}{3}y+\frac{2}{3} \\
\left(\frac{3x+y-1}{4}\right)^2+\left(\frac{5y-x-1}{4}\right)^2=\frac{1}{2},& y\in[-1/4,1/2],\; x<-\frac{1}{3}y-\frac{1}{3}
\end{cases}
\]
for both colors.
\end{thm}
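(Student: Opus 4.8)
The plan is to transport the arctic circle of ordinary domino tilings (Theorem~\ref{arcticthm}) through the weight-preserving bijection constructed above between $2$-tilings at $t=0$ and domino tilings. That bijection acts path-by-path: by Corollary~\ref{cor:2-color-t-0} the Schr\"oder paths of a nonzero-weight $2$-tiling at $t=0$ occur, top to bottom, in the order $\mathrm{red}_1,\mathrm{blue}_1,\mathrm{red}_2,\mathrm{blue}_2,\dots$, and shifting $\mathrm{blue}_i$ down and left by $i$ and $\mathrm{red}_i$ down and left by $i-1$ pushes the fully frozen paths of Proposition~\ref{prop:2-color-t-0-frozen} out of the diamond while leaving exactly the $m$ Schr\"oder paths of a domino tiling, whose $\ell$-th path (from the top) is $\mathrm{red}_{(\ell+1)/2}$ for $\ell$ odd and $\mathrm{blue}_{\ell/2}$ for $\ell$ even. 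Inverting, the $\ell$-th domino-tiling path is obtained from the blue or red $2$-tiling path it comes from by a shift of $\lfloor\ell/2\rfloor$ units up and right; after rescaling by $1/m$ this shift is $\tfrac12\rho\,(1,1)+o(1)$, where $\rho=\ell/m\in[0,1]$ is the normalized index of the domino-tiling path. Crucially this limiting shift is the same for both colors, which is why the two arctic curves coincide.

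Next I would take the scaling limit. The bijection is weight-preserving and all no-interaction $2$-tilings carry the same ($t$-independent) weight, so a uniformly random $2$-tiling at $t=0$ is the inverse-bijection image of a uniformly random domino tiling; moreover the inverse bijection, restricted to any one path, is a translation by an integer multiple of $(1,1)$, hence preserves the checkerboard coloring and the domino types, so a region of a color of the $2$-tiling looks frozen exactly when the matching region of the domino tiling does. Therefore, after rescaling, the liquid region of each color is the image of the limiting domino tiling's liquid disk $\{x_0^2+y_0^2<\tfrac12\}$ under
\[
\Phi:(x_0,y_0)\longmapsto\Bigl(x_0+\tfrac12\rho(x_0,y_0),\ y_0+\tfrac12\rho(x_0,y_0)\Bigr),
\]
$\rho(x_0,y_0)\in[0,1]$ denoting the normalized index of the domino-tiling path through $(x_0,y_0)$ (constant along each path). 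In particular the arctic curve of each color is the image under $\Phi$ of the circle $x_0^2+y_0^2=\tfrac12$.

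It remains to evaluate $\rho$ on that circle and apply $\Phi$. The four tangency points $(\pm\tfrac12,\pm\tfrac12)$ of the circle with the diamond $|x|+|y|\le1$ split it into a top, a bottom, a right and a left arc, bordering the northern, southern, eastern and western polar regions; in the limit the paths are straight segments in each polar region (horizontal in the south, of slope $-1$ in the east, of slope $+1$ in the west, absent in the north), and using this together with the path boundary data — the $(\rho m)$-th domino-tiling path runs from $(\rho-1,-\rho)$ on $x+y=-1$ to $(1-\rho,-\rho)$ on $x-y=1$ — one finds that $\rho$ extends continuously to the circle as $0$ on the top arc, $-y_0$ on the bottom arc, $\tfrac{1-x_0-y_0}{2}$ on the right arc, and $\tfrac{1+x_0-y_0}{2}$ on the left arc. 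Substituting each of these into $\Phi$ and eliminating $(x_0,y_0)$ via $x_0^2+y_0^2=\tfrac12$ gives, in the order of the statement,
\[
x^2+y^2=\tfrac12,\qquad (x+y)^2+(2y)^2=\tfrac12,\qquad \Bigl(\tfrac{3x+y-1}{2}\Bigr)^2+\Bigl(\tfrac{x+3y-1}{2}\Bigr)^2=\tfrac12,\qquad \Bigl(\tfrac{3x+y-1}{4}\Bigr)^2+\Bigl(\tfrac{5y-x-1}{4}\Bigr)^2=\tfrac12,
\]
and tracking the images of $(\pm\tfrac12,\pm\tfrac12)$ under the restriction of $\Phi$ to each arc (which is injective and orientation-preserving affine) shows that these four elliptical arcs join into one closed curve and produces the stated ranges $y>1/2$; $x\in[-1/4,3/4],\,y<-1/4$; $y\in[-1/4,1/2],\,x>-\tfrac13y+\tfrac23$; and $y\in[-1/4,1/2],\,x<-\tfrac13y-\tfrac13$.

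The main obstacle is the limiting step of the second paragraph: one must make rigorous that the combinatorial bijection — a family of path-wise integer translations whose size grows linearly in the path index — converges, after rescaling, to the single shear $\Phi$, and that $\Phi$ carries the liquid disk homeomorphically onto the liquid region of the rescaled color, so that the arctic curve is exactly $\Phi$ of the circle. Granting this, together with the well-known deterministic structure of the polar regions of the Aztec diamond, the theorem reduces to the elementary algebra sketched above.
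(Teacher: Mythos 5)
Your proposal is correct and follows essentially the same route as the paper: reverse the $t=0$ shifting bijection, note that on each frozen region the shift is asymptotically $\tfrac{\rho}{2}(1,1)$ with $\rho$ the normalized path index (constant along each frozen arc), and push the arctic circle of Theorem \ref{arcticthm} through this piecewise-affine deformation arc by arc. The paper works out the northern and western arcs explicitly and states the remaining ones are analogous, so your uniform map $\Phi$ with $\rho$ evaluated on all four arcs is just a more systematic packaging of the same computation, with the same implicitly assumed (and by you explicitly acknowledged) limiting step that the path-wise integer translations converge to the shear carrying the liquid disk onto the liquid region.
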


\begin{proof} From Theorem \ref{arcticthm},
we know that for the normal Aztec diamond the arctic curve is 
the circle $x^2+y^2=1/2$. Reversing the bijection in the previous Proposition determines how one should deform the circle to get the arctic curve for the 2-tilings of the Aztec diamond of rank $m$ when $m\rightarrow \infty$. (Each piece of the arctic circle becomes a piece of a different ellipse.) 

For example, in terms if the Schr\"oder paths, the upper portion of the arctic curve separates the region of no paths from the disordered region inside the arctic curve. This boundary is determined by the asymptotic trajectory of the upper most path. As this path doesn't shift under our bijection, the portion of the arctic curve remains the same.
This gives us the first region of the Theorem.

Now consider the western portion of the arctic curve. The section of the arctic curve separates the region of up-right paths in the standard Aztec diamond from the disordered region. Recall that the $2i$-th path of the 1-tiling of the Aztec diamond maps to the $i$-th blue path and the $(2i-1)$-th path of the 1-tiling of the Aztec diamond maps to the $i$-th red path. Reversing the bijection means shifting these paths up and right $i$ steps or $i-1$ steps, respectively.

Suppose our Aztec diamond is rank $m$ and we rescale it by a factor $1/m$ in each axis.
Now each square of our checkerboard has size $\frac{1}{m} \times \frac{1}{m}$ as in Theorem \ref{arcticthm}. Then the starting location of the $2i$-th path is at the coordinate 
\[
(x_{2i},y_{2i}) = \left(-1+\frac{2i-1}{m}, - \frac{1}{2m} - \frac{2i-1}{m} \right).
\]
Reversing the bijection the will shift the $2i$-th path of the 1-tiling of the Aztec diamond up and right by $i/m = (x_{2i} - y_{2i} + 1)/4 + O(\frac{1}{m})$ where it will become the $i$-th path of color blue in the $2$-tiling of the  Aztec diamond. In fact, since we are considering the frozen region of up-right paths, any point $(x,y)$ along the $2i$-th path will also shift by $(x-y+1)/4+O(\frac{1}{m})$. The same holds for the $(2i-1)$-th path, except that it will become the $i$-th path of color red in the $2$-tiling of the Aztec diamond.

Now we take $m\to \infty$. With this choice of coordinates, any point $(x,y)$ in this region of up-right paths in the 1-tiling of the Aztec diamond maps to a point in the red or blue Aztec diamond according to
\[
(x,y) \mapsto \left(x+\frac{x-y+1}{4}, y+\frac{x-y+1}{4}\right) = \left(\frac{5x-y+1}{4},\frac{x+3y+1}{4}\right). 
\]
Since the arctic curve separating the region of up-right paths from the disordered region in the $1$-tiling is given by $x^2+y^2=\frac{1}{2}$ with $x<-\frac{1}{2}, -\frac{1}{2}<y<\frac{1}{2}$, inverting the above map, we see that the arctic curve in the $2$-tiling is given by
\[
\left(\frac{3x+y-1}{4}\right)^2+\left(\frac{5y-x-1}{4}\right)^2=\frac{1}{2}
\]
with $-1<\frac{3x+y-1}{4}<-\frac{1}{2}$ and $-\frac{1}{2}<\frac{5y-x-1}{4}<\frac{1}{2}$, for both colors. Simplifying the constraints gives the last region in the theorem.

The remaining two portions of the arctic curve can be worked out similarly.
\end{proof}

It is straightforward to generalize our discussion for 2-tilings to $k$-tilings. We start with the generalization of Proposition \ref{prop:2-color-t-0-frozen}.

\begin{prop}\label{prop:k-color-t-0-frozen}
    Let $a\in\{1,\ldots,k\}$. When $t=0$, for any $k$-tiling of the rank-m Aztec diamond with non-zero weight, we have that the $i$-th path of color $a$ is forced to have its first $i(k-1)+a-1$ steps be horizontal. If the total number of steps is less than that amount, then all its steps are horizontal.
\end{prop}
\begin{proof}
    This follows from the exact same reasoning as in Prop. \ref{prop:2-color-t-0-frozen}. We will induct on $i$, and for each $i$ we will induct downward on $a$,
    
    As in Prop. \ref{prop:2-color-t-0-frozen} (replacing 2 with $k$ and 1 with $k-1$) the first paths of color $k$ and $k-1$ start an end at the same location, the path of color $k-1$ must go horizontal on its first step. 
    
    Let $a\ne k$ and consider the $i$-th path of color $a$. Suppose the result is true for $i$-th path of color $a+1$. Since the paths start and end at the same location, the path of color $a$ cannot go above the path of color $a+1$ without introducing an interaction when they eventually must return to the same height. Thus the path of color $a$ must have at least the same number of forced horizontal steps as the path of color $a+1$. But now the paths are at the same location and the same reasoning from Prop. \ref{prop:2-color-t-0-frozen} shows that the smaller color must have an additional forced horizontal step.

    Now let $a=k$, consider it's $i$-th path, and suppose the result is true for the $(i-1)$-th path of color $1$. If the path of color $k$ hits the path of color $1$ from below, it would introduce an interaction. Since the path of color $k$ starts one row below the path of color $1$, it cannot go up while the path of color $1$ is horizontal. Thus the path of color $k$ must have the same number of forced horizontal steps, as desired.
\end{proof}

From the above, we end up with the following bijection.

\begin{prop} For any $k\ge 1$,
there is a weight-preserving bijection between $k$-tilings of the rank-$m$ Aztec diamond at $t=0$ and domino tilings of the rank-$m$ Aztec diamond, given by shifting the $i$-th path of color $a$ down $i(k-1)-a+1$ steps and left $i(k-1)-a+1$ steps.
\end{prop}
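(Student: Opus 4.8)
The plan is to run the $k=2$ argument of Section \ref{t-0-section} almost verbatim, replacing the pair (blue, red) by a chain of colours $1<2<\cdots<k$ and replacing its two structural inputs -- Proposition \ref{prop:2-color-t-0-frozen} and Corollary \ref{cor:2-color-t-0} -- by their $k$-colour analogues. Concretely, I will first establish a \emph{frozen-steps lemma}: for a $k$-tiling of the rank $m$ Aztec diamond of nonzero weight at $t=0$, the $i$-th Schr\"oder path of colour $a$ is forced to begin with $\min\{(k-1)i+1-a,\;m-i+1\}$ horizontal steps. Then I will establish an \emph{ordering lemma}: for $1\le a<k$ the $i$-th path of colour $a$ stays weakly below the $i$-th path of colour $a+1$ and never shares an up-right step with it, while the $i$-th path of colour $1$ stays strictly below the $(i+1)$-th path of colour $k$. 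Granting these, the bijection is the stated shift, and the remaining work is to check that (i) the shift expels exactly the forced horizontal prefixes, (ii) the shifted active arcs are pairwise non-crossing and assemble into a rank $m$ domino tiling, and (iii) the weights match.

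For the frozen-steps lemma I will induct on the \emph{global order} of the $mk$ paths given by $g(i,a)=ik-a+1$, so the order is $(1,k),(1,k-1),\dots,(1,1),(2,k),\dots$; within a level all colours' $i$-th paths share the common starting point $\mathrm{start}_i$ and common endpoint $\mathrm{end}_i$. The three observations driving the proof of Proposition \ref{prop:2-color-t-0-frozen} generalize without change: (1) distinct paths of one colour do not cross, so by downward induction the $i$-th path of every colour stays weakly above the horizontal line through $\mathrm{start}_i$ and $\mathrm{end}_i$; (2) at $t=0$ a path of a smaller colour may not meet a path of a larger colour from above; (3) if a smaller- and a larger-colour path share two points, the smaller one may not take an up-right step at the left one. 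In the inductive step at $(i,a)$, observation (1) forbids a down-right step while the path is still on line $i$, and observation (3), applied against each already-processed path of level $i$ that currently coincides with it, forbids an up-right step; these force horizontal steps until the active arc peels off, and counting how many such steps are available before the forced region ends gives exactly the claimed $\min$. The ordering lemma follows from the same observations: if the $i$-th path of colour $a$ were ever strictly above the $i$-th path of colour $a+1$, then since they share both endpoints there would be a point where the colour $a$ path meets the colour $a+1$ path from above, contradicting (2); the no-shared-up-step statement is immediate from (3); and strictness between colour $1$ at level $i$ and colour $k$ at level $i+1$ holds because $\mathrm{start}_i$ lies strictly above $\mathrm{start}_{i+1}$ and $1<k$, so a first point of contact would again be a smaller-from-above meeting.

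With both lemmas in hand, observe that the shift amount $s(i,a)=(k-1)i+1-a$ equals the generic frozen length whenever the path is not completely frozen, equivalently $g(i,a)\le m$, so translating the $i$-th path of colour $a$ by $(-s(i,a),-s(i,a))$ slides its forced horizontal prefix entirely out of the Aztec diamond while leaving the active arc inside; the completely frozen paths (those with $g(i,a)>m$, of which there are $m(k-1)$) consist only of horizontal steps and vanish entirely. For non-crossing, within a level colour $a$ is shifted one more unit down-and-left than colour $a+1$, so ``weakly below with no shared up-step'' becomes ``strictly below'', and across levels colour $1$ at level $i$ and colour $k$ at level $i+1$ receive the \emph{same} shift $(k-1)i$ while already being strictly ordered. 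Hence the $m$ surviving arcs form a strictly decreasing chain of non-intersecting Schr\"oder paths with the correct endpoints, i.e.\ a rank $m$ domino tiling; the inverse map re-prepends the horizontal prefixes and reinstates the frozen all-horizontal paths, and the two lemmas guarantee it lands back among the $t=0$-admissible $k$-tilings. Finally the map is weight preserving: the shift vector $(-s,-s)$ runs parallel to the slices of the Aztec diamond (the lines $x-y=\mathrm{const}$), so it preserves the slice index of every step's starting point and hence every $x_i$- and $y_i$-exponent; horizontal steps have weight $1$ and are the only steps that cross the boundary; and at $t=0$ a nonzero-weight $k$-tiling has no interactions, so $w(\bm T)=\prod_a w(T_a)$ equals the weight of the resulting $1$-tiling, exactly as in the $k=2$ case.

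The main obstacle is the induction in the frozen-steps lemma: unlike $k=2$, at step $(i,a)$ one must simultaneously track several already-frozen paths of the same level $i$ (colours $a+1,\dots,k$) together with the colour-$a$ path of the previous level, and argue that their combined constraints force precisely $\min\{(k-1)i+1-a,\;m-i+1\}$ horizontal steps -- neither more (the active arc must be free to leave) nor, crucially, fewer. The bookkeeping of the global order $g(i,a)$ and of which paths currently coincide with the path being analyzed is where the care is needed; everything downstream -- the shift arithmetic, non-crossing after shifting, and weight preservation -- is a routine adaptation of the $k=2$ proof.
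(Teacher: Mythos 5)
Your plan is the route the paper itself takes: it proves the $k=2$ case in detail and then asserts the general case, with exactly your two intermediate statements (the frozen-prefix count $\min\{(k-1)i+1-a,\;m-i+1\}$ and the ordering lemma for consecutive colours within a level and for the pair colour $1$ at level $i$ versus colour $k$ at level $i+1$) as the intended scaffolding. Your handling of the shift is correct and in places more careful than the paper: noting that the no-shared-up-step clause is precisely what upgrades ``weakly below'' to ``strictly below'' under the relative $(-1,-1)$ shift, and that the shift is parallel to the slices so every non-horizontal step keeps its $x_i$/$y_i$ weight, are the right points to make, and your endpoint/index arithmetic ($g(i,a)=ki-a+1$, shift $s(i,a)=(k-1)i+1-a$, $m(k-1)$ fully frozen paths) is all consistent.

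The one step that, as you describe it, would not deliver the claimed count is the largest colour at each level, i.e.\ $a=k$ with $i\ge 2$ --- the first path processed at level $i$ in your global order. There are no already-processed level-$i$ paths for observation (3) to act against, and the ``colour-$a$ path of the previous level'' you propose to track (colour $k$ at level $i-1$) has frozen prefix $(k-1)(i-1)+1-k=(k-1)(i-2)$, so same-colour non-crossing forces only $(k-1)(i-2)$ horizontal steps, which falls short of the required $(k-1)(i-1)$ by $k-1$. The forcing must instead come from your observation (2): the colour-$1$ path at level $i-1$ starts one unit up and one unit to the left, is frozen horizontally for $(k-1)(i-1)$ steps, and may not be met from above by the larger colour at $t=0$, so the colour-$k$ path at level $i$ cannot take an up-right step during its first $(k-1)(i-1)$ steps. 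This is exactly the role observation 2 plays for the red path in the proof of Proposition \ref{prop:2-color-t-0-frozen}, so the repair is local, but without it your induction undercounts for that family of cases. (A minor wording slip: in your ordering lemma the $i$-th path of colour $1$ lies strictly \emph{above} the $(i+1)$-th path of colour $k$; the argument you give, via the strictly higher starting point, proves the disjointness you actually use.)
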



For $k$-tilings, we can then compute the arctic curve for $t=0$, given in Theorem \ref{karctic}, which we restate here.

\begin{thm}
    The arctic curve for $k$-tilings of the Aztec diamond when $t=0$ is given by
    \[
    \begin{cases}
    x^2+y^2=\frac{1}{2},& x\in[-1/2,1/2],\; y>1/2 \\
    (x+(k-1)y)^2+(k y)^2=\frac{1}{2},& x\in[-1/(2k),1 - 1/(2k)],\; y<-1/(2k) \\
    \left(\frac{2x+(k-1)(x+y-1)}{2}\right)^2+\left(\frac{2y+(k-1)(x+y-1)}{2}\right)^2=\frac{1}{2},& y\in[-1/(2k),1/2],\; x>-\frac{k-1}{k+1}y+\frac{k}{k+1} \\
    \left(\frac{2x+(k-1)(x+y-1)}{2k}\right)^2+\left(\frac{2y+(k-1)(3y-x-1)}{2k}\right)^2=\frac{1}{2},& y\in[-1/(2k),1/2],\; x<-\frac{k-1}{k+1}y-\frac{1}{k+1}
    \end{cases}
    \]
    for each color.
\end{thm}
\begin{proof}
Suppose we have $k$ colors and let $a\in\{1,\ldots,k\}$. Under the bijection, the $(ik-a+1)$-th path in the 1-color Aztec diamond corresponds to the $i$-th path of color $a$ Aztec diamond. It has starting and ending points 
\[
\begin{aligned}
(x_{ik-a+1},y_{ik-a+1}) &\;= \left(-1+\frac{ik-a}{m}, -\frac{1}{2m}-\frac{ik-a}{m}\right) \\
(w_{ik-a+1},z_{ik-a+1}) &\;= \left(1-\frac{ik-a}{m}, -\frac{1}{2m}-\frac{ik-a}{m}\right) 
\end{aligned}
\]
respectively. Moreover, $(ik-a+1)$-th path in the 1-color Aztec diamond will shift by $\frac{i(k-1)-a+1}{m}$ units up and to the right when mapping from the rescaled 1-color Aztec diamond to the color $a$ Aztec diamond.

Note that it is enough for us only to consider only what happens to the smallest color $a=1$. If we were to consider paths corresponding to a different color, the difference in starting location and shift amount would only result in an order $\frac{1}{m}$ difference in the results.

Now we proceed as in the 2-color case: for each type of frozen region, we find a point $(x,y)$ on a path corresponding to the smallest color. We then see where that point maps after shifting the paths. Since, as $m\to \infty$, points on the arctic circle will satisfy $x^2+y^2=\frac{1}{2}$, we can deduce the location of the arctic circle in the coupled tilings. Let $(\tilde x,\tilde y)$ be coordinates on the coupled Aztec diamond. We summarize the calculation in the table below.  

\begin{center}
\resizebox{\textwidth}{!}{
    \begin{tabular}{|c||c|c|} \hline
    Frozen Region & Shift Amount & Coordinate Change \\ \hline \hline & & \\
    Empty & N/A & $ (\tilde x, \tilde y) = (x,y)$ \\ & & \\ \hline & & \\
    Up-Right Paths & $\begin{aligned} &x-y = x_{ik}-y_{ik} \\ &\implies \frac{i(k-1)}{m} = \frac{k-1}{2k} (x-y+1) +O\left(\frac{1}{m}\right)\end{aligned}$ & $\begin{aligned} & (\tilde x,\tilde y) =\\ &\left(x+\frac{k-1}{2k}(x-y+1),y+\frac{k-1}{2k}(x-y+1) \right) \end{aligned}$ \\ & & \\ \hline & & \\
    Horizontal Paths & $\begin{aligned} & y=y_{ik} \\ & \implies \frac{i(k-1)}{m} = - \frac{k-1}{k} y +O\left(\frac{1}{m}\right) \end{aligned}$ & $(\tilde x,\tilde y)=\left(x- \frac{k-1}{k}y, \frac{1}{k} y\right)$ \\ & & \\ \hline & & \\
    Down-Right Paths & $\begin{aligned} & x+y=w_{ik}+z_{ik} \\ &\implies \frac{i(k-1)}{m} = -\frac{k-1}{2k}(x+y-1) + O\left(\frac{1}{m}\right) \end{aligned}$ & $\begin{aligned}&(\tilde x,\tilde y)=
    &\left(x-\frac{k-1}{2k}(x+y-1),y-\frac{k-1}{2k}(x+y-1)\right) \end{aligned}$  \\ & & \\ \hline 
    \end{tabular}
}
\end{center}
We now invert the coordinate change in each region. For example, in the frozen region of up-right paths we have
\[
(x,y)= \left(\frac{2\tilde x + (k-1)(\tilde x + \tilde y - 1)}{2k}, \frac{2\tilde y + (k-1)(3\tilde y - \tilde x - 1)}{2k} \right)
\]
Since this region corresponds to $x<-\frac{1}{2}$ and $-\frac{1}{2}<y<\frac{1}{2}$, we find that for the coupled Aztec diamond, in the region $\tilde x < -\frac{k-1}{k+1}\tilde y - \frac{1}{k+1}$ and $-\frac{1}{2k}<\tilde y<\frac{1}{2}$,  the arctic curve satisfies
\[
\left(\frac{2\tilde x + (k-1)(\tilde x + \tilde y - 1)}{2k}\right)^2 + \left(\frac{2\tilde y + (k-1)(3\tilde y - \tilde x - 1)}{2k}\right)^2=\frac{1}{2}
\]
This is the fourth case stated in the theorem. The other cases can be done similarly.

\end{proof}

\begin{remark}
Note that one should also be able to use this bijection to construct the full limit shape for the $k$-tilings at $t=0$, as it is a deformation of the normal Aztec diamond limit shape computed in \cite{johansson},. 
\end{remark}

\begin{remark}\label{rmk:whitepinkpaths}
    One can show analogous results for the white-pink model. In that case, one should consider Schr\"oder paths that start on the south-west boundary and end on the north-west boundary. Then, when $t=0$, the whote-pink interactions will force the paths to start with many vertical steps. Deleting these forced path segments and then translating paths down and to the left will give a bijections with tilings of the 1-color Aztec diamond. Then one could compute the $k$-color arctic curve in a similar manner to what we have done for the purple-gray model.
\end{remark}

\section{The case $t \rightarrow \infty$} \label{t-infinity-section}

In this section, we will compute the arctic curve for the $k$-tilings of the Aztec diamond as $t \rightarrow \infty$.  First we will relate the $t=0$ case to the $t \rightarrow \infty$ case, by defining a bijection between tilings with no pairs of coupled dominos (i.e. $t=0$) and tilings with the maximum possible number of coupled dominos.  Then we will apply the arctic curve computations in the $t=0$ case given in the previous section.

Let $\phi$ be the involution on the set of $k$-tilings of the  Aztec diamond of rank $m$ given by reflecting over the line $y=x$.  This involution leads to the following lemma.
\begin{lem}\label{lem:0toinf}
Let $\bm{T}$ be a $k$-tiling of the Aztec diamond of rank $m$ with $j$
pairs of coupled dominos. Then $\phi(\bm{T})$ is a $k$-tiling of the Aztec diamond of rank $m$ with ${k\choose 2}{m+1\choose 2}-j$
pairs of coupled dominos.
\end{lem}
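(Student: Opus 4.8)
The plan is to analyze how the reflection $\phi$ across the line $y=x$ acts on interactions, showing that it sets up a bijection between the interactions of $\bm{T}$ and the \emph{non-interactions} of $\phi(\bm{T})$ among a fixed set of $\binom{k}{2}\binom{m+1}{2}$ candidate pairs of dominos. First I would fix a convenient model — say the purple-gray model — and recall that an interaction is one of the four listed local pairs of dominos, one of color $i$ and one of color $j$ with $i<j$. Reflecting over $y=x$ sends the Aztec diamond of rank $m$ to itself and sends dominos to dominos (horizontal $\leftrightarrow$ vertical, and the checkerboard coloring behaves consistently since $a+b+m$ is symmetric in $a,b$), so $\phi(\bm{T})$ is again a genuine $k$-tiling. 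The key observation is that $\phi$ does \emph{not} preserve the set of interaction-configurations; rather, a reflected interaction becomes one of the four \emph{non}-interaction local patterns, and conversely a non-interaction that is ``eligible'' (i.e. sits in a position where an interaction could occur) reflects to an interaction.

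To make this precise I would introduce, for each ordered pair of colors $i<j$, the notion of an \textbf{overlap site}: a pair consisting of a color-$i$ domino and a color-$j$ domino occupying a certain relative position (the positions that appear in the interaction pictures, together with the mirror positions). The main combinatorial lemma to prove is that for any $k$-tiling, the number of such overlap sites for the pair $(i,j)$ is exactly $\binom{m+1}{2}$, independent of the tiling, and that each overlap site is \emph{either} an interaction \emph{or} a non-interaction but never ambiguous; moreover $\phi$ swaps the two types site-by-site. Granting this, summing over the $\binom{k}{2}$ pairs $(i,j)$ gives: (number of interactions of $\bm{T}$) $+$ (number of interactions of $\phi(\bm{T})$) $=\binom{k}{2}\binom{m+1}{2}$, which is exactly the claim.

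For the count $\binom{m+1}{2}$, I would relate overlap sites to the Schröder-path / interlacing-partition description: recall from Section~\ref{two-models-section} that the $i$-th tiling corresponds to a chain $\emptyset=\lambda^{(0,i)}\preceq'\lambda^{(1,i)}\succeq\cdots\succeq\lambda^{(2m,i)}=\emptyset$, and an interaction between colors $i<j$ corresponds (as shown in the relating-lemma computations, e.g. the gray-row analysis with the $\#\{a\text{ not right},\,b\text{ absent}\}$ bookkeeping) to a specific relation between the box-configurations of row $\ell$ of the two chains. The total number of ``slots'' where colors $i$ and $j$ can meet in a fixed row of the lattice is governed by the row length, and summing $\sum_{p=1}^{m}(p-1)$-type contributions over the $2m$ rows — or more cleanly, counting the $\binom{m+1}{2}$ cells of the staircase region — yields $\binom{m+1}{2}$; since $\phi$ exchanges the purple (odd) and gray (even) rows and the roles of ``exits right'' and ``exits top,'' it literally interchanges at each slot the event ``this slot is an interaction'' with its complement within that slot.

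The main obstacle I expect is the bookkeeping that shows each overlap site is cleanly dichotomous (interaction vs. not) and that $\phi$ matches up sites across the two tilings with no double-counting or leftover sites — in particular being careful that the four interaction pictures and their four mirror images together exhaust the overlap sites and are mutually exclusive. This is essentially the same local case-check already carried out implicitly in the ``relating lattice configurations and $k$-tilings'' subsections (the enumerations of domino configurations that ``give a $t$''), so I would lean on those computations: translate the statement through the weight-preserving bijection of Theorem~\ref{prop:YBEpurplegray}'s consequences into the lattice model, where $\phi$ becomes the reflection exchanging the purple and gray sublattices, and there the identity (\# of $t$-producing faces) $+$ (\# after reflection) $=\binom{k}{2}\binom{m+1}{2}$ follows row-by-row from the two counting lemmas in Section~\ref{kcolors} (the one giving $p-1$ paths exit the top of a gray row, and its pink-row analogue). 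Finally I would note $\phi$ is visibly an involution, so it is automatically a bijection, completing the proof.
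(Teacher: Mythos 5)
Your overall strategy is the same as the paper's: show that the interactions of $\bm{T}$ and of $\phi(\bm{T})$ together account for a fixed collection of $\binom{k}{2}\binom{m+1}{2}$ ``sites'', so that $j+\ell$ is constant and $\phi$, being an involution, gives the bijection. However, the two facts you yourself flag as ``the main combinatorial lemma'' are exactly where the proposal has genuine gaps, and the route you suggest for them does not go through as stated. You never pin down what an overlap site is: defining it as a pair of dominos (one of each color) in one of the eight interaction-or-mirrored relative positions leaves it unclear why the number of such pairs is independent of the tiling, and your suggested count via the vertex-model row lemmas is off --- the gray-row lemma gives $p-1$ paths exiting the top of the $p$-th gray row, and $\sum_{p=1}^{m}(p-1)=\binom{m}{2}$, not $\binom{m+1}{2}$; those lemmas concern the normalization factor $t^{\binom{m}{2}\binom{k}{2}}$ in the weight-preserving correspondence, not the interactions of the reflected tiling. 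Likewise, the assertion that $\phi$ acts on the lattice model by ``exchanging the purple and gray sublattices'' so that the identity follows row by row is unsubstantiated (and unnecessary). The paper avoids all of this by anchoring the sites on the larger color alone: for colors $a<b$, a site is a domino of color $b$ of type I or II (horizontal with shaded right square, vertical with shaded top square); type III/IV dominos never produce an interaction as the larger color. The tiling-independence of the count is then the statement that \emph{every} tiling of the rank-$m$ Aztec diamond has exactly $\binom{m+1}{2}$ dominos of type I or II, proved by flip-connectivity (the all-horizontal tiling has $\binom{m+1}{2}$ of them and a flip preserves the count), not by any vertex-model computation.

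The second gap is the dichotomy and the site-by-site swap under $\phi$. This needs an explicit local case check --- the paper's cases I-A/I-B and II-A/II-B: for a fixed type I or II domino $D$ of color $b$ and a fixed $a<b$, the color-$a$ tiling meets $D$ in exactly one of a short list of local configurations, exactly one side of which yields an interaction; since $\phi$ exchanges types I and II and carries the non-interacting subcases of one type onto the interacting subcases of the other, exactly one of $\bm{T}$, $\phi(\bm{T})$ gets an interaction at each such $D$ for each $a$. You defer this to computations ``carried out implicitly'' in the lattice-model sections, but those enumerate the $t$-producing faces for the weight correspondence and say nothing about how the reflection permutes the local patterns, so the mutual exclusivity/exhaustion you need is not actually established anywhere in your argument. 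Once the sites are anchored as above and the four-case check is done, your identity follows by summing $(b-1)\binom{m+1}{2}$ over $b$, exactly as in the paper.
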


\begin{proof} 
The dominos are of four types, as shown below.
\begin{center}
\begin{tabular}{cccc}
\resizebox{0.8cm}{!}{
\begin{tikzpicture}[baseline = (current bounding box).center]
\draw[lightgray, fill=lightgray] (0,0) rectangle (1,1);
\draw[very thick, red] (-1,0) rectangle (1,1);
\end{tikzpicture}
}
& 
\resizebox{0.4cm}{!}{
\begin{tikzpicture}[baseline = (current bounding box).center]
\draw[lightgray, fill=lightgray] (0,0) rectangle (1,1);
\draw[very thick, red] (0,-1) rectangle (1,1);
\end{tikzpicture}
}
& 
\resizebox{0.8cm}{!}{
\begin{tikzpicture}[baseline = (current bounding box).center]
\draw[lightgray, fill=lightgray] (0,0) rectangle (1,1);
\draw[very thick, red] (0,0) rectangle (2,1);
\end{tikzpicture}
}
& 
\resizebox{0.4cm}{!}{
\begin{tikzpicture}[baseline = (current bounding box).center]
\draw[lightgray, fill=lightgray] (0,0) rectangle (1,1);
\draw[very thick, red] (0,0) rectangle (1,2);
\end{tikzpicture}
}
\\
type I & type II & type III & type IV
\end{tabular}
\end{center}
In a $1$-tiling, the number of dominos of type I or II equals ${m+1\choose 2}$ for the $1$-tiling where all the dominos are horizontal.
When we perform a flip
\begin{center}
\begin{tabular}{ccccccc}
\resizebox{0.8cm}{!}{
\begin{tikzpicture}[baseline = (current bounding box).center]

\draw[lightgray, fill=lightgray] (0,0) rectangle (1,1);
\draw[lightgray, fill=lightgray] (-1,1) rectangle (0,2);
\draw[very thick, red] (-1,0) rectangle (1,1);
\draw[very thick, red] (-1,1) rectangle (1,2);
\end{tikzpicture}
}&$\leftrightarrow$
& 
\resizebox{0.8cm}{!}{
\begin{tikzpicture}[baseline = (current bounding box).center]
\draw[lightgray, fill=lightgray] (0,0) rectangle (1,1);
\draw[lightgray, fill=lightgray] (-1,1) rectangle (0,2);
\draw[very thick, red] (-1,0) rectangle (0,2);
\draw[very thick, red] (0,-0) rectangle (1,2);
\end{tikzpicture}
}
& or &
\resizebox{0.8cm}{!}{
\begin{tikzpicture}[baseline = (current bounding box).center]
\draw[lightgray, fill=lightgray] (-1,0) rectangle (0,1);
\draw[lightgray, fill=lightgray] (0,1) rectangle (1,2);
\draw[very thick, red] (-1,0) rectangle (1,1);
\draw[very thick, red] (-1,1) rectangle (1,2);
\end{tikzpicture}
}&$\leftrightarrow$
& 
\resizebox{0.8cm}{!}{
\begin{tikzpicture}[baseline = (current bounding box).center]
\draw[lightgray, fill=lightgray] (-1,0) rectangle (0,1);
\draw[lightgray, fill=lightgray] (0,1) rectangle (1,2);
\draw[very thick, red] (-1,0) rectangle (0,2);
\draw[very thick, red] (0,-0) rectangle (1,2);
\end{tikzpicture}
}
\end{tabular}
\end{center}
The number of tiles of type I or II is unchanged.
We can get all tilings starting from the tiling with all horizontal tiles and performing flips. Therefore there are 
${m+1\choose 2}$ type I or II dominos for all $1$-tilings of the Aztec diamond of rank $m$.

When applying $\phi$, the dominos of type I become dominos of type II (and vice versa), and the dominos of type III become dominos of type IV (and vice versa).  

Fix a $k$-tiling $\bm{T}$. Suppose that $\bm{T}$ has $j$ pairs of coupled dominos and that $\phi(\bm{T})$ has $\ell$ pairs of coupled dominos.  Fix two colors $a$ (blue) $<$ $b$ (red).  When the red domino is of type I, we get a power of $t$ for
\[\resizebox{0.8cm}{!}{
\begin{tikzpicture}[baseline = (current bounding box).center]
\draw[lightgray, fill=lightgray] (0,0) rectangle (1,1);
\draw[lightgray] (0,1) rectangle (1,2);
\draw[line width=1mm, blue] (0,0) rectangle (1,2);
\draw[line width=1mm, red] (-1,0) rectangle (1,1);
\end{tikzpicture}}
\]
in which case we say the red domino is in case I-A with the color blue, and no power of $t$ for 
\[
\resizebox{1.2cm}{!}{
\begin{tikzpicture}[baseline = (current bounding box).center]
\draw[lightgray, fill=lightgray] (0,0) rectangle (1,1);
\draw[line width=1mm, blue] (0,0) rectangle (2,1);
\draw[line width=1mm, red] (-1,0) rectangle (1,1);
\end{tikzpicture}},\;\;\;\;\;\;
\resizebox{0.8cm}{!}{
\begin{tikzpicture}[baseline = (current bounding box).center]
\draw[lightgray, fill=lightgray] (0,0) rectangle (1,1);
\draw[line width=1mm, blue] (0,1) rectangle (1,-1);
\draw[line width=1mm, red] (-1,0) rectangle (1,1);
\end{tikzpicture}},\;\;\;\;\;\;
\resizebox{0.8cm}{!}{
\begin{tikzpicture}[baseline = (current bounding box).center]
\draw[lightgray, fill=lightgray] (0,0) rectangle (1,1);
\draw[line width=1mm, blue] (-1.1,-0.1) rectangle (.9,.9);
\draw[line width=1mm, red] (-1,0) rectangle (1,1);
\end{tikzpicture}}
\]
in which case we say the red domino is in case I-B with the color blue.  When the red domino is of type II, we get
no power of $t$ for
\[
\resizebox{0.8cm}{!}{
\begin{tikzpicture}[baseline = (current bounding box).center]
\draw[lightgray, fill=lightgray] (0,0) rectangle (1,1);
\draw[line width=1mm, blue] (0,0) rectangle (2,1);
\draw[line width=1mm, red] (0,-1) rectangle (1,1);
\end{tikzpicture}}
\]
in which case we say the red domino is in case II-A with the color blue, and a power of $t$ for 
\[
\resizebox{0.4cm}{!}{
\begin{tikzpicture}[baseline = (current bounding box).center]
\draw[lightgray, fill=lightgray] (0,0) rectangle (1,1);
\draw[line width=1mm, blue] (0,0) rectangle (1,2);
\draw[line width=1mm, red] (0,-1) rectangle (1,1);
\end{tikzpicture}},\;\;\;\;\;\;
\resizebox{0.8cm}{!}{
\begin{tikzpicture}[baseline = (current bounding box).center]
\draw[lightgray, fill=lightgray] (0,0) rectangle (1,1);
\draw[line width=1mm, blue] (-1,0) rectangle (1,1);
\draw[line width=1mm, red] (0,-1) rectangle (1,1);
\end{tikzpicture}},\;\;\;
\resizebox{0.4cm}{!}{
\begin{tikzpicture}[baseline = (current bounding box).center]
\draw[lightgray, fill=lightgray] (0,0) rectangle (1,1);
\draw[line width=1mm, blue] (0.1,-0.9) rectangle (1.1,1.1);
\draw[line width=1mm, red] (0,-1) rectangle (1,1);
\end{tikzpicture}}
\]
in which case we say the red domino is in case II-B with the color blue.  The red dominos of type III or IV never get a power of $t$.  Therefore 
\[ \begin{aligned}
j + \ell &= \sum_{1 \leq a < b \leq k} \sum_{\substack{\text{dominos $D$} \\ \text{of color $b$}}} \textbf{1}_\text{$D$ is in case I-A or II-B with $a$} + \sum_{1 \leq a < b \leq k} \sum_{\substack{\text{dominos $D$} \\ \text{of color $b$}}} \textbf{1}_\text{$D$ is in case I-B or II-A with $a$} \\
&= \sum_{1 \leq a < b \leq k} \sum_{\substack{\text{dominos $D$} \\ \text{of color $b$}}} \textbf{1}_\text{$D$ is in case I-A, I-B, II-A, or II-B with $a$} \\
&= \sum_{1 \leq a < b \leq k} \sum_{\substack{\text{dominos $D$} \\ \text{of color $b$}}} \textbf{1}_\text{$D$ has type I or II} \\
&= \sum_{1 \leq b \leq k} (b-1) \sum_{\substack{\text{dominos $D$} \\ \text{of color $b$}}} \textbf{1}_\text{$D$ has type I or II} \\
&= \sum_{1 \leq b \leq k} (b-1) {m+1 \choose 2} \\
&= {k \choose 2} {m+1 \choose 2}.
\end{aligned} \]
\end{proof}

This gives us that $k$-tilings for $t=0$ are in bijection with the
$k$-tilings for $t\rightarrow \infty$ and that $\phi$ is this bijection.
Therefore
\begin{cor}
The case $t=0$ and $t\rightarrow\infty$ have the same limit shape (up to the reflection $y=x$).
\end{cor}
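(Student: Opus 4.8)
The plan is to deduce the corollary from the involution lemma just proved together with the explicit $t=0$ arctic curve of Theorem~\ref{karctic}. The heart of the matter is the following immediate consequence of the lemma: for each fixed rank $m$, the reflection $\phi$ across the line $y=x$ restricts to a bijection between the $k$-tilings of $AD_m$ with $0$ interactions and the $k$-tilings of $AD_m$ with $\binom{k}{2}\binom{m+1}{2}$ interactions. Indeed, the lemma sends a $k$-tiling with $j$ interactions to one with $\binom{k}{2}\binom{m+1}{2}-j$ interactions; taking $j=0$ gives one direction, the fact that $\phi$ is an involution gives the inverse, and since the number of interactions is always non-negative the value $\binom{k}{2}\binom{m+1}{2}$ is the maximum possible and is attained exactly on the $\phi$-images of the interaction-free tilings. (Interaction-free tilings exist, since $Z_{AD}^{(k)}(X_m;Y_m;0)=\prod_{i\le j}(1+x_iy_j)\neq 0$.)

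Next I would set up the two limiting ensembles. Specialize $x_i=y_j=1$, so that a $k$-tiling $\bm{T}$ receives probability proportional to $t^{\#\mathrm{interactions}(\bm{T})}$ inside $Z_{AD}^{(k)}(1;1;t)$. As $t\to 0^+$ this measure converges to the uniform measure on interaction-free $k$-tilings, and as $t\to\infty$ it converges to the uniform measure on the $k$-tilings achieving the maximal number $\binom{k}{2}\binom{m+1}{2}$ of interactions. By the bijection of the previous paragraph, the $t\to\infty$ ensemble is exactly the pushforward by $\phi$ of the $t=0$ ensemble. Since $\phi$ is a single rigid reflection of the plane, it does not depend on $m$ and it commutes with the rescaling $(x,y)\mapsto(x/m,y/m)$ of Theorem~\ref{arcticthm}; hence, after rescaling, a uniformly random $k$-tiling at $t\to\infty$ is the image under $(x,y)\mapsto(y,x)$ of a uniformly random $k$-tiling at $t=0$, at every rank $m$.

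Letting $m\to\infty$, it follows that the limit shape for $t\to\infty$ — the arctic curve, and likewise the full height-function limit shape alluded to in the Remark — is the image under reflection across $y=x$ of the limit shape for $t=0$, the latter being given by Theorem~\ref{karctic}; concretely one obtains the explicit $t\to\infty$ arctic curve by exchanging $x\leftrightarrow y$ in the four branches of Theorem~\ref{karctic}. The only step requiring any care is the passage from a finite-$m$ identification of ensembles to the scaling limit, and this is routine precisely because $\phi$ is one fixed isometry acting identically on all scales, so commuting it past the $1/m$ rescaling and the $m\to\infty$ limit is immediate; the substantive input is the interaction-reversal lemma, which has already been established. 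Thus there is no real obstacle beyond that lemma, and the corollary follows.
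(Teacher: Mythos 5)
Your proposal is correct and follows essentially the same route as the paper: the paper likewise deduces the corollary directly from the interaction-reversal lemma, observing that $\phi$ restricts to a bijection between the interaction-free $k$-tilings (the $t=0$ ensemble) and the maximal-interaction $k$-tilings (the $t\rightarrow\infty$ ensemble), so the limit shapes coincide up to reflection across $y=x$. Your write-up merely makes explicit the probabilistic limits $t\to 0^+$, $t\to\infty$ and the compatibility of $\phi$ with the $1/m$ rescaling, which the paper leaves implicit.
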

See for example Figure \ref{fig:ArcticCurveDominoLarge}.

\begin{figure}[ht]
    \centering
    \includegraphics[width=.7\textwidth]{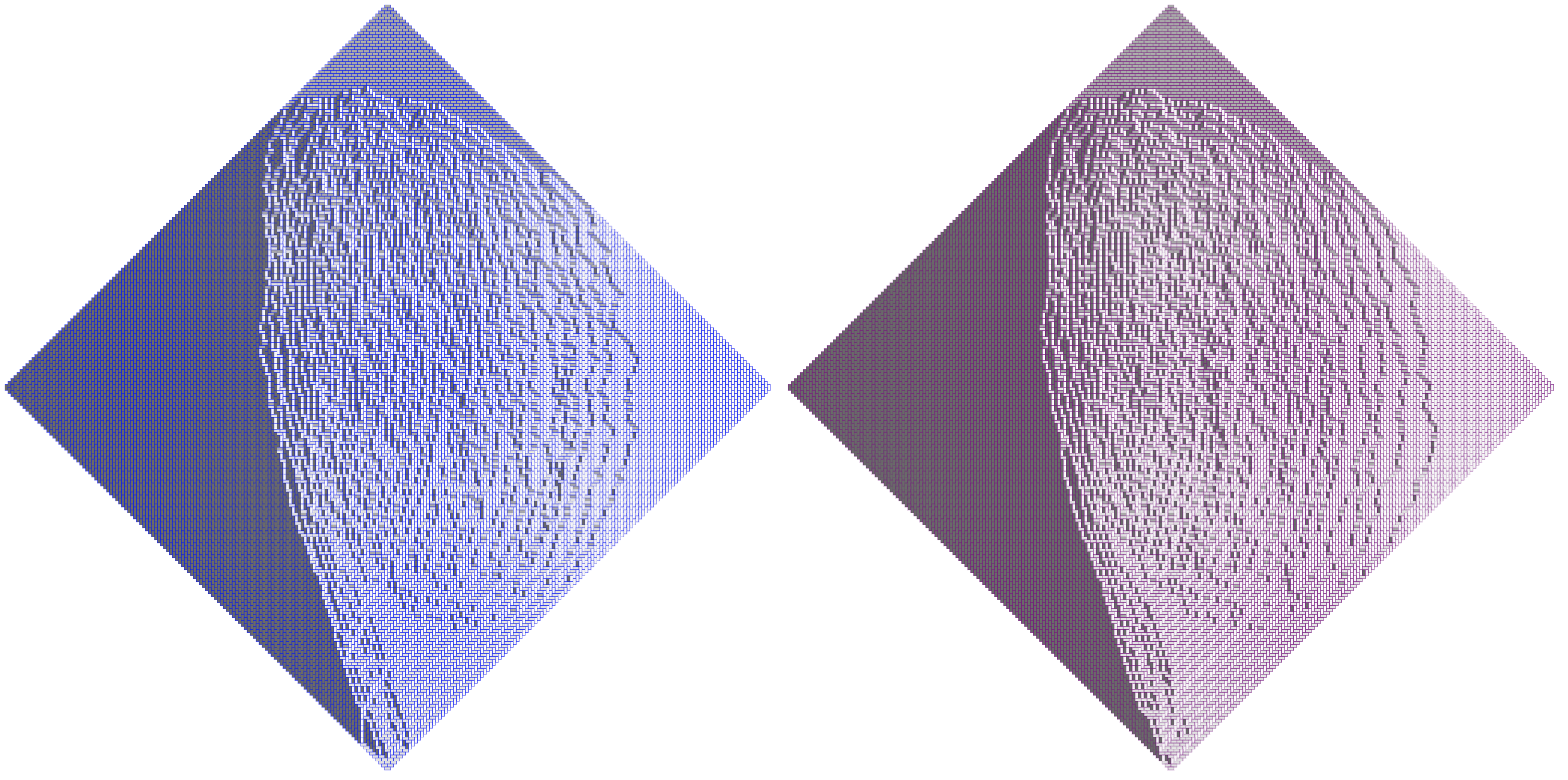} 
    \caption{Simulation of a 2-tiling Aztec diamond of rank 128 for $t=1000$.}
    \label{fig:ArcticCurveDominoLarge}
\end{figure}

\begin{figure}[ht]
    \centering
    \includegraphics[width=.7\textwidth]{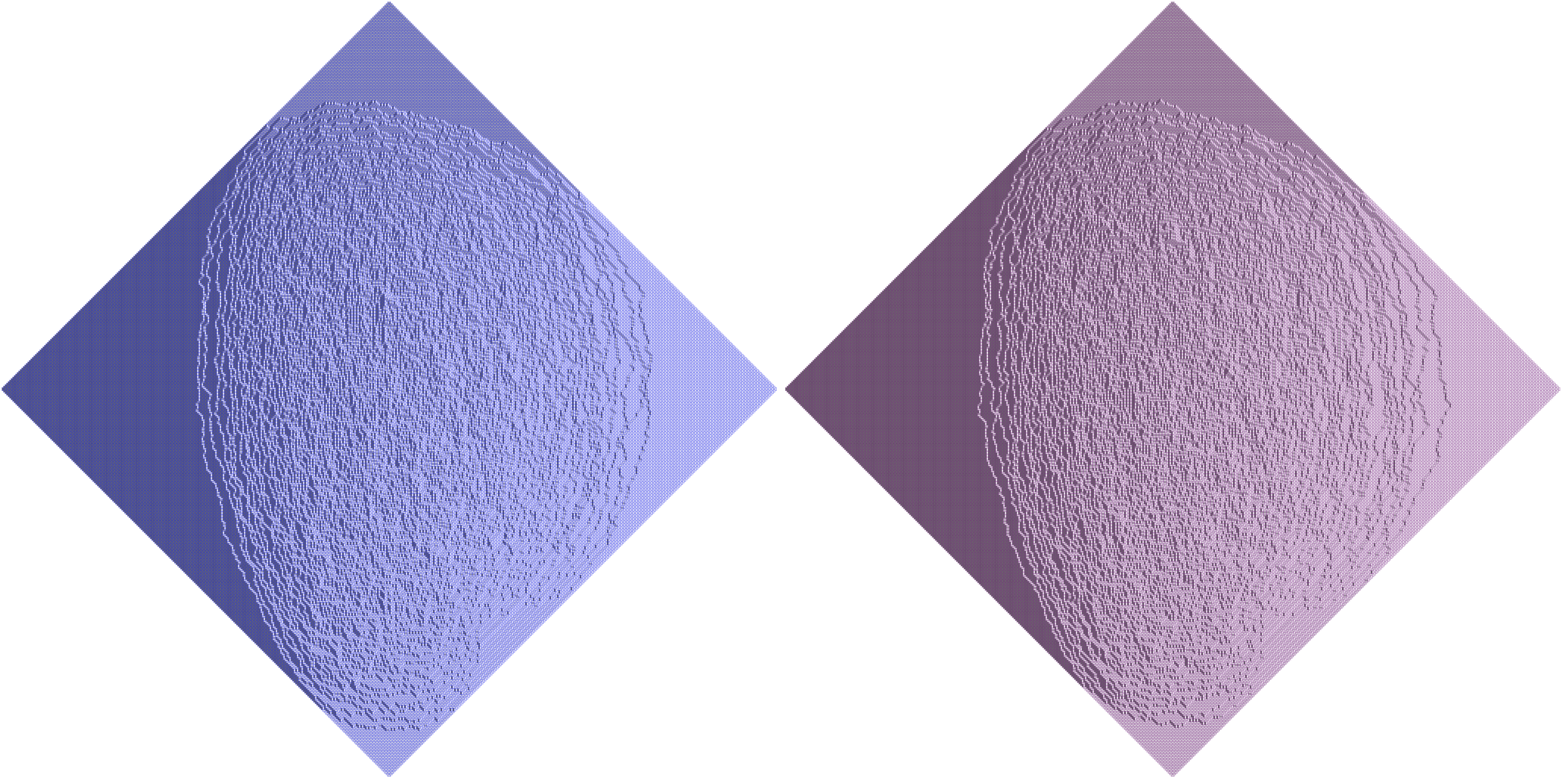} 
    \caption{Simulation of a 2-tiling Aztec diamond of rank 256 for $t=5$. Note the formation of a cusp along the south-east boundary of the Aztec diamond.}
    \label{fig:ArcticCurveDomino256}
\end{figure}

\section{Conclusion and open questions}
\label{open-section}

In this section, we list a few open questions and further results. 

Since the writing of this article a version of the domino shuffle algorithm has been created for the coupled tilings. The domino-shuffling algorithm was originally introduced in \cite{aztec0} and further studied in \cite{shuffling1,shuffling2} for counting the domino tilings of the Aztec diamond of rank $m$.  This algorithm can also be used  to generate a uniform sampling of a tiling \cite{aztec3}.
In \cite{KN23} the third author of this paper and Matthew Nicoletti generalize the shuffling algorithm to generate $k$-tilings of the 
Aztec diamond of rank $m$ such that the probability of generating a given $k$-tiling is proportional to its weight. They describe the algorithm both in terms of dynamics on a system of colored particles and as operations on the dominos themselves.

Here we list several open problems and areas of further study.

\begin{problem}
Give a combinatorial proof of Proposition \ref{prop:open}. 
\end{problem}

\begin{problem}
In this paper we compute  the arctic curves of the $k$-tilings of the Aztec diamond of rank $m$ when $t=0,1,\infty$. (The case $t=1$ is the uniform case
and we get back Theorem \ref{arcticthm}.)
Our current techniques do not generalize to any other values of $t$. It is a challenging open problem to
compute the arctic curves of the $k$-tilings of the Aztec diamond of rank $m$ when $m \rightarrow\infty$ and $t\neq 0,1$ or $\infty$. See Figure \ref{fig:ArcticCurveDomino256} for an example. In \cite[Corollary 4.4.9]{AndrewThesis}, the second author proves that if the arctic curve exists for $0\le t\le 1$ then it is the same for $1/t$ (up to the reflection $y=x$). See Figure \ref{david}.
We also conjecture that the $k$-tilings have the same arctic curve for all the colors. 
\end{problem}

\begin{figure}[ht]
    \centering
    \begin{tabular}{c}
    \includegraphics[width=.7\textwidth]{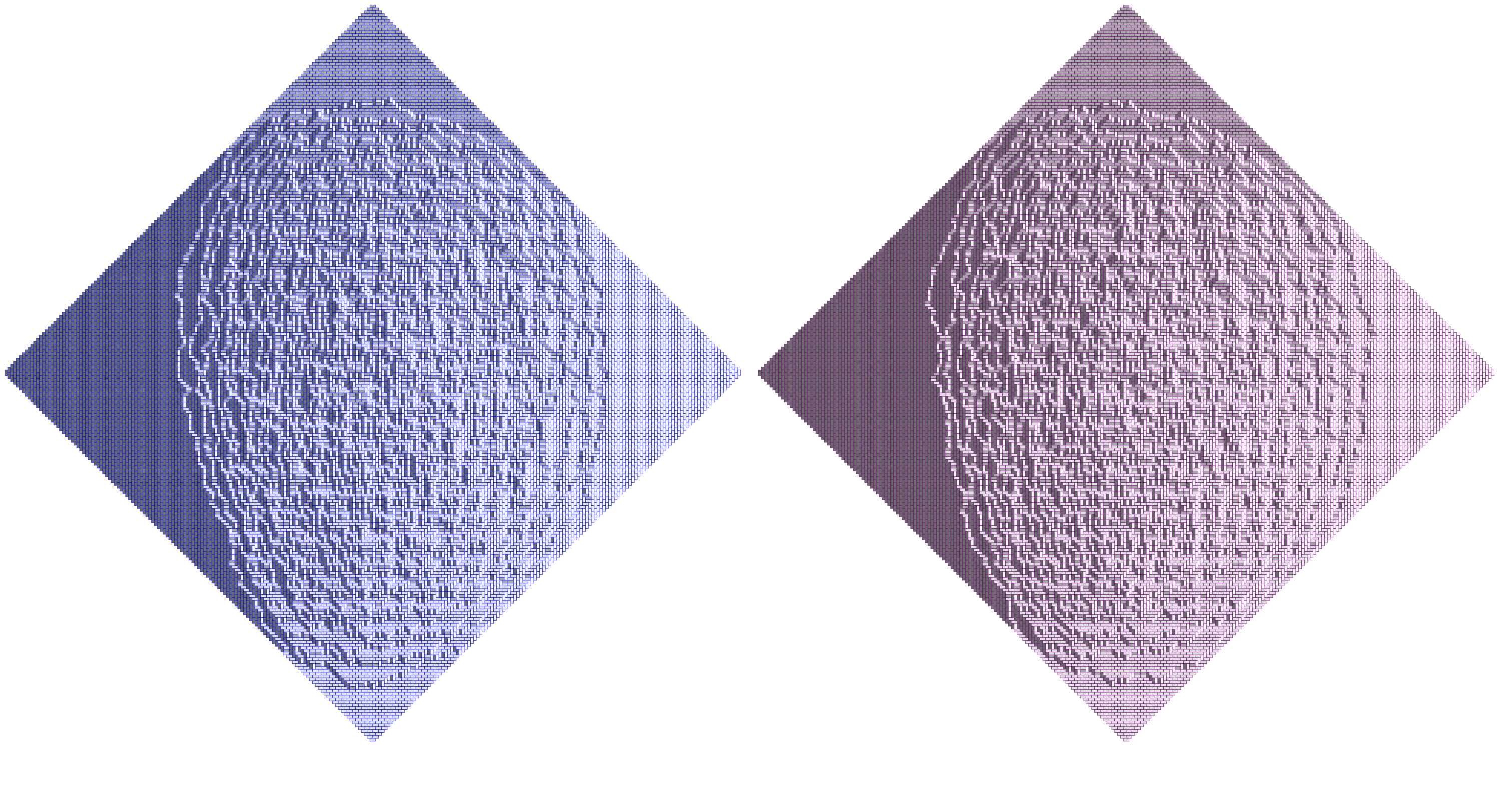} \\
    \includegraphics[width=.7\textwidth]{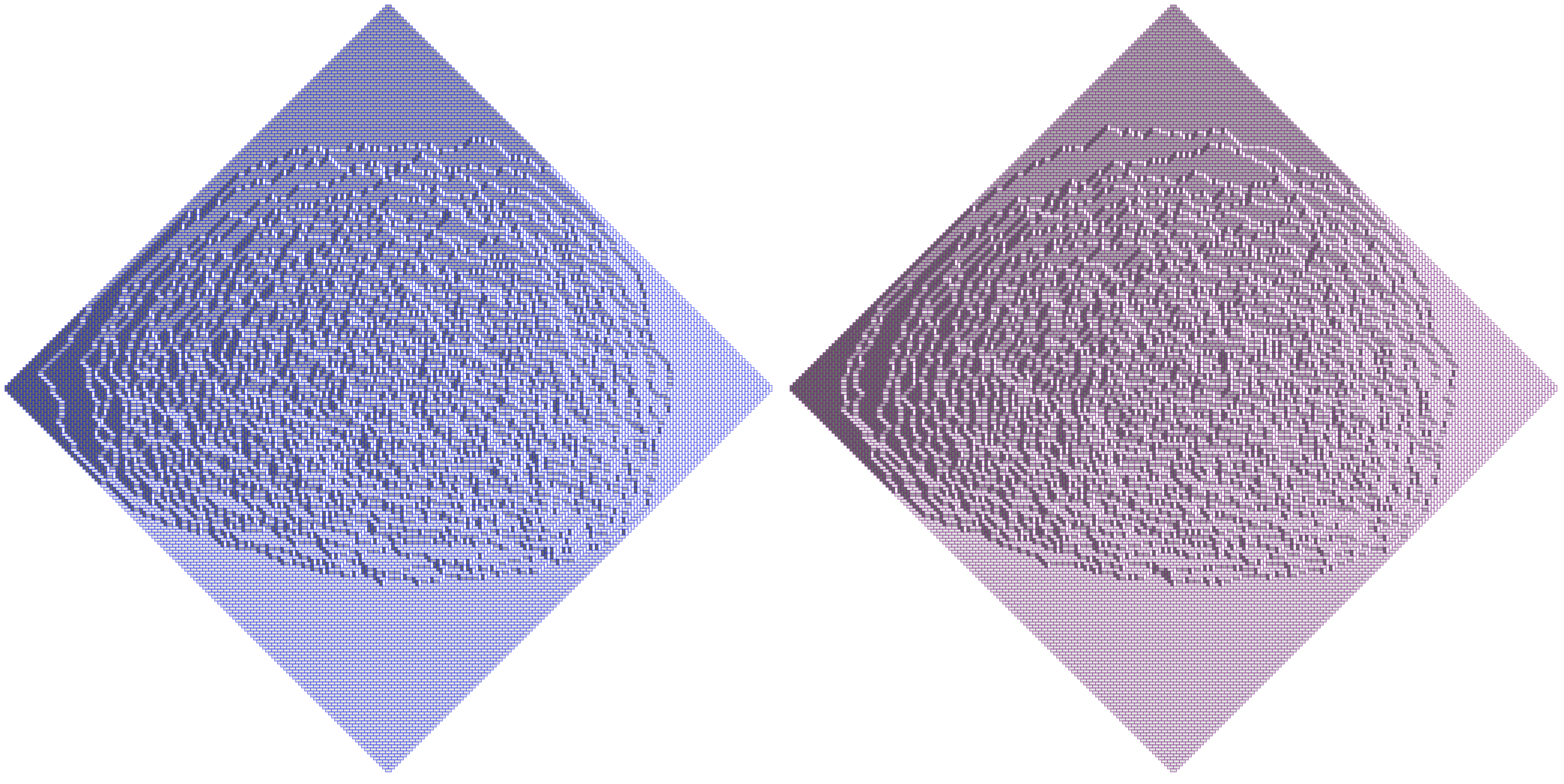}
    \end{tabular}
    \caption{Top: Simulation of a rank-128 Aztec diamond for $t=3$. Bottom: Simulation of a rank-128 Aztec diamond for $t=1/3$.}
    \label{david}
\end{figure}

\begin{problem}
In Appendix \ref{ap:a} we discuss lozenge $k$-tilings of the $a\times b\times c$ hexagon. We define their interactions using a vertex model as well (in fact, we use the white vertices with $x_i=q^{i-1}$).  When $t=1$, the generating polynomial of such tilings is equal to:
$$
q^{k{a\choose 2}b}\left(\prod_{i=1}^a\prod_{j=1}^b\frac{1-q^{c+i+j-1}}{1-q^{i+j-1}}\right)^k
$$
When $t=0$, the generating polynomial of such tilings is equal to:
$$
q^{k{a\choose 2}b}\\\prod_{i=1}^{ka}\prod_{j=1}^b\left(\frac{1-q^{c-(k-1)a+i+j-1}}{1-q^{i+j-1}}\right).
$$
The generating polynomial
of the $k$-tilings of the $a\times b\times c$ hexagon according to their number of pairs of coupled lozenges. This is a polynomial in $t$ of degree ${k\choose 2}ab$. For example the coefficient of $t^{{k\choose 2}ab}$ is
$$
q^{k{a\choose 2}b}\prod_{i=1}^{a}\prod_{j=1}^{kb}\left(\frac{1-q^{c+i+j-1}}{1-q^{i+j-1}}\right).
$$
A table for small values of $a,b,c$ and $q=1,k=2$ is presented in Table \ref{ex:tab}. Note that for $q=1$, from Remark \ref{rmk:lozengeLLT}, this polynomial is an LLT polynomial $\mathcal{L}_{(\underbrace{\lambda,\ldots,\lambda}_{k \text{ times}})}(\underbrace{1,\ldots,1}_{a+c \text{ times}};t)$ where $\lambda = (\underbrace{b,\ldots,b}_{a \text{ times}})$.
We leave as an open problem the study of these $k$-tilings of the hexagon.
\end{problem}

These (domino or lozenge) $k$-tilings come from vertex models for LLT polynomials \cite{LLT,GKsuper,ABW}. For now not much is now about the LLT process. A first study of colored corner processes was started in \cite{ABW23} and gives rise to beautiful results and open questions in combinatorics and probability. The third author and Nicoletti are working on extending these results to the case of the Aztec diamond. These are very nice first steps in the study of LLT processes.
It would be interesting to understand the big picture.

As another example, one can try to study the fluctuations of the arctic curve in the $k$-tilings. For a 1-tiling, it is known due to Johansson \cite{johansson} that the fluctuations of Schr\"oder paths are given by the Airy process. In particular, the fluctuations of the arctic curve is given by the Tracy-Widom $F_2$ distribution \cite{tracywidom}. Some numerical results on such fluctuations for 2-tilings were obtained by Lucas Allen, Braeden Bertz, and Harsha Kenchareddy as part of an REU through the Madison Experimental Mathematics Laboratory \cite{MXMreport}. The numerically computed fluctuations are shown in Figure \ref{fig:fluctuations}.

\begin{figure}
    \begin{tabular}{c}
\includegraphics[width=.8\textwidth]{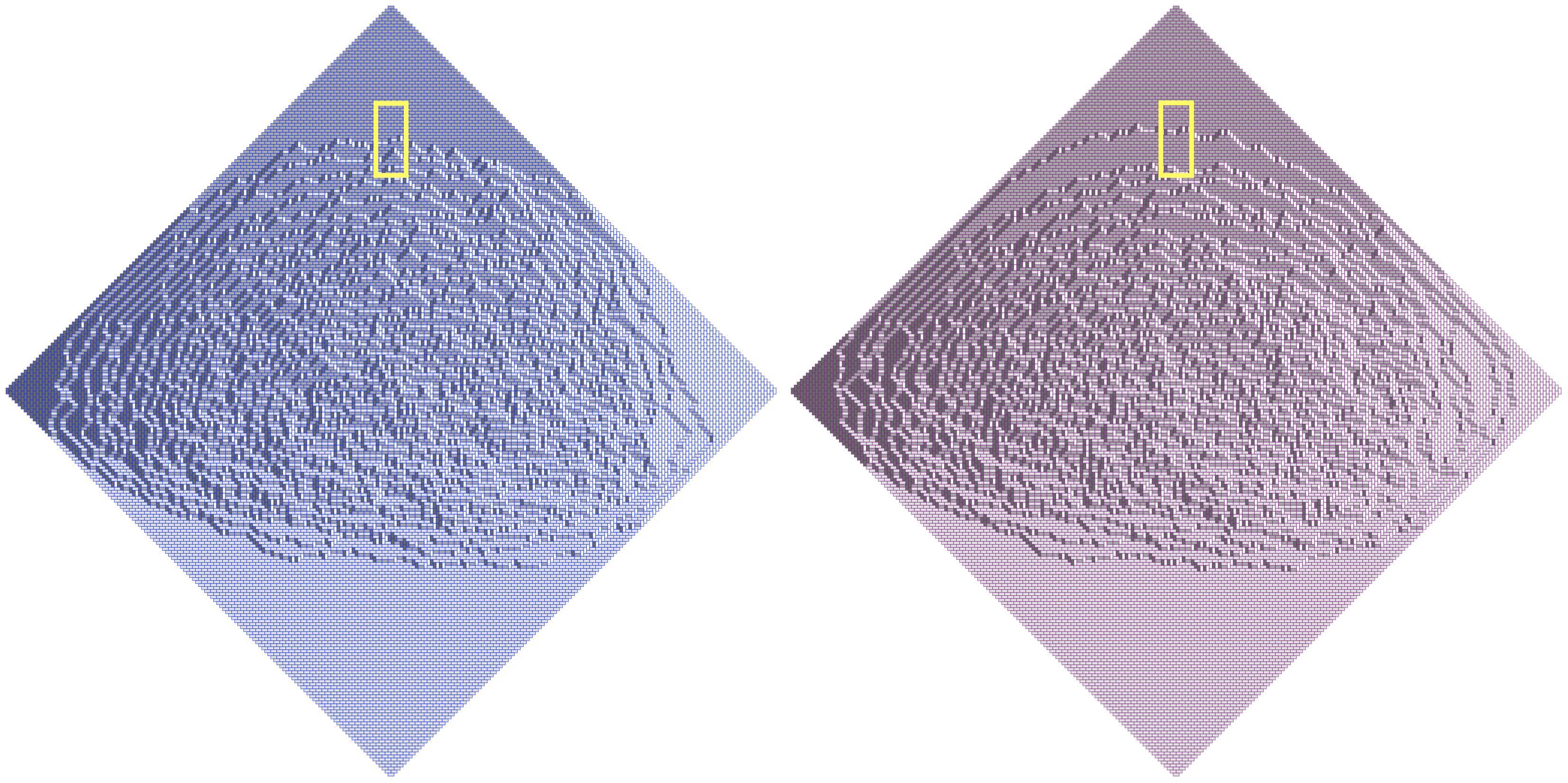} \\
\begin{tabular}{cccc}
\includegraphics[width=.22\textwidth]{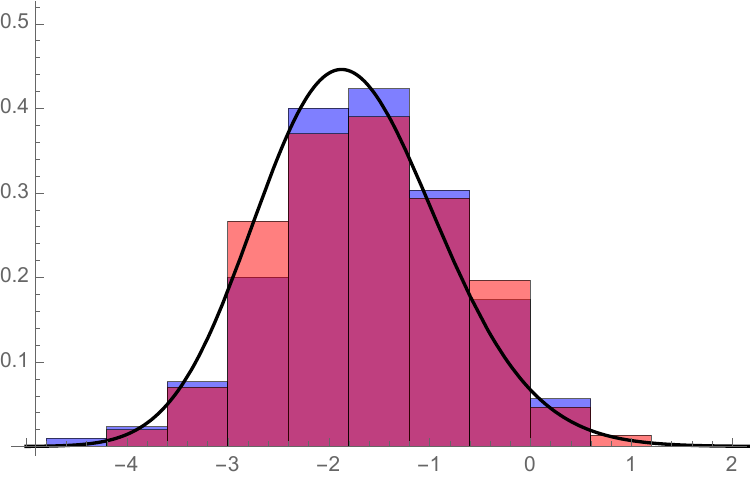}
&
\includegraphics[width=.22\textwidth]{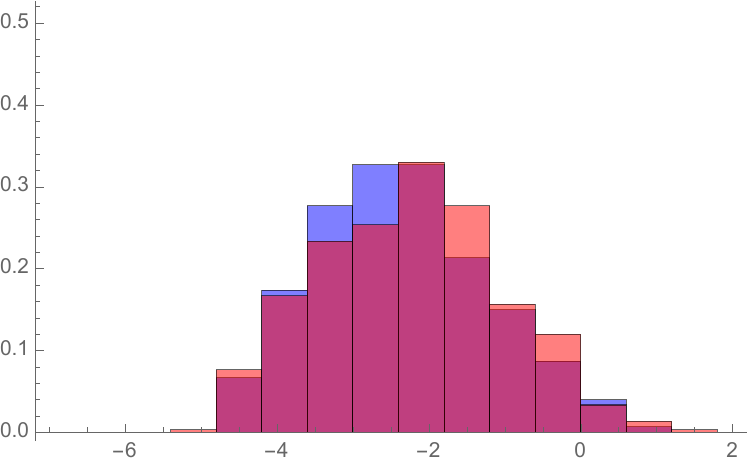}
&
\includegraphics[width=.22\textwidth]{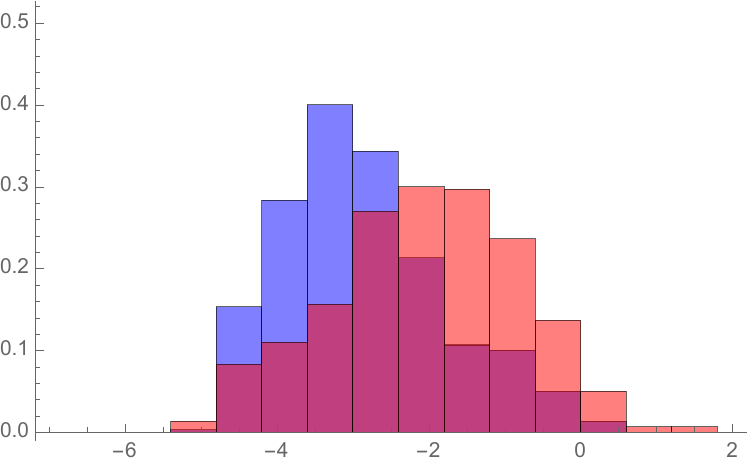}
&
\includegraphics[width=.22\textwidth]{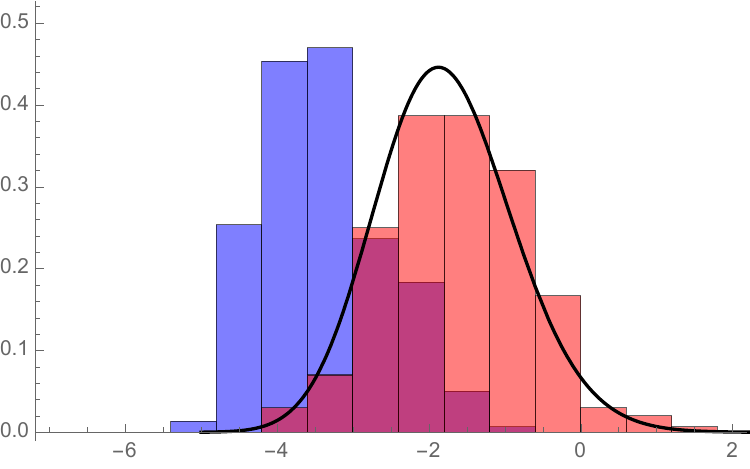} \\
$t=1$ & $t=0.5$ & $t=0.2$ & $t=0$
\end{tabular}
\end{tabular}
\caption{The numerically calculated fluctuations of the top portion of the arctic curve at $x=0$ were computed for several values of $t$. The red histogram shows the fluctuation for the red tililng and the blue histogram shows the fluctuation for the blue tiling. The black curve overlaid in the plots for $t=1$ and $t=0$ is the density of the Tracy-Widom $F_2$ distribution. As expected, for $t=1$ both arctic curves show Tracy-Widom fluctuations. From the bijection in Thm. \ref{thm:2tilingAC}, we also expect the arctic curve for the red tiling when $t=0$ to be Tracy-Widom. Moreover, when $t=0$, the top portion of the arctic curve for the blue tiling maps to the second highest Schr\"oder path in a 1-color Aztec diamond under the bijection in Thm. \ref{thm:2tilingAC}  and thus we expect its fluctuations to be the same as the fluctuations of the second largest eigenvalue of a GUE random matrix. Away from $t=0,1$, it is unclear what fluctuations to expect. Here we show the numerical results for $t=0.5$ and $t=0.2$, note the disappearance of Tracy-Widom type fluctuations for either color.}\label{fig:fluctuations}
\end{figure}

\begin{problem}
Study the LLT processes and understand the limi shapes and fluctuations given by these processes.  
\end{problem}

\appendix

\section{Appendix: Lozenge tilings}
\label{ap:a}

Here we consider the vertex model with only white rows. For a single color, if we start with the empty partition and end at the partition $(\underbrace{b,\ldots, b}_{a\text{ times}})$ after adding $a+c$ rows of white vertices, then the resulting vertex model configurations are in bijection with $k$ coupled lozenge tilings of an $a\times b \times c$ hexagon
\[
\resizebox{0.25\textwidth}{!}{
\begin{tikzpicture}[baseline=(current bounding box.center)]
\draw (0,0) grid (5,6);
\draw[ultra thick] (0.5,0)--(0.5,0.5);
\draw[ultra thick] (1.5,0)--(1.5,0.5);
\draw[ultra thick] (4.5,5.5)--(4.5,6);
\draw[ultra thick] (3.5,5.5)--(3.5,6);
\draw[thick,decorate,decoration={brace}] (1.9,-0.1)--(0.1,-0.1);
\draw[thick,decorate,decoration={brace}] (4.9,-0.1)--(2.1,-0.1);
\draw[thick,decorate,decoration={brace}] (-0.1,0.1)--(-0.1,5.9);
\node[below] at (1,-0.2) {$a$};
\node[below] at (3.5,-0.2) {$b$};
\node[] at (-0.7,3) {$a+c$};
\end{tikzpicture}
}
\rightarrow
\resizebox{0.25\textwidth}{!}{
\begin{tikzpicture}[baseline=(current bounding box.center)]
\begin{scope}[scale=1,yscale=0.866,xslant=0.5]
\draw[] (0,0)--(-2,2)--(-2,6)--(1,6)--(3,4)--(3,0)--(0,0);
\node[below,left] at (-1.0,1.0) {a};
\node[below] at (1.5,0.0) {b};
\node[above,left] at (-2.0,4.0) {c};
\foreach \i in {0,...,1}
\foreach \j in {-\i,...,3}
{
\draw[fill=lightgray] (\j,\i)--(\j,\i+1)--(\j-1,\i+1)--cycle;
}
\foreach \i in {2,...,3}
\foreach \j in {-1,...,3}
{
\draw[fill=lightgray] (\j,\i)--(\j,\i+1)--(\j-1,\i+1)--cycle;
}
\foreach \i in {4,...,5}{
\pgfmathparse{int(3-\i+3)}
\foreach \j in {-1,...,\pgfmathresult}
{
\draw[fill=lightgray] (\j,\i)--(\j,\i+1)--(\j-1,\i+1)--cycle;
}
}
\end{scope}
\end{tikzpicture}
}
\]
To do this we map paths to tiles by 
\[
\begin{aligned}
\resizebox{0.5cm}{!}{
\begin{tikzpicture}[baseline=(current bounding box.center)]
\draw (0,0) grid (1,2);
\draw[very thick] (0.5,0.5)--(0.5,1.5);
\end{tikzpicture}} & \mapsto &
\resizebox{0.5cm}{!}{
\begin{tikzpicture}[baseline=(current bounding box.center)]
\draw (0,0) grid (1,2);
\draw[dashed,fill=lightgray] (1,0)--(1,1)--(0,1)--cycle;
\draw[dashed] (1,1)--(0,2)--(0,1)--cycle;
\draw[very thick] (0.5,0.5)--(0.5,1.5);
\end{tikzpicture}} & \mapsto &
\resizebox{0.5cm}{!}{
\begin{tikzpicture}[baseline=(current bounding box.center)]
\draw[fill=lightgray] (1,0)--(1,1)--(0,1)--cycle;
\draw[] (1,1)--(0,2)--(0,1)--cycle;
\draw[very thick] (0.5,0.5)--(0.5,1.5);
\end{tikzpicture}} & \mapsto &
\resizebox{0.5cm}{!}{
\begin{tikzpicture}[baseline=(current bounding box.center)]
\begin{scope}[scale=1,yscale=0.866,xslant=0.5]
\draw[fill=lightgray] (1,0)--(1,1)--(0,1)--cycle;
\draw[] (1,1)--(0,2)--(0,1)--cycle;
\draw[very thick] (0.5,0.5)--(0.5,1.5);
\end{scope}
\end{tikzpicture}} \\
\resizebox{1cm}{!}{
\begin{tikzpicture}[baseline=(current bounding box.center)]
\draw (0,0) grid (2,1);
\draw[very thick] (0.5,0.5)--(1.5,0.5);
\end{tikzpicture}} & \mapsto &
\resizebox{1cm}{!}{
\begin{tikzpicture}[baseline=(current bounding box.center)]
\draw (0,0) grid (2,1);
\draw[dashed,fill=lightgray] (1,0)--(1,1)--(0,1)--cycle;
\draw[dashed] (2,0)--(1,1)--(1,0)--cycle;
\draw[very thick] (0.5,0.5)--(1.5,0.5);
\end{tikzpicture}} & \mapsto &
\resizebox{1cm}{!}{
\begin{tikzpicture}[baseline=(current bounding box.center)]
\draw[fill=lightgray] (1,0)--(1,1)--(0,1)--cycle;
\draw[] (2,0)--(1,1)--(1,0)--cycle;
\draw[very thick] (0.5,0.5)--(1.5,0.5);
\end{tikzpicture}} & \mapsto &
\resizebox{0.67cm}{!}{
\begin{tikzpicture}[baseline=(current bounding box.center)]
\begin{scope}[scale=1,yscale=0.866,xslant=0.5]
\draw[fill=lightgray] (1,0)--(1,1)--(0,1)--cycle;
\draw[] (2,0)--(1,1)--(1,0)--cycle;
\draw[very thick] (0.5,0.5)--(1.5,0.5);
\end{scope}
\end{tikzpicture}} \\
\resizebox{0.5cm}{!}{
\begin{tikzpicture}[baseline=(current bounding box.center)]
\draw (0,0) grid (1,1);
\end{tikzpicture}} & \mapsto &
\resizebox{0.5cm}{!}{
\begin{tikzpicture}[baseline=(current bounding box.center)]
\draw (0,0) grid (1,1);
\draw[dashed,fill=lightgray] (1,0)--(1,1)--(0,1)--cycle;
\draw[dashed] (0,0)--(0,1)--(1,0)--cycle;
\end{tikzpicture}} & \mapsto &
\resizebox{0.5cm}{!}{
\begin{tikzpicture}[baseline=(current bounding box.center)]
\draw[fill=lightgray] (1,0)--(1,1)--(0,1)--cycle;
\draw[] (0,0)--(0,1)--(1,0)--cycle;
\end{tikzpicture}} & \mapsto &
\resizebox{0.67cm}{!}{
\begin{tikzpicture}[baseline=(current bounding box.center)]
\begin{scope}[scale=1,yscale=0.866,xslant=0.5]
\draw[fill=lightgray] (1,0)--(1,1)--(0,1)--cycle;
\draw[] (0,0)--(0,1)--(1,0)--cycle;
\end{scope}
\end{tikzpicture}} 
\end{aligned}
\]
and then remove all frozen sections of paths. For example
\[
\resizebox{0.25\textwidth}{!}{
\begin{tikzpicture}[baseline=(current bounding box.center)]
\draw (0,0) grid (5,6);
\draw[ultra thick] (0.5,0)--(0.5,2.5)--(1.5,2.5)--(1.5,4.5)--(3.5,4.5)--(3.5,6);
\draw[ultra thick] (1.5,0)--(1.5,1.5)--(3.5,1.5)--(3.5,2.5)--(4.5,2.5)--(4.5,6);
\end{tikzpicture}
}
\rightarrow
\resizebox{0.25\textwidth}{!}{
\begin{tikzpicture}[baseline=(current bounding box.center)]
\begin{scope}[scale=1,yscale=0.866,xslant=0.5]
\draw[] (0,0)--(-2,2)--(-2,6)--(1,6)--(3,4)--(3,0)--(0,0);
\foreach \i in {0,...,1}
\foreach \j in {-\i,...,3}
{
\draw[fill=lightgray] (\j,\i)--(\j,\i+1)--(\j-1,\i+1)--cycle;
}
\foreach \i in {2,...,3}
\foreach \j in {-1,...,3}
{
\draw[fill=lightgray] (\j,\i)--(\j,\i+1)--(\j-1,\i+1)--cycle;
}
\foreach \i in {4,...,5}{
\pgfmathparse{int(3-\i+3)}
\foreach \j in {-1,...,\pgfmathresult}
{
\draw[fill=lightgray] (\j,\i)--(\j,\i+1)--(\j-1,\i+1)--cycle;
}
}
\draw[ultra thick, blue] (0,0)--(1,0)--(1,1)--(0,1)--(0,0);
\draw[ultra thick, blue] (1,0)--(2,0)--(2,1)--(1,1)--(1,0);
\draw[ultra thick, blue] (2,0)--(3,0)--(3,1)--(2,1)--(2,0);
\draw[ultra thick, blue] (2,1)--(3,1)--(3,2)--(2,2)--(2,1);
\draw[ultra thick, blue] (0,2)--(1,2)--(1,3)--(0,3)--(0,2);
\draw[ultra thick, blue] (0,3)--(1,3)--(1,4)--(0,4)--(0,3);
\draw[ultra thick, blue] (1,3)--(2,3)--(2,4)--(1,4)--(1,3);
\draw[ultra thick, blue] (-2,3)--(-1,3)--(-1,4)--(-2,4)--(-2,3);
\draw[ultra thick, blue] (-2,4)--(-1,4)--(-1,5)--(-2,5)--(-2,4);
\draw[ultra thick, blue] (-2,5)--(-1,5)--(-1,6)--(-2,6)--(-2,5);
\draw[ultra thick, blue] (-1,5)--(0,5)--(0,6)--(-1,6)--(-1,5);
\draw[ultra thick, blue] (0,5)--(1,5)--(1,6)--(0,6)--(0,5);
\draw[ultra thick, blue] (0,0)--(0,1)--(-1,2)--(-1,1)--(0,0);
\draw[ultra thick, blue] (-1,1)--(-1,2)--(-2,3)--(-2,2)--(-1,1);
\draw[ultra thick, blue] (2,1)--(2,2)--(1,3)--(1,2)--(2,1);
\draw[ultra thick, blue] (0,2)--(0,3)--(-1,4)--(-1,3)--(0,2);
\draw[ultra thick, blue] (3,2)--(3,3)--(2,4)--(2,3)--(3,2);
\draw[ultra thick, blue] (0,3)--(0,4)--(-1,5)--(-1,4)--(0,3);
\draw[ultra thick, blue] (3,3)--(3,4)--(2,5)--(2,4)--(3,3);
\draw[ultra thick, blue] (2,4)--(2,5)--(1,6)--(1,5)--(2,4);
\draw[ultra thick, blue] (0,1)--(1,1)--(0,2)--(-1,2)--(0,1);
\draw[ultra thick, blue] (0,4)--(1,4)--(0,5)--(-1,5)--(0,4);

\draw[ultra thick] (-1.5,1.5)--(-1.5,2.5)--(-0.5,2.5)--(-0.5,4.5)--(1.5,4.5)--(1.5,5.5);
\draw[ultra thick] (-0.5,0.5)--(-0.5,1.5)--(1.5,1.5)--(1.5,2.5)--(2.5,2.5)--(2.5,4.5);
\end{scope}
\end{tikzpicture}
}
\]
Now fix the number of colors $k$.  Map paths of each color to colored lozenges as described above. Note that in terms of the lozenges, we get a power of $t$ if 
\[
\resizebox{1cm}{!}{
\begin{tikzpicture}[baseline=(current bounding box.center)]
\begin{scope}[scale=1,yscale=0.866,xslant=0.5]
\draw[fill=lightgray] (1,0)--(1,1)--(0,1)--cycle;
\draw[] (2,0)--(1,1)--(1,0)--cycle;
\draw[very thick,blue] (1,0)--(0,1)--(1,1)--(2,0)--cycle;
\draw[very thick,red] (1.05,0.05)--(0.05,1.05)--(1.05,1.05)--(2.05,0.05)--cycle;
\end{scope}
\end{tikzpicture}
}
\text{ or }
\resizebox{1cm}{!}{
\begin{tikzpicture}[baseline=(current bounding box.center)]
\begin{scope}[scale=1,yscale=0.866,xslant=0.5]
\draw[fill=lightgray] (1,0)--(1,1)--(0,1)--cycle;
\draw[] (2,0)--(1,1)--(1,0)--cycle;
\draw[] (1,1)--(0,2)--(0,1)--cycle;
\draw[very thick,blue] (1,0)--(0,1)--(1,1)--(2,0)--cycle;
\draw[very thick,red] (1.05,0.05)--(0.05,1.05)--(0.05,2.05)--(1.05,1.05)--cycle;
\end{scope}
\end{tikzpicture}
}
\]
when blue is a smaller color than red.

\begin{remark} \label{rmk:lozengeLLT}
Under this bijection the partition function for $k$-tilings of the $a \times b \times c$ hexagon by lozenges is equal to the LLT polynomial $\mathcal{L}_{(\underbrace{\lambda,\ldots,\lambda}_{k \text{ times}})}(x_1,\ldots,x_{a+c};t)$ where $\lambda = (\underbrace{b,\ldots,b}_{a \text{ times}})$ and we take the specializations $x_i = 1$ for all $i=1,\ldots,a+c$.
\end{remark}

First, we consider a symmetry of the $k$-tilings.
\begin{prop}
Flipping a $k$-tilings of the $a\times b \times c$ hexagon across the vertical axis and reversing the order of the colors is a bijection between $k$-tilings of the $a\times b \times c$ hexagon and $k$-tilings of the $c\times b \times a$ hexagon such that a configuration with $j$ pairs of coupled lozenges maps to a configuration with $\binom{k}{2}(ab-bc)+j$ pairs of coupled lozenges.
\end{prop}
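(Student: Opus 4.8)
The plan is to argue directly with lozenge tilings, following the template of the reflection lemma for the Aztec diamond proved in Section \ref{t-infinity-section} (the lemma on $\phi$). Reflection across the vertical axis is manifestly a bijection from lozenge tilings of the $a\times b\times c$ hexagon to lozenge tilings of the $c\times b\times a$ hexagon, and reversing the order of the $k$ colors extends it to a bijection on $k$-tilings; the entire content of the statement is to track how the number of interactions changes. I would begin by recording the elementary fact that reflection across the vertical axis fixes the ``horizontal'' lozenge orientation (the image of a horizontal path step in the white vertex model) and interchanges the other two lozenge orientations (the images of a vertical path step and of an empty vertex). I would also invoke the classical fact that in every lozenge tiling of the $a\times b\times c$ hexagon the number of lozenges of each of the three orientations is constant, equal in the appropriate labeling to $ab$, $bc$, and $ca$.

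Next I would fix an ordered pair of colors $p<q$ (smaller $=$ blue, larger $=$ red) and set up a local bookkeeping of the $pq$-interactions modeled on the ``case I-A / I-B / II-A / II-B'' analysis used for $\phi$. The two interaction configurations both have the blue lozenge in the horizontal orientation, so I would parametrize the $pq$-interactions by the triangular cells covered by a horizontal blue lozenge; there are a constant number of such cells (one of $ab$, $bc$, $ca$). For each such cell exactly one of two complementary local conditions holds for the red tiling, and a factor of $t$ is produced by precisely one of them. I would then check that the flip-and-reverse operation turns each $t$-producing local condition into a non-$t$-producing one and vice versa, within a class of cells whose size is a constant depending only on $a,b,c$; summing over those cells shows that the $pq$-interaction count changes by a fixed amount, independent of the tiling. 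Summing the correction over the $\binom{k}{2}$ ordered pairs of colors then yields the total shift $\binom{k}{2}(ab-bc)$; this is consistent with the map being an involution, since applying it a second time contributes $\binom{k}{2}(cb-ba)$ and returns the original count.

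I expect the main obstacle to be exactly this local bookkeeping: enumerating, for each orientation of a red lozenge, which local configurations of the blue tiling produce a $t$; verifying that the flip genuinely pairs $t$-producing with non-$t$-producing configurations inside a class of constant size (this is where the constant orientation-counts, together with an implicit local flow-conservation for lozenge tilings, must be used to see that the \emph{change} is tiling-independent rather than merely the total being fixed); handling the lozenges adjacent to the hexagon boundary, where a neighbor of the relevant lozenge may lie outside the hexagon; and pinning down the sign and exact value of the additive constant. An alternative, possibly cleaner, route is suggested by Remark \ref{rmk:lozengeLLT}: the partition function of $k$-tilings of the $a\times b\times c$ hexagon is the specialized LLT polynomial $\mathcal{L}_{(\lambda,\ldots,\lambda)}(1,\ldots,1;t)$ with $\lambda=(b^a)$ in $a+c$ variables, so one can try to realize the vertical flip together with color reversal as the known symmetry of such LLT polynomials exchanging the rectangular shapes $(b^a)$ and $(b^c)$ inside $\mathbf{1}^{a+c}$, and read the power of $t$ off the $t^{\varphi(\L,\cdot)}$ factors of the $L$-weights under reversal of the color order, using $\varphi(\L^{\mathrm{rev}},\K^{\mathrm{rev}})=|\L|\,|\K|-\sum_i L_iK_i-\varphi(\L,\K)$ and summing the $|\L|\,|\K|$ terms across the lattice. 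I would pursue whichever of the two approaches demands less case analysis.
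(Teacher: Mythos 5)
Your overall strategy is the right one and matches the paper's in outline: fix a pair of colors, exploit the constancy of the number of lozenges of each orientation ($ab$, $bc$, $ca$), do local bookkeeping of which two-color configurations produce a $t$, and sum over the $\binom{k}{2}$ pairs. However, two concrete points break the argument as proposed. First, the geometric input is stated backwards: reflection across the vertical axis fixes the lozenge coming from a \emph{vertical} path step (the upright rhombus, of which there are $ac$ per color) and interchanges the other two orientations, namely the image of a horizontal path step (the ``type 3'' lozenge, $ab$ per color) and the image of an empty vertex (``type 2'', $bc$ per color); it does not fix the horizontal-step lozenge as you claim. Since both interaction configurations are anchored on a horizontal-step lozenge of the smaller color, misidentifying which orientation is preserved corrupts the identification of which configurations of $T$ become interactions of $\phi(T)$.

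Second, and more seriously, the mechanism you propose -- that the flip pairs $t$-producing local conditions with non-$t$-producing ones inside a single class of cells of constant size -- is copied from the Aztec-diamond lemma and proves the wrong kind of identity: such a pairing forces the \emph{sum} of the interaction counts of $T$ and $\phi(T)$ to be constant, whereas here the sum is tiling-dependent and it is the \emph{difference} that is constant. You flag this tension yourself (``the change is tiling-independent rather than merely the total being fixed'') but supply no mechanism that produces it. The paper's resolution is to count over two \emph{different} constant-size families that share one exceptional configuration: for colors blue $<$ red, the interactions of $T$ are indexed by the $ab$ blue horizontal-step lozenges, and the only non-$t$-producing case there is ``red type-2 lozenge covering the shared triangle''; the configurations of $T$ that become interactions of $\phi(T)$ are indexed by the $bc$ red type-2 lozenges, with the same exceptional case ``blue type-3 over red type-2''. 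Writing $s$ for the number of occurrences of that shared configuration, one gets counts $ab-s$ and $bc-s$, and the tiling-dependent $s$ cancels in the difference, giving the constant shift $ab-bc$ per pair and hence $\binom{k}{2}(ab-bc)$ overall. Your sketch never isolates this shared term, so the tiling-independence of the change is left unproved; and the alternative LLT/weight-reversal route you mention (the identity for $\varphi$ under color reversal is correct) is only a suggestion -- you would still have to realize the hexagon flip as a symmetry of the white-vertex lattice and control all remaining weight factors, which is not done.
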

\begin{proof}
Label the types of lozenges
\[
\begin{tabular}{ccc}
  Type 1 & Type 2 & Type 3 \\
 \resizebox{0.5cm}{!}{
\begin{tikzpicture}[baseline=(current bounding box.center)]
\begin{scope}[scale=1,yscale=0.866,xslant=0.5]
\draw[fill=lightgray] (1,0)--(1,1)--(0,1)--cycle;
\draw[] (1,1)--(0,2)--(0,1)--cycle;
\end{scope}
\end{tikzpicture}} 
& 
\resizebox{0.67cm}{!}{
\begin{tikzpicture}[baseline=(current bounding box.center)]
\begin{scope}[scale=1,yscale=0.866,xslant=0.5]
\draw[fill=lightgray] (1,0)--(1,1)--(0,1)--cycle;
\draw[] (0,0)--(0,1)--(1,0)--cycle;
\end{scope}
\end{tikzpicture}} 
&
\resizebox{0.67cm}{!}{
\begin{tikzpicture}[baseline=(current bounding box.center)]
\begin{scope}[scale=1,yscale=0.866,xslant=0.5]
\draw[fill=lightgray] (1,0)--(1,1)--(0,1)--cycle;
\draw[] (2,0)--(1,1)--(1,0)--cycle;
\end{scope}
\end{tikzpicture}}
\end{tabular}
\]
Note that every tiling of an $a\times b \times c$ hexagon has $ac$ lozenges of type 1, $bc$ lozenges of type 2, and $ab$ lozenges of type 3. After flipping across the vertical axis, those of type 2 map to those of type 3 and vice-versa, while those of type 1 stay the same. Let $\phi$ be the map that flips a $k$-tiling about the vertical axis and reverses the order of the colors. Under $\phi$ lozenges of type 2 map to those of type 3 and vice-versa, while those of type 1 stay the same.

Consider any pair of colors $\alpha<\beta$. We'll draw color $\alpha$ as blue and color $\beta$ as red. Consider the 2-tiling $(T_\alpha,T_\beta)$. Recall the lozenges that give an interaction are
\[
\resizebox{1cm}{!}{
\begin{tikzpicture}[baseline=(current bounding box.center)]
\begin{scope}[scale=1,yscale=0.866,xslant=0.5]
\draw[fill=lightgray] (1,0)--(1,1)--(0,1)--cycle;
\draw[] (2,0)--(1,1)--(1,0)--cycle;
\draw[very thick,blue] (1,0)--(0,1)--(1,1)--(2,0)--cycle;
\draw[very thick,red] (1.05,0.05)--(0.05,1.05)--(1.05,1.05)--(2.05,0.05)--cycle;
\end{scope}
\end{tikzpicture}
}
\text{ or }
\resizebox{1cm}{!}{
\begin{tikzpicture}[baseline=(current bounding box.center)]
\begin{scope}[scale=1,yscale=0.866,xslant=0.5]
\draw[fill=lightgray] (1,0)--(1,1)--(0,1)--cycle;
\draw[] (2,0)--(1,1)--(1,0)--cycle;
\draw[] (1,1)--(0,2)--(0,1)--cycle;
\draw[very thick,blue] (1,0)--(0,1)--(1,1)--(2,0)--cycle;
\draw[very thick,red] (1.05,0.05)--(0.05,1.05)--(0.05,2.05)--(1.05,1.05)--cycle;
\end{scope}
\end{tikzpicture}
}.
\]
Note that 
\[
\#\left(
\resizebox{1cm}{!}{
\begin{tikzpicture}[baseline=(current bounding box.center)]
\begin{scope}[scale=1,yscale=0.866,xslant=0.5]
\draw[fill=lightgray] (1,0)--(1,1)--(0,1)--cycle;
\draw[] (2,0)--(1,1)--(1,0)--cycle;
\draw[very thick,blue] (1,0)--(0,1)--(1,1)--(2,0)--cycle;
\draw[very thick,red] (1.05,0.05)--(0.05,1.05)--(1.05,1.05)--(2.05,0.05)--cycle;
\end{scope}
\end{tikzpicture}
}
\right)
+
\#\left(
\resizebox{1cm}{!}{
\begin{tikzpicture}[baseline=(current bounding box.center)]
\begin{scope}[scale=1,yscale=0.866,xslant=0.5]
\draw[fill=lightgray] (1,0)--(1,1)--(0,1)--cycle;
\draw[] (2,0)--(1,1)--(1,0)--cycle;
\draw[] (1,1)--(0,2)--(0,1)--cycle;
\draw[very thick,blue] (1,0)--(0,1)--(1,1)--(2,0)--cycle;
\draw[very thick,red] (1.05,0.05)--(0.05,1.05)--(0.05,2.05)--(1.05,1.05)--cycle;
\end{scope}
\end{tikzpicture}
}
\right)
+
\#\left(
\resizebox{1.33cm}{!}{
\begin{tikzpicture}[baseline=(current bounding box.center)]
\begin{scope}[scale=1,yscale=0.866,xslant=0.5]
\draw[fill=lightgray] (1,0)--(1,1)--(0,1)--cycle;
\draw[] (2,0)--(1,1)--(1,0)--cycle;
\draw[] (1,0)--(0,1)--(0,0)--cycle;
\draw[very thick,blue] (1,0)--(0,1)--(1,1)--(2,0)--cycle;
\draw[very thick,red] (0.05,0.05)--(0.05,1.05)--(1.05,1.05)--(1.05,0.05)--cycle;
\end{scope}
\end{tikzpicture}
}
\right)
= ab
\]
since the number of blue lozenges of type 3 is $ab$. Rearranging we have
\begin{equation} \label{eq:lozcount1}
\#\left(
\resizebox{1cm}{!}{
\begin{tikzpicture}[baseline=(current bounding box.center)]
\begin{scope}[scale=1,yscale=0.866,xslant=0.5]
\draw[fill=lightgray] (1,0)--(1,1)--(0,1)--cycle;
\draw[] (2,0)--(1,1)--(1,0)--cycle;
\draw[very thick,blue] (1,0)--(0,1)--(1,1)--(2,0)--cycle;
\draw[very thick,red] (1.05,0.05)--(0.05,1.05)--(1.05,1.05)--(2.05,0.05)--cycle;
\end{scope}
\end{tikzpicture}
}
\right)
+
\#\left(
\resizebox{1cm}{!}{
\begin{tikzpicture}[baseline=(current bounding box.center)]
\begin{scope}[scale=1,yscale=0.866,xslant=0.5]
\draw[fill=lightgray] (1,0)--(1,1)--(0,1)--cycle;
\draw[] (2,0)--(1,1)--(1,0)--cycle;
\draw[] (1,1)--(0,2)--(0,1)--cycle;
\draw[very thick,blue] (1,0)--(0,1)--(1,1)--(2,0)--cycle;
\draw[very thick,red] (1.05,0.05)--(0.05,1.05)--(0.05,2.05)--(1.05,1.05)--cycle;
\end{scope}
\end{tikzpicture}
}
\right)
= ab-\#\left(
\resizebox{1.33cm}{!}{
\begin{tikzpicture}[baseline=(current bounding box.center)]
\begin{scope}[scale=1,yscale=0.866,xslant=0.5]
\draw[fill=lightgray] (1,0)--(1,1)--(0,1)--cycle;
\draw[] (2,0)--(1,1)--(1,0)--cycle;
\draw[] (1,0)--(0,1)--(0,0)--cycle;
\draw[very thick,blue] (1,0)--(0,1)--(1,1)--(2,0)--cycle;
\draw[very thick,red] (0.05,0.05)--(0.05,1.05)--(1.05,1.05)--(1.05,0.05)--cycle;
\end{scope}
\end{tikzpicture}
}
\right)
\end{equation}

Now the lozenges in $(T_\alpha,T_\beta)$ that will count as a coupled pair after applying $\phi$ are of the form
\[
\resizebox{1cm}{!}{
\begin{tikzpicture}[baseline=(current bounding box.center)]
\begin{scope}[scale=1,yscale=0.866,xslant=0.5]
\draw[fill=lightgray] (1,0)--(1,1)--(0,1)--cycle;
\draw[] (1,0)--(0,1)--(0,0)--cycle;
\draw[very thick,blue] (0,0)--(0,1)--(1,1)--(1,0)--cycle;
\draw[very thick,red] (0.05,0.05)--(0.05,1.05)--(1.05,1.05)--(1.05,0.05)--cycle;
\end{scope}
\end{tikzpicture}
}
\text{ or }
\resizebox{1cm}{!}{
\begin{tikzpicture}[baseline=(current bounding box.center)]
\begin{scope}[scale=1,yscale=0.866,xslant=0.5]
\draw[fill=lightgray] (1,0)--(1,1)--(0,1)--cycle;
\draw[] (1,1)--(0,2)--(0,1)--cycle;
\draw[] (1,0)--(0,1)--(0,0)--cycle;
\draw[very thick,red] (0,0)--(0,1)--(1,1)--(1,0)--cycle;
\draw[very thick,blue] (1.05,0.05)--(0.05,1.05)--(0.05,2.05)--(1.05,1.05)--cycle;
\end{scope}
\end{tikzpicture}
}.
\]
Similarly to the previous calculation, we have
\begin{equation}\label{eq:lozcount2}
\#\left(\resizebox{1cm}{!}{
\begin{tikzpicture}[baseline=(current bounding box.center)]
\begin{scope}[scale=1,yscale=0.866,xslant=0.5]
\draw[fill=lightgray] (1,0)--(1,1)--(0,1)--cycle;
\draw[] (1,0)--(0,1)--(0,0)--cycle;
\draw[very thick,blue] (0,0)--(0,1)--(1,1)--(1,0)--cycle;
\draw[very thick,red] (0.05,0.05)--(0.05,1.05)--(1.05,1.05)--(1.05,0.05)--cycle;
\end{scope}
\end{tikzpicture}
}
\right)
+
\#\left(\resizebox{1cm}{!}{
\begin{tikzpicture}[baseline=(current bounding box.center)]
\begin{scope}[scale=1,yscale=0.866,xslant=0.5]
\draw[fill=lightgray] (1,0)--(1,1)--(0,1)--cycle;
\draw[] (1,1)--(0,2)--(0,1)--cycle;
\draw[] (1,0)--(0,1)--(0,0)--cycle;
\draw[very thick,red] (0,0)--(0,1)--(1,1)--(1,0)--cycle;
\draw[very thick,blue] (1.05,0.05)--(0.05,1.05)--(0.05,2.05)--(1.05,1.05)--cycle;
\end{scope}
\end{tikzpicture}
}
\right) =
bc - \#\left(
\resizebox{1.33cm}{!}{
\begin{tikzpicture}[baseline=(current bounding box.center)]
\begin{scope}[scale=1,yscale=0.866,xslant=0.5]
\draw[fill=lightgray] (1,0)--(1,1)--(0,1)--cycle;
\draw[] (2,0)--(1,1)--(1,0)--cycle;
\draw[] (1,0)--(0,1)--(0,0)--cycle;
\draw[very thick,blue] (1,0)--(0,1)--(1,1)--(2,0)--cycle;
\draw[very thick,red] (0.05,0.05)--(0.05,1.05)--(1.05,1.05)--(1.05,0.05)--cycle;
\end{scope}
\end{tikzpicture}
}
\right).
\end{equation}
Subtracting equations (\ref{eq:lozcount2}) from (\ref{eq:lozcount1}), we see that the difference in the number of pairs of coupled lozenges is constant, in particular, it is $ab-bc$. Doing this for every pair of colors $\alpha<\beta$ gives the result. 
\end{proof}

Similar to the Aztec diamond, for special values of $t$ we have bijections with $1$-tilings.
\begin{prop}
When $t=0$ there is a bijection between $2$-tilings of the $a\times b \times c$ hexagon by lozenges and $1$-tilings of the $2a\times b \times (c-a)$ hexagon. (If $a>c$, then there are no configurations when $t=0$.)
\end{prop}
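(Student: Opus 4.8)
The plan is to run, in the lozenge setting, the same argument used for the $2$-tilings of the Aztec diamond at $t=0$ (Proposition~\ref{prop:2-color-t-0-frozen} through Corollary~\ref{cor:2-color-t-0} and the bijection that follows it), transported across the correspondence of Appendix~\ref{ap:a}. First I would pass from lozenges to non-intersecting lattice paths. A $2$-tiling of the $a\times b\times c$ hexagon corresponds to a pair of families of $a$ non-intersecting paths each -- a blue family (colour $1$) and a red family (colour $2$) -- in one and the same grid of width $b$ and height $a+c$, where the $i$-th path of each colour runs from the $i$-th site of the bottom boundary to the $(b-a+i)$-th site of the top boundary. Under this dictionary the two local pictures that carry a power of $t$ become \emph{forbidden} once $t=0$. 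The target object, a $1$-tiling of the $2a\times b\times(c-a)$ hexagon, is a family of $2a$ non-intersecting paths in the \emph{same} grid, since its width is $b$ and its height is $2a+(c-a)=a+c$, the $j$-th path running from bottom site $j$ to top site $b-2a+j$.

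Second, I would prove the hexagon analogue of Proposition~\ref{prop:2-color-t-0-frozen}: at $t=0$, in any $2$-tiling of nonzero weight the paths are frozen in a predictable way. The three basic observations of that proposition go through essentially verbatim: (a) two paths of the same colour are non-intersecting, forcing the blue family, and the red family, to be strictly nested; (b) for $i<j$ the $i$-th blue path cannot meet the $j$-th red path, since at a first meeting point the blue path would touch the red path "from above", creating an interaction; (c) if a blue path and a red path meet at two points then at the first of them the blue path cannot take an up-step, again on pain of an interaction. Running the induction from the bottom pair of paths upward -- the first blue and first red path share both endpoints, so blue path $1$ is forced horizontal, then blue path $2$ and red path $2$ must copy its frozen block, and so on -- shows that the $i$-th blue path begins (and, by the mirror argument at the top boundary, ends) with a block of forced horizontal steps, and the $i$-th red path with a block one shorter, the block lengths being governed by a $\min$ of a linear function of $i$ and the remaining height, exactly as in the Aztec case. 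The same induction, with blue path $i$ weakly nested on one side of red path $i$ and strictly on that side of red path $i+1$, is the analogue of Corollary~\ref{cor:2-color-t-0}.

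Third, I would define the bijection by the same interleaving used in the $k=2$ Aztec case: red path $i$ becomes path $2i-1$ and blue path $i$ becomes path $2i$ of the single tiling, after deleting its forced frozen block and re-inserting the frozen block prescribed by the boundary of the new hexagon. The bookkeeping to verify is that the forced block lengths are exactly those that convert the $a\times b\times c$ boundary data into the $2a\times b\times(c-a)$ boundary data, i.e. that the new paths $2i-1$ and $2i$ do start at bottom sites $2i-1,2i$ and end at top sites $b-2a+2i-1,b-2a+2i$; and that, by the nesting statement, the $2a$ resulting paths are strictly nested, hence form a genuine $1$-tiling of the $2a\times b\times(c-a)$ hexagon. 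Every horizontal step has weight $1$, so the map is weight-preserving, and it is manifestly invertible: from a $1$-tiling of the $2a\times b\times(c-a)$ hexagon, split the paths by parity into a red and a blue family, shift them back, and restore the forced frozen blocks; the reconstructed $2$-tiling has no interactions by construction. For the case $a>c$: then $c-a<0$, the target hexagon is empty, and on the source side the freezing lemma forces the frozen blocks of the successive pairs of paths to consume more than the available height $a+c$, so no nonzero-weight $2$-tiling exists at $t=0$ either.

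The main obstacle will be the second step together with the boundary-matching part of the third: pinning down the exact frozen block lengths and checking that they reshape the hexagon data correctly. The inductive skeleton is identical to that for the Aztec diamond, but the top-boundary freezing (which did not arise in the Aztec discussion, where only the left end was analysed) and the size constraints relating $b$ to $2a$ must be handled with care.
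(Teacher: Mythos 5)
There is a genuine gap: the key lemma you propose — a hexagon analogue of Proposition \ref{prop:2-color-t-0-frozen} asserting that at $t=0$ the $i$-th blue path must begin (and end) with a forced block of \emph{horizontal} steps, with the red block one shorter — is false in the lozenge setting, and the map you build on it ("delete the frozen block and re-insert the block prescribed by the new boundary") is not the right transformation. In the hexagon vertex model the only $t$-producing local pattern is the smaller colour taking a horizontal step at a site where the larger colour is present; since the $i$-th blue and red paths share their starting site, this forces blue's first step to be \emph{vertical}, not horizontal, and it forces no horizontal steps at all — red path $i$ may immediately move right or go straight up. (The paper's own example, Figure \ref{fig:lozshift}, shows paths whose initial segments are long vertical runs.) What survives from the Aztec analysis are only the relative-order statements: at $t=0$ the $i$-th red path stays weakly to the right of the $i$-th blue path and never shares a horizontal step with it, and it stays strictly to the left of the $(i+1)$-th blue path. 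Your deletion/reinsertion step also cannot be made to do the required bookkeeping: source and target live on lattices with the same number of rows ($a+c = 2a+(c-a)$) and every path carries exactly $b$ horizontal steps on both sides, so there is nothing to delete — deleting steps would destroy the side of length $b$, which is unchanged by the proposition.

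The paper's proof is instead a pure sliding argument, with no frozen segments: shift the $i$-th red path to the right by $i$ columns and the $i$-th blue path to the right by $i-1$ columns. The order statements above guarantee that the resulting $2a$ paths are strictly interlaced, hence non-intersecting, and their bottom and top attachment points become $2a$ consecutive sites, which is exactly the path data of a $1$-tiling of the $2a\times b\times(c-a)$ hexagon; the shift is manifestly reversible. This also handles the parenthetical claim directly: if $a>c$ the image would have to be a non-intersecting family with the boundary data of a hexagon with negative side $c-a$, which does not exist, so no $t=0$ configurations exist — your argument for this case rests on the incorrect freezing lemma and so does not stand. (Secondary point: even in a corrected version, your interleaving order puts red at the odd positions; with the paper's conventions the smaller colour lags, so blue path $i$ becomes path $2i-1$ and red path $i$ becomes path $2i$.)
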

\begin{proof}
Look at the paths of the vertex model. Then a similar sliding argument as in Section \ref{t-0-section} works again.

More precisely, label the starting points of each color of path $1,\ldots,a$. Then shift the $i$-th red path over to the right $i$ columns, and then shift the $i$-th blue path over to the right by $i-1$ columns. The claim is that now the paths are non-intersecting. 

We can see this by first noting that when $t=0$ the $i$-th red path must be weakly to right of the $i$-th blue path (since they start at the same place and the blue paths can never cross the red path as it would give a power of $t$). Further, the blue paths also cannot travel horizontally with the red path. 

Next we see that the $i$-th red path is also strictly to the left of the $(i+1)$-th blue path. One can see this as the red path starts to the left and ends to the left of the blue path, so if the two paths ever share a face the blue path must cross the red path eventually, resulting in a power of $t$.

Thus, after the shifting, the $i$-th red path is strictly between the $i$-th and $(i+1)$-th blue paths. Clearly, this is reversible.
\end{proof}
See Figure \ref{fig:lozshift} for an example. A similar result holds for $k$-tilings.
\begin{figure}[ht]
    \centering
    \[
    \begin{aligned}
\resizebox{0.25\textwidth}{!}{
\begin{tikzpicture}[baseline=(current bounding box.center)]
\draw (0,0) grid (5,5);
\draw[ultra thick, blue] (0.4,0)--(0.4,3.6)--(1.4,3.6)--(1.4,4.6)--(3.4,4.6)--(3.4,5);
\draw[ultra thick, blue] (1.4,0)--(1.4,1.6)--(3.4,1.6)--(3.4,2.6)--(4.4,2.6)--(4.4,5);
\draw[ultra thick, red] (0.6,0)--(0.6,2.4)--(1.6,2.4)--(1.6,3.4)--(3.6,3.4)--(3.6,5);
\draw[ultra thick, red] (1.6,0)--(1.6,0.4)--(4.6,0.4)--(4.6,5);
\end{tikzpicture}
}
&
\rightarrow
\resizebox{0.34\textwidth}{!}{
\begin{tikzpicture}[baseline=(current bounding box.center)]
\draw (0,0) grid (7,5);
\draw[ultra thick,blue] (0.4,0)--(0.4,3.6)--(1.4,3.6)--(1.4,4.6)--(3.4,4.6)--(3.4,5);
\draw[ultra thick,blue] (2.4,0)--(2.4,1.6)--(4.4,1.6)--(4.4,2.6)--(5.4,2.6)--(5.4,5);
\draw[ultra thick,red] (1.6,0)--(1.6,2.4)--(2.6,2.4)--(2.6,3.4)--(4.6,3.4)--(4.6,5);
\draw[ultra thick,red] (3.6,0)--(3.6,0.4)--(6.6,0.4)--(6.6,5);
\end{tikzpicture}
}
\\
\resizebox{0.25\textwidth}{!}{
\begin{tikzpicture}[baseline=(current bounding box.center)]
\begin{scope}[scale=1,yscale=0.866,xslant=0.5]
\draw[] (0,0)--(-2,2)--(-2,5)--(1,5)--(3,3)--(3,0)--(0,0);
\foreach \i in {0,...,1}
\foreach \j in {-\i,...,3}
{
\draw[fill=lightgray] (\j,\i)--(\j,\i+1)--(\j-1,\i+1)--cycle;
}
\foreach \i in {2,...,2}
\foreach \j in {-1,...,3}
{
\draw[fill=lightgray] (\j,\i)--(\j,\i+1)--(\j-1,\i+1)--cycle;
}
\foreach \i in {3,...,4}{
\pgfmathparse{int(3-\i+2)}
\foreach \j in {-1,...,\pgfmathresult}
{
\draw[fill=lightgray] (\j,\i)--(\j,\i+1)--(\j-1,\i+1)--cycle;
}
}

\draw[ultra thick, blue] (0.4-2,1.6)--(0.4-2,3.6)--(1.4-2,3.6)--(1.4-2,4.6)--(3.4-2,4.6);
\draw[ultra thick, blue] (1.4-2,0.6)--(1.4-2,1.6)--(3.4-2,1.6)--(3.4-2,2.6)--(4.4-2,2.6)--(4.4-2,3.6);
\draw[ultra thick, red] (0.6-2,1.4)--(0.6-2,2.4)--(1.6-2,2.4)--(1.6-2,3.4)--(3.6-2,3.4)--(3.6-2,4.4);
\draw[ultra thick, red] (1.6-2,0.4)--(1.6-2,0.4)--(4.6-2,0.4)--(4.6-2,3.4);
\end{scope}
\end{tikzpicture}
}
&
\rightarrow
\resizebox{0.25\textwidth}{!}{
\begin{tikzpicture}[baseline=(current bounding box.center)]
\begin{scope}[scale=1,yscale=0.866,xslant=0.5]
\draw[] (0,0)--(-4,4)--(-4,5)--(-1,5)--(3,1)--(3,0)--(0,0);
\foreach \i in {0,...,0}
\foreach \j in {-\i,...,3}
{
\draw[fill=lightgray] (\j,\i)--(\j,\i+1)--(\j-1,\i+1)--cycle;
}
\foreach \i in {1,...,3}{
\pgfmathparse{int(3-\i)}
\foreach \j in {-\i,...,\pgfmathresult}
{
\draw[fill=lightgray] (\j,\i)--(\j,\i+1)--(\j-1,\i+1)--cycle;
}
}
\foreach \i in {4,...,4}{
\pgfmathparse{int(3-\i)}
\foreach \j in {-3,...,\pgfmathresult}
{
\draw[fill=lightgray] (\j,\i)--(\j,\i+1)--(\j-1,\i+1)--cycle;
}
}

\draw[ultra thick,blue] (0.4-4,3.6)--(0.4-4,3.6)--(1.4-4,3.6)--(1.4-4,4.6)--(3.4-4,4.6)--(3.4-4,4.6);
\draw[ultra thick,blue] (2.4-4,1.6)--(2.4-4,1.6)--(4.4-4,1.6)--(4.4-4,2.6)--(5.4-4,2.6)--(5.4-4,2.6);
\draw[ultra thick,red] (1.6-4,2.4)--(1.6-4,2.4)--(2.6-4,2.4)--(2.6-4,3.4)--(4.6-4,3.4)--(4.6-4,3.4);
\draw[ultra thick,red] (3.6-4,0.4)--(3.6-4,0.4)--(6.6-4,0.4)--(6.6-4,1.4);
\end{scope}
\end{tikzpicture}
}
    \end{aligned}
    \]
    \caption{Example of the bijection between the 2-tilings of the $2 \times 3 \times 3$ hexagon at $t=0$ and tilings of the $4 \times 3 \times 1$ hexagon. The top gives the bijection in terms of lattice path, while the bottom gives the lozenge tilings.}
    \label{fig:lozshift}
\end{figure}
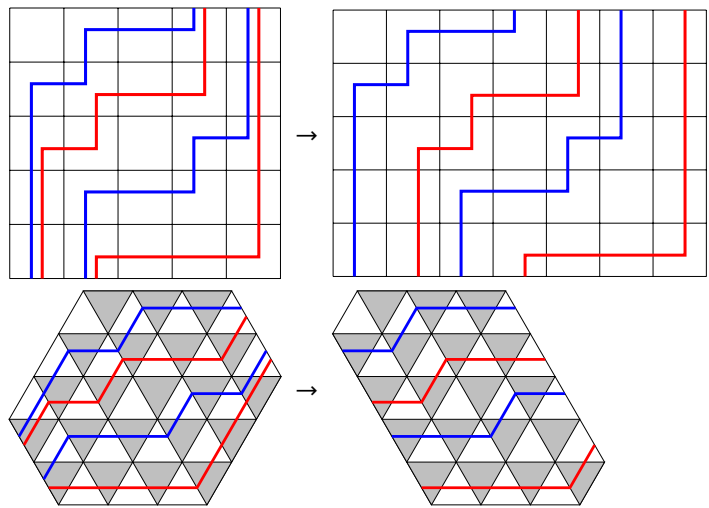

\begin{prop}
When $t=0$ there is a bijection between $k$-tilings of the $a\times b \times c$ hexagon by lozenges and $1$ tilings of the $ka\times b \times (c-(k-1)a)$ hexagon. (If $(k-1)a>c$, then there are no configurations when $t=0$.)
\end{prop}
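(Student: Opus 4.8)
The plan is to run the argument used for $2$-tilings, and sketched for general $k$ in the Aztec case, on the associated $k$-color vertex model. Recall from the appendix that a $k$-tiling of the $a\times b\times c$ hexagon is the same as a configuration of $a+c$ rows of white vertices with bottom boundary $\bm{0}$ and top boundary the $k$-tuple each equal to $(b,\dots,b)$ ($a$ copies); each color then contributes exactly $a$ lattice paths, and I label the $a$ paths of a given color by their starting columns $1,\dots,a$ (so path $i$ of every color starts in column $i$). I take the colors to be $1<2<\dots<k$, with smaller colors drawn to the left.

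First I would establish the $t=0$ frozen structure, the hexagon analogue of Proposition~\ref{prop:2-color-t-0-frozen} and of the general-$k$ lemma sketched in Section~\ref{t-0-section}. The three basic observations transfer verbatim: (i) two paths of the same color never meet, so the $i$-th path of a color cannot dip below the horizontal segment joining its start $i$ and end $i$; (ii) when $t=0$, a smaller-color path may not cross a larger-color path (a crossing would force a ``meeting from above'', producing a factor of $t$), and two paths of different colors may not take an up-step together; (iii) if a smaller-color and a larger-color path share two faces, then between them the smaller-color path takes no up-step. From these, a nested induction — on the path index $i$, and within a fixed $i$ from color $1$ up to color $k$ — forces the $i$-th path of color $a$ to begin with a prescribed number of horizontal steps, and one checks this number is exactly the shift amount $(k-1)(i-1)+(a-1)$, truncated by the path's total horizontal extent. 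When $(k-1)a>c$ this forced prefix for the last paths exceeds the room available between their (forced) endpoints, so no $t=0$ configuration exists, which is the degenerate case of the proposition.

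Next I would record the interleaving statement: at $t=0$, for $1\le a<b\le k$ the $i$-th path of color $b$ lies weakly to the right of the $i$-th path of color $a$ (and shares no horizontal step with it), while the $i$-th path of color $k$ lies strictly to the left of the $(i+1)$-th path of color $1$ — the hexagon analogue of Corollary~\ref{cor:2-color-t-0}, following from (ii)--(iii) and the fact that the relevant endpoints coincide (same index, different color) or are strictly ordered (index $i+1$ versus $i$). The bijection is then the shift map: translate the $i$-th path of color $a$ to the right by $(k-1)(i-1)+(a-1)$ columns. This reindexes the collection of all $ka$ paths as the single sequence (path $1$ color $1$, \dots, path $1$ color $k$, path $2$ color $1$, \dots), i.e.\ path $i$ of color $a$ becomes path $(i-1)k+a$. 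By the frozen-structure lemma the shift pushes the forced horizontal prefix of each path out of the picture and leaves the remainder inside the enlarged region; by the interleaving statement the shifted paths are pairwise strictly separated, hence non-intersecting. Non-intersecting lattice paths of this shape on $a+c$ rows with $ka$ sources are precisely the $1$-tilings of the $ka\times b\times(c-(k-1)a)$ hexagon — the extra $(k-1)a$ columns and the corresponding shrinking of the third side are exactly what absorb the shifted sources — and the shift is manifestly invertible. Tracking the $x_i=q^{i-1}$ weights through the shift multiplies the generating polynomial by $q^{k\binom{a}{2}b}$, recovering the $t=0$ formula stated in the open problems.

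The main obstacle will be the frozen-structure lemma for general $k$: organizing the double induction so that the forced horizontal prefixes come out to precisely $(k-1)(i-1)+(a-1)$, handling the truncation at the hexagon boundary, and extracting the nonexistence assertion for $(k-1)a>c$ cleanly from the same bookkeeping. Once that is in place, the interleaving statement, the shift, the strict separation after shifting, the identification with tilings of the larger hexagon, and invertibility are all routine, following the $k=2$ proof already given.
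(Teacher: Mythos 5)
Your overall route is the paper's route: the shift amounts $(k-1)(i-1)+(a-1)$, the interleaving statements (same-index paths weakly ordered by color with no shared horizontal step; the $i$-th path of color $k$ strictly left of the $(i+1)$-th path of color $1$), the observation that both paths compared across indices shift by the same amount, and reversibility — this is exactly the generalization of the paper's $k=2$ hexagon proof. However, the ``frozen-structure lemma'' you import from the Aztec case is false in the hexagon setting, and you lean on it at two points. In the hexagon model the $i$-th path of color $a$ is \emph{not} forced to begin with $(k-1)(i-1)+(a-1)$ horizontal steps: in the paper's own $t=0$ example (Figure \ref{fig:lozshift}, $a=2$, $b=3$, $c=3$, $k=2$) the first path of the larger color starts with two vertical traversals before its first horizontal step, and the second path of the smaller color likewise starts by going straight up. The reason no such lemma is needed is that, unlike the Aztec bijection, the hexagon bijection removes nothing: the shift is a pure horizontal translation into a lattice that is $(k-1)a$ columns wider but has the same number of rows, $a+c = ka + (c-(k-1)a)$. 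The start and end columns of the shifted path of index $i$ and color $a$ are $j=(i-1)k+a$ and $b+j$, which is precisely the boundary data of the $j$-th path in the $ka\times b \times (c-(k-1)a)$ hexagon, so ``the shifted paths land inside the enlarged region'' is a matter of this bookkeeping, not of pushing frozen prefixes out of the picture.

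The same false lemma is what you use for the degenerate case $(k-1)a>c$ (``the forced prefix exceeds the room available''), so that argument also does not stand as written. The clean replacement: the interleaving statements hold regardless of the relative size of $c$ and $(k-1)a$, so any $t=0$ configuration would produce, after the shift, a single-color configuration of white rows going from the empty $k$-tuple to the partition with $ka$ parts equal to $b$ in only $a+c$ rows; since each white row adds a horizontal strip, the number of parts can grow by at most one per row, forcing $a+c\ge ka$, i.e.\ $c\ge (k-1)a$. With the frozen-prefix lemma deleted and these two points argued directly, the rest of your proposal (interleaving, shifting, strict separation, reversibility) is correct and coincides with the paper's argument. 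Finally, the closing remark about the $q$-weights is both unnecessary for the statement and not right as phrased: the shift leaves every horizontal step in its original row, so the row weights $x_i=q^{i-1}$ are unchanged by the bijection rather than multiplied by an extra power of $q$.
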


The artic curve for lozenge tilings of a hexagon was computed by Cohn, Larsen, and Propp in 1998:
\begin{thm}\cite{CLP98,GorinBook}
For $aL \times bL \times cL$ hexagon, a uniformly random
tiling is with high probability asymptotically frozen outside the inscribed
ellipse as $L \rightarrow\infty$. In more detail, for each $(x, y)$ outside the ellipse, with
probability tending to 1 as $L \rightarrow\infty$, all the lozenges that we observe in a
finite neighborhood of $$(xL, yL)$$ are of the same type.
\end{thm}

From this we can calculate the arctic curve of the $k$-tilings of an hexagon when $t=0$.
\begin{thm}
When $t=0$, as $a\to\infty$ the arctic curves (for both colors) of the $2$-tilings of an $a\times 2a \times 3a$ hexagon are given by
\[
\begin{cases}
\left(\frac{6x-\sqrt{3}y+6}{3}\right)^2+y^2=3, & x\le-\frac{3}{2},\;\;  -\frac{\sqrt{3}}{2}\le y \le \frac{\sqrt{3}}{2} \\
x^2+y^2=3, & x\le0, \;\; \frac{\sqrt{3}}{2}\le y \le \sqrt{3} \\
\left(\frac{3x-\sqrt{3}y+3}{3}\right)^2+y^2=3, &  x\ge 0, \;\; \frac{\sqrt{3}}{2}\le y \le \sqrt{3} \\
\left(\frac{6x-\sqrt{3}y}{3}\right)^2+y^2=3, & x\ge\frac{1}{2}, \;\; -\frac{\sqrt{3}}{2}\le y \le \frac{\sqrt{3}}{2}  \\
(x+1)^2+y^2=3, & x\ge -1, \;\; -\sqrt{3}\le y \le -\frac{\sqrt{3}}{2} \\
\left(\frac{3x-\sqrt{3}y}{3}\right)^2+y^2=3, & x\le -1, \;\; -\sqrt{3}\le y \le -\frac{\sqrt{3}}{2}
\end{cases}
\]
(More generally, for $k$-tilings of an $a\times ka \times (2k-1)a$ hexagon the arctic curve can be worked out similarly.)
\end{thm}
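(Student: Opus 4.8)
The plan is to run the same argument as in Section~\ref{t-0-section}, with the Aztec diamond replaced by the hexagon and the arctic circle replaced by the classical arctic ellipse for lozenge tilings. First I would reduce to a regular hexagon: applying the preceding proposition with $k=2$, at $t=0$ the $2$-tilings of the $a\times 2a\times 3a$ hexagon are in weight-preserving bijection with ordinary lozenge tilings of the $2a\times 2a\times 2a$ hexagon. After rescaling by $1/a$ the latter is the regular hexagon, whose limit shape is the classical inscribed arctic ellipse of Cohn--Larsen--Propp; for the regular hexagon this ellipse is a circle, which in the coordinates of the statement is $x^2+y^2=3$. Its six arcs of tangency to the six sides separate the liquid region from the six frozen corner regions (two of each lozenge type).

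Next I would track the bijection on those six frozen regions. Recall from the proof of the preceding proposition that, read on the paths of the vertex model, the bijection shifts the $i$-th blue path right by $i-1$ columns and the $i$-th red path right by $i$ columns, so that after the shift the $2a$ paths interleave as blue $1$, red $1$, blue $2$, red $2,\ldots,$ blue $a$, red $a$; thus the $(2i-1)$-st path of the regular-hexagon tiling is blue path $i$ and the $2i$-th path is red path $i$. Reversing the bijection shifts the $(2i-1)$-st path left by $i-1$ and the $2i$-th path left by $i$. Since consecutive paths in a frozen region of the regular hexagon are evenly spaced, within each such region the index $i$ is an affine function of the Euclidean position $(x,y)$, so the reverse shift acts on that region by an explicit affine map; as in Section~\ref{t-0-section}, this map pushes the frozen part of each path entirely out of the hexagon while leaving the liquid region fixed to leading order. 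Because blue path $i$ and red path $i$ differ in their shifts only by one column, i.e.\ by $O(1/a)$ after rescaling, they yield the same limiting curve, which is why the statement gives a single curve ``for both colors.''

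Then I would transport the six arcs. Each arc of $x^2+y^2=3$ lies in one frozen region and is pushed forward by the affine map of that region; inverting the affine map and substituting into $x^2+y^2=3$ gives an ellipse, and transforming the two endpoints of the arc gives the inequalities cutting out the relevant piece. Carrying this out for all six regions produces exactly the six cases of the theorem. The general case is the same: using the $k$-color bijection of the last proposition (shift path $i$ of color $a$ by $i(k-1)-a+1$, which turns the $a\times ka\times(2k-1)a$ hexagon at $t=0$ into the regular $ka\times ka\times ka$ hexagon) and repeating the six-arc computation.

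\textbf{Main obstacle.} As in the Aztec case, the real content is the bookkeeping rather than any single hard estimate: (i) fixing the precise dictionary between lattice-path coordinates and the slanted Euclidean picture of the hexagon so that the inscribed circle is normalized to $x^2+y^2=3$ and the six frozen regions sit where claimed; and (ii) for each frozen region, identifying which family of paths is being shifted (the vertical-step paths, the horizontal-step paths, or the empty background) and expressing its index $i$ as an affine function of $(x,y)$, so as to pin down the linear part of each affine map --- this controls the eccentricity of the corresponding ellipse --- together with the correct ranges for $x$ and $y$. Once these affine maps are in hand, inverting them and simplifying the resulting conics is routine, and the verification that the shift moves the frozen regions off the hexagon while fixing the liquid region is identical to the argument used for the $k=2$ Aztec theorem in Section~\ref{t-0-section}.
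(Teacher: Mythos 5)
Your proposal follows essentially the same route as the paper: reduce via the $t=0$ bijection to ordinary lozenge tilings of the regular $2a\times 2a\times 2a$ hexagon, invoke its arctic circle $x^2+y^2=3$, and transport each of the six arcs through the reversed path-shifting bijection, which acts affinely on each frozen region --- exactly the argument the paper gives by citing the bijection and the Aztec-diamond computation of the $t=0$ section. One phrasing to correct: the reverse shift does not leave the liquid region fixed to leading order (only the arc traced by the first path is unmoved); what matters, and what you then use correctly, is that on each frozen region the shift is an affine function of $(x,y)$, so inverting it turns the corresponding circular arc into the stated elliptical arc.
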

\begin{proof}
As $a\to \infty$, we know that after appropriate rescaling the arctic curve for the $1$-tilings of the $2a\times 2a \times 2a$ hexagon is the circle $x^2+y^2=3$. We map it to the $2$-tiling case via our bijection (as in the case for the Aztec diamond).
\end{proof}

\begin{figure}[ht]
    \centering
    \resizebox{\textwidth}{!}{
    \begin{tabular}{ccc}
    \includegraphics[width=0.5\textwidth]{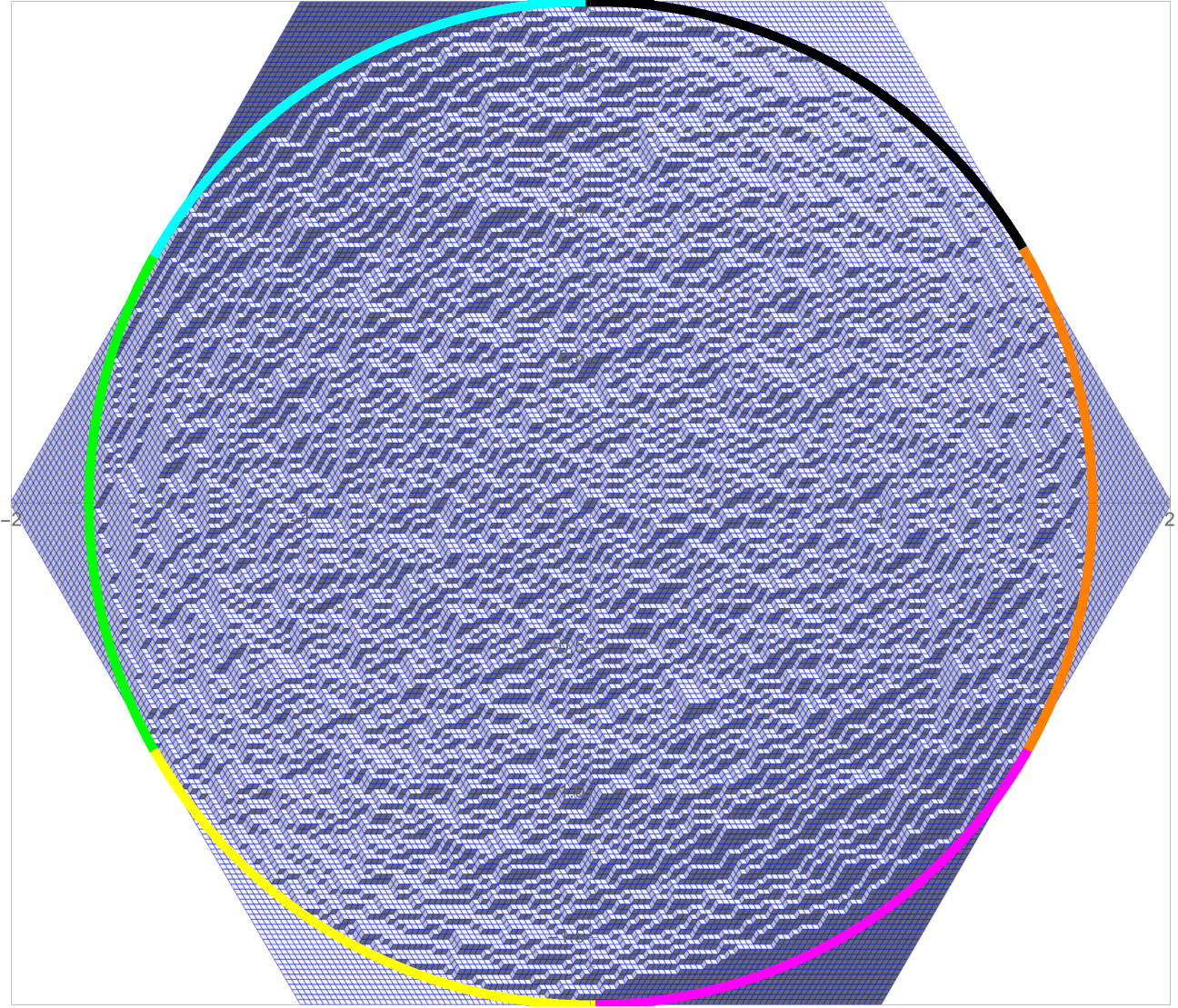} &
    \includegraphics[width=0.5\textwidth]{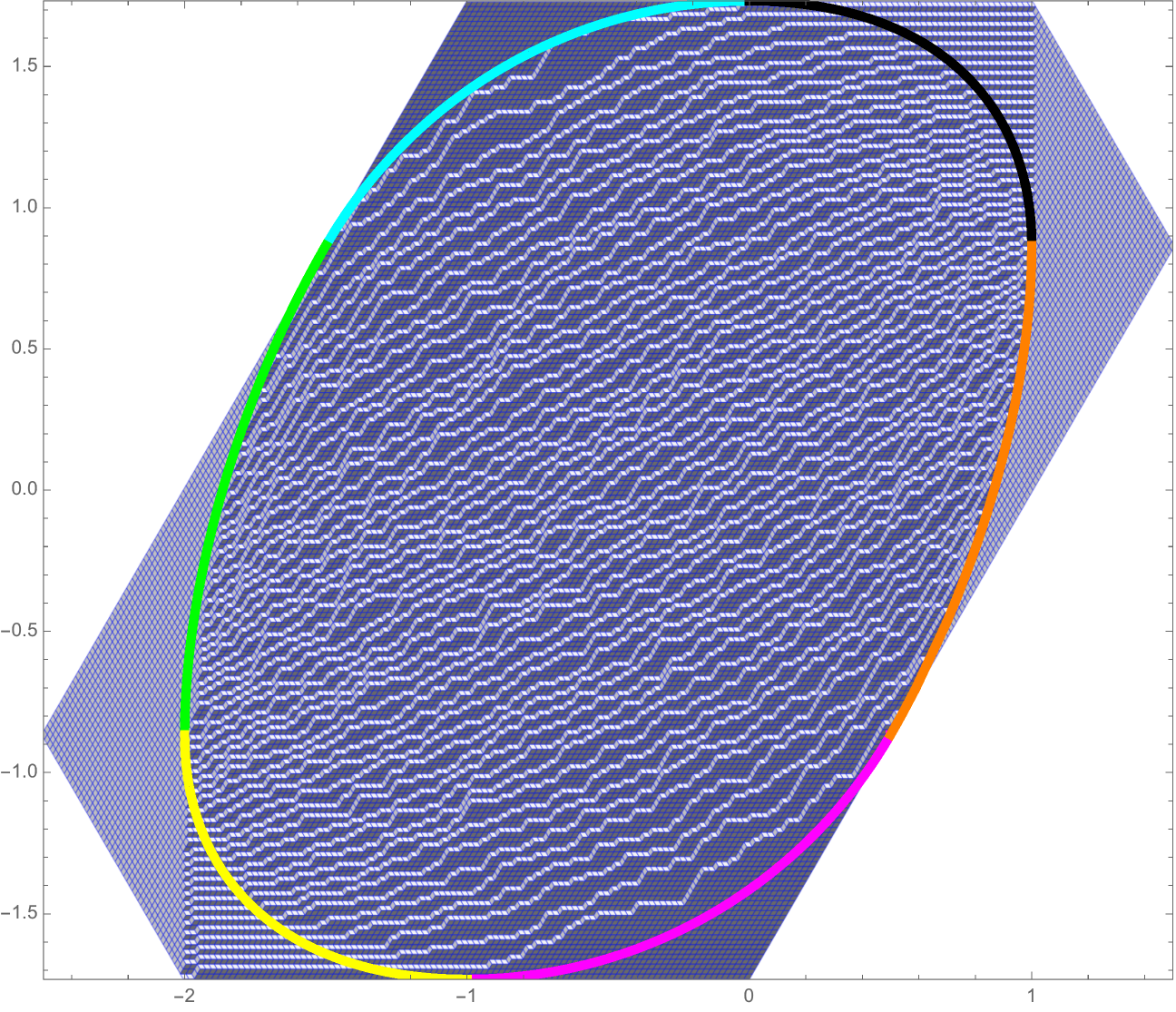} & \includegraphics[width=0.5\textwidth]{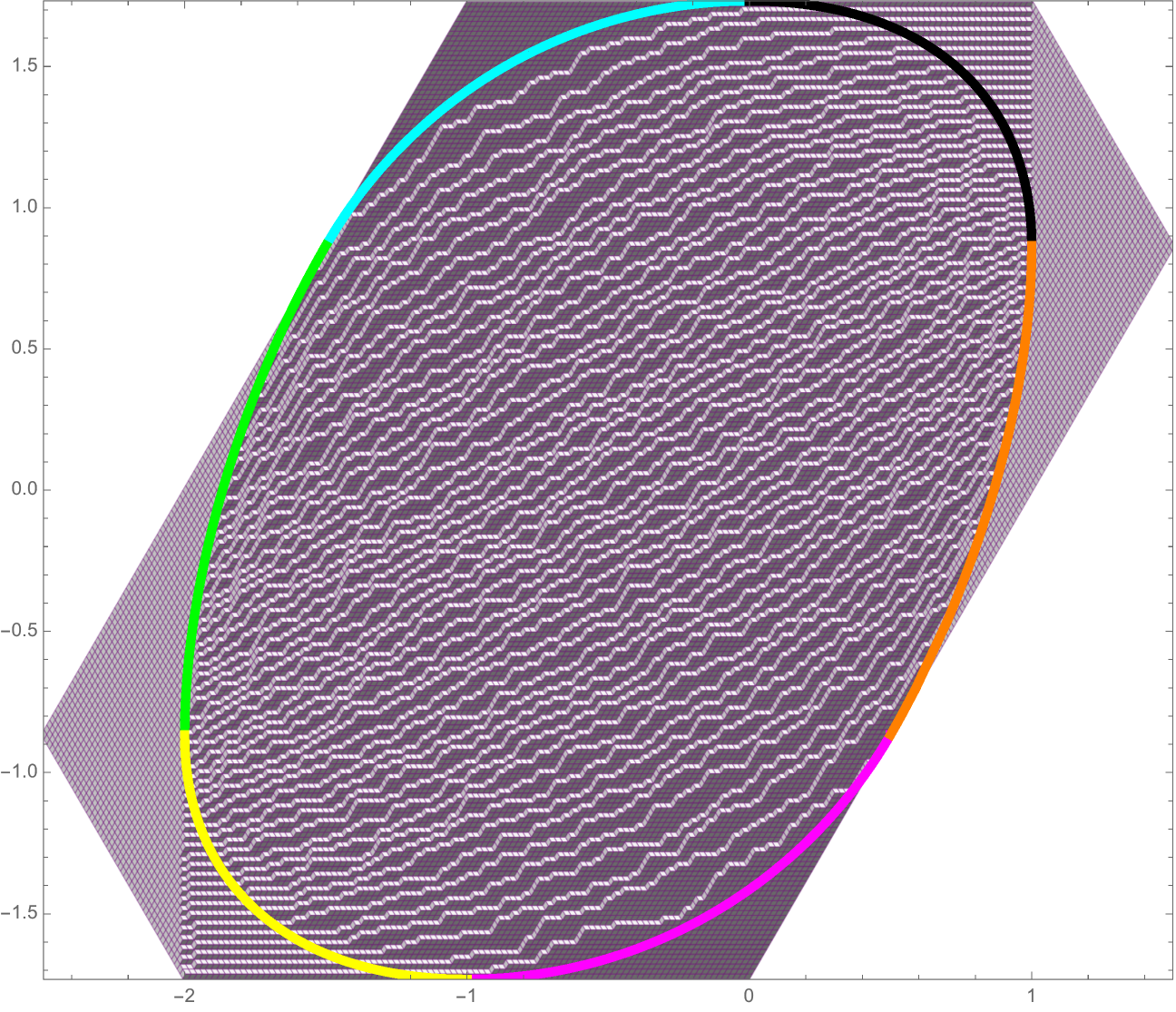}
    \end{tabular}
    }
    \caption{A simulation of a 1-tiling of an $100 \times 100 \times 100$ hexagon (left) and a simulation and computed arctic curve for $2$-tiling a $50 \times 100 \times 150$ hexagon (right). The colors of the arctic curve show which pieces map to each other under the bijection. }
    \label{fig:ArcticCurveLozenge}
\end{figure}

See Figure \ref{fig:ArcticCurveLozenge} for an example of the arctic curves for $2$ colors and $t=0$.

We can also work out the case when $t\to \infty$. Unlike the Aztec diamond, the mapping in this case takes a different form than that of $t=0$. One can show the following:

\begin{lem} \label{lem:loztinf}
For each color label the paths $1,\ldots, a$ with the path with the left-most starting point being path $1$ and then continuing to the right. Suppose $k=2$. Let $r_{i,j}^{(1)}$ be the row in which the $i$-th path of color $1$ goes right on its $j$-th step, and similarly for color $2$. Then
\[
r_{i,1}^{(1)} \le r_{i,1}^{(2)} \le r_{i,2}^{(1)} \le r_{i,2}^{(2)} \le \ldots \le r_{i,b}^{(1)} \le r_{i,b}^{(2)}.
\]
\end{lem}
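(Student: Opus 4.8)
The plan is to first translate the $t\to\infty$ condition into a purely combinatorial condition on the vertex-model configurations, then into a statement about the relative positions of the blue and red paths, and finally to establish the latter by induction on the rows. To begin, I would record that for the white $L$-vertex the power of $t$ is $t^{\varphi(\L,\I+\J)}$, which for $k=2$ equals $t^{L_1(I_2+J_2)}$; since $I_2+J_2\in\{0,1\}$ and equals $1$ exactly when the vertex is occupied by color $2$ (a path always enters a vertex from below or from the left), the total power of $t$ of a configuration is the number of vertices at which color $1$ exits to the right and color $2$ is present. Each of the $a$ paths of color $1$ makes exactly $b$ right-steps, at $ab$ distinct vertices (a path visits each vertex once, and paths of one color never share a vertex), so the $t$-degree of any configuration is at most $ab$; since the generating polynomial has $t$-degree $\binom{k}{2}ab=ab$, a configuration survives as $t\to\infty$ iff color $2$ is present at \emph{every} vertex where color $1$ exits right (call such a configuration \emph{saturated}). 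Letting $m^{(1)}_i(r),m^{(2)}_i(r)$ be the column occupied just above row $r$ by the $i$-th blue, resp.\ red, path (paths ordered by starting column), the chain in the statement is equivalent to
\[ m^{(1)}_i(r)-1\ \le\ m^{(2)}_i(r)\ \le\ m^{(1)}_i(r)\qquad\text{for every row }r\text{ and every }i, \]
i.e.\ at every horizontal cut red path $i$ sits at blue path $i$'s column or one step to its left (this is just the remark that $r^{(1)}_{i,j}\le r$ iff blue path $i$ has made at least $j$ right-steps by the top of row $r$, and likewise for red).

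The main work is to prove the displayed inequality for saturated configurations by induction on $r$, the base case $r=0$ being trivial since all paths of both colors start at the same columns $s_1<\dots<s_a$. For the inductive step I would analyze a single row $r$ as a horizontal strip of pairwise disjoint ``blue arcs'' $[m^{(1)}_i(r-1),m^{(1)}_i(r)]$ and ``red arcs'' $[m^{(2)}_i(r-1),m^{(2)}_i(r)]$. The lower bound $m^{(2)}_i(r)\ge m^{(1)}_i(r)-1$ comes from a clean covering argument: the right-step column $m^{(1)}_i(r)-1$ of blue path $i$ must be covered by some red arc, but by the inductive hypothesis red arc $i+1$ starts strictly to its right and red arc $i-1$ ends strictly to its left, so only red arc $i$ can cover it, forcing $m^{(2)}_i(r)\ge m^{(1)}_i(r)-1$ (if blue path $i$ makes no right-step in row $r$ the bound is immediate from the hypothesis at $r-1$).

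The upper bound $m^{(2)}_i(r)\le m^{(1)}_i(r)$ — red never overshoots blue — is the delicate part: here I would argue by contradiction, taking a saturated configuration in which some red path first overshoots its blue partner, and then show, using the non-crossing of the red paths among themselves and of the blue paths among themselves, that this overshoot forces the arcs of the paths of index $i+1$, then $i+2,\dots$, to be displaced in the same way, cascading to the rightmost index $a$, where the absence of any red path beyond red path $a$ leaves a right-step of blue path $a$ uncovered, contradicting saturation. The $b=1$ instance of this cascade is already transparent: $s_1$ is only ever occupied by path $1$, forcing $r^{(1)}_{1,1}\le r^{(2)}_{1,1}$, and then $r^{(1)}_{i,1}<r^{(1)}_{i-1,1}\le r^{(2)}_{i-1,1}$ forces $r^{(1)}_{i,1}\le r^{(2)}_{i,1}$ for each $i$ in turn. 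The main obstacle I anticipate is making this cascade rigorous for general $b$: the red path that covers a given blue right-step need not be the one of the same index, so the bookkeeping of \emph{which} red arc covers \emph{which} blue right-step, and of how an overshoot propagates through several indices simultaneously, is where the argument needs genuine care. Finally, the extension from $k=2$ to general $k$ (alluded to after the statement) should follow by running the same argument on each consecutive pair of colors, since an interaction between colors $a<b$ depends only on those two colors.
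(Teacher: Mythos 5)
Your overall framework is reasonable, and since the paper states this lemma without proof ("One can show the following"), there is no argument of the authors to compare against. Your reduction is correct: the $t\to\infty$ configurations are exactly those with the maximal number $ab$ of interactions, i.e.\ those in which color $2$ is present at every vertex where color $1$ exits right, and the chain of inequalities is indeed equivalent to saying that at every horizontal cut $r$ the $i$-th red path sits at column $m^{(2)}_i(r)\in\{m^{(1)}_i(r)-1,\,m^{(1)}_i(r)\}$. Your lower-bound argument also works, with one bookkeeping correction: to exclude red arc $i-1$ from covering the last right-step column $c=m^{(1)}_i(r)-1$ of blue path $i$ in row $r$ you need red non-crossing within row $r$ \emph{together with} the upper bound at cut $r-1$, namely $m^{(2)}_{i-1}(r)<m^{(2)}_i(r-1)\le m^{(1)}_i(r-1)\le c$, while red arcs of index $\ge i+1$ are excluded by the lower bound at cut $r-1$ plus blue non-crossing, $m^{(2)}_{i+1}(r-1)\ge m^{(1)}_{i+1}(r-1)-1\ge m^{(1)}_i(r)>c$. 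This means the induction genuinely needs both bounds at the previous cut, so the upper bound cannot be deferred.

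The genuine gap is precisely that upper bound, $m^{(2)}_i(r)\le m^{(1)}_i(r)$ (equivalently $r^{(1)}_{i,j}\le r^{(2)}_{i,j}$), which you leave as a sketch, and the cascade you sketch runs in the wrong direction. If red path $i$ overshoots blue path $i$ at some cut $r$, then since $m^{(1)}_i(r)<m^{(2)}_i(r)\le s_i+b$, blue path $i$ must exit right from its current column $c=m^{(1)}_i(r)$ in some later row $r''$; at that vertex every red path of index $\ge i$ is unavailable (their positions are already $\ge m^{(2)}_i(r)>c$ and only move right), so saturation forces some red path $i'<i$ to cover $(r'',c)$. But then $m^{(2)}_{i'}(r'')\ge c$ while $m^{(1)}_{i'}(r'')\le m^{(1)}_{i-1}(r'')<m^{(1)}_i(r''-1)=c$, so the overshoot propagates to a \emph{strictly smaller} index at a strictly later row. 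The cascade therefore terminates at path $1$, where no smaller red path exists and blue path $1$'s next right step is left uncovered — not at path $a$ as you propose; an overshoot at index $i$ does not force any displacement of the higher-index paths, so your cascade toward index $a$ would not close. With the direction corrected, the proof is short and needs no delicate bookkeeping: prove the upper bound globally by this minimal-counterexample argument, then run your row induction for the lower bound. As written, however, the harder half of the lemma is missing.
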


From this it follows that
\begin{prop}
When $t\to \infty$ there is a bijection between 2-tilings of the $a\times b\times c$ hexagon by lozenges and 1-tilings of the $a\times 2b \times c$ hexagon.
\end{prop}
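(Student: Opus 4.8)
The plan is to reason entirely in terms of the non-intersecting lattice paths of the vertex model, as in Section \ref{t-0-section}, but now \emph{merging} the paths of the two colors pairwise rather than interleaving and spreading them apart. Recall that a $2$-tiling of the $a\times b\times c$ hexagon corresponds to a blue family of $a$ non-intersecting paths (color $1$) together with a red family of $a$ non-intersecting paths (color $2$), where every path runs upward through the $a+c$ rows of the lattice and moves a total of $b$ units to the right, the $i$-th path of either color starting in column $i$ and ending in column $b+i$; a $1$-tiling of the $a\times 2b\times c$ hexagon is a single family of $a$ non-intersecting paths through the same $a+c$ rows, the $i$-th one starting in column $i$, ending in column $2b+i$, and hence moving right by $2b$. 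Morally, at $t=0$ interactions are forbidden and the paths must spread out (producing more, shorter paths), whereas at $t\to\infty$ interactions are maximized and the paths of the two colors are forced together, so they should \emph{merge}.

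Concretely, given a $2$-tiling with the maximal number $\binom{2}{2}ab=ab$ of interactions, for each $i\in[a]$ I would define the merged $i$-th path to be the one whose right steps occur, read from bottom to top, in the rows obtained by sorting the multiset union of the right-step rows of the $i$-th blue and the $i$-th red path. By Lemma \ref{lem:loztinf} this sorted sequence is exactly $r_{i,1}^{(1)}\le r_{i,1}^{(2)}\le r_{i,2}^{(1)}\le r_{i,2}^{(2)}\le\cdots\le r_{i,b}^{(1)}\le r_{i,b}^{(2)}$, so it is weakly increasing and really does describe a lattice path moving right by $2b$. The candidate inverse sends a single path to the pair of paths whose right steps are its odd-indexed and its even-indexed right steps; Lemma \ref{lem:loztinf} makes this forced, since the two smallest entries of the merged right-step multiset must be $r_{i,1}^{(1)}\le r_{i,1}^{(2)}$, the next two must be $r_{i,2}^{(1)}\le r_{i,2}^{(2)}$, and so on. In particular the merge map is injective, and merge followed by split is the identity on $t\to\infty$ configurations, while split followed by merge is visibly the identity on $1$-tilings.

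Two things then need checking. First, that the merged paths are non-intersecting: at any height $h$, the column of the $i$-th merged path equals $i$ plus the number of right steps taken so far by the $i$-th blue path plus those taken by the $i$-th red path, and since the blue family and the red family are each non-intersecting one has (blue right steps of path $i$) $\le$ (blue right steps of path $i+1$) and likewise for red at every height; adding these two inequalities and using the strict offset $i<i+1$ of the starting columns shows that merged path $i$ lies strictly left of merged path $i+1$. Thus the merge of a $t\to\infty$ $2$-tiling is a genuine $1$-tiling of $a\times 2b\times c$, and conversely splitting a non-intersecting family into odd and even right steps produces two non-intersecting families, because the number of odd, resp.\ even, right steps taken by height $h$ is a monotone function of the total number taken by height $h$, so the inequality between merged paths descends to the sub-paths. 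Second --- and this is the real content --- one must show that the split of an arbitrary $1$-tiling again has exactly $ab$ interactions, i.e.\ that the inequalities of Lemma \ref{lem:loztinf} are not only necessary but also sufficient for a $2$-tiling to be a $t\to\infty$ configuration.

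I would obtain this by expressing the number of interactions of an arbitrary $2$-tiling of the $a\times b\times c$ hexagon as $ab$ minus a sum of nonnegative ``defects'', one for each pair $(i,j)$, with the $(i,j)$-th defect vanishing precisely when the interleaving inequalities $r_{i,j}^{(1)}\le r_{i,j}^{(2)}\le r_{i,j+1}^{(1)}$ hold, in the spirit of Observations $1$--$3$ in the proof of Proposition \ref{prop:2-color-t-0-frozen}. This interaction book-keeping is the main obstacle; granting it, merge and split are mutually inverse bijections. The $k$-tiling analogue then follows the same way: merge the $i$-th paths of all $k$ colors into one path moving right by $kb$, ordered by the $k$-color extension of Lemma \ref{lem:loztinf}, and split cyclically, yielding a bijection between $k$-tilings of the $a\times b\times c$ hexagon at $t\to\infty$ and $1$-tilings of the $a\times kb\times c$ hexagon, consistent with the stated value of the coefficient of $t^{\binom{k}{2}ab}$.
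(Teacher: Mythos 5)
Your proposal is correct and is essentially the paper's own argument: the paper defines the bijection by exactly your merge/split rule, $r_{i,2j-1}=r_{i,j}^{(1)}$ and $r_{i,2j}=r_{i,j}^{(2)}$, and invokes Lemma \ref{lem:loztinf} for validity. The one step you defer as ``the real content'' (that splitting an arbitrary $1$-tiling yields exactly $ab$ interactions) is likewise left implicit in the paper, and it closes without any defect bookkeeping: for $k=2$ each of the $ab$ right steps of the smaller color contributes at most one factor of $t$, and since the split forces $r_{i,2j-2}\le r_{i,2j-1}\le r_{i,2j}$, the same-index larger-color path occupies the column of the smaller color's $j$-th right step at that row (entering from below or from the left), so every such step does contribute a $t$ and the count is exactly $ab$.
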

\begin{proof}
Let $r_{i,j}$ be the row in which the $i$-th path of the $1$-tiling goes right on its $j$-th step. Then the bijection is given by taking
\[
r_{i,2j-1} =r_{i,j}^{(1)} \text{ and } r_{i,2j} =r_{i,j}^{(2)}.
\]
The previous Lemma \ref{lem:loztinf} ensures this is a valid configuration of paths.
\end{proof}
More generally, a similar argument holds for $k$-tilings.
\begin{prop}
When $t\to \infty$ there is a bijection between $k$-tilings of the $a\times b\times c$ hexagon by lozenges and 1-tilings of the $a\times kb \times c$ hexagon.
\end{prop}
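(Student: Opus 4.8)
As with the $t=0$ statements, the plan is to work with the path description of the vertex‑model configurations. Since the weight of a $k$‑tiling carries a factor $t^{\#\mathrm{interactions}}$, as $t\to\infty$ the (normalized) measure concentrates on the $k$‑tilings with the \emph{maximum possible} number of interactions, and it is these ``surviving'' configurations that must be matched bijectively with $1$‑tilings of the $a\times kb\times c$ hexagon; this is exactly the regime in which Lemma~\ref{lem:loztinf} is to be read. The proof then proceeds in three steps: (i) prove the $k$‑colour analogue of Lemma~\ref{lem:loztinf}, describing the East‑steps of such a maximal configuration; (ii) define the interleaving map on East‑step rows, generalizing $r_{i,2j-1}=r_{i,j}^{(1)}$, $r_{i,2j}=r_{i,j}^{(2)}$ from the preceding proposition; and (iii) check that it is a weight‑preserving bijection onto $1$‑tilings of the enlarged hexagon, after which the arctic‑curve consequences follow exactly as in the $t=0$ case.

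For step (i), label the $a$ paths of each colour $1,\dots,a$ from left to right and let $r_{i,j}^{(\ell)}$ be the row in which the $i$‑th path of colour $\ell$ takes its $j$‑th East step, for $1\le i\le a$, $1\le j\le b$, $1\le \ell\le k$. The assertion to be proved is that a configuration is maximal for the interaction count precisely when
\[
r_{i,j}^{(1)}\;\le\;r_{i,j}^{(2)}\;\le\;\cdots\;\le\;r_{i,j}^{(k)}\;\le\;r_{i,j+1}^{(1)}\qquad\text{for all }i,j ,
\]
the inequalities being strict exactly where two of these East‑steps must become consecutive East‑steps of a single path after interleaving. Because an interaction involves only two colours, and the interactions between colours $\alpha<\beta$ depend only on the pair $(T_\alpha,T_\beta)$, it suffices to argue one consecutive pair of colours at a time, as in Proposition~\ref{prop:2-color-t-0-frozen} and its corollaries but with the roles of ``strictly above'' reversed: a local move on the larger colour's path (a lozenge flip shifting an East‑step up to sit just below the matching East‑step of the smaller colour) strictly increases the number of $t$'s produced by that pair whenever the pairwise interleaving is violated, without disturbing any other pair. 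Hence a maximal configuration obeys every pairwise interleaving; conversely these pairwise conditions assemble into the single transitive chain displayed above, so they are simultaneously realizable and jointly characterize maximality.

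For steps (ii)--(iii), given a maximal $k$‑tiling define $a$ paths in the $a\times kb\times c$ hexagon by declaring that the $i$‑th path takes its $\bigl((j-1)k+\ell\bigr)$‑th East step in row $r_{i,j}^{(\ell)}$. The interleaving chain guarantees that the East‑step rows of each path are (strictly) increasing, so each is a bona fide lattice path with $kb$ East steps, and the non‑intersection of the colour‑$\ell$ paths among themselves together with the strict ``between consecutive colours'' inequalities obtained in step (i) yield non‑intersection of the combined family; thus one gets a $1$‑tiling of the $a\times kb\times c$ hexagon. De‑interleaving (reading off East‑steps $1,\dots,k$, then $k+1,\dots,2k$, and so on) is the inverse map, and since East steps carry weight $1$ the bijection is weight‑preserving, matching the generating polynomials and transporting the arctic curve of the $1$‑tiling to that of the $k$‑tiling. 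I expect the main obstacle to be step (i): the combinatorial verification that the maximum interaction count is equivalent to the interleaving condition — in particular the careful bookkeeping of which inequalities must be strict, and the check that the per‑pair optimality conditions for all $\binom{k}{2}$ pairs can hold at once — whereas steps (ii) and (iii) are then routine.
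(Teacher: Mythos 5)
Your overall route is the same as the paper's: generalize Lemma~\ref{lem:loztinf} to $k$ colours, interleave the East-step rows via $r_{i,(j-1)k+\ell}=r^{(\ell)}_{i,j}$, and de-interleave for the inverse (the paper gives exactly this map for $k=2$ and asserts the general case is similar). However, two of the specific claims you lean on in step (i) do not hold as stated. First, it is not enough to ``argue one consecutive pair of colours at a time'': the wrap-around inequality $r^{(k)}_{i,j}\le r^{(1)}_{i,j+1}$ in your chain is the pairwise condition for the pair $(1,k)$ and is \emph{not} a consequence of the consecutive-pair conditions. For example, with a single path per colour ($a=1$, $b=2$, $c\ge 3$) and East-step rows $(1,2)$, $(2,3)$, $(3,4)$ for colours $1,2,3$, every consecutive pair interleaves ($1\le 2\le 2\le 3$ and $2\le 3\le 3\le 4$), yet the interleaved row sequence $1,2,3,2,3,4$ is not weakly increasing, so the image is not a path; and indeed the pair $(1,3)$ is not maximal (colour $1$'s second East step, in row $2$, is not covered by colour $3$, which is still vertical in column $1$ there), so this configuration must be excluded from the domain, which your argument as written does not do. The fix is to impose the pairwise interleaving for \emph{every} pair $\alpha<\beta$: global maximality forces every pairwise count $I_{\alpha\beta}$ to equal its maximum $ab$, because all the pairwise maxima are simultaneously achievable (e.g.\ by $k$ identical tilings), and the conjunction of all pairwise chains is precisely your displayed chain.

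Second, your exchange argument asserts that a flip on the larger colour's path increases the $(\alpha,\beta)$ count ``without disturbing any other pair''; that is false, since moving $T_\beta$ changes its interactions with every other colour. Fortunately the claim is not needed: $I_{\alpha\beta}$ depends only on $(T_\alpha,T_\beta)$, so the characterization ``$I_{\alpha\beta}=ab$ iff $r^{(\alpha)}_{i,j}\le r^{(\beta)}_{i,j}\le r^{(\alpha)}_{i,j+1}$ for all $i,j$'' is a statement about two colours in isolation, and global maximality reduces to it by the simultaneous-achievability remark above. Finally, the aside about which inequalities ``must be strict'' is a red herring: none need be strict, since a path may take several East steps in the same row, and non-intersection of the combined family follows directly from the colour-wise conditions $r^{(\ell)}_{i+1,j}<r^{(\ell)}_{i,j}$, which pass verbatim to the interleaved sequences.
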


By reversing this bijection we can compute the arctic curve as $t\to \infty$.
\begin{thm}
When $t \rightarrow \infty$, as $a\to \infty$ the arctic curves (for both colors) of the 2-tiling of an $2a\times a \times 2a$ hexagon are given by
\[
\begin{cases}
x^2+y^2=3, & x\le-\frac{3}{2},\;\;  -\frac{\sqrt{3}}{2}\le y \le \frac{\sqrt{3}}{2} \\
\left(\frac{6x-\sqrt{3}y+6}{3}\right)^2+y^2=3, & x\le-\frac{1}{2}, \;\; \frac{\sqrt{3}}{2}\le y \le \sqrt{3} \\
\left(\frac{6x+\sqrt{3}y}{3}\right)^2+y^2=3, &  x\ge -\frac{1}{2}, \;\; \frac{\sqrt{3}}{2}\le y \le \sqrt{3} \\
(x+1)^2+y^2=3, & x\ge\frac{1}{2}, \;\; -\frac{\sqrt{3}}{2}\le y \le \frac{\sqrt{3}}{2}  \\
\left(\frac{6x-\sqrt{3}y}{3}\right)^2+y^2=3, & x\ge -\frac{1}{2}, \;\; -\sqrt{3}\le y \le -\frac{\sqrt{3}}{2} \\
\left(\frac{6x+\sqrt{3}y+6}{3}\right)^2+y^2=3, & x\le -\frac{1}{2}, \;\; -\sqrt{3}\le y \le -\frac{\sqrt{3}}{2}
\end{cases}
\]
(More generally, for $k$-tilings of an $ka\times a \times ka$ hexagon the arctic curves can be worked out similarly.)
\end{thm}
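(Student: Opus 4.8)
The plan is to reverse the bijection of the preceding proposition and push the known arctic curve of the regular hexagon through it, exactly as was done for the Aztec diamond in Section \ref{t-0-section} and for the $t=0$ hexagon just above. Concretely, the preceding proposition gives a weight-preserving bijection between $2$-tilings of the $2a\times a\times 2a$ hexagon at $t\to\infty$ and $1$-tilings of the $2a\times 2a\times 2a$ hexagon (taking $(a,b,c)=(2a,a,2a)$, so that the target $a\times 2b\times c=2a\times 2a\times 2a$ is the regular hexagon). This bijection acts on lattice paths by de-interleaving the East steps of each path: path $i$ of the $1$-tiling, whose successive East steps lie in rows $r_{i,1}\le r_{i,2}\le\cdots\le r_{i,2b}$, is split into path $i$ of color $1$ (East steps in rows $r_{i,1},r_{i,3},\dots$) and path $i$ of color $2$ (East steps in rows $r_{i,2},r_{i,4},\dots$), consistency being guaranteed by Lemma \ref{lem:loztinf}. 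Since East steps carry weight $1$, the bijection is weight preserving, so for each color the liquid region of the $2$-tiling is precisely the image of the liquid region of the regular-hexagon $1$-tiling under the inverse map, and the arctic curve of the $2$-tiling is the image of the inscribed circle of the regular hexagon.

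First I would invoke the classical arctic ellipse theorem for lozenge tilings: after the rescaling by $1/a$ used here, a uniformly random lozenge tiling of the $2a\times 2a\times 2a$ hexagon has six frozen corner regions whose common boundary is the inscribed circle $x^2+y^2=3$, and the six arcs of this circle (cut off by the tangency points with the six sides) bound the six frozen regions, which cycle through the three lozenge orientations. Next I would describe the inverse bijection in the continuum. In each frozen sector the paths are locally straight — all East steps, all North steps, or absent — so the operation "delete every other East step" acts there as an explicit affine transformation of the plane: a sector of the all-East type is compressed by a factor $2$ in the horizontal (East) direction, while a sector of the all-North or absent type undergoes only a horizontal translation whose magnitude is affine-linear in position, because the East-step density in the intervening frozen region is constant. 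Composing with the $1/a$ rescaling and letting $a\to\infty$, each of the six affine maps carries the corresponding arc of $x^2+y^2=3$ to an arc of an ellipse; reading the six affine maps off the path dynamics produces the six displayed equations, and transporting the constraints $x\le-\tfrac{3}{2}$, $-\tfrac{\sqrt 3}{2}\le y\le\tfrac{\sqrt 3}{2}$, etc., that cut out the six frozen regions of the regular hexagon through the same maps produces the six domain conditions. The fact that the curve is the same for both colors follows because the odd- and even-indexed East steps have the same asymptotic density in every sector, differing only in an $O(1/a)$ boundary layer; and the $k$-tiling statement for the $ka\times a\times ka$ hexagon is obtained identically from the $k$-color version of the preceding proposition (split the East steps into residue classes mod $k$, starting from the regular $ka\times ka\times ka$ hexagon).

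I expect the main obstacle to be the region-by-region computation: identifying, for each of the six frozen sectors of the regular hexagon, exactly which affine transformation the de-interleaving induces, and — more delicately — bookkeeping the accumulated horizontal shift felt inside the all-North and absent sectors (this is the source of the constants $+6$, $+3$, $+1$ in the numerators of the displayed equations), together with checking that this shift really is affine in the limit, which comes down to the constancy of the East-step density in each frozen region bounded on one side by the liquid region. Matching each sector of the regular hexagon to the correct one of the six cases, and keeping the inequalities consistent through the affine maps, is the bulk of the work; none of it is conceptually new once the $t=0$ computation in Section \ref{t-0-section} is in hand, but it is where all the care is required.
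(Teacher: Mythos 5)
Your proposal is correct and follows essentially the same route as the paper: invert the $t\to\infty$ bijection with $1$-tilings of the regular $2a\times 2a\times 2a$ hexagon, invoke the classical arctic circle $x^2+y^2=3$, and transport each of the six arcs through the asymptotically affine deformation induced by de-interleaving the East steps (the paper computes this shift explicitly, as $(j-1)/a$ or $j/a$ for the $(2j-1)$-th and $2j$-th East steps, expressed affinely in $(x,y)$ via the line containing the step), obtaining the six ellipse arcs and their domain constraints. Your sector-by-sector description of the affine maps is just a repackaging of that same computation, which the paper likewise carries out for one arc and declares the rest analogous.
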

\begin{proof}
Rescale the $2a\times 2a \times 2a$ hexagon so that it has sides of length $2$ and it is centered at $(x,y)=(0,0)$. Now each lozenge has side length $1/a$.

Thinking of the tiling interchangeably as paths and lozenges, we have that the paths start on the SW side of the hexagon at 
\[
(x_i,y_i)=\left(-2+\frac{2i-1}{4a}, -\frac{\sqrt{3}(2i-1)}{4a}\right), \;\;\; i=1,2,\ldots,2a.
\]
Note that each path will take $2a$ horizontal steps, with each horizontal step moving the path $1/a$ to the right. For path $i$ the center of the $j$-th horizontal step will occur along the line $y=\sqrt{3}(x+2-(i+2j-2)/(2a))$. Reversing the bijection from the previous proposition, we see that in the 1-tiling  the $(2j-1)$-th horizontal step of path $i$ will map to the $j$-th horizontal step of path $i$ in the blue tiling, while the $2j$-th horizontal step of path $i$ will map to the $j$-th horizontal step of path $i$ in the red tiling, for $j=1,\ldots,a$. Under the bijection the $y$-coordinate of the path does not change. Geometrically this corresponds to the shifting the $(2j-1)$-th horizontal step of path $i$ in the 1-tiling to the right by $(j-1)/a$ to get the $j$-th horizontal step of path $i$ in the blue tiling. Similarly, we shift the $2j$-th horizontal step of path $i$ in the 1-tiling to the right by $j/a$ to get the red tiling. We can use this to see how to map different sections of the arctic curve.

For example, consider the first path which starts at $(x_1,y_1)$. The trajectory of this path gives the boundary of the upper frozen region of lozenges of type 2 and the disordered region. In the 1-tiling, this portion of the arctic curve is given by $x^2+y^2=3$, $x\le 0$, $\sqrt{3}{2}\le y\le \sqrt{3}$.

As stated above, to get the first blue path we shift $(2j-1)$-th horizontal step in the 1-tiling to the right by $(j-1)/a$. Since in the 1-tiling this horizontal step lies along the line $y=\sqrt{3}(x+2-(4j-1)/(2a))$, we have $(j-1)/a = \frac{1}{6} (6+3x-\sqrt{3}y) + O(1/a)$. Thus the map from the first path of the 1-tiling to the first path of the blue tiling is given by
\[
(x,y)\mapsto \left(x-\frac{1}{6} (6+3x-\sqrt{3}y),y\right) = \left(\frac{1}{6} (3x+\sqrt{3}y-6),y\right)
\]
up to terms that go to zero as $a\to\infty$. Inverting this we see that this portion of the arctic curve for the blue tiling is given by 
\[
\left(\frac{6x-\sqrt{3}y+6}{3}\right)^2+y^2=3.
\]
The analysis for the first red path works the same. This gives the second case in the statement of the Theorem.

The other portions of the arctic curve can be done similarly.

\end{proof}
See Figure \ref{fig:ArcticCurveLozenget100} for an example.

\begin{figure}[ht]
    \centering
    \resizebox{\textwidth}{!}{
    \begin{tabular}{ccc}
    \includegraphics[width=0.5\textwidth]{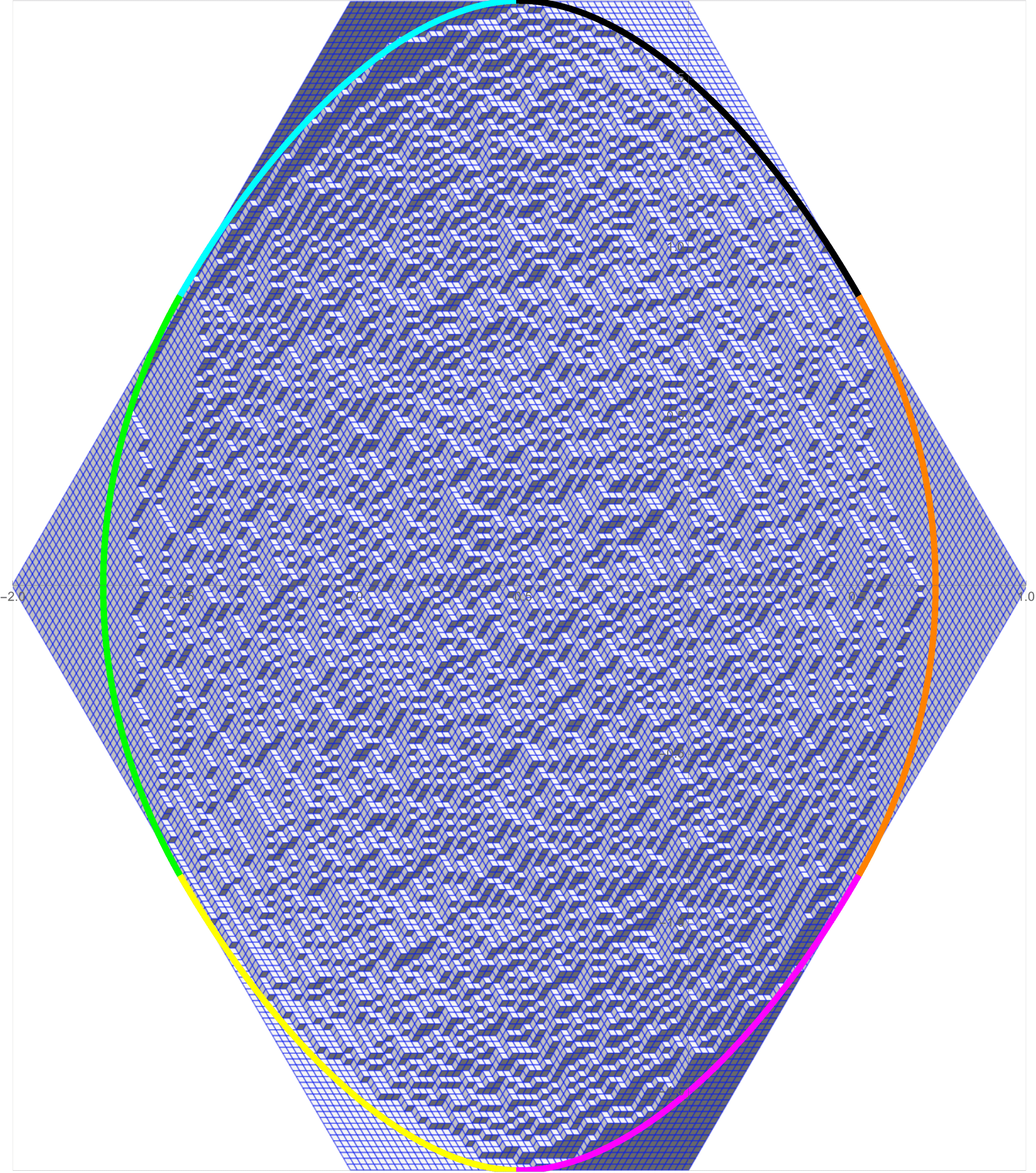} & \includegraphics[width=0.5\textwidth]{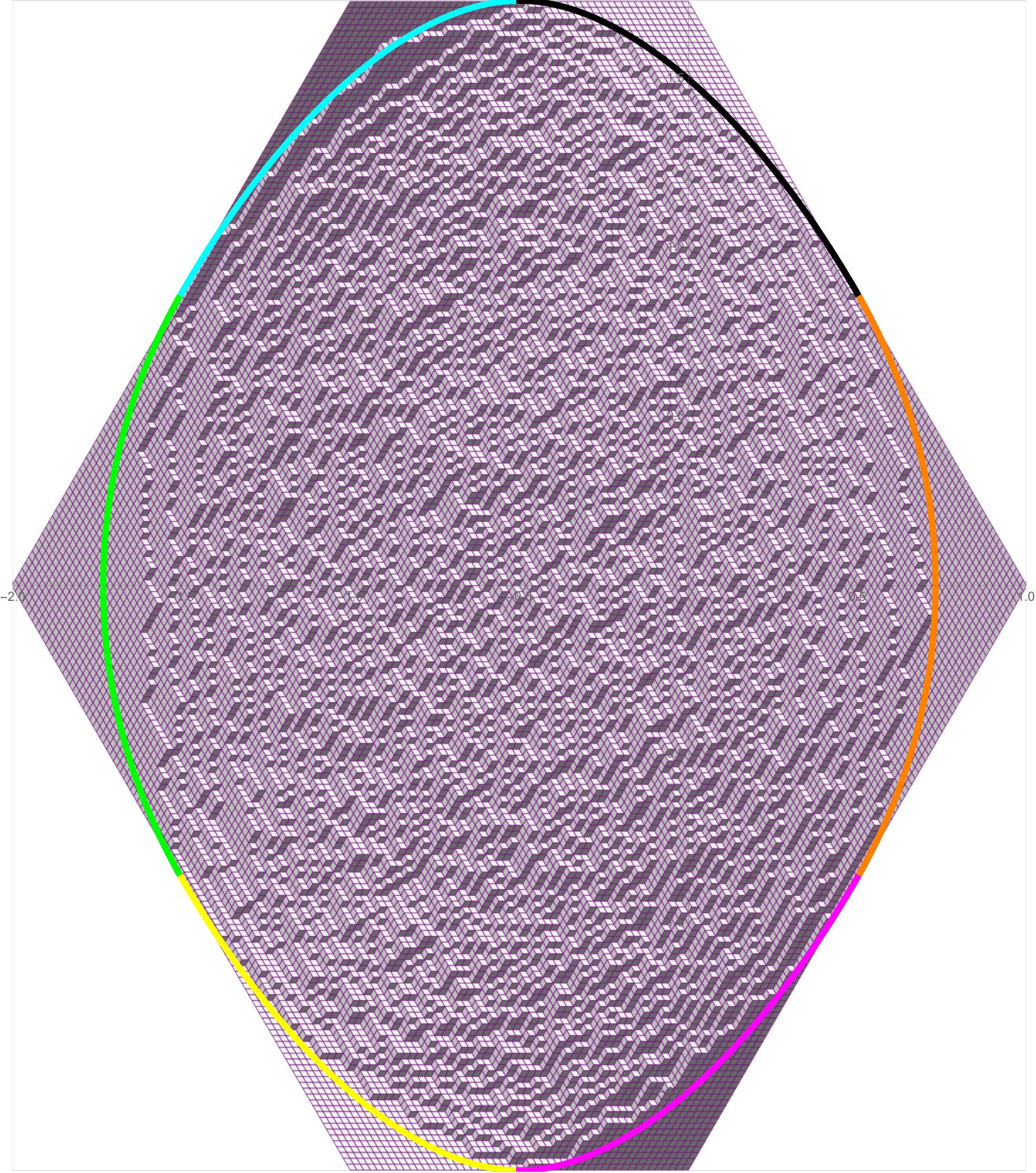}
    \end{tabular}
    }
    \caption{A simulation and computed arctic curve for $2$-tiling a $100 \times 50 \times 100$ hexagon for large $t$ ($t=100$).}
    \label{fig:ArcticCurveLozenget100}
\end{figure}

\begin{table}[ht]
\begin{center}
\begin{tabular}{|c|c|c|c|}\hline
a& b& c& Generating polynomial  \\ \hline \hline
1& 1& 1&  $3t + 1$ \\
1& 1& 2&  $6t + 3$ \\
1& 1& 3& $10t + 6$ \\
1& 2& 1&  $5t^2 + 3t + 1$ \\
1& 2& 2&  $15t^2 + 15t + 6$ \\
1& 2& 3& $35t^2+45t+20$ \\
2& 1& 1&  $3t(2t + 1)$ \\
2& 1& 2&  $20t^2 + 15t + 1$ \\
2& 1& 3& $50t^2 + 45t + 5$ \\
2& 2& 1&  $t^2(15t^2 + 15t + 6)$ \\
2& 2& 2&  $105t^4 + 175t^3 + 104t^2 + 15t + 1$ \\
2& 2& 3& $490t^4 + 1050t^3 + 770t^2 + 175t + 15$ \\
3& 1& 1&  $t^2(10t + 6)$ \\
3& 1& 2&  $5t(10t^2 + 9t + 1)$ \\
3& 1& 3& $175t^3 + 189t^2 + 35t + 1$ \\
3& 2& 1&  $t^4(35t^2 + 45t + 20)$ \\
3& 2& 2&  $t^2(490t^4 + 1050t^3 + 770t^2 + 175t + 15)$\\ 
3& 2& 3&  $4116t^6 + 11340t^5 + 10689t^4 + 3850t^3 + 594t^2 + 35t + 1$ \\ \hline
\end{tabular}\end{center}
\caption{Generating polynomial of 2-tilings of the $a\times b\times c$ hexagon}
\label{ex:tab}
\end{table}

\section{Appendix: Equivalence of algebraic and graphical definitions for the $L$ and $M$ vertices}
\label{ap:b}

Recall the algebraic definition of the $L$ matrix:
\[ L_x^{(k)}(\I,\J,\K,\L) = \textbf{1}_{\I+\J=\K+\L} \prod_{i=1}^k \textbf{1}_{I_i+J_i \neq 2} \cdot x^{|\L|} t^{\varphi(\L,\I+\J)}. \]
Due to the factor of $\textbf{1}_{\I+\J=\K+\L} \prod_{i=1}^k \textbf{1}_{I_i+J_i \neq 2}$, in order for the weight to be non-zero, we require $I_i+J_i=K_i+L_i$ and $I_i+J_i \neq 2$ for all $i \in [k]$.  In terms of our graphical interpretation, this means that each color must have one of the following five forms.
\[ \resizebox{0.5\textwidth}{!}{
\begin{tabular}{ccccc}
    \begin{tikzpicture}[baseline=(current bounding box.center)]\draw[thin] (0,0) rectangle (1,1);\end{tikzpicture} & \begin{tikzpicture}[baseline=(current bounding box.center)]\draw[thin] (0,0) rectangle (1,1);\draw[blue, thick] (0.5,0)--(0.5,0.5)--(1,0.5);\end{tikzpicture} & \begin{tikzpicture}[baseline=(current bounding box.center)] \draw[thin] (0,0) rectangle (1,1); \draw[blue, thick] (0,0.5)--(1,0.5); \end{tikzpicture} & \begin{tikzpicture}[baseline=(current bounding box.center)] \draw[thin] (0,0) rectangle (1,1); \draw[blue, thick] (0.5,0)--(0.5,1); \end{tikzpicture} & \begin{tikzpicture}[baseline=(current bounding box.center)] \draw[thin] (0,0) rectangle (1,1); \draw[blue, thick] (0,0.5)--(0.5,0.5)--(0.5,1); \end{tikzpicture} \\ A & B & C & D & E
\end{tabular}}
\]
Note that $L_i = 1$ if color $i$ has form B or C (i.e. color $i$ exits right) and 0 otherwise.  Also note that $I_i+J_i = 1$ if color $i$ has form B, C, D, or E (i.e. color $i$ is present) and 0 otherwise.  Assuming each color has one of these five forms, the weight is
\[ \begin{aligned}
x^{|\L|} t^{\varphi(\L,\I+\J)} &= x^{\sum_{i=1}^k L_i} t^{\sum_{i=1}^k \left ( L_i \sum_{j=i+1}^k (I_j+J_j) \right )} = \prod_{\substack{1 \leq i \leq k \\ L_i = 1}} xt^{\sum_{j=i+1}^k (I_j+J_j)} = \prod_{\substack{1 \leq i \leq k \\ \text{color $i$ exits right}}} xt^{\delta_i}
\end{aligned} \]
where $\delta_i$ is the number of colors greater than $i$ that are present.  It is easy to see that this matches the graphical definition of the $L$ matrix.

Now recall the algebraic definition of the $M$ matrix:
\[ \M_x^{(k)}(\I,\J,\K,\L) = x^kt^{\binom{k}{2}}L^{(k)}_{\bar x}(\I,\J,\K,\L) \]
where $\bar{x} = \frac{1}{xt^{k-1}}$.  In order for $\M_x^{(k)}(\I,\J,\K,\L)$ to be non-zero, we need $L^{(k)}_{\bar x}(\I,\J,\K,\L)$ to be non-zero, which requires each color to have form A, B, C, D, or E.  Assuming each color has one of these five forms, the weight is
\[ \begin{aligned}
x^kt^{\binom{k}{2}} \cdot L^{(k)}_{\bar x}(\I,\J,\K,\L) = x^k t^{\binom{k}{2}} \prod_{\substack{1 \leq i \leq k \\ \text{color $i$ exits right}}} \frac{1}{xt^{k-1}} t^{\delta_i}
\end{aligned} \]
which has the form $x^p t^q$ for some $p,q \in \mathbb{Z}$.  We see that
\[ p = k - \text{\# colors that exit right} = \text{\# colors that don't exit right} \]
and
\[ \begin{aligned}
q &= \binom{k}{2} + \sum_{\substack{1 \leq i \leq k \\ \text{color $i$ exits right}}} \delta_i - \sum_{\substack{1 \leq i \leq k \\ \text{color $i$ exits right}}} (k-1) \\
&= \sum_{1 \leq i < j \leq k} 1 + \sum_{\substack{1 \leq i < j \leq k \\ \text{color $i$ exits right} \\ \text{color $j$ is present}}} 1 - \left ( \sum_{\substack{1 \leq i < j \leq k \\ \text{color $i$ exits right}}} 1 + \sum_{\substack{1 \leq h < i \leq k \\ \text{color $i$ exits right}}} 1 \right ) \\
&= \sum_{1 \leq i < j \leq k} 1 + \sum_{\substack{1 \leq i < j \leq k \\ \text{color $i$ exits right} \\ \text{color $j$ is present}}} 1 - \left ( \sum_{\substack{1 \leq i < j \leq k \\ \text{color $i$ exits right}}} 1 + \sum_{\substack{1 \leq i < j \leq k \\ \text{color $j$ exits right}}} 1 \right ) \\
&= \sum_{1 \leq i < j \leq k} \left \{ \begin{array}{ll} 
0=1+0-(1+0) & \text{$i$ right, $j$ not present} \\ 
1=1+1-(1+0) & \text{$i$ right, $j$ present but not right} \\ 
0=1+1-(1+1) & \text{$i$ right, $j$ right} \\ 
1=1+0-(0+0) & \text{$i$ not right, $j$ not present} \\ 
1=1+0-(0+0) & \text{$i$ not right, $j$ present but not right} \\ 
0=1+0-(0+1) & \text{$i$ not right, $j$ right}  
\end{array} \right . \\
&= \sum_{\substack{1 \leq i < j \leq k \\ \text{color $i$ doesn't exit right}}} \alpha_i + \sum_{\substack{1 \leq i < j \leq k \\ \text{color $i$ exits right}}} \beta_i
\end{aligned} \]
where 
\[ \begin{aligned}
&\alpha_i = \text{\# colors $j>i$ that don't exit right}, \\
&\beta_i = \text{\# colors $j>i$ that are present but don't exit right} = \text{\# colors $j>i$ that exit top}.
\end{aligned} \]
Thus the weight is
\[ \begin{aligned}
x^p t^q = \prod_{\substack{1 \leq i < j \leq k \\ \text{color $i$ doesn't exit right}}} xt^{\alpha_i} \cdot \prod_{\substack{1 \leq i < j \leq k \\ \text{color $i$ exits right}}} t^{\beta_i}.
\end{aligned} \]
It is easy to see that this matches the graphical definition of the $\M$ matrix.

\bibliographystyle{abbrv}
\bibliography{Aztec}

\end{document}